\numberwithin{equation}{section}
\numberwithin{figure}{section}
\newtheorem{Thm}{Theorem}[subsection]
\newtheorem{Lem}[Thm]{Lemma}
\newtheorem{Prop}[Thm]{Proposition}
\newtheorem{Cor}[Thm]{Corollary}
\newtheorem{Conj}[Thm]{Conjecture}
\newtheorem{Quest}[Thm]{Question}
\newtheorem{Eg}[Thm]{Example}
\newtheorem{Rem}[Thm]{Remark}
\newtheorem{Def}[Thm]{Definition}
\newtheorem*{Def*}{Definition}
\newtheorem*{Thm*}{Theorem}
\newtheorem*{Conj*}{Conjecture}
\newtheorem*{Assumption*}{Assumption}
\newcommand{\kk}{\Bbbk}
\newcommand{\Z}{\mathbb{Z}}
\newcommand{\N}{\mathbb{N}}
\newcommand{\Q}{\mathbb{Q}}
\newcommand{\C}{\mathbb{C}}
\newcommand{\R}{\mathbb{R}}
\renewcommand{\hat}[1]{\widehat{#1}}
\renewcommand{\tilde}[1]{\widetilde{#1}}
\newcommand{\opname}[1]{\operatorname{\mathsf{#1}}}
\newcommand{\Spec}{\operatorname{\mathsf{Spec}}}
\renewcommand{\mod}{\opname{mod}}
\newcommand{\per}{\opname{per}}
\newcommand{\pr}{\opname{pr}}
\newcommand{\proj}{\opname{proj}}
\newcommand{\add}{\opname{add}}
\newcommand{\op}{^{op}}
\newcommand{\ra}{\rightarrow}
\newcommand{\Gr}{\opname{Gr}}
\newcommand{\sign}{\opname{sign}}
\newcommand{\Hom}{\opname{Hom}}
\newcommand{\supp}{\opname{supp}}
\renewcommand{\deg}{\opname{deg}}
\newcommand{\Hf}{{\frac{1}{2}}}
\newcommand{\Rm}[1]{{\longmapsto}}
\newcommand{\Lm}[1]{{\longmapsfrom}}
\newcommand{\cA}{{\mathcal A}}
\newcommand{\cC}{{\mathcal C}}
\newcommand{\cF}{{\mathcal F}}
\newcommand{\cS}{{\mathcal S}}
\newcommand{\uM}{{\underline{M}}}
\newcommand{\uT}{{\underline{T}}}
\newcommand{\tB}{{\widetilde{B}}}
\newcommand{\tE}{{\widetilde{E}}}
\newcommand{\tF}{{\widetilde{F}}}
\newcommand{\tG}{{\widetilde{G}}}
\newcommand{\tI}{{\widetilde{I}}}
\newcommand{\tQ}{{\widetilde{Q}}}
\newcommand{\tW}{{\widetilde{W}}}
\newcommand{\tg}{{\tilde{g}}}
\newcommand{\gen}{\mathbb{L}}
\newcommand{\clAlg}{{\cA}}
\newcommand{\qClAlg}{\cA_q}
\newcommand{\diag}{{\delta}}
\newcommand{\gr}{{\mathrm{gr}}}
\tikzstyle{none}=[inner sep=0pt]
\tikzstyle{black box}=[draw=black, fill=black!25]
\tikzstyle{white box}=[draw=black, fill=white]
\tikzstyle{black circle}=[circle,draw=black!50, fill=black!25]
\tikzstyle{red circle}=[circle,draw=red!50, fill=red!25]
\tikzstyle{blue circle}=[circle,draw=blue!50, fill=blue!25]
\tikzstyle{green circle}=[circle,draw=green!50, fill=green!25]
\tikzstyle{yellow circle}=[circle,draw=yellow!50, fill=yellow!25]
\newcommand{\thistheoremname}{}
\newtheorem*{genericthm*}{\thistheoremname}
\newenvironment{namedthm*}[1]
  {\renewcommand{\thistheoremname}{#1}%
   \begin{genericthm*}}
  {\end{genericthm*}}
\renewcommand{\diag}{{d}}
\newcommand{\Ind}{{\opname{Ind}}}
\newcommand{\fv}{\opname{f}}
\newcommand{\ufv}{\opname{uf}}
\newcommand{\codeg}{\opname{codeg}}
\newcommand{\suppDim}{\opname{suppDim}}
\newcommand{\ptSet}{\mathcal{PT}}
\newcommand{\hPtSet}{\widehat{\mathcal{PT}}}
\begin{document}
\newtheorem{DefLem}[Thm]{Definition-Lemma}

\renewcommand{\qClAlg}{{\clAlg_q}}
\newcommand{\tropSet}{{\mathcal{M}^\circ}}
\renewcommand{\Mc}{{M^\circ}}
\newcommand{\yCone}{{N_{\ufv}}}

\newcommand{\sol}{\mathrm{TI}}
\newcommand{\intv}{{\mathrm{BI}}}

\newcommand{\Perm}{\mathrm{P}}

\newcommand{\bideg}{\opname{bideg}}

\newcommand{\midClAlg}{{\clAlg^{\mathrm{mid}}}}

\newcommand{\upClAlg}{\mathcal{U}}

\newcommand{\canClAlg}{{\clAlg^{\mathrm{can}}}}

\newcommand{\AVar}{\mathbb{A}}
\newcommand{\XVar}{\mathbb{X}}
\newcommand{\bAVar}{\overline{\mathbb{A}}}

\newcommand{\Jac}{\hat{\mathop{J}}}

\newcommand{\wt}{\mathrm{cl}}
\newcommand{\cl}{\mathrm{cl}}

\newcommand{\domCone}{{\overline{M}^\circ}}

\newcommand{\LP}{{\mathcal{LP}}}
\newcommand{\bLP}{{\overline{\mathcal{LP}}}}

\newcommand{\bClAlg}{{\overline{\clAlg}}}
\newcommand{\bUpClAlg}{{\overline{\upClAlg}}}

\newcommand{\tree}{{\mathbb{T}}}

\renewcommand{\diag}{{d'}}

\newcommand{\img}{{\mathrm{Im}}}

\newcommand{\Id}{{\mathrm{Id}}}
\newcommand{\prin}{{\mathrm{prin}}}

\newcommand{\mm}{{\mathbf{m}}}

\newcommand{\cPtSet}{{\mathcal{CPT}}}
\newcommand{\bPtSet}{{\mathcal{BPT}}}
\newcommand{\tCPtSet}{{\widetilde{\mathcal{CPT}}}}

\newcommand{\tf}{{\tilde{f}}}
\newcommand{\ty}{{\tilde{y}}}
\newcommand{\tcS}{{\tilde{\mathcal{S}}}}

\newcommand{\frd}{{\mathfrak{d}}}
\newcommand{\frD}{{\mathfrak{D}}}
\newcommand{\frp}{{\mathfrak{p}}}
\newcommand{\frg}{{\mathfrak{g}}}
\newcommand{\frn}{{\mathfrak{n}}}
\newcommand{\frsl}{{\mathfrak{sl}}}

\newcommand{\Quot}{\mathrm{Quot}}

\title[]{Bases for upper cluster algebras and tropical points}

\dedicatory{In memory of Kentaro Nagao}

\author{Fan QIN}

\address{School of Mathematical Sciences, Shanghai Jiao Tong University, 800
	Dongchuan Road, Shanghai 200240, China.}

\email{fgin11@sjtu.edu.cn}
\begin{abstract}
It is known that many (upper) cluster algebras possess different kinds
of good bases which contain the cluster monomials and are parametrized
by the tropical points of cluster Poisson varieties. For a large class of 
upper cluster algebras (injective-reachable ones with full rank coefficients), we describe all of its bases with these properties.
Moreover, we show the existence of the generic basis for them. In
addition, we prove that Bridgeland's representation theoretic formula
is effective for their theta functions (weak genteelness).

Our results apply to (almost) all well-known cluster algebras arising from
representation theory or higher Teichm\"uller theory, including quantum
affine algebras, unipotent cells, double Bruhat cells, skein algebras
over surfaces, where we change the coefficients if necessary so that the full rank assumption holds.
\end{abstract}

\maketitle
\tableofcontents{}

\section{Introduction}

\label{sec:intro} 

\subsection{Background: good bases for cluster algebras}

Cluster algebras $\clAlg$ are commutative algebras equipped with
extra combinatorial data. Fomin and Zelevinsky \cite{FominZelevinsky02}
invented these algebras as a combinatorial approach to the dual canonical
bases of quantized enveloping algebras (\cite{Lusztig90}\cite{Lusztig91}\cite{Kashiwara90}).
They conjectured that the cluster monomials (certain monomials of
generators) of some cluster algebras are elements of the dual canonical
bases of quantized enveloping algebra. Similarly, an analogous conjecture
due to Hernandez and Leclerc \cite{HernandezLeclerc09} expected that
the cluster monomials of some other cluster algebras correspond to
simple modules of quantum affine algebras. Inspired by these conjectures,
there have been many works devoted to relate cluster algebras, their
bases and representation theory (\cite{BuanMarshReinekeReitenTodorov06}
\cite{Keller05} \cite{CalderoReineke08} \cite{DerksenWeymanZelevinsky09}
\cite{Amiot09} \cite{Plamondon10a} \cite{GeissLeclercSchroeer10}
\cite{GeissLeclercSchroeer11} \cite{GeissLeclercSchroeer10b} \cite{plamondon2013generic}
\cite{HernandezLeclerc09} \cite{Nakajima09} \cite{KimuraQin14} \cite{qin2017triangular} \cite{Kang2018}
etc...).

On the other hand, to each cluster algebra $\clAlg$, one can define
geometric objects $\AVar$ and $\XVar$ called the cluster $K2$ variety
and cluster Poisson variety respectively \cite{fock2016symplectic}.
The upper cluster algebra $\upClAlg$ is defined to be the ring of
the regular functions over $\AVar$. Furthermore, (a weaker form\footnote{Fock and Goncharov expect an additional stronger property that the
basis should have positive structure constants. For the moment, we
don't know how to pick out such positive bases from the candidates
provided in our paper.} of) a conjecture by Fock and Goncharov predicts that $\upClAlg$
possesses a basis parametrized by the tropical points of $\XVar$
associated to the Langlands dual cluster algebra \cite{FockGoncharov09}.
Gross-Hacking-Keel-Kontsevich recently verified it in many cases and
found that the conjecture does not always hold \cite{gross2018canonical}.

It is well known that the cluster algebra $\clAlg$ is contained in
the upper cluster algebra $\upClAlg$ (Laurent phenomenon \cite{FominZelevinsky02}),
and they agree in many cases, e.g. for many cluster algebras arising
from representation theory. In view of the above conjectures, it is
natural to look for good bases of (upper) cluster algebras, where
the meaning of ``good'' depends on the context. Good bases in the
literature can be divided into the following three families, see Section
\ref{sec:Preliminaries} for necessary definitions.

\begin{enumerate}

\item The generic basis in the sense of \cite{dupont2011generic}:
If the collection of the ``generic'' cluster characters from certain triangulated
category is a basis, it is called the generic basis. The existence of such bases is mostly known for the cluster algebras
arising from unipotent cells \cite{GeissLeclercSchroeer10b}, in which
case it agrees with the dual semicanonical basis of Lusztig \cite{Lusztig00}.
Also, its existence is preserved by source/sink extension \cite{Fei2017generic}.
Conjecturally, this family includes the bangle basis \cite{musiker2013bases}\cite{felikson2017bases}
of cluster algebras arising from surfaces, with the no punctured case
treated in \cite{geiss2020schemes}\cite{GLSsurface}.

\item The common triangular basis in the sense of \cite{qin2017triangular}:
It is defined using some triangular properties by \cite{qin2017triangular}
for ``injective-reachable'' quantum cluster algebras. Its existence
is known for the quantum cluster algebras arising from quantum affine
algebras, where it agrees with the basis consisting of the simple
modules \cite{qin2017triangular}. Also, its existence is known for
those arising from unipotent cells, where it agrees with the dual
canonical basis \cite{qin2017triangular}\cite{Kang2018}\cite{kashiwara2018laurent}.
Conjecturally, this family includes the band basis \cite{thurston2014positive}
of cluster algebras arising from surfaces and the Berenstein-Zelevinsky
acyclic triangular bases \cite{BerensteinZelevinsky2012}\cite{qin2019compare}.

\item The theta basis in the sense of \cite{gross2018canonical}:
It consists of the ``theta functions'' appearing in the associated
scattering diagram. It turns out to be a basis for injective-reachable
upper cluster algebras \cite{gross2018canonical}. This family includes
the greedy bases of cluster algebras of rank $2$ \cite{lee2014greedy}\cite{cheung2015greedy}.
For cluster algebras arising
from surfaces \cite{musiker2013bases}, the bracelet bases in the sense of \cite{musiker2013bases} are conjectured to be the theta bases. This conjecture will be verified in an upcoming work by Travis Mandel and the author \cite{mandel_surface}.

\end{enumerate}

The bases as listed above appear naturally from their own backgrounds\footnote{The common triangular basis is related to the (dual) canonical basis
in representation theory, which is often thought to be the best basis
for quantized enveloping algebras. The theta basis was also said to
be ``canonical'' in the original paper \cite{gross2018canonical} and is very natural from a geometric point of view.}. They are always parametrized by the tropical points and contain
all cluster monomials \cite{plamondon2013generic}\cite{qin2017triangular}\cite{gross2018canonical}.
But such good bases are known to be different even in easy toy models
\cite{ShermanZelevinsky04}. This surprising phenomenon is the main
motivation of this paper. Given there exist different bases parametrized
by the tropical points (verifying the Fock-Goncharov conjecture),
the following question arises naturally.

\begin{Quest}

How many bases are parametrized by the tropical points? How similar
and how different are they?

\end{Quest}

We shall give an answer for injective-reachable upper cluster algebras under the full rank assumption (see Remark \ref{rem:full_rank_assumption}).

\subsection{Main results and comments}\label{sec:main_results}

Let there be given a set of vertices $I$ and a partition $I=I_{\ufv}\sqcup I_{\fv}$
into unfrozen vertices and frozen vertices. A seed $t$ is a collection
$((b_{ij})_{i,j\in I},(x_{i})_{i\in I})$, where $(b_{ij})$ is a
skew-symmetrizable matrix and $x_{i}$ the cluster variables in $t$
(distinguished generators of $\clAlg$). Throughout the paper, we often make the following assumption, see Remark \ref{rem:full_rank_assumption}.
\begin{Assumption*}[Full rank assumption]
We assume $\tB(t):=(b_{ik})_{i\in I,k\in I_{\ufv}}$
to be of full rank.
\end{Assumption*}

We will work with a base ring $\kk$, which will be $\kk=\Z$ for classical (upper) cluster algebras and $\kk=\Z[q^{\pm\Hf}]$ for the quantum case, where $q$ is a formal quantum parameter.

We have the lattice of Laurent multidegrees $\Mc(t)\simeq\Z^{I}$
with the natural basis $f_{i}$, the Laurent polynomial ring $\LP(t)=\kk[x_{i}{}^{\pm}]=\kk[\Mc(t)]$,
where where $x^{f_{i}}:=x_{i}$, and the (skew-)field of fractions $\cF(t)$ (see Section \ref{sec:quantization} for the quantum case). In
\cite{qin2017triangular}, the author introduced the dominance order
$\preceq_{t}$ on $\Mc(t)$ such that $g'\preceq_{t}g$ if and only
if $g'=g+\tB(t)\cdot n$ for some $n\in\N^{I_{\ufv}}$. 

On the one hand, for any unfrozen vertex $k\in I_{\ufv}$, there is
an algorithm $\mu_{k}$ called mutation which generates a new seed
$t'=\mu_{k}(t)$ from $t$. We use $\Delta^{+}$ to denote the set
of seeds obtained by repeatedly applying mutations. In addition, there
is a corresponding isomorphism between the (skew-)fields $\mu_{k}^{*}:\cF(t')\simeq\cF(t)$.
We naturally extend these notions for seeds $t'=\overleftarrow{\mu}t$
related by a sequence of mutations $\overleftarrow{\mu}$. Recall
that the upper cluster algebra $\upClAlg$ equals $\cap_{t\in\Delta^{+}}\LP(t)$
where the fraction fields are identified.

On the other hand, on the tropical part, one has a tropical transformation
(piecewise linear map) $\phi_{t',t}:\Mc(t)\simeq\Mc(t')$. By
identifying Laurent degrees $g\in\Mc(t)$ for all seeds $t\in\Delta^{+}$
via the tropical transformations, we define the set of tropical points\footnote{We remark that $\tropSet$ should not be confused with the fixed abstract lattice $\Mc$ used in \cite{gross2013birational}.  The set $\tropSet$ in our paper is viewed as the set of equivalent classes of Laurent degree lattices. In particular, it does not have an additive structure.}
$\tropSet$ to be the set of the equivalent classes $[g]$. $\tropSet$
is equipped with many dominance orders $\preceq_{t}$ by comparing
the representatives in each seed $t$. Given any set of seeds $S$
and any tropical point $[g]\in\tropSet$, dominance orders cut out a
subset of tropical points $\tropSet_{\preceq_{S}[g]}=\{[g']|[g']\preceq_{t}[g]\ \forall t\in S\}$ 

We say a Laurent polynomial $z\in\LP(t)$ is pointed at degree $\deg^{t}z=g\in\Mc(t)$
(resp. copointed at codegree $\codeg^{t}z=g\in\Mc(t)$) if it has
a unique $\preceq_{t}$-maximal (resp. $\preceq_{t}$-minimal) Laurent
monomial with degree $g$ and coefficient $1$. We say $z\in\upClAlg$
is pointed at the tropical point $[g]$ if it is pointed at the representatives
of $[g]$ at all seeds $t\in\Delta^{+}$.

In this work, we restrict our attention to injective-reachable seeds $t$,
which means that there is a seed $t[-1]$ such that, for some permutation
$\sigma$ of $I_{\ufv}$, the cluster variables $x_{i}(t)$ have degree
$\deg^{t[-1]}(x_{i}(t))=-f_{\sigma(i)}$ modulo the frozen part $\Z^{I_{\fv}}$.

\subsubsection*{All bases}

Our first main result is a description of all bases parametrized by the
tropical points.

\begin{Thm}\label{thm:all_pointed_bases}
Consider the classical case $\kk=\Z$.
Let there be given an upper cluster algebra $\upClAlg$ with injective-reachable
seeds $t=\overleftarrow{\mu}t[-1]$ subject to the full rank assumption.

(1) For any collection $\mathcal{S}=\{s_{[g]}\in\upClAlg|[g]\in\tropSet\}$
such that $s_{[g]}$ are pointed at the tropical points $[g]$, $\mathcal{S}$
must be a $\kk$-basis of $\upClAlg$ containing all cluster monomials. 

(2) There exists at least one such basis, which we choose and denote by $\mathcal{Z}=\{z_{[g]}\}$.

(3) The set of all such bases $\mathcal{S}$ is parametrized as follows:

\begin{eqnarray*}
\prod_{[g]\in\tropSet}\kk^{\tropSet_{\prec_{\Delta^{+}}[g]}} & \simeq & \{\mathcal{S}\}\\
((b_{[g],[g']})_{[g']\in\tropSet_{\prec_{\Delta^{+}}[g]}})_{[g]\in\tropSet} & \mapsto & \mathcal{S}=\{s_{[g]}|[g]\in\tropSet\}
\end{eqnarray*}
such that $s_{[g]}=z_{[g]}+\sum_{[g']\in\tropSet_{\prec_{\Delta^{+}}[g]}}b_{[g],[g']}z_{[g']}$. In addition,  $\tropSet_{\prec_{\Delta^{+}}[g]}$
are finite sets. 

\end{Thm}
By this result, the three families of good bases in previous literature correspond to three points in this (infinite) ``moduli space" of bases. The quantum analog of Theorem \ref{thm:all_pointed_bases} is discussed in Section \ref{sec:quantum_bases}. See also Remark \ref{rem:frozen_shift_basis} for bases that factor through frozen variables.

\begin{Rem}[Deformation factors]
The main theorem shows that the set of bases $\{\mathcal{S}\}$ has
a linear structure similar to that of the solution space of a non-homogeneous
linear system, and a general basis could be obtained from a special
one by linear deformation controlled by the factors $\tropSet_{\prec_{\Delta^{+}}[g]}$, which we call the deformation factors.

These deformation factors are new mathematical objects, and further questions arise naturally, see
Section \ref{sec:Deformation-factors}. In particular, Conjecture \ref{conj:rigid_deformation_factor} would imply the open orbit conjecture for unipotent subgroups (see \cite{GeissLeclercSchroeer10}), see Remark \ref{rem:open_orbit_conjecture}.
\end{Rem}

In practice, instead of using the set $\tropSet_{\prec_{\Delta^{+}}[g]}$, it would be easier to work with the larger finite sets $\tropSet_{\prec_{\{t,t[-1]\}}[g]}$. These larger sets can be easily controlled
by computing the difference between the degrees and codegrees (called
support dimensions, or $f$-vectors following \cite{fujiwara2018duality})
(Proposition \ref{prop:bipointed_support}). Correspondingly, in Theorem
\ref{thm:bipointed_bases}, we describe the bases subject to the
weaker condition: we require the basis elements to be compatibly pointed at the seeds $t,t[-1]$ rather than compatibly pointed at all seeds (see Definition \ref{def:compatably_pointed}).

Next, we discuss how to choose one such basis for Theorem \ref{thm:all_pointed_bases}.

\subsubsection*{Generic bases}

Assume that the seeds are skew-symmetric, i.e. their matrices are skew-symmetric. It is naturally expected that the generic cluster characters give rise to bases of many (upper) cluster algebra, called the generic bases. Though the existence of such bases was verified in limited cases, such as \cite{GeissLeclercSchroeer10b}. 

Our second main result gives the existence of the generic basis at a high level of generality, which provides a good choice for the special basis $\mathcal{Z}$ in Theorem \ref{thm:all_pointed_bases}.

\begin{Thm}[Generic basis]\label{thm:generic_basis_existence}
Consider the classical case $\kk=\Z$. Let there be given a skew-symmetric injective-reachable seed $t$ subject to the full rank assumption. Then the set of the localized generic cluster
characters is a basis of $\upClAlg$, called the generic basis.

\end{Thm}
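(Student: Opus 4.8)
The plan is to show that the localized generic cluster characters form a collection of elements of $\upClAlg$ that are pointed at all tropical points, and then invoke Theorem \ref{thm:all_pointed_bases}(1) to conclude they are a basis. Concretely, for a skew-symmetric injective-reachable seed $t$, associate to the quiver with potential (or the appropriate finite-dimensional Jacobian-type algebra, via Derksen--Weyman--Zelevinsky and Plamondon) the cluster character of a generic decorated representation with prescribed $g$-vector; standard results \cite{plamondon2013generic}\cite{DerksenWeymanZelevinsky09} give a well-defined generic cluster character $\gen^{\clAlg}(g)$ for each $g$ in the relevant lattice. The first step is to recall that each $\gen^{\clAlg}(g)$ lies in $\upClAlg$ and is pointed at degree $g$ in the seed $t$: this is essentially the content of \cite{plamondon2013generic} combined with the Laurent phenomenon, using that generic values of the $E$-invariant vanish so the cluster character has a unique leading term.

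The main work is to promote ``pointed at degree $g$ in the single seed $t$'' to ``pointed at the tropical point $[g]$,'' i.e. pointed at the representative of $[g]$ in every seed $t'\in\Delta^{+}$. For this I would use the injective-reachability hypothesis together with the behaviour of generic cluster characters under mutation. The key point is that mutation of the Jacobian algebra corresponds to the tropical transformation $\phi_{t',t}$ on $g$-vectors, and genericity is preserved: a generic representation for $t$ mutates to (the closure of) a generic locus for $t'$, so $\mu_{k}^{*}$ sends $\gen^{\clAlg}(g)$ computed in $t$ to $\gen^{\clAlg}(\phi_{t',t}(g))$ computed in $t'$. Since pointedness at a degree is preserved under this identification (the unique maximal monomial is transported to the unique maximal monomial), one gets pointedness at every seed reachable from $t$. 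The role of injective-reachability is to guarantee that this works for the full set $\Delta^{+}$ and, crucially, that the indexing set of $g$-vectors of generic characters is exactly $\tropSet$ — i.e. that every tropical point $[g]$ is realized by a generic cluster character, which is where one needs the shift functor $t\mapsto t[-1]$ and the permutation $\sigma$ to cover all of $\Mc(t)\simeq\Z^{I}$, not just the cluster-monomial cone.

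After these two steps, the collection $\{\gen^{\clAlg}(g)\}$ is a family of elements of $\upClAlg$ pointed at all tropical points, so Theorem \ref{thm:all_pointed_bases}(1) immediately gives that it is a basis containing all cluster monomials; localizing at the frozen variables yields the localized generic basis as stated. I expect the main obstacle to be the second step: proving that genericity really is compatible with all mutations — equivalently, that the generic cluster character is independent of which seed one computes it in, after tropical identification of $g$-vectors. This requires controlling the (semi-continuous) generic value of the $E$-invariant under the mutation of decorated representations and checking that no ``jumping'' of the generic stratum spoils the unique leading term; the injective-reachable hypothesis is exactly what lets one avoid pathologies here, via the reduction to a finite-dimensional setting where \cite{plamondon2013generic} and \cite{DerksenWeymanZelevinsky09} apply. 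A secondary point to verify carefully is that linear independence is genuinely a consequence of pointedness at tropical points (distinct tropical points give distinct leading monomials in $t$), which is where the full-rank assumption on $\tB(t)$ enters.
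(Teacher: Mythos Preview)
Your overall route is the same as the paper's: use Plamondon's result that the (localized) generic cluster characters are compatibly pointed at all seeds, and then apply Theorem~\ref{thm:all_pointed_bases}(1) (the paper also notes that Theorem~\ref{thm:tropical_finite_decomposition} gives this more directly). So the architecture is correct.

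Where you go astray is in identifying the ``main work.'' The mutation compatibility you flag as the central obstacle --- that $\overleftarrow{\mu}^{*}\gen_{g}^{t}=\gen_{\phi_{t',t}g}^{t'}$ --- is precisely the theorem of Plamondon \cite{plamondon2013generic} recalled at the end of Section~\ref{sec:cluster_category}; it holds for \emph{any} skew-symmetric seed and does not use injective-reachability at all. Likewise, the fact that every $g\in\Mc(t)$ is the degree of some localized generic cluster character is part of the construction (Lemma~\ref{lem:localized_index}) and again needs no injective-reachability. So both steps you single out as delicate are in fact black boxes from the literature.

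The injective-reachability hypothesis enters only at the very last step, inside Theorem~\ref{thm:all_pointed_bases}(1) (equivalently Theorem~\ref{thm:tropical_finite_decomposition}): it is what forces the $\prec_t$-decomposition of an arbitrary $z\in\upClAlg$ into the $\gen_g$ to be \emph{finite}, via the degree/codegree swap between $t$ and $t[-1]$ (Propositions~\ref{prop:order_reverse}, \ref{prop:cpt_pt_swap}). Your proposal does not mention this finiteness issue, and your sentence ``the injective-reachable hypothesis is exactly what lets one avoid pathologies here, via the reduction to a finite-dimensional setting where \cite{plamondon2013generic} and \cite{DerksenWeymanZelevinsky09} apply'' misplaces the hypothesis entirely. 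If you simply cite Plamondon for compatibility and then invoke Theorem~\ref{thm:all_pointed_bases}(1), the proof is complete and matches the paper; the substance of the argument lives in that theorem, not in re-deriving Plamondon's mutation invariance.
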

Theorem \ref{thm:generic_basis_existence} is a consequence of Theorem \ref{thm:tropical_finite_decomposition}. The latter result is a general criterion of independent interest, which states that if a collection of elements have well-behaved degrees under mutations, then they form a basis.

We refer the reader to Section \ref{sec:results_proofs} ans Section
\ref{sec:Other-applications-and} for more precise statements, generalization
and more details. Our results apply to (almost) all well-known
cluster algebras arising from representation theory or higher Teichm\"uller
theory, see Remark \ref{rem:injective_reachable_assumption}. Note that a change of coefficients will be needed for punctured surfaces, see Remark \ref{rem:full_rank_assumption}.

In particular, we obtain the existence of the generic basis with high
generality, covering all previously known cases such as \cite{GeissLeclercSchroeer10b}. This result will be used by an upcoming work \cite{GLSsurface} for studying generic bases of cluster algebras arising from surfaces.

\subsubsection*{Theta bases}

For general seeds, a good choice for the special basis $\mathcal{Z}$ in Theorem \ref{thm:all_pointed_bases} would be the theta
basis \cite{gross2018canonical}(see
Section \ref{sec:Theta-functions}).

Now, assume the seeds to be skew-symmetric again. Our last result states that Bridgeland's
representation theoretic formula for many theta functions is effective
(called weak genteelness, see Section \ref{sec:Weakly-genteelness}), which
can be viewed as a pleasant property predicted by Nagao's work \cite{Nagao10}. 

\begin{Thm}[Weak genteelness]\label{thm:genteel}
Take $\kk=\Z$. Let there be given a skew-symmetric injective-reachable seed $t$.
Then Bridgeland's representation theoretic formula is effective for
theta functions in the cluster scattering diagram. Moreover, the stability
scattering diagram and the cluster scattering diagram are equivalent.

\end{Thm}

\begin{Rem}\label{rem:update_DM}
\cite{davison2019strong} appeared soon after this work. Its results allow us to further understand and strengthen the present work.

First, an explicit topology was constructed for the Laurent polynomial ring $\LP(t)$ in \cite[Section 2.2.2]{davison2019strong}, which generalized the natural adic-topology that we will use for seeds of principal coefficients in Section \ref{sec:change_seed}. We omit the details but point out that, in view of this topology, in Definition-Lemma \ref{def:dominance_order_decomposition}, 
the dominance order decomposition is convergent and the pointed set $\cS$ is a topological basis.

Second but most importantly, for any skew-symmetric seed under the full rank assumption, \cite{davison2019strong} constructed the quantum theta functions with strong properties. In particular, when the seed is injective-reachable, such functions form the quantum theta basis for the quantum upper cluster algebra. The existence of such a basis is crucial for describing more quantum bases, see Section \ref{sec:quantum_bases}.
\end{Rem}

\begin{Rem}[Full rank assumption]\label{rem:full_rank_assumption}
It is worth noting that, if an initial seed $t_0$ satisfies the full rank assumption, so do all the seeds obtained from $t_0$ by iterated mutations, see \cite[Theorem 3.1.2]{muller2015existence}\cite[Proposition 5.1.4]{qin2017triangular}.

But the full rank assumption does not hold true for an arbitrary seed $t=((b_{ij})_{i,j\in I}, (x_i))$. Nevertheless, for studying many questions in cluster theory, one has the freedom to change the coefficients so that the assumption becomes true (i.e. change the set of frozen vertices $I_{\fv}$ and the matrix $(b_{ij})$ but keeping the principal part $(b_{ij})_{i,j\in I_{\ufv}}$ unchanged).

A change of coefficients is justified by keeping important structures in cluster theory. For example, the exchange graphs remain the same, see \cite[Proposition 3]{cao2020enough}. Moreover, if one knows the cluster expansion of cluster variables for some coefficients under the assumption, then one can deduce the cluster expansion for all coefficients \cite[Section 3]{FominZelevinsky07}. 

Similarly, if a (quantum) cluster algebra subject to the assumption possesses a good basis (as in Remark \ref{rem:frozen_shift_basis}), one can construct a spanning set for the corresponding algebra with arbitrary coefficients, using the correction technique for pointed elements (\cite[Section 9]{Qin12} \cite[Section 4]{qin2017triangular}). Moreover, under the full rank assumption, or the weaker assumption that $\tB(t)\R_{\geq 0}^{I_{\ufv}}$ is strictly convex (as used in \cite{GLSsurface}), the spanning set is again a basis. 

It is natural to ask whether the spanning set constructed above is always a basis for all choice of coefficients. But, at this moment, very little is known about bases of (upper) cluster algebras without the full rank assumption or the convexity assumption above. A progress made towards this direction was due to \cite{irelli2013linear}, which showed that the set of cluster monomials (usually a proper subset of the basis) is linearly independent.

Finally, a seed can be quantized if and only if the the assumption holds. 
Except for punctured surfaces, the well-known cluster algebras listed in Remark \ref{rem:injective_reachable_assumption} admit natural quantization and satisfy the full rank assumption. When the assumption fails, we have to choose appropriate coefficients so that the assumption becomes true, a quantization can be performed, and our results about bases become effective.
\end{Rem}

\begin{Rem}[Injective-reachable assumption]\label{rem:injective_reachable_assumption}
For deriving the main results of this paper, the injective-reachable assumption is imposed. 

This assumption implies that the associated Jacobian algebra is finitely-dimensional. The converse is not necessarily true. A counter example arising from once-punctured torus was studied in \cite[Example 4.3]{plamondon2013generic}.

The injective-reachable assumption are satisfied by the following well-known cluster algebras:
\begin{itemize}
\item coordinate rings of unipotent cells \cite{GeissLeclercSchroeer10} \cite{GeissLeclercSchroeer11}, see \cite[Section 13]{GeissLeclercSchroeer10};
\item level$-l$ categories of representations of quantum affine algebras \cite{HernandezLeclerc09}, see \cite[(52)]{qin2017triangular};
\item symmetric CGL extensions (including double Bruhat cells \cite{BerensteinFominZelevinsky05} \cite{goodearl2020berenstein}), see \cite[Main theorem III]{GY13};
\item equivariant perverse coherent
sheaves over affine Grassmannians, see \cite[Theorem 3.1, Proposition 6.2]{cautis2019cluster};
\item cluster algebras over marked surfaces (except once-punctured closed surfaces) \cite{FockGoncharov06a} \cite{FominShapiroThurston08}, see \cite[Proposition 7.10]{FominShapiroThurston08};
\item $PGL_m$ (or $SL_m$) local systems on marked surfaces (except once-punctured closed surfaces) \cite{goncharov2018donaldson} \cite{Goncharov2019localsystem}, see \cite[Theorem 1.2]{goncharov2018donaldson}.
\end{itemize}
\end{Rem}

\subsubsection*{Key points in the proofs}

As an important part of the paper, we give a systematic analysis of the tropical properties of upper cluster algebra elements, by which we mean how their degrees and codegrees change under mutations. More precisely, we introduce notions such as codegrees and support dimensions (Definitions \ref{def:copointed} \ref{def:f_dim} \ref{def:support_dimension}). We also introduce the linear map $\psi_{t[-1],t}:\Mc(t)\rightarrow\Mc(t[-1])$, which reverses the dominance orders and swap degrees and codegrees at different seeds $t$, $t[-1]$ (Definition \ref{def:deg_transform} Propositions \ref{prop:order_reverse} \ref{prop:cpt_pt_swap}). Then we derive the equivalence between being compatibly
pointed at $t,t[-1]$ (i.e., degrees are controlled by tropical transformations)
and being bipointed at $t$ with the ``correct'' support dimension
(Proposition \ref{prop:bipointed_support}). We arrive at the following interesting observation.
\begin{Lem}[Lemma \ref{lem:compatible_at_g_vector}]\label{lem:intro_compatible_at_g_vector}
If an upper cluster algebra element $Z$ and a cluster monomial $M$ share the same tropical property, then they are the same.
\end{Lem}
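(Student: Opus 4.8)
The plan is to pass to a single seed adapted to the cluster monomial $M$ and then invoke antisymmetry of the dominance order there. Since $M$ is a cluster monomial, there is a seed $t'\in\Delta^{+}$ in whose cluster $M$ is a monomial in the cluster variables; regarded as an element of $\LP(t')$, $M$ is then a single Laurent monomial, say $M=x(t')^{g'}$ with $g'=\deg^{t'}M\in\Mc(t')$. In particular $M$ is simultaneously pointed and copointed at $t'$, with $\deg^{t'}M=\codeg^{t'}M=g'$; that is, its support dimension at $t'$ vanishes (Definition \ref{def:support_dimension}).

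Next I would transport the tropical property of $Z$ to this seed $t'$. To say that $Z$ and $M$ share the same tropical property is to say that $Z$ is compatibly pointed at $t$ and $t[-1]$ with the same degree and the same support dimension as $M$. Using the equivalence between compatible pointedness and bipointedness with the correct support dimension (Proposition \ref{prop:bipointed_support}), the fact that degrees of compatibly pointed elements propagate through the tropical transformations to every seed, and the degree/codegree swap (Propositions \ref{prop:order_reverse} and \ref{prop:cpt_pt_swap}), one deduces that $Z$ is pointed at $t'$ with $\deg^{t'}Z=g'$ and copointed at $t'$ with $\codeg^{t'}Z=g'$ --- exactly the degree and codegree that $M$ has at $t'$.

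Finally comes the short endgame. Expanding $Z=\sum_{m}c_{m}\,x(t')^{m}$ in the seed $t'$, pointedness at $g'$ forces $m\preceq_{t'}g'$ for every occurring monomial (a unique $\preceq_{t'}$-maximal element of a finite poset is its maximum), while copointedness at $g'$ forces $g'\preceq_{t'}m$ for every such $m$; since $\preceq_{t'}$ is a partial order, $m=g'$ for every occurring monomial, whence $Z=c_{g'}\,x(t')^{g'}$ and $c_{g'}=1$ by pointedness. Thus $Z=M$. I expect the real work to be concentrated in the middle step: one must be certain that the tropical data recorded at the home seed $t$ determines the degree and codegree of $Z$ at the remote seed $t'$ in precisely the same manner as for $M$. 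Once the preceding analysis of the behaviour of degrees and codegrees under mutation is granted, the lemma reduces to the one-line order-theoretic argument above.
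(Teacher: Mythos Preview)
Your approach is essentially the paper's own: pass to the seed $t'$ where $M=x(t')^{g'}$ is a genuine Laurent monomial, use compatible pointedness to transport the degree to $t'$, and then invoke Proposition~\ref{prop:bipointed_support} to conclude that $Z$ is bipointed at $t'$ with support dimension $\suppDim g'=0$, whence $Z=x(t')^{g'}$. Your endgame (unique $\preceq_{t'}$-maximal equals unique $\preceq_{t'}$-minimal forces a single monomial) is just an unpacking of ``support dimension zero''.

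One point of bookkeeping to sharpen: the precise hypothesis in Lemma~\ref{lem:compatible_at_g_vector} is compatible pointedness at $\{t,\,t',\,t'[-1]\}$, not at $\{t,\,t[-1]\}$ as you write. The seed $t'[-1]$ (the shift of the seed where $M$ lives) is what lets you apply Proposition~\ref{prop:bipointed_support} \emph{at} $t'$ and conclude $\codeg^{t'}Z=g'$; compatible pointedness at $t,t[-1]$ alone would only give you bipointedness at $t$, which is not where you need it. Your phrase ``degrees propagate to every seed'' tacitly assumes more than $\{t,t[-1]\}$, so the argument is morally right but the seeds named in your middle paragraph should be $t'$ and $t'[-1]$.
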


The parametrization of the set of the bases (Theorem \ref{thm:all_pointed_bases}(2)) is an application of the above analysis.

As another important part of the paper, we propose and prove a criterion for a given collection of elements of an upper cluster algebra to become a basis (Theorem \ref{thm:tropical_finite_decomposition}), which says that the collection suffices to have good tropical properties. This criterion immediately implies Theorem \ref{thm:all_pointed_bases}(1) as well as Theorem \ref{thm:generic_basis_existence}, since the generic cluster characters are known to have good tropical properties \cite[Theorem 1.3]{plamondon2013generic}. 

The criterion is proved by introducing and analyzing the dominance order decomposition into the given collection (Definition-Lemma \ref{def:dominance_order_decomposition}). A priori, the (possibly infinite) decomposition depends on the chosen seed. We first show that the decomposition is independent of the chosen seed (Proposition \ref{prop:invariant_decomposition}), whose proof is based on natural adic-topologies induced by principal coefficients in the sense of \cite{FominZelevinsky07}, and an application of the nilpotent Nakayama Lemma (we learn the usefulness
of the nilpotent Nakayama Lemma from the inspirational work \cite{gross2018canonical}). Then we show that the decomposition is finite by using the injective-reachability condition and conclude that the given collection is a basis.

Finally, we give a quick proof of Theorem \ref{thm:genteel} based on cluster theory and the trick of constructing opposite scattering diagrams.

\begin{Rem}
The analysis of tropical properties in this paper turn out to be very useful in upcoming works \cite{qin2020dual} \cite{qin2020analog}. In particular, the dominance order decomposition will be used in \cite{qin2020dual}, and the codegrees will be used in \cite{qin2020analog}.
\end{Rem}

\subsection{Contents}

Section \ref{sec:Preliminaries} contains necessary preliminaries.
A reader could skip the details and the content familiar to him/her.
But it is still recommended to read Section \ref{sec:cluster_notation}
which merges symbols and notions of cluster algebras of two different
styles \cite{FominZelevinsky02}\cite{gross2018canonical}. In addition, we verify the equivalence between injective-reachability and the existence of green to red sequences.

In Section \ref{sec:bidegrees_and_support}, we define and study degrees,
codegrees and support. These are the main tools that will be used
in this paper, which we develop by elementary manipulation on Laurent
polynomials/series.

In Section \ref{sec:Properties-of--decompositions}, we study properties
of the $\prec_{t}$-decompositions based on Section \ref{sec:bidegrees_and_support}
and the nilpotent Nakayama Lemma. This section provides direct proofs
for Theorem \ref{thm:all_pointed_bases}(1) and Theorem \ref{thm:generic_basis_existence}.

In Section \ref{sec:results_proofs}, we present the main results,
consequences and the proofs based on Sections \ref{sec:bidegrees_and_support}
\ref{sec:Properties-of--decompositions}.

In Section \ref{sec:Other-applications-and}, we discuss related topics
such as deformation factors, quantized version of our results, a representation theoretic formula for the theta functions (weak genteelness), and the bases for partially
compactification cases.

In Section \ref{sec:appendix_scattering_diagram},
we briefly review some content in \cite{gross2018canonical} about
scattering diagrams and theta functions. Then we present two proofs for the weak genteelness (Theorem \ref{thm:genteel}). One is conceptual
following Nagao \cite{Nagao10}. Another one uses the construction of an
opposite scattering diagram. This section is independent from most part of
the paper, but provides definitions and properties for the theta
functions.

\section*{Acknowledgments}

The author thanks Bernhard Keller and Pierre-Guy Plamondon for inspiring
discussion on representation theory. He also thanks Mark Gross, Tom
Bridgeland, Alfredo N\'ajera Ch\'avez, Dylan Rupel, Man-Wai Cheung,
Tim Magee, Christof Geiss, Daniel Labardini Fragoso and Salvatore
Stella for interesting discussion. He is grateful to Bernhard Keller
for reading a preliminary version of the paper. He thanks Travis Mandel
for interesting discussion and remarks. He thanks the anonymous referee for comments and the observation in Remark \ref{rem:monodromy}.

\section{Preliminaries\label{sec:Preliminaries}}

\subsection{Basics of cluster mutations and tropicalization\label{sec:cluster_notation}}

Throughout this paper, we shall consider cluster algebras with geometric
coefficients in the sense of \cite{FominZelevinsky07}. The cluster
algebra we defined is the same as in \cite{FominZelevinsky07}, following
the nice presentation of \cite{gekhtman2017hamiltonian}. Furthermore,
our convention is compatible with the different formalism \cite{gekhtman2017hamiltonian}\cite{gross2013birational},
so that we can easily use results and arguments form these works.

We will work with a base ring $\kk$. We usually take $\kk=\Z$ for classical (upper) cluster algebras and $\kk=\Z[q^{\pm\Hf}]$ for quantum (upper) cluster algebras, where $q^{\Hf}$ is a formal quantum parameter. Unless otherwise specified, our arguments will be equally effective for both the classical and quantum case.

\subsubsection*{Seeds and $B$-matrices}

Given the set of vertices $I=I_{\ufv}\sqcup I_{\fv}$. The vertices
in $I_{\ufv}$ and $I_{\fv}$ are called unfrozen and frozen respectively.
Suppose that there is a collection of integers $d_{i}>0$, and a matrix
$(b_{ij})_{i,j\in I}$ such that $b_{ij}\in\begin{cases}
\Q & i,j\in I_{\fv}\\
\Z & \mathrm{else}
\end{cases}$, $b_{ij}d_{j}=-b_{ji}d_{i}$.

\begin{Def}\label{def:seed}

A seed $t$ is a collection $((b_{ij}(t))_{i,j\in I},(x_{i}(t))_{i\in I},d_{i},I,I_{\ufv})$
with $x_{i}(t)$ indeterminate. The matrix $\tB(t):=(b_{ik}(t))_{i\in I,k\in I_{\ufv}}$
is called the $B$-matrix associated to $t$ and $x_{i}(t)$ the cluster
variables.

\end{Def}

For any $m=(m_{i})\in\N^{I_{\ufv}}\oplus\Z^{I_{\fv}}$, we call $x(t)^{m}:=\prod_{i\in I}x_{i}(t)^{m_{i}}$
a \emph{(localized) cluster monomial} in the seed $t$.

We usually fix $d_{i}$ and $I_{\ufv}\subset I$, and denote $t=((b_{ij}(t)),(x_{i}(t)))$
for simplicity. The symbol $t$ will be omitted when the context is
clear.

Let $d$ denote the least common multiple of $(d_{i})_{i\in I}$
and define the Langlands dual $d_{i}^{\vee}:=\frac{d}{d_{i}}$. Then
$d_{i}^{\vee}b_{ij}=-d_{j}^{\vee}b_{ji}$, and we say $(b_{ij})$
is \emph{skew-symmetrizable} by the diagonal matrix $\mathrm{diag}(d_{i}^{\vee})$.
It follows that the principal part $B:=(b_{ij})_{i,j\in I_{\ufv}}$
of $(b_{ij})$ is skew-symmetrizable as well. 

Conversely, suppose that we are given an $I\times I_{\ufv}$-integer
matrix $\tB=(b_{ij})_{i\in I,j\in I_{\ufv}}$ with principal part
$B$, such that $B$ is skew-symmetrizable by some diagonal matrix
$D=\mathrm{diag}(\diag_{k})_{k\in I_{\ufv}}$, $\diag_{k}\in\Z_{>0}$.
We can do the following extension.

\begin{Lem}\label{lem:matrix_extension}

We can find strictly positive integers $\diag_{f}$, $f\in I_{\fv}$,
and extend the matrix $\tB(t)$ to an $I\times I$ integer matrix
$(b_{ij}(t))$, such that $\diag_{i}b_{ij}(t)=-\diag_{j}b_{ji}(t)$. 

\end{Lem}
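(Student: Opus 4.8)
The plan is to fill in the entries of $(b_{ij}(t))$ that are indexed by frozen vertices in the only way compatible with the skew-symmetrizability relation, after first choosing the new multipliers $d_f$ large enough to guarantee integrality. First I would observe that for $i\in I_{\ufv}$ and $f\in I_{\fv}$ the entry $b_{fi}(t)$ is already part of the given matrix $\tB(t)$ (since $f\in I$ and $i\in I_{\ufv}$), so the desired identity $d_i b_{if}(t)=-d_f b_{fi}(t)$ forces $b_{if}(t)=-d_f b_{fi}(t)/d_i$. Hence the only data left to choose are the positive integers $d_f$, $f\in I_{\fv}$, together with the frozen--frozen block $(b_{ff'}(t))_{f,f'\in I_{\fv}}$, which is otherwise unconstrained.

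The key step is the choice of the $d_f$. I would take $d_f:=\mathrm{lcm}(d_k:k\in I_{\ufv})$ for every $f\in I_{\fv}$ --- or, for an even cruder choice, $d_f:=\prod_{k\in I_{\ufv}}d_k$ --- which is a strictly positive integer divisible by each $d_i$ with $i\in I_{\ufv}$. Then for all $i\in I_{\ufv}$ and $f\in I_{\fv}$ the forced value $b_{if}(t):=-d_f b_{fi}(t)/d_i$ is an integer, because $d_i\mid d_f$ and $b_{fi}(t)\in\Z$. For the frozen--frozen block I would simply set $b_{ff'}(t):=0$. Together with the given entries $(b_{ij}(t))_{i\in I,\,j\in I_{\ufv}}$, this produces an $I\times I$ integer matrix extending $\tB(t)$.

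It then remains to verify $d_i b_{ij}(t)=-d_j b_{ji}(t)$ for all $i,j\in I$, which I would do case by case: for $i,j\in I_{\ufv}$ it is the hypothesis; for $i\in I_{\ufv}$ and $j\in I_{\fv}$ (and, symmetrically, $i\in I_{\fv}$ and $j\in I_{\ufv}$) it holds by the very definition of $b_{if}(t)$; and for $i,j\in I_{\fv}$ both sides vanish. I do not expect any genuine obstacle here; the only point requiring attention is the integrality of the forced entries $b_{if}(t)$, and this is exactly why the new multipliers must be chosen divisible by every $d_i$ with $i\in I_{\ufv}$ rather than arbitrarily.
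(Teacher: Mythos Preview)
Your proposal is correct and essentially identical to the paper's proof: the paper also chooses $\diag_{f}$ to be the least common multiple of $(\diag_{k})_{k\in I_{\ufv}}$, sets $b_{kf}(t)=-\frac{\diag_{f}}{\diag_{k}}b_{fk}(t)$ for $k\in I_{\ufv}$, $f\in I_{\fv}$, and $b_{ff'}(t)=0$ on the frozen--frozen block. Your write-up merely spells out the verification in slightly more detail than the paper does.
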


\begin{proof}

Let $\diag$ denote the least common multiple of $(\diag_{k})_{k\in I_{\ufv}}$.
We might choose $\diag_{f}=\diag,$ $b_{kf}(t)=-\frac{\diag}{\diag_{k}}b_{fk}(t)$,
$b_{ff'}=0$, $\forall f,f'\in I_{\fv},k\in I_{\ufv}$.

\end{proof}

Recall that, a seed by Fomin-Zelevinsky \cite{FominZelevinsky02}
takes the form $(\tB,(x_{i}))$ with a skew-symmetrizable principal
part $B$. By Lemma \ref{lem:matrix_extension}, their seed could
be extended to our seed by choose a matrix extension. The extra data
in our definition arise from the construction in \cite{FockGoncharov09} \cite{gross2013birational} \cite{gross2018canonical}.

We say the seed $t$ is skew-symmetrizable (resp. skew-symmetric)
if the matrix $(b_{ij}(t))$ is.

\subsubsection*{Lattices and $\epsilon$-matrices}

Following \cite{gross2013birational}\cite{gross2018canonical}, let
$\Mc(t)$ denote a lattice with a $\Z$-basis $\{f_{i}(t)|i\in I\}$
and $N(t)$ a lattice with a $\Z$-basis $\{e_{i}(t)|i\in I_{\ufv}\}$.
Define the pairing $\langle\ ,\ \rangle$ between $\Mc(t)$ and
$N(t)$ such that $\langle f_{i}(t),e_{j}(t)\rangle=\frac{1}{d_{i}}\delta_{ij}$.
Let $\yCone(t)$ denote the sublattice of $N(t)$ generated by $\{e_{k}(t)|k\in I_{\ufv}\}$.

Define the $\Q$-valued bilinear form $\{\ ,\ \}$ on $N(t)$ such
that $b_{ij}=\{e_{j}(t),e_{i}(t)\}d_{i}$. It turns out that $\{\ ,\ \}$
is skew-symmetric. 

\begin{Def}

The $\epsilon$-matrix is defined to be $$(\epsilon_{ij})_{i,j\in I}=(\{e_{i}(t),e_{j}(t)\}d_{j})_{i,j\in I}.$$

\end{Def}

Let $p^{*}$ denote the linear map from $N(t)$ to $\Mc(t)\otimes \Q$
such that 
\begin{align*}
p^{*}(n)=\{n,\ \}.
\end{align*}
Denote $v_{k}(t)=p^{*}(e_{k}(t))=\{e_{k}(t),\ \}$ for $k\in I_{\ufv}$.
It turns out that $v_{k}(t)=\sum_{i\in I}b_{ik}f_{i}(t)\in \Mc(t)$. 

We always assume that $p^{*}|_{\yCone(t)}$ is injective throughout this paper, or, equivalently, $\tB(t)$ satisfies the full rank assumption.

Let us consider the group ring (of characters) $\LP(t)=\kk[\Mc(t)]=\kk[\chi^{m}]_{m\in\Mc(t)}$
and the group ring (of cocharacters) $\kk[N(t)]=\kk[\lambda^{n}]_{n\in N(t)}$. We denote the $x$-variables $x_{i}(t)=\chi^{f_{i}(t)}$, Laurent monomials $x(t)^m=\chi ^m$, and the
$y$-variables $y_{i}(t)=\lambda^{e_{i}(t)}$. Similarly,
we can define $\bLP(t)=\kk[x_{f}(t)]_{f\in I_{\fv}}[x_{i}(t)^{\pm}]_{i\in I_{\ufv}}$
and call it the (partially) compactified Laurent polynomial ring.

The commutative product in $\LP(t)$ will be denoted by $\cdot$ or omitted for simplicity. For the quantum case ($\kk=\Z[q^{\pm\Hf}]$), we also define the twisted product $*$, see Section \ref{sec:quantization}.

Note that, for $\kk=\Z$, $\LP(t)\otimes\C$
is the ring of regular functions on the split algebraic torus $(\C^{*})^{I}$. And $\bLP(t)\otimes\C$ is the ring of regular functions
on the partial compactification $(\C^{*})^{I_{\ufv}}\times(\C)^{I_{\fv}}$
of $(\C^{*})^{I}$.

\subsubsection*{Mutations}

Let $[\ ]_{+}$ denote $\max(\ ,0)$ and define $[(g_{i})_{i\in I}]_{+}=([g_{i}]_{+})_{i\in I}$
for any vector $(g_{i})_{i\in I}$. For any $k\in I_{\ufv}$, we can
define a seed $t'=\mu_{k}t$ by the following procedure. 

We start by choosing a sign $\varepsilon\in\{+,-\}$, define the $I\times I$
matrix $\tE_{\varepsilon}$ and $I\times I$-matrix $\tF_{\varepsilon}$
such that
\begin{align*}
(\tF_{\varepsilon})_{ij} & =\begin{cases}
\delta_{ij} & k\notin\{i,j\}\\
-1 & i=j=k\\{}
[\varepsilon b_{kj}]_{+} & i=k,\ j\neq k
\end{cases}
\end{align*}

\begin{align*}
(\tE_{\varepsilon})_{ij} & =\begin{cases}
\delta_{ij} & k\notin\{i,j\}\\
-1 & i=j=k\\{}
[-\varepsilon b_{ik}]_{+} & i\neq k,\ j=k
\end{cases}.
\end{align*}
Notice that $\tF_{\varepsilon}^{2}=\Id_{I_{\ufv}}$ and $\tE_{\varepsilon}^{2}=\Id_{I}$.
The $I_{\ufv}\times I_{\ufv}$-submatrix of $\tE_{\varepsilon}$ (principal
part) is denoted by $E_{\varepsilon}$ and the $I_{\ufv}\times I_{\ufv}$-submatrix
of $\tF_{\varepsilon}$ denoted by $F_{\varepsilon}$.

Next, define a lattice $\Mc(t')$ with a basis $\{f_{i}'=f_{i}(t')\}_{i\in I}$
and a lattice $N(t')$ with a basis $\{e_{i}'=e_{i}(t')\}_{i\in I}$,
where we omit the symbol $t$ from now on. We define linear isomorphisms
$\tau_{k,\varepsilon}:\Mc(t')\rightarrow\Mc(t)$ and $\tau_{k,\varepsilon}:N(t')\rightarrow N(t)$
such that $\tau_{k,\varepsilon}(e_{i}')=\sum_{j\in I}e_{j}\cdot(\tF_{\varepsilon})_{ji}$
and $\tau_{k,\varepsilon}(f_{i}')=\sum_{j\in I}f_{j}\cdot(\tE_{\varepsilon})_{ji}$,
namely,

\begin{align}
\tau_{k,\varepsilon}(e_{i}') & =\begin{cases}
e_{i}+[\varepsilon b_{ki}]_{+}e_{k} & i\neq k\\
-e_{k} & i=k
\end{cases}\label{eq:tropical_e}
\end{align}

\begin{align}
\tau_{k,\varepsilon}(f_{i}') & =\begin{cases}
f_{i} & i\neq k\\
-f_{k}+\sum_{j}[-\varepsilon b_{jk}]_{+}f_{j} & i=k
\end{cases}.\label{eq:tropical_f}
\end{align}
Clearly, $\tau_{k,\epsilon}$ preserves the pairing $\langle\ ,\ \rangle$.
Further define the bilinear form $\{\ ,\ \}$ on $N(t')$ as induced
by that on $N(t)$ via $\tau_{k,\varepsilon}$. It is straightforward
to check that the corresponding matrix $(b_{ij}')_{i,j\in I}=(\{e_{j}',e_{i}'\}d_{i})_{i,j\in I}$
satisfies

\begin{align*}
b_{ij}' & =\begin{cases}
-b_{ij} & k\in\{i,j\}\\
b_{ij}+b_{ik}[\varepsilon b_{kj}]_{+}+[-\varepsilon b_{ik}]_{+}b_{kj} & k\neq i,j
\end{cases}.
\end{align*}
 Notice that $b_{ij}'$ are independent of the choice of the sign
$\varepsilon$.

We define the mutated seed $t'=\mu_{k}t$ as $((b_{ij}')_{i,j\in I},(x_{i}')_{i\in I})$.
Let us relate the cluster variables $x_{i}$ and $x_{i}'$ now.

First, the maps $\tau_{k,\varepsilon}$ induce isomorphisms between
Laurent polynomials rings, which are still denoted by $\tau_{k,\varepsilon}$,
such that

\begin{align*}
\tau_{k,\varepsilon}(y_{i}') & =\begin{cases}
y_{i}y_{k}^{[\varepsilon b_{ki}]_{+}} & i\neq k\\
y_{k}^{-1} & i=k
\end{cases}
\end{align*}

\begin{align*}
\tau_{k,\varepsilon}(x_{i}') & =\begin{cases}
x_{i} & i\neq k\\
x_{k}^{-1}\prod_{j}x_{j}^{[-\varepsilon b_{jk}]_{+}} & i=k
\end{cases}
\end{align*}

Now consider the classical case $\kk=\Z$ for simplicity (see Section \ref{sec:quantization} for the quantum case). Define the automorphisms $\rho_{k,\varepsilon}$ on the fraction
fields $\cF(t)=\cF(\LP(t))$ and $\cF(\kk[N(t)])$ respectively, such
that
\begin{align}\label{eq:rho}
\begin{split}
\rho_{k,\varepsilon}(x_{i}) & =\begin{cases}
x_{i} & i\neq k\\
x_{k}(1+x^{\varepsilon v_{k}})^{-1} & i=k
\end{cases}\\
\rho_{k,\varepsilon}(y_{i}) & =\begin{cases}
y_{i}(1+y_{k}^{\varepsilon})^{-b_{ki}} & i\neq k\\
y_{k} & i=k
\end{cases}
\end{split}
\end{align}
Then it turns out that
\begin{align}\label{eq:mutation}
\begin{split}
\rho_{k,\varepsilon}\circ\tau_{k,\varepsilon}(x_{i}') & =\begin{cases}
x_{i} & i\neq k\\
x_{k}^{-1}\prod_{j}x_{j}^{[-\varepsilon b_{jk}]_{+}}(1+\chi^{\varepsilon v_{k}}) & i=k
\end{cases}\\
\rho_{k,\varepsilon}\circ\tau_{k,\varepsilon}(y_{i}') & =\begin{cases}
y_{i}y_{k}^{[\varepsilon b_{ki}]_{+}}(1+y_{k}^{\varepsilon})^{-b_{ki}} & i\neq k\\
y_{k}^{-1} & i=k
\end{cases}
\end{split}
\end{align}
We observe that the compositions $\rho_{k,\varepsilon}\circ\tau_{k,\varepsilon}$
are independent of the choice of $\varepsilon$. Let us call them the
\emph{mutation birational maps}, which are denoted by $\mu_{k}^{*}$.
The maps $\tau_{k,\varepsilon}$ is called their \emph{monomial parts
}and $\rho_{k,\varepsilon}$ their \emph{Hamiltonian parts}. One can
show that the $\mu_{k}^{*}$ give isomorphisms between the fraction
fields $\cF(t')\simeq\cF(t)$ and between the fraction fields $\cF(\kk[N(t')])\simeq\cF(\kk[N(t)])$
respectively.

Given any two seeds $t,t'$ such that $t'=\overleftarrow{\mu}t$ for
some mutation sequence $\overleftarrow{\mu}$. Let $\overleftarrow{\mu}^{*}$
denote the mutation maps from the fraction field $\cF(t')$ to $\cF(t)$
defined by composing the corresponding mutation maps. Then we can
denote $\LP(t)\cap\LP(t')=\LP(t)\cap(\overleftarrow{\mu}^{*}\LP(t'))$
and also $\LP(t)\cap\LP(t')=(\overleftarrow{\mu}^{-1})^{*}\LP(t)\cap\LP(t')$.
Correspondingly, for any $z\in(\overleftarrow{\mu}^{-1})^{*}\LP(t)\cap\LP(t')$,
the Laurent polynomial $\overleftarrow{\mu}^{*}z\in\LP(t)$ is sometimes
also denoted by $z$ for simplicity.

\subsubsection*{y-variables}

Because $p^{*}$ is linear and $\tau_{k,\epsilon}$ preserves $\{\ ,\ \}$
and $\langle\ ,\ \rangle$, we have $\tau_{k,\varepsilon}(v_{i}')=\begin{cases}
v_{i}+[\varepsilon b_{ki}]_{+}v_{k} & i\neq k\\
-v_{k} & i=k
\end{cases}$. One can check that $\mu_{k,\varepsilon}^{*}(\chi^{v_{i}})=\begin{cases}
\chi^{v_{i}}\chi^{[\varepsilon b_{ki}]_{+}v_{k}}(1+\chi^{\varepsilon v_{k}})^{-b_{ki}} & i\neq k\\
\chi^{-v_{k}} & i=k
\end{cases}$, i.e. subject to the law given by \eqref{eq:mutation}. By abuse of notation, we define the Laurent monomial $y_{k}=\chi^{v_{k}}$, which equals $\prod_{i}x_{i}^{b_{ik}}$
in $\LP(t)$ under the commutative product. $y_{k}$ are still called the $y$-variables.

\subsubsection*{Tropicalization}

We refer the reader to \cite{FockGoncharov09}\cite{gross2013birational}\cite{gross2018canonical}
for more details. Recall that $\langle f_{i},e_{j}\rangle=\frac{\delta_{ij}}{d_{i}}$,
$b_{ij}=\{e_{j},e_{i}\}d_{i}$ and $b_{ji}\cdot d_{j}{}^{-1}=-b_{ij}\cdot d_{i}^{-1}$,
$i,j\in I$. Let $M(t)$ denote the sublattice of $\Mc(t)$ with
the basis $\{e_{i}^{*}=d_{i}f_{i}\}$. Let $N^{\circ}(t)$ denote
the sublattice of $N(t)$ with the basis $\{d_{i}e_{i}\}$. Then $M(t)$
is dual to $N(t)$ and $N^{\circ}(t)$ is dual to $\Mc(t)$ under
the pairing $\langle\ ,\ \rangle$ respectively. 

For any lattice $L$ and its dual $L^{*}$, we denote the split algebraic
torus $T_{L}=\Spec\Z[L^{*}]=\Spec\Z[\lambda^{n}]_{n\in L^{*}}$. Let
$P$ be a given semifield $(P,\oplus,\otimes)$ and $P^{\times}$
the multiplicative group. Let $Q_{sf}(L)$ denote the semifield of
subtraction free rational functions on $T_{L}$. A tropical point
in $T_{L}$ is defined to be a semifield homomorphism from $Q_{sf}(L)$
to $P$. The set of tropical points in $T_{L}$ is denoted by $T_{L}(P)$.
One can show that $T_{L}(P)\simeq\Hom_{\mathrm{groups}}(L^{*},P^{\times})\simeq L\otimes_{\Z}P^{\times}$
such that any point $m\otimes_{\Z}p$ sends a subtraction-free Laurent
polynomial $f=\sum_{n}\lambda^{n}\in Q_{sf}(L)$ to $\oplus_{n}p^{\otimes\langle m,n\rangle}\in P$,
see \cite{gross2018canonical}.

We usually work with $P=\Z^{T}=(\Z,\max(\ ,\ ),+)$ or $P=\Z^{t}=(\Z,\min(\ ,\ ),+)$,
in which case $P^{\times}=\Z\backslash\{0\}$ and $T_{L}(P)\simeq L$.
We have $-\max(a,b)=\min(-a,-b)$, $\forall a,b\in\Z$. It follows
that the map $i:\Z^{T}\rightarrow\Z^{t}$ such that $i(a)=-a$ is
an isomorphism between the semifields $\Z^{T}$ and $\Z^{t}$.

We will soon define the Langlands dual seed $t^{\vee}$. By taking
the tropicalization of the corresponding mutation maps on $T_{M(t^{\vee})}\simeq T_{\Mc(t)}$
with the tropical semifield $P=\Z^{T}$ \cite{FockGoncharov09}, we
obtain the following definition.

\begin{Def}[Tropical transformation]\label{def:tropical_transformation}

Let there be given seeds $t'=\mu_{k}t$. The tropical transformation
$\phi_{t',t}:\Mc(t)\rightarrow\Mc(t')$ is the piecewise linear
map such that, for any $g=\sum g_{i}f_{i}\in\Mc(t)$, its image
$g'=\sum g_{i}'f_{i}'=\phi_{t',t}(g)$ is given by

\begin{align*}
g'_{k} & =-g_{k}\\
g'_{i} & =g_{i}+[b_{ik}(t)]_{+}[g_{k}]_{+}-[-b_{ik}(t)]_{+}[-g_{k}]_{+}\ \forall i\neq k
\end{align*}

\end{Def}

For any two seeds $t',t$ related by a mutation sequence $\overleftarrow{\mu}=\mu_{k_{r}}\cdots\mu_{k_{1}}$
such that $t'=\overleftarrow{\mu}t$. Define $\phi_{t',t}$ to be
the composition of the corresponding tropical transformations. Then
it is independent of the choice of $\overleftarrow{\mu}$ because
it is the tropicalization of the mutation maps.

\subsubsection*{Langlands dual}

Let us sketch the construction of the Langlands dual, although we
will not investigate the duality in depth.

Let us define the Langlands dual seed $t^{\vee}=(b_{ij}(t^{\vee}),(x_{i}(t^{\vee}))_{i\in I})$
with strictly positive integers $d_{i}(t^{\vee})=d_{i}^{\vee}=\frac{d}{d_{i}}$.
We define $N(t^{\vee})$ to be the lattice $N^{\circ}(t)$ with basis
$\{e_{i}^{\vee}:=d_{i}e_{i}\}$ endowed with the bilinear form $\{\ ,\ \}^{\vee}$
such that $\{\ ,\ \}^{\vee}=\frac{1}{d}\{\ ,\ \}$, which implies
the definition $b_{ji}(t^{\vee}):=-b_{ij}$. Its dual lattice $M(t^{\vee})$
is then defined to be $M^{\circ}(t)$ spanned by the basis $\{(e_{i}^{\vee})^{*}=(d_{i})^{-1}e_{i}^{*}=f_{i}\}$.
Define $\Mc(t^{\vee})$ to be the lattice spanned by the basis $\{f_{i}^{\vee}:=\frac{1}{d_{i}^{\vee}}(e_{i}^{\vee})^{*}=\frac{1}{d}e_{i}^{*}\}$,
and $N^{\circ}(t^{\vee})$ the lattice spanned by the basis $\{d_{i}^{\vee}e_{i}^{\vee}=de_{i}\}$.

By construction, we have $T_{\Mc(t)}=T_{M(t^{\vee})}$. Moreover,
such identification commutes with the mutations, see \cite[Lemma 1.11]{FockGoncharov09}.

\subsubsection*{Cluster algebras and cluster varieties}

Choose an initial seed $t_{0}$. For any sequence of unfrozen vertices $(k_1,k_2,\ldots,k_r)$, we have a sequence of sign-coherent vectors called $c$-vectors, whose construction is technical and will be postponed to Section \ref{sec:exp-g-c-vector}. Correspondingly, we have a sequence of signs $(\varepsilon_1,\ldots,\varepsilon_r)$ and the corresponding sequence of mutation $\overleftarrow{\mu}=\mu_{k_r,\varepsilon_r}\cdots \mu_{k_1,\varepsilon_1}$ starting from $t_0$ (reading from right to left), see Theorem \ref{thm:cg_vector}. Unless otherwise specified, we always make this canonical choice of signs for mutations, and omit the sign symbols $\varepsilon_1, \ldots,\varepsilon_r$ for simplicity.

Let $\Delta^{+}=\Delta^{+}_{t_0}$ denote
the set of all seeds obtained from the initial seed by iterated mutations (with the canonical choice of signs).
For any $t\in\Delta^{+}$, view its cluster variables $x_{i}(t)$
as element in the (skew-)field of fractions $\LP(t_{0})$ via the mutation
maps. 

In the following, we construct the classical cluster algebras using the commutative product, and the quantum cluster algebras using the twisted product (see Section \ref{sec:quantization}).
\begin{Def}[Cluster algebras]\label{def:cluster_algebra}

We define the (partially) compactified cluster algebra as $\bClAlg=\kk[x_{i}(t)]_{\forall i\in I,t\in\Delta^{+}}$,
and the (localized) cluster algebra as $\clAlg=\bClAlg(t_{0})[x_{f}^{-1}]_{f\in I_{\fv}}$.
We define the (localized) upper cluster algebra as $\upClAlg=\cap_{t\in\Delta^{+}}\LP(t)$,
where Laurent polynomials at different seeds are identified via mutation
maps.

\end{Def}

In this paper, we shall focus our attention on the cluster algebras
$\clAlg$ and upper cluster algebras $\upClAlg$. Let us explain geometric objects associated to $\upClAlg$ with the choice $\kk=\Z$.

\begin{Def}

We define the cluster varieties to be $\AVar=\cup_{t\in\Delta^{+}}T_{N^{\circ}(t)}$
and $\XVar=\cup_{t\in\Delta^{+}}T_{M(t)}$, where the tori are glued
via mutation maps.

\end{Def}

The Fock-Goncharov dual of a variety $V=\cup T_{L}$ is defined as
$V^{\vee}=\cup T_{L^{*}}$. Therefore, the dual of $\AVar$ is given
by $\AVar^{\vee}=\cup_{t\in\Delta^{+}}T_{\Mc(t)}$ where the tori
are glued by mutation maps. Then $\AVar^{\vee}$ agrees with the variety
$\XVar(t_{0}^{\vee})$ associated to the Langlands dual initial seed
$t_{0}^{\vee}$. We observe that the ring of the regular functions
on $\AVar$ is just the upper cluster algebra $\upClAlg$ (with $\kk=\Z$). 

Recall that the gluing map between $T_{\Mc(t)}$ and $T_{\Mc(t')}$
tropicalizes to $\phi_{t,t'}:\Mc(t)\simeq\Mc(t')$. We define
the set of the tropical points $\AVar(\Z^{T})$ to be the set of equivalent
classes in $\sqcup_{t\in\Delta^{+}}\Mc(t)$ under the identifications
$\phi_{t,t'}$, which we also denote by $\tropSet$. The elements in
$\tropSet$ are denoted by $[g]$ for the representatives $g\in\Mc(t)$.

\subsection{Cluster expansions, $c$-vectors and $g$-vectors}
\label{sec:exp-g-c-vector}
Cluster variables have been shown to enjoy the Laurent phenomenon
\cite{FominZelevinsky02}. They can be calculated by the Caldero-Chapoton type expansion formula for the classical case 
\cite{CalderoChapoton06}\cite{DerksenWeymanZelevinsky09} and for the quantum case \cite{tran2011f}\cite{gross2018canonical}. We summarize these
properties as the following using the commutative product.

\begin{Thm}\label{thm:CC-formula}

For any seeds $t=\overleftarrow{\mu}t_{0}\in \Delta^+_{t_0}$ and $i\in I$, we have
$\overleftarrow{\mu}^{*}(x_{i}(t))\in\LP(t_{0})$. Moreover, we have

\begin{align*}
\overleftarrow{\mu}^{*}(x_{i}(t)) & =x(t_{0})^{g_{i}(t)}\cdot(\sum_{n\in\yCone^{\geq0}(t_{0})}c_{n}x(t_{0})^{\tB(t_{0})n})
\end{align*}
where $g_{i}(t)\in\Mc(t_{0})$, coefficient $c_{0}=1$, and all
coefficients $c_{n}\in\kk$.

\end{Thm}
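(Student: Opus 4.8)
The plan is to prove Theorem \ref{thm:CC-formula} by a two-step reduction. First, the Laurent phenomenon $\overleftarrow{\mu}^{*}(x_{i}(t))\in\LP(t_{0})$ is the classical result of Fomin–Zelevinsky \cite{FominZelevinsky02}, so I would simply invoke it; the only point needing care is to check that the statement is insensitive to the matrix extension introduced in Lemma \ref{lem:matrix_extension}, i.e. that the frozen rows play no role in the Laurentness of the unfrozen cluster variables. That is immediate, since mutation in direction $k\in I_{\ufv}$ never changes the frozen variables $x_f$, and the exchange relation for $x_k$ involves only the column $\tB(t)$.

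Next I would establish the shape of the expansion. The cleanest route is to use the \emph{separation of additions} formula of Fomin–Zelevinsky \cite{FominZelevinsky07}: every cluster variable $x_i(t)$, expressed in the initial seed, factors as $x(t_0)^{g_i(t)}\cdot F_i\big|_{\widehat{y}}(t_0)$, where $F_i$ is the $F$-polynomial (a polynomial in the $y_k=\chi^{v_k}$, $k\in I_{\ufv}$, with constant term $1$) and $g_i(t)\in\Mc(t_0)$ is the $g$-vector. Since in our notation $y_k(t_0)=\chi^{v_k(t_0)}=x(t_0)^{\tB(t_0)e_k}$ and any monomial in the $y_k$ with exponent $n\in\yCone^{\geq 0}(t_0)=\N^{I_{\ufv}}$ becomes $x(t_0)^{\tB(t_0)n}$, substituting the $F$-polynomial into $x(t_0)^{g_i(t)}$ yields exactly
\[
\overleftarrow{\mu}^{*}(x_i(t))=x(t_0)^{g_i(t)}\cdot\Big(\sum_{n\in\yCone^{\geq 0}(t_0)}c_n\,x(t_0)^{\tB(t_0)n}\Big),
\]
with $c_0=1$ coming from the constant term of $F_i$. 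The nonnegativity $c_n\in\N$ is precisely the positivity of $F$-polynomials; here I would cite the proof of positivity of Laurent expansions by Gross–Hacking–Keel–Kontsevich \cite{gross2018canonical} (or Lee–Schiffler in the skew-symmetric case), which gives $c_n\ge 0$.

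Alternatively, if one prefers a self-contained argument not relying on \cite{FominZelevinsky07}, one can induct on the length of $\overleftarrow{\mu}$: the base case is $x_i(t_0)=x(t_0)^{f_i}$ with trivial sum, and for the inductive step one applies the mutation birational map $\mu_k^{*}=\rho_{k,\varepsilon}\circ\tau_{k,\varepsilon}$ with the sign $\varepsilon$ chosen so that $\chi^{\varepsilon v_k(t_0)}$ is a genuine (non-Laurent) monomial on the subalgebra generated by the $x(t_0)^{\tB(t_0)n}$; then $\tau_{k,\varepsilon}$ sends each $x(t_0)^{\tB(t_0)n}$ to another such monomial and $\rho_{k,\varepsilon}$ multiplies by a nonnegative integer power series in $1+\chi^{\varepsilon v_k}$, whose expansion has nonnegative integer coefficients, preserving the stated form and the normalization $c_0=1$. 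The main obstacle in either approach is the positivity claim $c_n\in\N$: the structural form of the expansion and $c_0=1$ are formal consequences of separation of additions, but positivity genuinely requires the deep input of \cite{gross2018canonical}, so in practice I would just cite it.
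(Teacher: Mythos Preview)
Your proposal is correct and matches the paper's treatment: the paper does not prove this theorem at all but presents it as a summary of known results, citing \cite{FominZelevinsky02} for the Laurent phenomenon, \cite{CalderoChapoton06}\cite{DerksenWeymanZelevinsky09} for the expansion shape, and \cite{gross2018canonical} for positivity. Your route via separation of additions \cite{FominZelevinsky07} in place of the Caldero--Chapoton/DWZ references is an equally valid (and arguably cleaner) way to extract the form $x(t_0)^{g_i(t)}\cdot F_i(y)$ with $c_0=1$, and you correctly identify that the only genuinely deep ingredient is positivity from \cite{gross2018canonical}.
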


The vector $g_{i}(t)$ is called the $i$-th (extended) $g$-vector
of the seed $t$ with respect to the initial seed $t_{0}$. Its principal
part is defined to be $\pr_{I_{\ufv}}g_{i}(t)$, where $\pr_{I_{\ufv}}$
denote the natural projection from $\Z^{I}$ to $\Z^{I_{\ufv}}$.
Let $\tG$ denote the $I\times I_{\ufv}$-matrix formed by the column
vectors $g_{k}^{t_{0}}(t)$, $k\in I_{\ufv}$, and $G(t)=G^{t_{0}}(t)$
its $I_{\ufv}\times I_{\ufv}$ submatrix called the $G$-matrix.

We extend the $I_{\ufv}\times I_{\ufv}$ matrix $B(t_{0})$ to the
$(I_{\ufv}\sqcup I_{\ufv}')\times I_{\ufv}$-matrix $\tB(t_{0})^{\prin}=\left(\begin{array}{c}
B(t_{0})\\
\mathrm{Id}_{I_{\ufv}}
\end{array}\right)$ with $I_{\ufv}'=I_{\ufv}$, called the matrix of principal coefficients.
For any seed $t=\overleftarrow{\mu}t_{0}$, we apply the mutation
sequence $\overleftarrow{\mu}$ to the initial matrix $\tB(t_{0})^{\prin}$
and the resulting matrix takes the form $\left(\begin{array}{c}
B(t)\\
C(t)
\end{array}\right)$. The $I_{\ufv}'\times I_{\ufv}$-matrix $C(t)=C^{t_{0}}(t)$ is called
the $C$-matrix. The $k$-th column vector of $C(t)$, denoted by
$c_{k}^{t_{0}}(t)$, is called the $k$-th $c$-vector.

Notice that the construction of the $c$-vectors and $g$-vectors
depend on the choice of the initial seed $t_{0}$. In addition, the
$c$-vectors and principal $g$-vectors only depend on the principal
part $B(t_{0})$. When the context is clear, we often omit symbol
$t_{0}$.

The following result is a consequence of \cite[Theorem 5.11]{gross2018canonical},
see also \cite{nakanishi2012tropical} \cite[Section 5.6]{Keller12}.

\begin{Thm}\label{thm:cg_vector}

(1) The $c$-vectors are sign coherent, i.e., for any seed $t$ and
$k\in I_{\ufv}$, we must have $c_{k}(t)\geq0$ or $c_{k}(t)\leq0$.

(2) For any given mutation sequence $\overleftarrow{\mu}=\mu_{i_{r}}\cdots\mu_{i_{0}}$,
denote $t_{s}=\mu_{i_{s-1}}\cdots\mu_{i_{0}}t_{0}$. Choose signs
$\varepsilon_{s}$ to be the sign of the $k$-th $c$-vector $c_{i}(t_{s})$.
Then we have $C(t)=C(t_{r+1})=F_{i_{0},\varepsilon_{0}}(t_{0})\cdots F_{i_{r},\varepsilon_{r}}(t_{r})$
and $G(t)=E_{i_{0},\varepsilon_{0}}(t_{0})\cdots E_{i_{r},\varepsilon_{r}}(t_{r})$.

\end{Thm}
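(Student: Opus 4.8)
The plan is to derive Theorem \ref{thm:cg_vector} from Theorem \ref{thm:CC-formula} together with the basic properties of the mutation rules spelled out in Section \ref{sec:cluster_notation}, following the standard route through principal coefficients and the $c$/$g$-duality.

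\textbf{Part (1): sign coherence of $c$-vectors.}
First I would reduce to the case of a cluster algebra with principal coefficients at $t_0$, since by definition $C(t)=C^{t_0}(t)$ depends only on $B(t_0)$. The key input is that the columns of $C(t)$ are, up to sign, the $g$-vectors (in the principal-coefficient seed) of the cluster variables of the \emph{opposite} seed, or equivalently that they arise as exponent vectors of the $y$-variables $y_k(t)=\chi^{v_k(t)}$ expressed in the initial cluster. Concretely, from the mutation formula for the $y$-variables and the fact that $y_k(t)$ is (by the Laurent phenomenon applied to principal coefficients, i.e. Theorem \ref{thm:CC-formula}) a Laurent monomial times a subtraction-free polynomial in the $y_i(t_0)$ with constant term $1$, one extracts that $c_k(t)$ is the leading exponent vector. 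Then I would invoke the positivity half of Theorem \ref{thm:CC-formula}: all coefficients $c_n$ in the expansion $x(t_0)^{g_i(t)}\bigl(\sum_n c_n x(t_0)^{\tB(t_0)n}\bigr)$ lie in $\N$. Running the mutation $\mu_k$ on $x_k(t)$ and comparing with the formula $\rho_{k,\varepsilon}\circ\tau_{k,\varepsilon}(x_k') = x_k^{-1}\prod_j x_j^{[-\varepsilon b_{jk}]_+}(1+\chi^{\varepsilon v_k})$, one sees that the binomial $1+\chi^{\varepsilon v_k(t)}$ must divide the (positive) polynomial part of $x_k(t)$ for exactly one choice of $\varepsilon$; positivity of the coefficients forbids cancellation, so $\varepsilon v_k(t)$ — and hence $c_k(t)$ — is sign-definite. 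This is essentially the argument of \cite{gross2018canonical}; alternatively one can cite \cite{nakanishi2012tropical} for the deduction of sign coherence from positivity. I expect this step to be the main obstacle, since making the passage from ``positivity of the Laurent expansion of cluster variables'' to ``sign coherence of $c$-vectors'' fully rigorous requires carefully tracking which monomial is $\preceq_{t_0}$-maximal and verifying no two contributions to the polynomial part cancel.

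\textbf{Part (2): the product formulas for $C(t)$ and $G(t)$.}
Granting (1), this part is an inductive bookkeeping argument. I would proceed by induction on $r$, the length of $\overleftarrow{\mu}$. The base case $r=-1$ (no mutation) gives $C(t_0)=\Id_{I_{\ufv}}$ and $G(t_0)=\Id_{I_{\ufv}}$, consistent with the empty product. For the inductive step, suppose $C(t_s)=F_{i_0,\varepsilon_0}(t_0)\cdots F_{i_{s-1},\varepsilon_{s-1}}(t_{s-1})$. Mutating at $i_s$, the $C$-matrix transforms by $C(t_{s+1}) = F_{i_s,\varepsilon}(t_s)\,C(t_s)$ where $\varepsilon$ is \emph{any} sign — but by part (1) the columns of $C(t_s)$ are sign-coherent, and the standard sign-coherence bookkeeping shows that choosing $\varepsilon=\varepsilon_s$, the sign of $c_{i_s}(t_s)$, makes the mutation matrix $F_{i_s,\varepsilon_s}(t_s)$ the one whose entries involve $[\varepsilon_s b_{i_s,j}(t_s)]_+$; this is precisely what is needed for the transformation of $C$ under $\mu_{i_s}$ to be left-multiplication by $F_{i_s,\varepsilon_s}(t_s)$. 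So $C(t_{s+1}) = F_{i_s,\varepsilon_s}(t_s)\,C(t_s)$, completing the induction. The formula for $G(t)$ follows dually: the $g$-vectors of Theorem \ref{thm:CC-formula} mutate via the matrices $\tE_{i_s,\varepsilon_s}$ (again with the sign determined by $c$-vector sign coherence, which is how the $\varepsilon$-ambiguity in the $\tau$-maps is resolved), so $G(t) = E_{i_0,\varepsilon_0}(t_0)\cdots E_{i_r,\varepsilon_r}(t_r)$ by the same induction. One can also cite the $c$-$g$ tropical duality of \cite{nakanishi2012tropical} to organize this: $G(t)^{\mathsf T}$ and $C(t)$ are inverse-transpose up to the symmetrizer, which packages both product formulas at once, with Theorem \ref{thm:cg_vector}(2) being the ``factorization into elementary mutation matrices'' refinement.

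In summary, the whole statement is the specialization of \cite{gross2018canonical} (for positivity, whence sign coherence) combined with the elementary mutation combinatorics of $c$- and $g$-vectors as in \cite{nakanishi2012tropical} and \cite[Section 5.6]{Keller12}; the only genuinely nontrivial ingredient is the positivity theorem, already packaged into Theorem \ref{thm:CC-formula}, and everything else is careful tracking of signs.
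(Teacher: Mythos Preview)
The paper does not prove this theorem: it simply states it as ``a consequence of \cite{gross2018canonical}, see also \cite{nakanishi2012tropical} \cite[Section 5.6]{Keller12}'' and moves on. Your proposal therefore goes well beyond what the paper does, and your overall strategy --- invoke positivity from \cite{gross2018canonical} to get sign coherence, then obtain the product formulas by induction on the length of $\overleftarrow{\mu}$ --- is the right one and is exactly the content of the cited references.

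That said, your sketch has two concrete problems. In part (1), the argument conflates $v_k(t)=\tB(t)e_k$ (a column of $\tB(t)$) with the $c$-vector $c_k(t)=c_k^{t_0}(t)$; these are different objects, and ``$\varepsilon v_k(t)$ is sign-definite'' is neither what divisibility by $1+\chi^{\varepsilon v_k(t)}$ gives you nor equivalent to sign coherence of $c_k(t)$. The actual passage from positivity to sign coherence (as in \cite{gross2018canonical} or the $F$-polynomial argument) goes through the principal-coefficient $F$-polynomials having a unique $\prec_{t_0}$-maximal monomial with coefficient $1$, which then forces the tropicalization of the $\hat{y}$-variables --- i.e.\ the $c$-vectors --- to be sign-coherent. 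Your paragraph gestures at this but the logical chain you wrote does not close.

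In part (2), the multiplication is on the wrong side. From $\tB(t')=\tE_{k,\varepsilon}(t)\,\tB(t)\,F_{k,\varepsilon}(t)$, restricting to the frozen (principal-coefficient) rows and using that the choice $\varepsilon=\sign(c_k(t))$ kills the relevant block of $\tE$, one gets $C(t_{s+1})=C(t_s)\cdot F_{i_s,\varepsilon_s}(t_s)$, i.e.\ \emph{right} multiplication, consistent with the stated product $C(t_{r+1})=F_{i_0,\varepsilon_0}(t_0)\cdots F_{i_r,\varepsilon_r}(t_r)$. Your formula $C(t_{s+1})=F_{i_s,\varepsilon}(t_s)\,C(t_s)$ would produce the factors in the reverse order. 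The same correction applies to $G$; compare Proposition~\ref{prop:calculate_EF}(6), which records $G(t')=G(t)\cdot E_{k,\sign(c_k(t))}(t)$.
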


Let $\pr_{I_{\ufv}}$ denote the natural projection from $\Z^I$ to $\Z^{I_{\ufv}}$.
\begin{Cor}\label{cor:c_g_basis}

Given seeds $t=\overleftarrow{\mu}t_{0}$ where $t_{0}$ is any chosen
initial seed. Then the $c$-vectors $c_{i}(t)$ of $t$ form a $\Z$-basis
of $\Z^{I_{\ufv}}$, and the principal $g$-vectors $\pr_{I_{\ufv}}g_{i}(t)$ form
a basis of $\Z^{I_{\ufv}}$.

\end{Cor}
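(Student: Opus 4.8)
The plan is to deduce this corollary directly from Theorem \ref{thm:cg_vector}. The key algebraic fact is that the matrices $E_{i,\varepsilon}$ and $F_{i,\varepsilon}$ built in the mutation construction are \emph{invertible over $\Z$}: indeed each $\tE_{\varepsilon}$ and $\tF_{\varepsilon}$ squares to the identity (this is recorded in Section \ref{sec:cluster_notation}, $\tF_{\varepsilon}^{2}=\Id_{I_{\ufv}}$ and $\tE_{\varepsilon}^{2}=\Id_{I}$), hence their principal submatrices $E_{\varepsilon}$, $F_{\varepsilon}$ have determinant $\pm 1$. First I would invoke Theorem \ref{thm:cg_vector}(2), which factors $C(t)=F_{i_{0},\varepsilon_{0}}(t_{0})\cdots F_{i_{r},\varepsilon_{r}}(t_{r})$ and $G(t)=E_{i_{0},\varepsilon_{0}}(t_{0})\cdots E_{i_{r},\varepsilon_{r}}(t_{r})$ as products of such matrices, for a suitable choice of signs $\varepsilon_{s}$ determined by the $c$-vectors along the mutation path. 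Since a finite product of $\GL_{I_{\ufv}}(\Z)$-matrices is again in $\GL_{I_{\ufv}}(\Z)$, both $C(t)$ and $G(t)$ lie in $\GL_{I_{\ufv}}(\Z)$, i.e. they are invertible integer matrices with integer inverse.

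From this the statement is immediate: the columns of an element of $\GL_{I_{\ufv}}(\Z)$ form a $\Z$-basis of $\Z^{I_{\ufv}}$. The columns of $C(t)$ are by definition the $c$-vectors $c_{i}(t)$, so these form a $\Z$-basis of $\Z^{I_{\ufv}}$; the columns of $G(t)$ are the principal $g$-vectors $\pr_{I_{\ufv}}g_{i}(t)$ (the $I_{\ufv}\times I_{\ufv}$ submatrix of $\tG$), so these likewise form a $\Z$-basis of $\Z^{I_{\ufv}}$. This completes the argument.

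I do not anticipate a genuine obstacle here, as the corollary is a formal consequence of the cited theorem. The only point requiring a word of care is that Theorem \ref{thm:cg_vector}(2) does not assert $C(t)$, $G(t)$ are products over an \emph{arbitrary} sign choice but over the specific signs $\varepsilon_{s}=\sign c_{i_{s}}(t_{s})$; however, since we only need \emph{one} such factorization to conclude invertibility, and Theorem \ref{thm:cg_vector}(1) (sign coherence) guarantees these signs are well-defined, this causes no difficulty. Alternatively, one could bypass the sign bookkeeping entirely and argue by induction on the length of $\overleftarrow{\mu}$: a single mutation multiplies $C$ and $G$ on the right by $F_{k,\varepsilon}$ resp. $E_{k,\varepsilon}$ for an appropriate $\varepsilon$, and $\det F_{k,\varepsilon}=\det E_{k,\varepsilon}=-1$, so $\lvert\det C(t)\rvert=\lvert\det G(t)\rvert=1$ is preserved under mutation starting from $C(t_{0})=G(t_{0})=\Id_{I_{\ufv}}$.
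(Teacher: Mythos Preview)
Your proposal is correct and matches the paper's approach: the paper states the corollary without proof, treating it as an immediate consequence of Theorem~\ref{thm:cg_vector}(2), and your derivation (that $C(t)$ and $G(t)$ are products of integer matrices squaring to the identity, hence lie in $\GL_{I_{\ufv}}(\Z)$) is exactly the intended one-line argument. The only minor wrinkle is that the paper records $\tE_{\varepsilon}^{2}=\Id_{I}$ and $\tF_{\varepsilon}^{2}=\Id$ for the full $I\times I$ matrices, and passing to the principal $I_{\ufv}\times I_{\ufv}$ blocks $E_{\varepsilon}$, $F_{\varepsilon}$ requires the observation that $\tE_{\varepsilon}$ is block lower-triangular with respect to the decomposition $I=I_{\ufv}\sqcup I_{\fv}$ (and similarly for $\tF_{\varepsilon}$); but this is clear from the explicit formulas, and in any case your direct determinant computation $\det E_{k,\varepsilon}=\det F_{k,\varepsilon}=-1$ bypasses it.
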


We can view extended $g$-vectors as principal $g$-vectors in the
following way. View the vertices $I$ as unfrozen and add principal coefficients as in \cite{FominZelevinsky07}. Then the previous
extended $g$-vectors become principal $g$-vectors. Consequently,
we deduce that, by extending the matrix $\tG(t)$ with unit column vectors $f_j,j\in I_{\fv}$, the matrix $\begin{pmatrix}
\tG(t)\ |\ f_j,j\in I_{\fv}\\
\end{pmatrix}$ equals $\tE_{i_{0},\varepsilon_{0}}(t_{0})\cdots\tE_{i_{r},\varepsilon_{r}}(t_{r})$.

It is useful to collect some facts about the calculation involved
matrices $E_{k,\varepsilon}$ and $F_{k,\varepsilon}$, see \cite[Section 5.6]{Keller12}.

Let $t^{\vee}$ denote the Langlands dual of $t$ whose associated
matrix satisfy $b_{ij}(t^{\vee})=-b_{ji}(t)$. Let $t^{\op}$ denote
the seed opposite to $t$ such that $b_{ij}(t^{\op})=-b_{ij}(t)$.

\begin{Prop}\label{prop:calculate_EF}

Let $t'=\mu_{k}t$ for some $k\in I_{\ufv}$. Let $\varepsilon$ be
any sign.

(1) $\tB(t')=\tE_{k,\varepsilon}(t)\cdot\tB(t)\cdot F_{k,\varepsilon}(t)$
for any sign $\varepsilon$.

(2) $\tE_{k,\varepsilon}^{2}=\Id_{I}$ and $F_{k,\varepsilon}^{2}=\Id_{I_{\ufv}}$.

(3) We have
\begin{align*}
E_{k,-\varepsilon}(t') & =E_{k,\varepsilon}^{-1}(t)\\
F_{k,-\varepsilon}(t') & =F_{k,\varepsilon}^{-1}(t)\\
E_{k,\varepsilon}(t^{\op}) & =E_{k,-\varepsilon}(t)\\
F_{k,\varepsilon}(t^{\op}) & =F_{k,-\varepsilon}(t)
\end{align*}

(4) Let $D'$ denote the diagonal matrix $\mathrm{diag}(d_{k}')_{k\in I_{\ufv}}$,
then $E_{k,\varepsilon}^{T}D'F_{k,\varepsilon}=D'$.

(5) We have $E_{k,\varepsilon}(t^{\vee})^{T}=F_{k,\varepsilon}(t)$

(6) Given any initial seed $t_{0}$, we have $G(t')=G(t)\cdot E_{k,\sign(c_{k}(t))}(t)$.

\end{Prop}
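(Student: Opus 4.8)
The plan is to verify all six identities by direct computation from the explicit entries of the matrices $\tE_{k,\varepsilon}$ and $F_{k,\varepsilon}$ recorded in Section~\ref{sec:cluster_notation}, together with the $B$-matrix mutation rule, the skew-symmetrizability relation $b_{ij}d_{j}=-b_{ji}d_{i}$, and Theorem~\ref{thm:cg_vector}. Here one reads the case not displayed in the definitions as a zero entry, so that $(\tE_{k,\varepsilon})_{kj}=0$ and $(F_{k,\varepsilon})_{ik}=0$ for $i,j\neq k$. Only (1) and (6) carry any real content; the rest are short checks.

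For (1) I would expand the triple product $\tE_{k,\varepsilon}(t)\cdot\tB(t)\cdot F_{k,\varepsilon}(t)$ and compare its $(i,j)$-entry, $i\in I$ and $j\in I_{\ufv}$, with the mutation formula for $b_{ij}'$, splitting into the cases $i=k$, $j=k$ and $i,j\neq k$. Using $b_{kk}=0$ one finds $(\tE_{k,\varepsilon}(t)\tB(t))_{il}=b_{il}+[-\varepsilon b_{ik}]_{+}b_{kl}$ for $i\neq k$ and $(\tE_{k,\varepsilon}(t)\tB(t))_{kl}=-b_{kl}$; then right multiplication by $F_{k,\varepsilon}(t)$ adds $[\varepsilon b_{kj}]_{+}$ times the $k$-th column, and the three cases reproduce $b_{ij}+b_{ik}[\varepsilon b_{kj}]_{+}+[-\varepsilon b_{ik}]_{+}b_{kj}$ and $-b_{ij}$ respectively; this is the standard matrix-mutation factorization. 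Part (2) was already observed right after the definitions and follows from a one-line computation, so that $\tE_{k,\varepsilon}^{2}=\Id_{I}$ and, restricting to $I_{\ufv}$, $E_{k,\varepsilon}^{2}=\Id_{I_{\ufv}}=F_{k,\varepsilon}^{2}$, the entries mixing $I_{\ufv}$ and $I_{\fv}$ contributing nothing to the principal parts.

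For (3) all four equalities reduce to two facts: passing from $t$ to $\mu_{k}t$ flips exactly the signs $b_{ik}\mapsto-b_{ik}$, $b_{ki}\mapsto-b_{ki}$ and otherwise leaves $\tE_{k,\varepsilon}$, $F_{k,\varepsilon}$ unchanged, while passing to $t^{\op}$ flips every $b_{ij}$; combining this with $[-\varepsilon(-b)]_{+}=[\varepsilon b]_{+}$ and the involutivity from (2), one gets $E_{k,-\varepsilon}(t')=E_{k,\varepsilon}(t)^{-1}=E_{k,\varepsilon}(t)$, $E_{k,\varepsilon}(t^{\op})=E_{k,-\varepsilon}(t)$, and the analogues for $F$. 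For (4) I would compute $(E_{k,\varepsilon}^{T}D'F_{k,\varepsilon})_{ij}$ entrywise: only the entries in row or column $k$ are nontrivial, the diagonal one coming from $(-1)\cdot d_{k}'\cdot(-1)$ and the off-diagonal one in row/column $k$ vanishing precisely by an instance of $b_{ij}d_{j}=-b_{ji}d_{i}$; this is the statement that the $g$- and $c$-vector factorizations are Langlands dual, and I would take care to match the transpose with the correct diagonal symmetrizer. For (5), expanding $(E_{k,\varepsilon}(t^{\vee}))^{T}$ via $b_{ij}(t^{\vee})=-b_{ji}(t)$ turns its $k$-th row into $(\dots,[\varepsilon b_{kj}(t)]_{+},\dots)$ with $-1$ on the diagonal, \ie exactly the $k$-th row of $F_{k,\varepsilon}(t)$, while all other entries are $\delta_{ij}$ on both sides and no $d_{i}$ enters.

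Finally, (6) is immediate from Theorem~\ref{thm:cg_vector}(2): pick any mutation sequence $\overleftarrow{\mu}=\mu_{i_{r-1}}\cdots\mu_{i_{0}}$ with $t=\overleftarrow{\mu}t_{0}$, and apply Theorem~\ref{thm:cg_vector}(2) both to $t$ along $\overleftarrow{\mu}$ and to $t'=\mu_{k}t$ along $\mu_{k}\overleftarrow{\mu}$; since $G^{t_{0}}(t)$ and $G^{t_{0}}(t')$ are independent of the chosen sequences, the factorizations $G(t)=E_{i_{0},\varepsilon_{0}}(t_{0})\cdots E_{i_{r-1},\varepsilon_{r-1}}(t_{r-1})$ and $G(t')=G(t)\cdot E_{k,\sign(c_{k}(t))}(t)$ follow at once. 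The only genuine obstacle is organizational: keeping the various sign and transpose conventions coherent between the lattice picture of mutation, where $\tE_{k,\varepsilon}$ and $F_{k,\varepsilon}$ act on bases, and the matrix picture of Theorem~\ref{thm:cg_vector}; once a convention is fixed, each identity is a finite and routine computation.
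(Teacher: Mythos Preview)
Your proposal is correct and follows exactly the paper's own approach: the paper's proof says only that (6) is a consequence of Theorem~\ref{thm:cg_vector} and that the other claims can be obtained from direct calculation. You have simply spelled out what those direct calculations are, and your derivation of (6) from Theorem~\ref{thm:cg_vector}(2) is precisely what the paper intends.
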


\begin{proof}

The claim (6) is a consequence of Theorem \ref{thm:cg_vector}. The
other claims can be obtained from direct calculation.

\end{proof}

The following result shows that $\tB(\overleftarrow{\mu}(t^{\op}))=\tB((\overleftarrow{\mu}t)^{\op})$.

\begin{Lem}

Let $t=\overleftarrow{\mu}t_{0}$ where $\overleftarrow{\mu}=\mu_{i_{r}}\cdots\mu_{i_{0}}$.
Then we have $\overleftarrow{\mu}(-\tB(t_{0}))=-(\overleftarrow{\mu}\tB(t_{0}))$.

\end{Lem}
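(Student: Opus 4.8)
The plan is to prove the identity $\overleftarrow{\mu}(-\tB(t_0))=-(\overleftarrow{\mu}\tB(t_0))$ by induction on the length $r+1$ of the mutation sequence $\overleftarrow{\mu}=\mu_{i_r}\cdots\mu_{i_0}$. The base case $r=-1$ (empty sequence) is trivial since both sides equal $-\tB(t_0)$. For the inductive step it suffices to treat a single mutation: one must check that for any seed $t$ and any $k\in I_{\ufv}$, the matrix obtained by mutating $-\tB(t)$ at $k$ equals $-(\mu_k\tB(t))$, i.e. mutation of $B$-matrices commutes with negation. Then the full statement follows by applying this one-step fact repeatedly, since $\overleftarrow{\mu}(-\tB(t_0)) = \mu_{i_r}(\cdots\mu_{i_0}(-\tB(t_0))) = \mu_{i_r}(\cdots(-\mu_{i_0}\tB(t_0))) = \cdots = -(\overleftarrow{\mu}\tB(t_0))$.

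For the one-step check, I would go back to the explicit mutation formula recorded in the preliminaries,
\begin{align*}
b_{ij}' & =\begin{cases}
-b_{ij} & k\in\{i,j\}\\
b_{ij}+b_{ik}[\varepsilon b_{kj}]_{+}+[-\varepsilon b_{ik}]_{+}b_{kj} & k\neq i,j
\end{cases},
\end{align*}
and simply substitute $-b_{ij}$ for $b_{ij}$ everywhere. In the first branch the result is immediate. In the second branch, replacing $b_{ik}\mapsto -b_{ik}$, $b_{kj}\mapsto -b_{kj}$, $b_{ij}\mapsto -b_{ij}$ gives $-b_{ij}+(-b_{ik})[-\varepsilon b_{kj}]_{+}+[\varepsilon b_{ik}]_{+}(-b_{kj}) = -\bigl(b_{ij}+b_{ik}[-\varepsilon b_{kj}]_{+}+[\varepsilon b_{ik}]_{+}b_{kj}\bigr)$, which is exactly $-1$ times the formula for $b_{ij}'$ computed with the \emph{opposite} sign choice $-\varepsilon$. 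Since the mutated matrix entries $b_{ij}'$ are independent of the choice of sign $\varepsilon$ (as noted right after the mutation formula), this is precisely $-b_{ij}'$, as desired. Hence $\mu_k(-\tB(t)) = -\mu_k(\tB(t))$.

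I do not anticipate any real obstacle here; the statement is a direct computation. The only point requiring a little care is the sign-independence of mutation, which is exactly why substituting $-\varepsilon$ for $\varepsilon$ in the computation above causes no harm — one should invoke that fact explicitly rather than silently. Alternatively, and perhaps more cleanly, one can observe that passing from $\tB(t)$ to $-\tB(t)$ is the same as passing to the opposite seed $t^{\op}$, and the claim then reads $\tB(\overleftarrow{\mu}(t^{\op})) = \tB((\overleftarrow{\mu}t)^{\op})$; in the sign-coherent formulation this amounts to the identities $E_{k,\varepsilon}(t^{\op})=E_{k,-\varepsilon}(t)$ and $F_{k,\varepsilon}(t^{\op})=F_{k,-\varepsilon}(t)$ from Proposition \ref{prop:calculate_EF}(3) together with $\tB(t')=\tE_{k,\varepsilon}(t)\tB(t)F_{k,\varepsilon}(t)$ from Proposition \ref{prop:calculate_EF}(1), again using sign-independence to match the signs along the sequence. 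Either route gives the result with no genuine difficulty.
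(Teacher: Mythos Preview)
Your proposal is correct and essentially matches the paper's proof: both proceed by induction on the length of $\overleftarrow{\mu}$, reducing to the one-step fact that $\mu_k(-\tB(t))=-\mu_k\tB(t)$, and your ``alternative'' route via $\tB(t')=\tE_{k,\varepsilon}(t)\tB(t)F_{k,\varepsilon}(t)$ together with $E_{k,\varepsilon}(t^{\op})=E_{k,-\varepsilon}(t)$, $F_{k,\varepsilon}(t^{\op})=F_{k,-\varepsilon}(t)$ is exactly how the paper argues. Your first route (direct substitution into the explicit mutation formula for $b'_{ij}$) is a slightly more elementary variant of the same computation.
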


\begin{proof}

Denote $t_{s}=\mu_{i_{s-1}}\cdots\mu_{i_{0}}t_{0}$. Choose any signs
$\varepsilon_{s}$ for the seeds $t_{s}$.

We prove the claim by induction on the length of $\overleftarrow{\mu}$
which equals $r+1$. The case $r+1=0$ is trivial. Assume that we
have shown the result for length $r$.

We have

\begin{align*}
-(\overleftarrow{\mu}\tB(t_{0})) & =-\tE_{i_{r},\varepsilon_{r}}(t_{r})\tB(t_{r})F_{i_{r},\varepsilon_{r}}(t_{r})\\
 & =\tE_{i_{r},\varepsilon_{r}}(t_{r})\tB(t_{r}^{\op})F_{i_{r},\varepsilon_{r}}(t_{r})\\
 & =\tE_{i_{r},-\varepsilon_{r}}(t_{r}^{\op})\tB(t_{r}^{\op})F_{i_{r},-\varepsilon_{r}}(t_{r}^{\op})\\
 & =\mu_{i_{r}}\tB(t_{r}^{\op})
\end{align*}

By induction hypothesis, 
\begin{align*}
\tB(t_{r}^{\op}): & =-\tB(t_{r})\\
 & =-\mu_{i_{r-1}}\cdots\mu_{i_{0}}\tB(t_{0})\\
 & =\mu_{i_{r-1}}\cdots\mu_{i_{0}}(-\tB(t_{0}))
\end{align*}

Therefore, $-(\overleftarrow{\mu}\tB(t_{0}))=\mu_{i_{r}}\mu_{i_{r-1}}\cdots\mu_{i_{0}}(-\tB(t_{0}))=\overleftarrow{\mu}(-\tB(t_{0}))$.

\end{proof}

Finally, we have the following duality between $c$-vectors and $g$-vectors.

\begin{Thm}\cite[Theorem 1.2]{nakanishi2012tropical}\cite{gross2018canonical}\label{thm:cg_duality}

Given any seeds $t=\overleftarrow{\mu}t_{0}$. Then we have

\begin{align*}
G^{t_{0}}(t)^{T}\cdot C^{t_{0}^{\vee}}(\overleftarrow{\mu}t_{0}^{\vee}) & =\Id_{I_{\ufv}}\\
C^{t_{0}}(t)\cdot C^{t^{\op}}(\overleftarrow{\mu}^{-1}(t^{\op})) & =\Id_{I_{\ufv}}\\
G^{t_{0}}(t)^{T} & =C^{(t^{\vee})^{\op}}(\overleftarrow{\mu}^{-1}((t^{\vee})^{\op}))
\end{align*}

When $B(t_{0})^{T}=-B(t_{0})$, we have $B(t_{0}^{\vee})=B(t_{0})$,
and consequently, $G^{t_{0}}(t)^{T}\cdot C^{t_{0}}(t)=\Id_{I_{\ufv}}$.

\end{Thm}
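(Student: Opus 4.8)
The plan is to recast all three identities in the matrix calculus set up above and then read them off using the relations in Proposition~\ref{prop:calculate_EF}. Write $\overleftarrow{\mu}=\mu_{i_r}\cdots\mu_{i_0}$, put $t_s=\mu_{i_{s-1}}\cdots\mu_{i_0}t_0$ (so $t=t_{r+1}$), and let $\varepsilon_s=\sign c_{i_s}(t_s)$ with respect to $t_0$. By Theorem~\ref{thm:cg_vector}(2),
\[
G^{t_0}(t)=E_{i_0,\varepsilon_0}(t_0)\cdots E_{i_r,\varepsilon_r}(t_r),\qquad
C^{t_0}(t)=F_{i_0,\varepsilon_0}(t_0)\cdots F_{i_r,\varepsilon_r}(t_r).
\]
Throughout I use that mutation commutes with $(-)^{\vee}$ and with $(-)^{\op}$ at the level of $B$-matrices (see \cite[Lemma~1.11]{FockGoncharov09}, and the Lemma on $\tB(\overleftarrow{\mu}(t^{\op}))$ above), and that $E_{k,\varepsilon}$, $F_{k,\varepsilon}$, as well as $C$- and $G$-matrices, depend only on the $B$-matrices visited; in particular $(t^{\vee})^{\vee}$ produces the same such matrices as $t$.

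For the first identity, transpose the product for $G^{t_0}(t)$ and apply Proposition~\ref{prop:calculate_EF}(5) in the form $E_{k,\varepsilon}(t_s)^{T}=F_{k,\varepsilon}(t_s^{\vee})$:
\[
G^{t_0}(t)^{T}=E_{i_r,\varepsilon_r}(t_r)^{T}\cdots E_{i_0,\varepsilon_0}(t_0)^{T}
=F_{i_r,\varepsilon_r}(t_r^{\vee})\cdots F_{i_0,\varepsilon_0}(t_0^{\vee}).
\]
Next I claim $C^{t_0^{\vee}}(\overleftarrow{\mu}t_0^{\vee})=F_{i_0,\varepsilon_0}(t_0^{\vee})\cdots F_{i_r,\varepsilon_r}(t_r^{\vee})$ with the \emph{same} signs $\varepsilon_s$. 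Indeed $\overleftarrow{\mu}t_0^{\vee}=t^{\vee}$, and the principal $B$-matrix of the Langlands dual is $-B(t)^{T}=D\,B(t)\,D^{-1}$ with $D=\mathrm{diag}(d_i)$ positive; since $[\ ]_{+}$ is invariant under scaling by positive reals, mutation of the principal-coefficient extensions commutes with conjugation by $D$, hence $C^{t_0^{\vee}}(t_s^{\vee})=D\,C^{t_0}(t_s)\,D^{-1}$, and its columns, being sign-coherent (Theorem~\ref{thm:cg_vector}(1)), keep their signs, so $\sign c_{i_s}(t_s^{\vee})=\varepsilon_s$. The two displayed products then multiply to $A_r\cdots A_0\,A_0\cdots A_r$ with $A_s=F_{i_s,\varepsilon_s}(t_s^{\vee})$ and $A_s^{2}=\Id_{I_{\ufv}}$ (Proposition~\ref{prop:calculate_EF}(2)), which telescopes to $\Id_{I_{\ufv}}$; this is the first identity.

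The third identity follows by applying the first to the initial seed $t_0^{\vee}$: since $(t_0^{\vee})^{\vee}$ visits the same $B$-matrices as $t_0$, one gets $G^{t_0^{\vee}}(t^{\vee})^{T}C^{t_0}(t)=\Id$, so $C^{t_0^{\vee}}(t^{\vee})=(G^{t_0}(t)^{-1})^{T}$; the second identity, applied to $t_0^{\vee}$, rewrites $C^{t_0^{\vee}}(t^{\vee})^{-1}$ as $C^{(t^{\vee})^{\op}}(\overleftarrow{\mu}^{-1}((t^{\vee})^{\op}))$, yielding $G^{t_0}(t)^{T}=C^{(t^{\vee})^{\op}}(\overleftarrow{\mu}^{-1}((t^{\vee})^{\op}))$. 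It remains to prove the second identity $C^{t_0}(t)\,C^{t^{\op}}(\overleftarrow{\mu}^{-1}(t^{\op}))=\Id$, which I would do by induction on the length of $\overleftarrow{\mu}$: using $C^{t_0}(t')=C^{t_0}(t)F_{k,\varepsilon}(t)$ for $t'=\mu_k t$ and Proposition~\ref{prop:calculate_EF}(3) ($F_{k,\varepsilon}(s^{\op})=F_{k,-\varepsilon}(s)$ and $F_{k,-\varepsilon}(\mu_k s)=F_{k,\varepsilon}(s)$) one compares the factors of $C^{t^{\op}}(\overleftarrow{\mu}^{-1}(t^{\op}))$, taken along the reversed sequence out of $t^{\op}$, with the forward factors $F_{i_s,\varepsilon_s}(t_s)$. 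The subtle point is that a naive term-by-term matching of the two $F$-products fails, because the $c$-vector signs genuinely change when the base seed is moved; the induction closes only because sign-coherence (Theorem~\ref{thm:cg_vector}(1)) forces the signs picked along the reversed-opposite sequence to be compatible with the $\varepsilon_s$. This sign bookkeeping is the main obstacle, and it is precisely the content of Nakanishi--Zelevinsky's tropical duality \cite{nakanishi2012tropical}, which I would quote here; everything else is the formal telescoping enabled by $E_{k,\varepsilon}^{2}=\Id_I$, $F_{k,\varepsilon}^{2}=\Id_{I_{\ufv}}$ and the identities of Proposition~\ref{prop:calculate_EF}. The skew-symmetric case is then immediate: if $B(t_0)^{T}=-B(t_0)$ then $D=\Id$, $t_0^{\vee}$ has the same $B$-matrix as $t_0$, and the first identity becomes $G^{t_0}(t)^{T}C^{t_0}(t)=\Id_{I_{\ufv}}$.
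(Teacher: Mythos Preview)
The paper does not supply its own proof of this theorem: it is stated with attribution to \cite{nakanishi2012tropical} and \cite{gross2018canonical} and used as a quoted result. So there is no in-paper argument to compare against; your proposal goes beyond what the paper attempts.

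That said, your argument has a genuine gap. Your treatment of the first identity via Proposition~\ref{prop:calculate_EF}(5) and the telescoping of the $F$-factors is correct, and the observation $C^{t_0^{\vee}}(t_s^{\vee})=D\,C^{t_0}(t_s)\,D^{-1}$ (hence the same signs) is the right way to see that the $\varepsilon_s$ match on the Langlands dual side. The problem is the second identity. You sketch an induction, then acknowledge that the ``sign bookkeeping is the main obstacle'' and propose to \emph{quote} Nakanishi--Zelevinsky's tropical duality at that point. But the tropical duality you want to quote \emph{is} the theorem under discussion (it is \cite[Theorem~1.2]{nakanishi2012tropical}), so this is circular. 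Concretely: the signs $\varepsilon'_s$ governing the $F$-factorisation of $C^{t^{\op}}(\overleftarrow{\mu}^{-1}(t^{\op}))$ are signs of $c$-vectors computed with base seed $t^{\op}$, not $t_0$, and there is no a priori relation between $\varepsilon'_s$ and $\varepsilon_{r-s}$ coming from sign-coherence alone; establishing that relation is exactly what \cite{nakanishi2012tropical} does (their Proposition~1.3/Theorem~1.2), and it requires more than Proposition~\ref{prop:calculate_EF} and Theorem~\ref{thm:cg_vector}. Since your derivation of the third identity rests on the second, the gap propagates.

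If you want a self-contained route, one option is to prove the second identity first by the inductive argument in \cite{nakanishi2012tropical} (which uses sign-coherence together with the recursions for $C$-matrices under change of initial seed), and only then deduce the third from the first two. Alternatively, simply cite the result as the paper does.
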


The $g$-vectors of a seed $t'$ obey the tropical transformation
$\phi_{t,t_{0}}$ where $t,t_{0}$ are initial seeds. More precisely,
we have the following result.

\begin{Thm}\cite{DerksenWeymanZelevinsky09}\cite{gross2018canonical}

Given seeds $t_{0},t,t'$ related by mutations. Then we have $\tG^{t}(t')=\phi_{t,t_{0}}\tG^{t_{0}}(t')$.

\end{Thm}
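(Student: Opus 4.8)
The plan is to deduce the tropical transformation law for $g$-vectors from the factorization of $G$-matrices into $E$-matrices (Theorem \ref{thm:cg_vector}(2), Proposition \ref{prop:calculate_EF}) together with the already-established fact that tropical transformations $\phi_{t,t_0}$ are compositions of the elementary piecewise-linear maps of Definition \ref{def:tropical_transformation} and are path-independent. First I would reduce to the case where $t$ and $t_0$ differ by a single mutation $\mu_k$, since both sides of $\tG^t(t') = \phi_{t,t_0}\tG^{t_0}(t')$ are built by composing the corresponding one-step maps: the right-hand side because $\phi_{t,t_0}$ is defined as a composite of elementary tropical transformations and is path-independent, and the left-hand side because changing the base seed step by step changes $\tG$ predictably. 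So it suffices to prove: if $t = \mu_k t_0$, then for every seed $t'$ reachable from $t_0$, the extended $g$-vectors satisfy $\tg_i^{t}(t') = \phi_{t,t_0}(\tg_i^{t_0}(t'))$.

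The key computational input is that, after adjoining principal coefficients at all vertices of $I$ (as described just before Proposition \ref{prop:calculate_EF}), the extended $g$-matrix factors as $\tG^{t_0}(t') = \tE_{i_0,\varepsilon_0}(t_0)\cdots \tE_{i_r,\varepsilon_r}(t_r)$ for the appropriate sign sequence, and similarly $\tG^{t}(t')$ is the product of the same $\tE$-factors but starting one step later, i.e.\ with an initial factor accounting for the relation between the seed chains based at $t_0$ versus at $t = \mu_k t_0$. Concretely, writing $t_0 \to t_1 \to \cdots$ for a mutation path realizing $t'$ from $t_0$ with $t_1 = \mu_k t_0 = t$, we get $\tG^{t_0}(t') = \tE_{k,\varepsilon_0}(t_0)\cdot \tG^{t}(t')$. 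Thus $\tG^{t}(t') = \tE_{k,\varepsilon_0}(t_0)^{-1}\tG^{t_0}(t') = \tE_{k,\varepsilon_0}(t_0)\tG^{t_0}(t')$ using $\tE^2 = \Id_I$. The remaining point is to identify the linear action $g \mapsto \tE_{k,\varepsilon}(t_0)\, g$ with the piecewise-linear $\phi_{t,t_0}$ \emph{on the relevant cone of $g$-vectors}: by sign-coherence of $c$-vectors (Theorem \ref{thm:cg_vector}(1)), the sign $\varepsilon_0$ is forced to equal $\sign(c_k(t_0))$ with respect to the chain realizing $t'$, and on the half-space where $g_k$ has the corresponding sign the formula $\tE_{k,\varepsilon}$ applied to $g$ reproduces exactly the Definition \ref{def:tropical_transformation} formulas $g_k' = -g_k$ and $g_i' = g_i + [b_{ik}]_+[g_k]_+ - [-b_{ik}]_+[-g_k]_+$. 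Here one checks both sign regimes: if $g_k \geq 0$ one uses $\varepsilon = +$, and if $g_k \leq 0$ one uses $\varepsilon = -$, and in each case the linear matrix $\tE_{k,\varepsilon}$ restricted to that half-space agrees with the piecewise-linear map.

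The main obstacle I anticipate is bookkeeping the sign choices: $\phi_{t,t_0}$ is genuinely piecewise-linear, while each $\tE_{k,\varepsilon}$ is linear, so the identification only holds on a half-space, and one must verify that the sign $\varepsilon_0$ produced by the $C$-matrix factorization (Theorem \ref{thm:cg_vector}(2)) is precisely the sign that makes $\tE_{k,\varepsilon_0}$ agree with $\phi_{t,t_0}$ on the cone containing $\tg_i^{t_0}(t')$. This is where the duality between $c$-vectors and $g$-vectors enters: the $k$-th coordinate of $\tg_i^{t_0}(t')$ relative to the one-step chain and the sign of $c_k$ along that chain are linked by Theorem \ref{thm:cg_duality}, which pins down the regime. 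Once the one-step case is settled, the general case follows by composing: $\phi_{t,t_0} = \phi_{t,t_1}\circ\cdots$ along any path, path-independence of $\phi$ is already known, and the $g$-vector transformation composes the same way because $\tG^{t_0}(t') = (\text{one-step factor})\cdot\tG^{t_1}(t')$ iterates cleanly. I would close by remarking that the principal-part version $G^t(t') = \phi_{t,t_0}G^{t_0}(t')$ (in the sense that the principal projections transform compatibly) follows by projecting, since $\pr_{I_{\ufv}}$ intertwines the $\tE$-matrices with their principal submatrices $E$.
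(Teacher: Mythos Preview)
The paper does not supply its own proof of this statement; it is quoted as a result from \cite{DerksenWeymanZelevinsky09} and \cite{gross2018canonical} with no accompanying argument. So there is nothing in the paper to compare against, and your proposal has to stand on its own.

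There is a genuine gap in the key step. When you write ``we get $\tG^{t_0}(t') = \tE_{k,\varepsilon_0}(t_0)\cdot \tG^{t}(t')$'' by peeling off the first factor of the product in Theorem~\ref{thm:cg_vector}(2), you are implicitly assuming that the remaining tail $\tE_{i_1,\varepsilon_1}(t_1)\cdots\tE_{i_r,\varepsilon_r}(t_r)$ equals $\tG^{t}(t')$. But the signs $\varepsilon_s$ in that tail are $\sign(c_{i_s}^{t_0}(t_s))$, computed with initial seed $t_0$, whereas the factorization of $\tG^{t}(t')$ uses $\varepsilon'_s=\sign(c_{i_s}^{t}(t_s))$ with initial seed $t$. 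These sign sequences differ in general (already in type $A_2$), so the tail is not $\tG^{t}(t')$ on the nose.

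The proposed fix via $c$--$g$ duality does not close the gap either. Since the first step is $\mu_k$ at $t_0$, one always has $\varepsilon_0=\sign(c_k^{t_0}(t_0))=+$, independently of $t'$. But $\tE_{k,+}(t_0)$ coincides with the piecewise-linear map $\phi_{t,t_0}$ only on the half-space $\{g:g_k\le 0\}$, and the columns of $\tG^{t_0}(t')$ need not lie there: for $t'=t_0$ the $k$-th column is $f_k$, with $k$-th coordinate $+1$, and one checks directly that $\tE_{k,+}(t_0)\cdot\Id\neq\tG^{t}(t_0)=\phi_{t,t_0}(\Id)$ whenever $b_{ik}(t_0)\neq 0$ for some $i$. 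Sign-coherence of $g$-vectors does tell you that for each fixed $t'$ all columns of $\tG^{t_0}(t')$ lie in a single half-space, so \emph{some} $\tE_{k,\varepsilon}$ realizes $\phi_{t,t_0}$ on them; the problem is that this $\varepsilon$ depends on $t'$ and is not the $\varepsilon_0=+$ handed to you by the factorization. The proofs in \cite{DerksenWeymanZelevinsky09} and \cite{gross2018canonical} avoid this by interpreting $g$-vectors as leading Laurent degrees of cluster variables (via $F$-polynomials, respectively theta functions) and computing directly how that degree changes under a single mutation of the \emph{initial} seed; the $E$-matrix factorization of Theorem~\ref{thm:cg_vector}(2) records mutation of the \emph{target} seed and does not by itself give the base-change rule.
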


\subsection{Injective-reachability and green to red sequences}

\begin{Def}\cite{qin2017triangular}

A seed $t$ is said to be injective-reachable if there exists a seed
$t'=\overleftarrow{\mu}t$ and a permutation $\sigma$ of $I_{\ufv}$
such that the principal $g$-vectors $\pr_{I_{\ufv}}g_{k}^{t}(t')$
equals $-\pr_{I_{\ufv}}f_{\sigma(k)}$ for any $k\in I_{\ufv}$, where
$f_{i}$ are the $i$-th unit vector of $\Mc(t)\simeq\Z^{I}$.

In this case, we denote $t'$ by $t[1]$, and $t$ by $t'[-1]$.

\end{Def}

Note that the mutation sequence $\overleftarrow{\mu}$ is not unique. We choose and fix one such sequence once and for all.

For any permutation $\sigma$, let $\Perm_{\sigma}$ denote the $I_{\ufv}\times I_{\ufv}$-matrix
such that $(\Perm_{\sigma})_{ik}=\delta_{i,\sigma(k)}$. Then $t$
is injective-reachable if and only if $G(t)=-\Perm_{\sigma}$ for
some $\sigma$. Notice that $\Perm_{\sigma^{-1}}=\Perm_{\sigma}^{T}$.

Notice that the seed $t[1]$, if it exists, is determined by $t$
up to a permutation of $I_{\ufv}$. Define $t[d+1]=t[d][1]$ and $t[d-1]=t[d][-1]$,
we obtain a chain of seeds $(t[d])_{d\in\Z}$. In addition, some
$t\in\Delta^{+}$ is injective-reachable implies all $t'\in\Delta^{+}$
are injective-reachable. See \cite{qin2017triangular} for more details.
We have the following notion following \cite{keller2011cluster}.

\begin{Def}

Given seeds $t'=\overleftarrow{\mu}t$. The mutation sequence $\overleftarrow{\mu}$
is said to be a green to red sequence starting from $t$ if $c_{k}^{t}(t')$
have negative signs for all $k\in I_{\ufv}$.

\end{Def}

\begin{Prop}\label{prop:negative_c_chamber}

The injective-reachable condition is satisfied if and only if $c_{k}^{t}(t')=-e_{\sigma(k)}$
for any $k\in I_{\ufv}$, where $e_{k}$ are the $k$-th unit vector
of $\yCone(t)\simeq\Z^{I_{\ufv}}$, or, equivalently, $C(t)=-\Perm_{\sigma}$.
In addition, when $C(t)=-\Perm_{\sigma}$, we must have $d'_{k}=d'_{\sigma(k)}$
for any $k\in I_{\ufv}$.

\end{Prop}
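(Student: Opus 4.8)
The plan is to prove Proposition \ref{prop:negative_c_chamber} in two parts, combining the already-established duality between $c$-vectors and $g$-vectors (Theorem \ref{thm:cg_duality}) with a direct analysis of how the diagonal scaling factors transform under the mutation sequence.

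First I would establish the equivalence between the injective-reachable condition and the statement $C(t')... $ wait, let me re-read: we want $c_k^t(t')=-e_{\sigma(k)}$, equivalently $C^t(t')=-\Perm_\sigma$. By the definition of injective-reachability we have $G^{t}(t')=-\Perm_\sigma$ (with $t'=t[1]$ in the notation of the excerpt, i.e. $\pr_{I_{\ufv}}g_k^{t}(t[1])=-f_{\sigma(k)}$). Since the seed $t$ here plays the role of the initial seed, I would invoke the skew-symmetrizable $c$-$g$ duality: Theorem \ref{thm:cg_duality} gives $G^{t}(t')^{T}\cdot C^{t^{\vee}}(\overleftarrow{\mu}t^{\vee})=\Id_{I_{\ufv}}$ together with $G^{t}(t')^{T}=C^{(t^{\vee})^{\op}}(\overleftarrow{\mu}^{-1}((t^{\vee})^{\op}))$, and more usefully the relation between $C^{t}(t')$ and $G^{t}(t')$. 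Concretely, from $C^{t_0}(t)\cdot C^{t^{\op}}(\overleftarrow{\mu}^{-1}(t^{\op}))=\Id$ and the general fact $G^{t_0}(t)^{T}=C^{(t^{\vee})^{\op}}(\overleftarrow{\mu}^{-1}((t^{\vee})^{\op}))$, one deduces that $C^{t}(t')=-\Perm_\sigma$ if and only if $G^{t}(t')=-\Perm_{\sigma'}$ for a related permutation; since a permutation matrix is the unique sign-coherent $\Z$-basis change of its type, $-\Perm_\sigma$ is forced once we know each column of $C^{t}(t')$ is a negative unit vector, and the sign-coherence (Theorem \ref{thm:cg_vector}(1)) guarantees that "all columns have negative sign" already pins down that each column is a negative unit vector. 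I would spell out that $G^{t}(t')=-\Perm_\sigma$ and $C^{t}(t')=-\Perm_{\sigma}$ are equivalent (with the same $\sigma$) in the skew-symmetrizable case by transposing and inverting the duality identity, using $\Perm_{\sigma^{-1}}=\Perm_\sigma^{T}$.

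Next I would prove the claim about the scaling factors: when $C(t)=-\Perm_\sigma$ then $d'_k=d'_{\sigma(k)}$. Here the natural tool is Theorem \ref{thm:cg_vector}(2): with $\overleftarrow{\mu}=\mu_{i_r}\cdots\mu_{i_0}$ and the preferred signs $\varepsilon_s=\sign c_{i_s}(t_s)$, we have $C(t')=F_{i_0,\varepsilon_0}(t_0)\cdots F_{i_r,\varepsilon_r}(t_r)$. Combined with Proposition \ref{prop:calculate_EF}(4), $E_{k,\varepsilon}^{T}D'F_{k,\varepsilon}=D'$, one gets, iterating along the sequence and tracking how $D'$ (the diagonal matrix of the $d'_k$ for the seed $t$, which is $D$ for the initial seed here) transforms: the product $G(t')^{T}D'(t')\,\cdots$ telescopes so that $C(t')^{T}D' C(t') = D$ up to the intermediate $E,F$ cancellations — more precisely $F$-matrices preserve $D$ in the sense $F_{k,\varepsilon}^{T} D F_{k,\varepsilon}'$-type identities chain up to give $C(t')^{T} D C(t') = D$, where $D=\mathrm{diag}(d'_k)$. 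Substituting $C(t')=-\Perm_\sigma$ yields $\Perm_\sigma^{T} D \Perm_\sigma = D$, and reading off diagonal entries gives exactly $d'_{\sigma(k)}=d'_k$. I would need to be careful about which $D$ and $D'$ appear at which seed; the cleanest route is to note that the composite $F$-matrix identity from Proposition \ref{prop:calculate_EF}(4) says that the symmetrized form $\mathrm{diag}(d_k)$ is preserved under the change of basis by the full $C$-matrix, i.e. $C^{t_0}(t)^{T}\,\mathrm{diag}(d_k(t))\,C^{t_0}(t)=\mathrm{diag}(d_k(t_0))$, which for $t_0=t$, $t=t'=t[1]$ and $C=-\Perm_\sigma$ immediately gives $d_{\sigma(k)}(t)=d_k(t)$.

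The main obstacle I anticipate is bookkeeping rather than conceptual: correctly chaining the single-step identities $E_{k,\varepsilon}^{T}D'F_{k,\varepsilon}=D'$ of Proposition \ref{prop:calculate_EF}(4) across a mutation sequence while keeping track of the fact that the diagonal matrix $D'$ itself changes from seed to seed (it is constant on $I_{\ufv}$ only up to the permutation we are trying to establish), and reconciling the two equivalent statements $G(t)=-\Perm_\sigma$ versus $C(t)=-\Perm_\sigma$ without circularity — one must use the duality theorem in the genuinely skew-symmetrizable (not just skew-symmetric) setting, where $B(t^{\vee})=-B(t)^{T}=B(t)$ need not hold, so the cleanest identity $G^{t_0}(t)^{T}C^{t_0}(t)=\Id$ is unavailable and one instead uses $G^{t_0}(t)^{T}C^{t_0^{\vee}}(\overleftarrow{\mu}t_0^{\vee})=\Id$ together with the observation that the Langlands dual of $-\Perm_\sigma$ as a $G$-matrix is again $-\Perm_\sigma$ as a $C$-matrix. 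Once these identifications are set up carefully the proof is a short computation.
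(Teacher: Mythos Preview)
Your proposal contains a genuine error and an unnecessary detour, both of which dissolve once you write down the correct chained identity.

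The chain of Proposition~\ref{prop:calculate_EF}(4), namely $E_{k,\varepsilon}^{T}D'F_{k,\varepsilon}=D'$, along the mutation sequence with the sign choices of Theorem~\ref{thm:cg_vector}(2) does \emph{not} give $C(t')^{T}D'C(t')=D'$; it gives
\[
G^{t}(t')^{T}\,D'\,C^{t}(t')\;=\;D'.
\]
Indeed, already at a single step one checks $F_{k,\varepsilon}^{T}D'F_{k,\varepsilon}\neq D'$ in general (the $(i,i)$ entry for $i\neq k$ picks up an extra $[\varepsilon b_{ki}]_{+}^{2}d'_{k}$), so the identity you wrote is false. The paper uses exactly the correct identity above, and it does all the work at once: if $G^{t}(t')=-\Perm_{\sigma}$ then $C^{t}(t')=-D'^{-1}\Perm_{\sigma}D'$, whose $k$-th column is $-\tfrac{d'_{k}}{d'_{\sigma(k)}}e_{\sigma(k)}$; integrality of $c$-vectors forces $d'_{\sigma(k)}\mid d'_{k}$ for all $k$, hence $d'_{k}=d'_{\sigma(k)}$ and $C^{t}(t')=-\Perm_{\sigma}$. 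The converse is symmetric.

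This also shows that your separate treatment of the equivalence via Theorem~\ref{thm:cg_duality} is an unnecessary detour. You correctly flagged the obstacle: in the genuinely skew-symmetrizable case that theorem relates $G^{t}(t')$ to $C^{t^{\vee}}(\overleftarrow{\mu}t^{\vee})$, not to $C^{t}(t')$, and your assertion that ``the Langlands dual of $-\Perm_{\sigma}$ as a $G$-matrix is again $-\Perm_{\sigma}$ as a $C$-matrix'' is precisely what needs to be proved. The identity $G^{T}D'C=D'$ bypasses the Langlands dual entirely and delivers both the equivalence and the symmetrizer compatibility in one line.
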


\begin{proof}

Denote $t'=\overleftarrow{\mu}t$ where $\overleftarrow{\mu}=\mu_{i_{r}}\cdots\mu_{i_{0}}$.
Define $t_{s}=\mu_{i_{s}}\cdots\mu_{i_{0}}t_{0}$, $\varepsilon_{s}=\sign(c_{i_{s}}(t_{s}))$,
$D'=\mathrm{diag}(d_{k}')_{k\in I_{\ufv}}$ as before. 

By Proposition \ref{prop:calculate_EF} and Theorem \ref{thm:cg_vector},
we have

\begin{align*}
D' & =E_{i_{r},\varepsilon_{r}}^{T}\cdots E_{i_{0},\varepsilon_{0}}^{T}D'F_{i_{0},\varepsilon_{0}}\cdots F_{i_{r},\varepsilon_{r}}\\
 & =(E_{i_{0},\varepsilon_{0}}\cdots E_{i_{r},\varepsilon_{r}})^{T}D'F_{i_{0},\varepsilon_{0}}\cdots F_{i_{r},\varepsilon_{r}}\\
 & =G^{t}(t')^{T}\cdot D'\cdot C^{t}(t')
\end{align*}

If the injective-reachable condition is satisfied, then we have $G^{t}(t')=-\Perm_{\sigma}$.
Therefore, $D'=-\Perm{}_{\sigma}^{T}D'C^{t}(t')$ and, consequently, we have 
$C^{t}(t')=-D'^{-1}\Perm_{\sigma}D'$, $c_{k}^{t}(t')=-\frac{d'_{k}}{d'_{\sigma(k)}}e_{\sigma(k)}$.
Because $c_{k}^{t}(t')$ are integer vectors, we must have $d'_{k}=d'_{\sigma(k)}$
and $c_{k}^{t}(t')=-e_{\sigma(k)}$ for any $k\in I_{\ufv}$.

Conversely, if $C^{t}(t')=-\Perm{}_{\sigma}$ $\forall k\in I_{\ufv}$,
we can show $d'_{k}=d'_{\sigma(k)}$ and $G^{t}(t')=-\Perm_{\sigma}$
in the same way.

\end{proof}

\begin{Cor}\label{cor:injective_red}

Given seeds $t'=\overleftarrow{\mu}t$. Then $t'=t[1]$ if and only
if $\overleftarrow{\mu}$ is a green to red sequence starting from
$t$.

\end{Cor}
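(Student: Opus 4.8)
The plan is to identify the two conditions ``$t' = t[1]$'' and ``$\overleftarrow{\mu}$ is a green to red sequence starting from $t$'' by routing both through Proposition \ref{prop:negative_c_chamber}. By definition, $\overleftarrow{\mu}$ is a green to red sequence starting from $t$ exactly when every $c$-vector $c_k^t(t')$ has negative sign, which is equivalent to the sign-coherence statement that $C^t(t') \leq 0$ (entrywise), using Theorem \ref{thm:cg_vector}(1). So the task reduces to showing that $C^t(t') \leq 0$ if and only if $C^t(t') = -\Perm_\sigma$ for some permutation $\sigma$ of $I_{\ufv}$, because the latter is precisely the characterization of injective-reachability furnished by Proposition \ref{prop:negative_c_chamber}.

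First I would handle the easy direction: if $t' = t[1]$, then Proposition \ref{prop:negative_c_chamber} gives $C^t(t') = -\Perm_\sigma$, whose columns are $-e_{\sigma(k)}$, each of which has negative sign; hence $\overleftarrow{\mu}$ is a green to red sequence. For the converse, suppose $\overleftarrow{\mu}$ is green to red, so all columns $c_k^t(t')$ are negative (and nonzero, since by the Corollary following Theorem \ref{thm:cg_vector} the $c$-vectors of $t'$ form a $\Z$-basis of $\Z^{I_{\ufv}}$, so no column vanishes). The key input is the duality $C^{t_0}(t)\cdot C^{t^{\op}}(\overleftarrow{\mu}^{-1}(t^{\op})) = \Id_{I_{\ufv}}$ from Theorem \ref{thm:cg_duality}, applied with $t_0 = t$: this shows $C^t(t')$ is invertible over $\Z$, i.e.\ $\det C^t(t') = \pm 1$. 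A nonpositive integer matrix with columns forming a $\Z$-basis must, after negating, be a nonnegative integer matrix of determinant $\pm 1$ whose columns form a $\Z$-basis of $\Z^{I_{\ufv}}$; such a matrix is a permutation matrix. (One clean way to see this: $-C^t(t')$ and its inverse are both nonnegative — the inverse is $-C^{t'}(t) \cdot (\text{sign adjustments})$, which is again nonpositive by applying the green-to-red hypothesis in the opposite direction via the duality; a nonnegative integer matrix with nonnegative integer inverse is a permutation matrix.) Thus $C^t(t') = -\Perm_\sigma$, and Proposition \ref{prop:negative_c_chamber} yields $t' = t[1]$.

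The main obstacle is the converse direction, specifically pinning down that a sign-coherent, nonpositive, unimodular $c$-matrix is exactly $-\Perm_\sigma$ rather than merely conjugate to it. The cleanest route is to use both halves of the $c$-vector duality in Theorem \ref{thm:cg_duality}: applying it once shows $-C^t(t')$ is invertible over $\Z$; applying it in the reverse direction (to the seed $t' = t[1]$ and the inverse mutation sequence, after checking that the green-to-red property is inherited appropriately, which follows since $C^t(t')$ negative forces $C^{t'^{\op}}(\cdots)$ negative by the duality identity $C^{t_0}(t)\cdot C^{t^{\op}}(\overleftarrow{\mu}^{-1}(t^{\op})) = \Id$) shows the inverse of $-C^t(t')$ is also a nonnegative integer matrix. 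An integer matrix that is nonnegative with a nonnegative integer inverse is forced to be a permutation matrix, since the $(i,i)$-entry of the product being $1$ with all terms nonnegative integers pins each row/column down to a single unit entry. This gives $C^t(t') = -\Perm_\sigma$ and closes the argument via Proposition \ref{prop:negative_c_chamber}.
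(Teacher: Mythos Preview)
Your approach is essentially correct and genuinely different from the paper's. The paper argues geometrically via the cluster scattering diagram: the $c$-vectors of $t'$ cut out a chamber $\cC^{t'}$, and if all $c$-vectors are negative then $\cC^{t'}\supseteq\cC^{-}$; since $\cC^{-}$ is itself a chamber of the scattering diagram, this forces $\cC^{t'}=\cC^{-}$ and hence $C^{t}(t')=-\Perm_{\sigma}$. Your route is purely combinatorial, using the $c$-matrix duality of Theorem~\ref{thm:cg_duality} together with sign coherence to pin down $-C^{t}(t')$ as a permutation matrix. This avoids invoking the scattering diagram machinery entirely, which is a nice simplification.

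That said, one step in your argument is asserted rather than proved. You need that the inverse matrix $C^{(t')^{\op}}(\overleftarrow{\mu}^{-1}((t')^{\op}))$ is also nonpositive, and you claim this ``follows since $C^{t}(t')$ negative forces $C^{(t')^{\op}}(\cdots)$ negative by the duality identity''. The duality identity alone does not give this; you must also invoke sign coherence (Theorem~\ref{thm:cg_vector}(1)) for the inverse matrix and then argue: if some column $c$ of the inverse were nonnegative, then $(-C^{t}(t'))(-c)\leq 0$ entrywise, contradicting that this product is a standard basis vector $e_j\geq 0$. Once this is filled in, the standard fact that a nonnegative integer matrix with nonnegative integer inverse is a permutation matrix finishes the job. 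Also, your parenthetical ``the inverse is $-C^{t'}(t)\cdot(\text{sign adjustments})$'' is not right---the duality gives $C^{(t')^{\op}}(t^{\op})$, a different object---and the phrase ``to the seed $t'=t[1]$'' in the converse paragraph is circular, since that is what you are proving. These are wording issues rather than structural ones, but they should be cleaned up.
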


\begin{proof}

The only if part is a consequence of Proposition \ref{prop:negative_c_chamber}. 

On the other hand, if $\overleftarrow{\mu}$ is a green to red sequence,
then $c_{k}^{t}(t')<0$ for all $k$. It defines a chamber $\cC^{t'}=\{m\in\R^{I}|c_{k}^{t}(t')\cdot\pr_{I_{\ufv}}m\geq0\}$
in the cluster scattering diagram associated to the initial seed $t$
(Section \ref{sec:Theta-functions}). But the chamber $\cC^{t'}$
contains the negative chamber $\cC^{-}=(\R_{\leq0}^{I_{\ufv}})\oplus\R^{I_{\fv}}$
of the scattering diagram. Therefore, one must have $\cC^{-}=\cC^{t'}$,
and consequently $C^{t}(t')=-\Perm_{\sigma}$ for some $\sigma$.
The claim follows from Proposition \ref{prop:negative_c_chamber}.

\end{proof}

\subsection{Cluster categories}\label{sec:cluster_category}

We refer the reader to \cite{Keller08Note}\cite{Plamondon10a} for
details of this section. A quiver $\tQ$ is a finite oriented graph,
which we assume to have no loops or $2$-cycles throughout this paper.
Denote its set of vertices by $I$ and arrows by $E$. An ice quiver
$\tQ$ is a quiver endowed with a partition of its vertices $I=I_{\ufv}\sqcup I_{\fv}$
(unfrozen and frozen respectively). The full subquiver of $\tQ$ supported
on the unfrozen vertices $I_{\ufv}$ is called the principal part
and denoted by $Q$.

To any ice quiver $\tQ$, we can associate an $I\times I$ skew-symmetric
matrix $(b_{ij})$ such that $b_{ij}$ is the difference between the
number of arrows from $j$ to $i$ and that from $i$ to $j$. Its
$I\times I_{\ufv}$-submatrix and $I_{\ufv}\times I_{\ufv}$-submatrix
are denoted by $\tB$ and $B$ as before. Conversely, to any $I\times I$
skew-symmetric integer matrix $(b_{ij})$, we can associate an ice
quiver $\tQ$.

The path algebra $\C\tQ$ is the $\C$-algebra
generated by paths of $\tQ$ whose multiplication is given by path
composition. $\C\tQ$ has the maximal ideal $\mm$ generated by the
arrows $a\in E$. Let $\widehat{\C\tQ}$ denote the completion. Choosing
a linear combination of oriented cycles $\tW\in\widehat{\C\tQ}$
called a potential, we can define its cyclic derivatives $\partial_{a}\tW$
for any $a\in E$, see \cite{DerksenWeymanZelevinsky08}. The ideal
$\langle\partial_{a}\tW\rangle_{a\in E}$ of $\widehat{\C\tQ}$ has
the closure $\overline{\langle\partial_{a}\tW\rangle_{a\in E}}=\cap_{n>0}(\langle\partial_{a}\tW\rangle_{a\in E}+\mm^{n})$.
We define the completed Jacobian algebra associated to the quiver
with potential to be $J_{(\tQ,\tW)}=\widehat{\C\tQ}/\overline{\langle\partial_{a}\tW\rangle_{a\in I}}$.
By restricting the potential $\tW$ to the full subquiver $Q$ (arrows
not contained in $Q$ are sent to $0$), we obtain the principal quiver
with potential $(Q,W)$ and the corresponding Jacobian algebra $J_{(Q,W)}$.

Let $\Gamma=\Gamma_{(\tQ,\tW)}$ denote the Ginzburg dg algebra (differential
graded algebra) associated to $(\tQ,\tW)$ \cite{Ginzburg06}. Then
its homology is concentrated at negative degrees such that $H^{>0}\Gamma=0$,
$H^{0}\Gamma=J_{(\tQ,\tW)}$. Let $\per\Gamma$ denote the perfect
derived category of $\Gamma$ (smallest triangulated category containing
$\Gamma$), and $D_{fd}\Gamma$ the full subcategory consisting of
objects with finite dimensional total homology. Let $\Sigma$ denote
the shift functor. 

The (generalized) cluster category $\cC=\cC_{(\tQ,\tW)}$ is defined
to be the quotient category $\per\Gamma/D_{fd}\Gamma$ \cite{Amiot09}.
Let $\pi$ denote the natural projection. We further assume that $J_{(\tQ,\tW)}=H^{0}\Gamma$
is finite dimensional. Then the category $\cC$ is a Hom-finite $2$-Calabi-Yau
triangulated category, which means $\Hom(X,\Sigma Y)\simeq D\Hom(Y,\Sigma X)$.
Furthermore, $\pi\Gamma$ is a cluster tilting object of $\cC$, i.e.,
$\Hom_{\cC}(\pi\Gamma,\Sigma(\pi\Gamma))=0$ and $\Hom_{\cC}(\pi\Gamma,\Sigma X)=0$
implies $X\in\add(\pi\Gamma)$. The subcategory of coefficient-free
objects is defined to be the full subcategory 
\begin{eqnarray*}
^{\bot}(\Sigma T_{\fv}) & = & \{X\in\cC|\Hom(X,\Sigma T_{\fv})=0\}
\end{eqnarray*}
where $T_{\fv}=\oplus_{i\in I_{\fv}}(\pi\Gamma_{i})$ and $\Gamma_{i}$
denote the $i$-th indecomposable projective of $\Gamma$.

From now on, we always assume that the potential $\tW$ is chosen to be
non-degenerate \cite{DerksenWeymanZelevinsky08}. Then we can mutate
cluster tilting objects. The cluster category $\cC$ associated to
$(\tQ,\tW)$ provides a categorification for the cluster algebra associated
to the initial seed $t_{0}=((b_{ij}),(x_{i}))$, such that we associate
cluster tilting objects $T(t)$ for $t\in\Delta^{+}$, with $T(t_{0})=\pi\Gamma$,
and quivers with potential $((\tQ(t),\tW(t))$ with $\tQ(t)$ corresponding
to $(b_{ij}(t))$. Notice that $T_{\fv}$ is a common summand for
all $T(t)$, $t\in\Delta^{+}$.

For any $M\in\cC$ and $T=T(t)$, we have an $\add T$-approximation
in $\cC$

\begin{eqnarray*}
T^{(1)} & \rightarrow & T^{(0)}\rightarrow M\rightarrow\Sigma T^{(1)}.
\end{eqnarray*}
Let us use identify the Grothendieck ring of $\add T$ with $\Mc(t)\simeq\Z^{I}$
such that the isoclass $[T_{i}]$ corresponds to the $i$-th unit
vector $f_{i}$. The index of $X$ is defined to be $\Ind^{T}M=[T^{(0)}]-[T^{(1)}]$.

For convenience, we consider right modules unless otherwise specified. We define the functor $F$ such that

\begin{eqnarray*}
F: & \cC & \rightarrow J_{(\tQ(t),\tW(t))}-\mod\\
 & X & \mapsto\Hom(T,\Sigma X).
\end{eqnarray*}
Its restriction on $^{\bot}(\Sigma T_{\fv})$ has image in $J_{(Q(t),W(t))}-\mod$.

\begin{Def}[Caldero-Chapoton formula]\label{def:CC_formula}
Consider the classical case $\kk=\Z$. For any given skew-symmetric seed $t$, the corresponding cluster
tilting object $T=T(t)$, and any coefficient-free object $M\in^{\bot}(\Sigma T_{\fv})$,
the cluster character of $M$ is defined to be the Laurent polynomial
in $\LP(t)$:

\begin{align*}
CC^{t}(M) & =x(t)^{\Ind^{T}M}(\sum_{n\in\yCone^{\geq0}(t)}\chi(\Gr_{n}FM)\cdot x(t)^{\tB\cdot n})
\end{align*}
where $\Gr_{n}FM$ is the submodule Grassmannian of the $J_{(Q(t),W(t))}$-module
$FM$ consisting of $n$-dimensional submodules, and $\chi$ denote
the topological Euler characteristic.

\end{Def}

We also define $CC^{t}(FM)=CC^{t}(M)$.

Let us recall the Calabi-Yau reduction in the sense of \cite{IyamaYoshino08},
see \cite[Section 3.3]{plamondon2013generic} for a brief introduction.

Let $(T_{\fv})$ denote the ideal of all morphisms of the cluster
category $\cC_{(\tQ,\tW)}$ which factor through $T_{\fv}$, then
the quotient $^{\bot}(\Sigma T_{\fv})/(T_{\fv})$ is naturally endowed
with a structure of triangulated category. Furthermore, $^{\bot}(\Sigma T_{\fv})/(T_{\fv})$
is equivalent to the cluster category $\cC_{(Q,W)}$ associated to
$(Q,W)$.

Let us use $\underline{\Gamma}$ to denote the Ginzburg algebra $\Gamma_{Q,W}$
and let $\underline{T}$ denote the corresponding cluster tilting
object in $\cC_{(Q,W)}$. Then, under the above quotient and equivalence,
any $T_{k}$ with $k\in I_{\ufv}$ is sent to $\underline{T}_{k}$.

Any object $M\in^{\bot}(\Sigma T_{\fv})$ is sent to an object $\underline{M}$
in $\cC_{(Q,W)}$. By \cite{plamondon2013generic}, the index of $\underline{M}$
is given by projection

\begin{align*}
\Ind^{\underline{T}}\underline{M} & =\pr_{I_{\ufv}}(\Ind^{T}M).
\end{align*}
In particular, if we let $I_{k}$ denote the indecomposable object
in $^{\bot}(\Sigma T_{\fv})$ which corresponds to $\Sigma(\uT_{k})$
in $\cC_{(Q,W)}$, then $\pr_{I_{\ufv}}\Ind^{T}I_{k}=-f_{k}$. Notice
that $F\Sigma(\uT_{k})$ is the $k$-th injective module of $J_{(Q,W)}$,
which we also denote by $I_{k}$.

By \cite{plamondon2013generic}, for any $g\in\Z^{I_{\ufv}}$, there
exists some $m\in\N^{I_{\fv}}$ depending on $g$ such that, for a
generic morphism $f\in\Hom(T^{[-g]_{+}},T^{[g]_{+}+m})$ (see \cite{Palu08a}),
$\mathrm{cone}f$ belongs to $\in^{\bot}(\Sigma T_{\fv})$ and has
no direct summand in $\add T_{\fv}$. We define the generic cluster
character associated to $g+m$ to be $\gen_{g+m}=CC(\mathrm{cone}f)$.

\begin{Thm}\cite[Theorem 1.3]{plamondon2013generic}

Given skew-symmetric seeds $t'=\overleftarrow{\mu}t$. Then the generic
cluster characters satisfy

\begin{align*}
\overleftarrow{\mu}^{*}\gen_{g+m}^{t} & =\gen_{\phi_{t',t}g+m}^{t'}.
\end{align*}

\end{Thm}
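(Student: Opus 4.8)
The plan is to reduce the statement to a single mutation $\overleftarrow{\mu}=\mu_k$ with $k\in I_{\ufv}$, and then to combine two ingredients: the compatibility of the Caldero--Chapoton map with mutation of the cluster tilting object, and the fact that the index of a fixed object transforms under such a mutation exactly by the tropical transformation $\phi_{t',t}$. This reduction is legitimate since tropical transformations compose, the mutation isomorphisms of fraction fields compose, and all generic cluster characters are defined inside one fixed cluster category $\cC$. So fix $t$, set $t'=\mu_k t$, and recall that $T(t')=\mu_k T(t)$ as cluster tilting objects of $\cC$ with $T_{\fv}$ a common direct summand; note also that $\mu_k$ is an involution on seeds, so $t$ and $t'$ play symmetric roles.

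First I would record that, after identifying $\cF(t)\simeq\cF(t')$ via $\mu_k^*$, one has $CC^{t}(M)=CC^{t'}(M)$ for every $M\in{}^{\bot}(\Sigma T_{\fv})$ having no summand in $\add T_{\fv}$. This is the cluster-character multiplication formula of Palu applied to the two exchange triangles of $T_k$, together with the fact that $CC^t$ takes the indecomposable summands of a cluster tilting object to cluster variables; it is the same compatibility that yields the Caldero--Chapoton expansion of Theorem~\ref{thm:CC-formula}. Consequently the theorem reduces to an assertion purely about objects: if $X_f=\mathrm{cone}(f)$ for a generic morphism $f\in\Hom(T^{[-g]_+},T^{[g]_+ +m})$ in $\cC$, then $X_f$ is again a generic object at the seed $t'$ with unfrozen parameter $\phi_{t',t}(g)$ (and some large frozen part).

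Next I would track the index. The defining triangle of $X_f$ gives $\Ind^{T(t)}X_f=[g]_+ +m-[-g]_+=g+m$, so $\gen^{t}_{g+m}=CC^{t}(X_f)$ for generic $f$. Passing from $T(t)$ to $\mu_k T(t)$, the index of an arbitrary object transforms by the piecewise-linear rule obtained from the two exchange triangles of $T_k$, the branch being selected by the sign of the $k$-th coordinate of the index; an elementary computation identifies this rule with the tropical transformation, so $\Ind^{T(t')}X_f=\phi_{t',t}(\Ind^{T(t)}X_f)$. This is the object-level lift of the $g$-vector identity $\tG^{t}(t')=\phi_{t,t_0}\tG^{t_0}(t')$. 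The bookkeeping on the frozen part is harmless: ``no summand in $\add T_{\fv}$'' refers to the common summand $T_{\fv}$ and is thus seed-independent, and by Plamondon's genericity lemma one may choose $m$ large so that the frozen part of $\phi_{t',t}(g+m)$ stays effective and so that enlarging $m$ leaves $CC$ unchanged.

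The remaining and main point is that genericity is intrinsic, i.e.\ that for generic $f$ at $t$ the object $X_f$ is also generic at $t'$. Here I would use Plamondon's description of the morphism spaces $\Hom_{\cC}(T^a,T^b)$ as affine varieties, and of the objects of a fixed index as forming a constructible family on a dense open subset of which $\chi(\Gr_n F(-))$ is constant, equal to the generic value. Applied at $t$, the $X_f$ sweep out a dense subset of the objects of index $g+m$; since the index of a given object is intrinsic once the cluster tilting object is fixed, the same objects sweep out a dense subset of the objects of index $\phi_{t',t}(g+m)$ at $t'$; by the symmetry of $t$ and $t'$, the two dense loci must meet, so the two generic values of $\chi(\Gr_n F(-))$ coincide. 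Hence $CC^{t'}(X_f)$ is the generic cluster character of $t'$ with unfrozen parameter $\phi_{t',t}(g)$, and since $CC^{t}(X_f)$ and $CC^{t'}(X_f)$ are the two cluster expansions of one and the same element, we obtain the asserted identity $\overleftarrow{\mu}^*\gen^{t}_{g+m}=\gen^{t'}_{\phi_{t',t}(g)+m}$ (where, as usual, $m$ stands for any sufficiently large frozen vector). The hardest step is exactly this last transversality: checking that the dense ``generic'' locus is detected in the same way through $T(t)$ and through $T(t')$, with the frozen coordinates lined up after $m$ is enlarged uniformly.
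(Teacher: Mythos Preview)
The paper does not prove this theorem at all: it is stated as a citation from \cite{plamondon2013generic} and used as a black box, so there is no ``paper's own proof'' to compare against. Your outline is a reasonable sketch of how Plamondon's argument goes, but there is a genuine gap in the middle step.

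You write that ``the index of an arbitrary object transforms by the piecewise-linear rule obtained from the two exchange triangles of $T_k$, the branch being selected by the sign of the $k$-th coordinate of the index''. This is false for arbitrary objects. The Dehy--Keller index mutation formula gives a linear transformation on $\Ind^{T}M$ only under a side condition, namely that one of the two spaces $\Hom(T_k^*,\Sigma M)$, $\Hom(T_k,\Sigma M)$ vanishes, and the branch is determined by \emph{which} one vanishes, not a priori by the sign of $(\Ind^T M)_k$. For rigid $M$ these coincide, but for a general object of a given index they need not, and in fact the index of a non-generic object can fail to transform by $\phi_{t',t}$. What Plamondon actually proves is that for a \emph{generic} $f$ the cone $X_f$ lies in the good locus where the required Hom vanishes and the branch matches the sign of $g_k$; this is a nontrivial step (it uses that generic morphisms between sums of $T_i$'s have cones with minimal possible $\Hom(T_k,\Sigma-)$), and it is exactly what makes the tropical rule appear. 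Your sketch collapses this into a statement about ``arbitrary objects'' and then treats genericity as a separate transversality issue, which reverses the logical order.

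The final paragraph on intrinsic genericity is also too loose to stand on its own: the parameter spaces of objects of a fixed index at $t$ and at $t'$ are different affine spaces, and you need an explicit comparison (Plamondon constructs one via the octahedral axiom applied to the exchange triangles) rather than an appeal to ``the same objects sweep out a dense subset''. The density claim is not automatic because the cone construction $f\mapsto\mathrm{cone}(f)$ is not a morphism of varieties in any obvious sense across the two parametrizations.
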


\subsection{Quantization}\label{sec:quantization}

We briefly recall the necessary modification needed for the quantum case $\kk=\Z[q^{\pm\Hf}]$. Assume that a seed $t$ satisfies the full rank assumption as before.

First, we endow the seed $t$ with a quantum seed structure by choosing a compatible $\Z$-valued skew-symmetric bilinear form $\lambda$ on $\Mc(t)$ and strictly positive integers $d'_k$, $k\in I_{\ufv}$. By compatibility, we mean
\begin{align*}
\lambda(f_i,p^* e_k)=-\delta_{i,k}d'_k,\ \forall i\in I,k\in I_{\ufv}.
\end{align*}
For any seed $t'=\mu_k t$, $k\in I_{\ufv}$, the linear isomorphism $\Mc(t')\simeq \Mc(t)$ via \eqref{eq:tropical_f} induces a bilinear form on $\Mc(t)$, which we still denote by $\lambda$. It follows from \cite{BerensteinZelevinsky05} that $\lambda$ is compatible with $t'$ as well. Repeatedly, we endow quantum seed structures to all seeds obtained from $t$ by iterated mutations.

For any quantum seed $t$, we endow the Laurent polynomial ring $\LP(t)$ with an extra multiplication called twisted product $*$, such that:
\begin{align*}
x^m * x^{m'}=q^{\Hf\lambda(m,m')}x^{m+m'},\ \forall m,m'\in\Mc(t).
\end{align*}
Note that $*$ becomes the commutative product $\cdot$ when we specialize $q^{\Hf}$ to $1$.

Unless otherwise specified, we will choose this twisted product $*$ as the multiplication for the $\kk$-algebra $\LP(t)$ instead of the commutative product $\cdot$. 

Similarly, we endow $\kk[N(t)]$ with the twisted product $*$
\begin{align*}
y^n* y^{n'}=q^{\Hf\lambda(p^*n,p^* n')}y^{n+n'},\ \forall n,n'\in N_{\ufv}(t).
\end{align*}
Then $p^*$ induces a $\kk$-algebra homomorphism from $\kk[N(t)]$ to $\LP(t)$ commuting with the twisted products.

Using the twisted product, we construct the skew-fields of fractions of $\LP(t)$ and $\kk[N(t)]$ and denote them by $\cF(t)=\cF(\LP(t))$ and $\cF(\kk[N(t)])$ respectively. The classical automorphisms in \eqref{eq:rho} are quantized to the automorphisms $\rho_{k,\varepsilon}$, such that, for $i\neq k$,
\begin{align}\label{eq:q_rho}
\begin{split}
\rho_{k,\varepsilon}(x_{i})  &= x_{i},\\
\rho_{k,\varepsilon}(x_{k}^{-1}) &= x_{k}^{-1}+x^{-f_k+\varepsilon v_{k}}\\
\rho_{k,\varepsilon}(y_{i})  &=y_{i}\cdot \sum_{s=0}^{|b_{ki}|} \begin{pmatrix}
|b_{ki}|\\s 
\end{pmatrix} _{q_k} y_{k}^{\varepsilon s},\ b_{ki}\leq 0 \\
\rho_{k,\varepsilon}(y_{i}^{-1})  &=y_{i}^{-1}\cdot \sum_{s=0}^{|b_{ki}|} \begin{pmatrix}
|b_{ki}|\\s 
\end{pmatrix} _{q_k} y_{k}^{\varepsilon s},\ b_{ki}> 0 \\
\rho_{k,\varepsilon}(y_{k})  &=y_{k}.
\end{split}
\end{align}
where we denote $q_k=q^{\Hf d'_k}$, the quantum number $[a]_{q}=\frac{q^a-q^{-a}}{q-q^{-1}}$ for $0\neq a\in \N$, $[0]_q!=1$, $[a]_q!=[a]_q[a-1]_q\cdots [1]_q$, and $\begin{pmatrix}
a\\b 
\end{pmatrix} _{q}=\frac {[a]_q!}{[b]_q! [a-b]_q!}$.

As before, define quantum mutations $\mu_k^*$ as the compositions $\rho_{k,\varepsilon}\circ\tau_{k,\varepsilon}$. Then they are independent of the choice of the sign $\varepsilon$, such that, for $i\neq k$,
\begin{align}\label{eq:q_mutation}
\begin{split}
\mu_k^* (x_{i}') & = x_{i}\\
\mu_k^* (x_{k}') & = x^{-f_k+\sum_j[-b_{jk}]_{+}f_j}+x^{-f_k+\sum_i[b_{ik}]_{+}f_i}\\
\mu_k^* (y_{i}') &=y_{i}\cdot \sum_{s=0}^{|b_{ki}|} \begin{pmatrix}
|b_{ki}|\\s 
\end{pmatrix} _{q_k} y_{k}^{ s},\ b_{ki}\leq 0 \\
\mu_k^* ((y_{i}')^{-1})  &=y_{i}^{-1}y_{k}^{-b_{ki}}\cdot \sum_{s=0}^{|b_{ki}|} \begin{pmatrix}
|b_{ki}|\\s 
\end{pmatrix} _{q_k} y_{k}^{ s},\ b_{ki}> 0 \\
\mu_k^* (y_{k}') &=y_{k}^{-1}.
\end{split}
\end{align}

We define the quantum (upper) cluster algebras as in Definition \ref{def:cluster_algebra}, using the quantum mutations and twisted products in the construction.

\section{Bidegrees and support of Laurent polynomials\label{sec:bidegrees_and_support}}

Given a seed $t=((b_{ij}(t))_{i,j\in I},(x_{i}(t))_{i\in I})$ such
that the $I\times I_{\ufv}$-matrix $\tB(t)$ is of full rank. Recall
that we have
\begin{align*}
\Mc(t) & \simeq\Z^{I}\\
N(t) & \simeq\Z^{I}\\
\yCone(t) & \simeq\Z^{I_{\ufv}}\\
\yCone^{\geq0}(t) & \simeq\N^{I_{\ufv}}\\
\yCone^{>0}(t) & \simeq\N^{I_{\ufv}}-\{0\},
\end{align*}
where the natural basis of $\Mc(t)$, $N(t)$ and $\yCone(t)$ are
denoted by $\{f_{i}|i\in I\}$, $\{e_{i}|i\in I\}$ and $\{e_{k}|k\in I_{\ufv}\}$
respectively. The pairing $\langle\ ,\ \rangle$ between $\Mc(t)$
and $N(t)$ are defined such that $\langle f_{i},e_{j}\rangle=\frac{1}{d_{i}}\delta_{ij}$.
In addition, $N(t)$ is endowed with the skew-symmetric bilinear form
$\{\ ,\ \}$ such that $\{e_{i},e_{j}\}=d_{j}^{-1}b_{ji}$. We also
have the linear map $p^{*}:\yCone(t)\rightarrow\Mc(t)$ such that
$p^{*}(n)=\{n,\ \}$, which turns out to be $p^{*}(n)=\tB(t)\cdot n$
under the identification $\Mc(t)\simeq\Z^{I}$ and $\yCone(t)\simeq\Z^{I_{\ufv}}$.
Denote $v_{k}=p^{*}(e_{k})\ \forall k\in I_{\ufv}$. The vectors $\{v_{k}\}_{k\in I_{\ufv}}$
are linearly independent by the full rank assumption on $\tB(t)$.

\subsection{Dominance order}

The dominance order is the following partial order defined on $\Mc(t)$.

\begin{Def}[Dominance order {\cite[Definition 3.1.1]{qin2017triangular}}]\label{def:dominance_order}

For any given seed $t$ and $g,g'\in\Mc(t)$, we say $g'$ is dominated
by $g$, denoted by $g'\preceq_{t}g$, if and only if we have $g'=g+p^{*}(n)$
for some $n\in\yCone^{\geq0}(t)$. We write $g'\prec_{t}g$ if $g\neq g'$.

\end{Def}

For any given $g,\eta\in\Mc(t)$, we define the following subsets
of $\Mc(t)$:

\begin{align*}
\Mc(t)_{\preceq_{t}g} & =\{g'\in\Mc(t)|g'\preceq_{t}g\}\\
 & =g+p^{*}N_{\ufv}^{\geq0}(t)\\
_{\eta\preceq_{t}}\Mc(t) & =\{g'\in\Mc(t)|\eta\preceq_{t}g'\}\\
 & =\eta-p^{*}N_{\ufv}^{\geq0}(t)\\
_{\eta\preceq_{t}}\Mc(t)_{\preceq_{t}g} & =\{g'\in\Mc(t)|\eta\preceq_{t}g'\preceq_{t}g\}\\
 & =\ _{\eta\preceq_{t}}\Mc(t)\cap\Mc(t)_{\preceq_{t}g}.
\end{align*}

\begin{Lem}[Finite Interval Lemma, {\cite[Lemma 3.1.2]{qin2017triangular}}]\label{lem:finite_interval}

For any $\eta,g\in\Mc(t)$, $_{\eta\preceq_{t}}\Mc(t)_{\preceq_{t}g}$
is a finite set. In particular, if $\eta\preceq_{t}g$ and $g\preceq_{t}\eta$,
we must have $\eta=g$ as elements in $\Mc(t)$.

\end{Lem}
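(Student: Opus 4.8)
The plan is to reduce the whole statement to the standing hypothesis that $\tB(t)$ has full rank, i.e. that $p^{*}:\yCone(t)\to\Mc(t)$ is injective, together with the elementary fact that in $\N^{I_{\ufv}}$ there are only finitely many elements below a fixed one in the componentwise order.

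First I would unwind the definitions. An element $g'$ lies in $_{\eta\preceq_{t}}\Mc(t)_{\preceq_{t}g}$ exactly when there are $n_{1},n_{2}\in\yCone^{\geq0}(t)$ with $g'=g+p^{*}(n_{1})$ and $\eta=g'+p^{*}(n_{2})$. Adding the two relations gives $\eta-g=p^{*}(n_{1}+n_{2})$. Since $p^{*}$ is injective, there is at most one $n\in\yCone(t)$ with $p^{*}(n)=\eta-g$; if no such $n$ exists, or if the unique one does not lie in $\yCone^{\geq0}(t)$, then the interval is empty and we are done. Otherwise this forces $n_{1}+n_{2}=n$ with $n\geq0$, hence $0\leq n_{1}\leq n$ componentwise, so $n_{1}$ ranges over a finite set (at most $\prod_{k\in I_{\ufv}}(n_{k}+1)$ choices), and each $n_{1}$ determines $g'=g+p^{*}(n_{1})$ uniquely. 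Therefore $_{\eta\preceq_{t}}\Mc(t)_{\preceq_{t}g}$ is finite.

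For the last assertion, suppose $\eta\preceq_{t}g$ and $g\preceq_{t}\eta$, so $g=\eta+p^{*}(n_{1})$ and $\eta=g+p^{*}(n_{2})$ for some $n_{1},n_{2}\in\yCone^{\geq0}(t)$. Adding gives $p^{*}(n_{1}+n_{2})=0$, and injectivity of $p^{*}$ yields $n_{1}+n_{2}=0$; since both summands are $\geq0$, we get $n_{1}=n_{2}=0$, hence $\eta=g$ in $\Mc(t)$.

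I do not expect any genuine obstacle here: the only nontrivial ingredient is the full-rank assumption on $\tB(t)$ (hence injectivity of $p^{*}$), and the rest is bookkeeping in the monoid $\yCone^{\geq0}(t)\simeq\N^{I_{\ufv}}$. The only point to handle with a little care is the degenerate case in which $\eta-g$ fails to lie in $p^{*}(\yCone^{\geq0}(t))$, where the interval is simply empty.
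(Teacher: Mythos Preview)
Your proof is correct and follows exactly the approach indicated in the paper, which simply states that the claim follows from the full-rank assumption on $\tB(t)$; you have just spelled out the details that the paper leaves implicit.
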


\begin{proof}

The claim follows from the assumption that $\tB(t)$ is of full rank.

\end{proof}

Recall that, for any two seeds $t,\ t'\in\Delta^{+}$, we have tropical
transformation $\phi_{t',t}:\Mc(t)\rightarrow\Mc(t')$. By viewing $\phi_{t',t}$ as an identification, the set of tropical points $\tropSet$ is the set
of equivalent classes. Moreover, the dominance order $\prec_{t'}$
is transported to $\Mc(t)$ and $\tropSet$ such that, for any $g,h\in \Mc(t)$, whenever we have $\phi_{t',t}h\prec_{t'} \phi_{t',t'}g$, we define $h\prec_{t'}g$ in $\Mc(t)$ and $[g]\prec_{t'}[h]$ in $\tropSet$.

In general, for any given sets of seeds $S,S'$, we define

\begin{align*}
\tropSet_{\preceq_{S}[g]} & =\{[g']\in\tropSet|[g']\preceq_{t}[g],\ \forall t\in S\}\\
_{[\eta]\preceq_{S'}}\tropSet & =\{[g']\in\tropSet|[\eta]\preceq_{t}[g'],\ \forall t\in S'\}\\
_{[\eta]\preceq_{S'}}\tropSet_{\preceq_{S}[g]} & =\ _{[\eta]\preceq_{S'}}\tropSet\cap\tropSet_{\preceq_{S}[g]}.
\end{align*}
We have similar definitions for $\Mc(t)_{\preceq_{S}g}$, $_{\eta\preceq_{S'}}\Mc(t)$,
$_{\eta\preceq_{S'}}\Mc(t)_{\preceq_{S}g}$. 

From now on, we use
the symbols $\Mc(t)$ and $g\in\Mc(t)$ if we want to specify
a special seed $t$, and $\tropSet$ and $[g]\in\tropSet$ otherwise.

\subsection{Formal Laurent series and bidegrees}

The monoid algebra $\kk[\yCone^{\geq0}(t)]=\kk[\lambda^{n}]_{n\in\yCone^{\geq0}(t)}$
has a maximal ideal $\mm=\kk[\yCone^{>0}(t)]$. The corresponding completion
is denoted by $\widehat{\kk[\yCone^{\geq0}(t)]}$. The injective linear
map $p^{*}:\yCone(t)\rightarrow\Mc(t)$ induces an embedding $p^{*}$
from $\kk[\yCone^{\geq}(t)]$ to $\LP(t)=\kk[\Mc(t)]=\kk[\chi^{m}]_{m\in\Mc(t)}$
such that $p^{*}(\lambda^{n})=\chi^{p^{*}(n)}\ \forall n\in\yCone(t)$.
We define the set of formal Laurent series to be
\begin{align*}
\widehat{\LP(t)} & =\LP(t)\otimes_{\kk[\yCone^{\geq0}(t)]}\widehat{\kk[\yCone^{\geq0}(t)]}
\end{align*}
where $\kk[\yCone^{\geq0}(t)]$ is viewed as a subalgebra of $\kk[\Mc(t)]$
via the embedding $p^{*}$.

Then a formal Laurent series is a finite sum of the elements of the
following type 
\begin{align*}
a\cdot x(t)^{g}\cdot\sum_{n\in\yCone^{\geq0}(t)}b_{n}y(t)^{n}
\end{align*}
where $a,b_{n}\in\kk,g\in\Mc(t)$, $x_{i}(t)=\chi^{f_{i}}$ and $y_{k}=\chi^{p^{*}(e_{k})}=\prod_{i}x_{i}^{b_{ik}}$
by the embedding $p^{*}$.

Similarly, let $\widehat{\kk[-\yCone^{\geq0}(t)]}$ denote the completion
of $\kk[-\yCone^{\geq0}(t)]$ with respect to its maximal ideal $\kk[-\yCone^{>0}(t)]$,
we can define 
\begin{align*}
\widetilde{\LP(t)} & =\LP(t)\otimes_{\kk[-\yCone^{\geq0}(t)]}\widehat{\kk[-\yCone^{\geq0}(t)]}
\end{align*}

Then any formal series $z\in\widetilde{\LP(t)}$ is a finite sum of
the elements of the following type 
\begin{align*}
a\cdot x(t)^{g}\cdot\sum_{n\in-\yCone^{\geq0}(t)}b_{n}y(t)^{n}
\end{align*}
where $a,b_{n}\in\kk,g\in\Mc(t)$.

Let us postpone the discussion of the ring structure for the moment
and give an intuitive definition of (co)degrees arising from dominance
order.

\begin{Def}[Degree, pointed \cite{qin2017triangular}]\label{def:pointed}

Given any formal sum $z=\sum_{g\in\Mc(t)}c_{g}x(t)^{g}$ where $c_{g}\in\kk$.
If the set of the Laurent degrees $\{g|c_{g}\neq0\}$ has a unique
$\prec_{t}$-maximal element $g$, we say $z$ has degree $g$ with
respect to $t$, and denote $\deg^{t}z=g$.

If we have $\deg^{t}z=g$ and $c_{g}=1$, then $z$ is said to be
pointed at $g$.

\end{Def}
A set is said to be pointed if it consists of elements pointed at different degrees.

We also need the following notion dual to Definition \ref{def:pointed}.

\begin{Def}[Codegree, copointed]\label{def:copointed}

Given any formal sum $z=\sum_{g\in\Mc(t)}c_{g}x(t)^{g}$ where $c_{g}\in\kk$.
If the set of the Laurent degrees $\{g|c_{g}\neq0\}$ has a unique
$\prec_{t}$-minimal element $\eta$, we say $z$ has codegree $\eta$
with respect to $t$, and denote $\codeg^{t}z=\eta$.

If we have $\codeg^{t}z=\eta$ and $c_{\eta}=1$, then $z$ is said
to be copointed at $\eta$.

\end{Def}

\begin{Def}[Bidegree, bipointed]\label{def:bipointed}

Given any formal sum $z=\sum_{g\in\Mc(t)}c_{g}x(t)^{g}$. If it
has $\deg^{t}z=g$ and $\codeg^{t}z=\eta$ for some $g,\eta\in\Mc(t)$,
we say $z$ has bidegree $(\eta,g)$, denoted by $\bideg^{t}z=(\eta,g)$.

If $z$ is further pointed at $g$ and copointed at $\eta$, we say
it is bipointed at $(\eta,g)$.

\end{Def}

We have the following easy observation.

\begin{Lem}

Given any formal sum $z=\sum_{g\in\Mc(t)}c_{g}x(t)^{g}$. If it
has bidegree $(\eta,g)$, then the following claims are true:

\begin{enumerate}

\item $\eta\preceq_{t}g$ .

\item $z$ is a Laurent polynomial. 

\item $z$ is a Laurent monomial if and only if $\eta=g$.

\end{enumerate}

\end{Lem}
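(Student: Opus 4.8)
The plan is to derive all three claims directly from Definitions \ref{def:pointed}--\ref{def:bipointed} together with the Finite Interval Lemma (Lemma \ref{lem:finite_interval}). The central observation is this: if $z=\sum_{g'\in\Mc(t)}c_{g'}x(t)^{g'}$ has bidegree $(\eta,g)$, then every $g'$ in the support $\{g'\mid c_{g'}\neq 0\}$ lies in the dominance interval ${}_{\eta\preceq_{t}}\Mc(t)_{\preceq_{t}g}$ --- the degree $g$ dominates every element of the support, and the codegree $\eta$ is dominated by every element of the support. In particular $g$ and $\eta$ themselves belong to the support (being its $\prec_t$-maximal, resp.\ $\prec_t$-minimal, element), so $c_g\neq 0\neq c_\eta$.

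First I would prove (1): since $\eta$ lies in the support and $g$ dominates the whole support, $\eta\preceq_{t}g$ is immediate. For (2): the support is contained in ${}_{\eta\preceq_{t}}\Mc(t)_{\preceq_{t}g}$, which is a finite set by Lemma \ref{lem:finite_interval}; hence $z$ has finite support and therefore lies in $\LP(t)=\Z[\Mc(t)]$, i.e.\ it is a genuine Laurent polynomial rather than merely an element of $\widehat{\LP(t)}$ or $\widetilde{\LP(t)}$.

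For (3), the ``only if'' direction is immediate: a nonzero Laurent monomial $c\,x(t)^{h}$ has support equal to the singleton $\{h\}$, whose unique $\prec_t$-maximal and $\prec_t$-minimal element is $h$, so $\eta=g=h$. Conversely, assume $\eta=g$; then the support is contained in ${}_{g\preceq_{t}}\Mc(t)_{\preceq_{t}g}$, and by the ``in particular'' clause of Lemma \ref{lem:finite_interval} (if $g\preceq_{t}g'$ and $g'\preceq_{t}g$ then $g'=g$) this interval is the singleton $\{g\}$. Hence $z=c_{g}x(t)^{g}$ with $c_{g}\neq 0$, a Laurent monomial.

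There is essentially no serious obstacle; the only point needing care is the reading of ``degree''/``codegree'' --- one must use that $g$ (resp.\ $\eta$) dominates (resp.\ is dominated by) the \emph{entire} support, not merely that it is one maximal (resp.\ minimal) element among several incomparable ones, which is the content of the conventions fixed in Section \ref{sec:bidegrees_and_support}. I would also remark that the empty sum has no $\prec_t$-maximal element and hence no bidegree, so the degenerate case $z=0$ is harmlessly excluded from the hypothesis.
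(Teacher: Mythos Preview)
Your proof is correct and follows exactly the paper's approach: the paper's own proof is the single line ``The claim follows from definition and the finiteness of $_{\eta\preceq_{t}}\Mc(t)_{\preceq_{t}g}$ (Lemma \ref{lem:finite_interval}).'' Your careful remark that one must read ``unique $\prec_t$-maximal element'' as ``$\prec_t$-maximum'' (i.e.\ dominating the entire support) is apt and consistent with how degree and codegree are used throughout the paper.
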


\begin{proof}

The claim follows from definition and the finiteness of $_{\eta\preceq_{t}}\Mc(t)_{\preceq_{t}g}$
(Lemma \ref{lem:finite_interval}).

\end{proof}

We will mainly be interested in Laurent polynomials. But sometimes
our calculation will be carried out as formal series. Let us look
at these series in more details. Recall that we have identified $\kk[\yCone(t)]$
as a subalgebra of $\kk[\Mc(t)]$ via the embedding $p^{*}$.

Given any $g\in\Mc(t)$. The $\kk$-submodule $x^{g}\cdot\kk[\yCone^{\geq0}(t)]\subset\kk[\Mc(t)]$
is a rank one free module of the algebra $\kk[\yCone^{\geq0}(t)]$.
We define its completion to be the rank one free $\widehat{\kk[\yCone^{\geq0}(t)]}$-module
$x^{g}\cdot\widehat{\kk[\yCone^{\geq0}(t)]}$.

The subset $\ptSet^{t}(g):=x^{g}\cdot(1+\kk[\yCone^{>0}(t)])$ of $x^{g}\cdot\kk[\yCone^{\geq0}(t)]$
is the set of Laurent polynomials pointed at degree $g$. Let $\widehat{\kk[\yCone^{>0}(t)]}$
denote the subset of series in $\widehat{\kk[\yCone^{\geq0}(t)]}$
with vanishing constant terms. Then the subset $\hPtSet^{t}(g):=x^{g}(1+\widehat{\kk[\yCone^{>0}(t)]})$
of $x^{g}\cdot\widehat{\kk[\yCone^{\geq0}(t)]}$ is the set of formal
Laurent series pointed at degree $g$. Notice that we have $\ptSet^{t}(g)\subset\hPtSet^{t}(g)\subset\widehat{\LP(t)}$.

Similarly, the subset $\cPtSet^{t}(\eta):=x^{\eta}\cdot(1+\kk[-\yCone^{>0}(t)])$
of $x^{\eta}\cdot\kk[-\yCone^{\geq0}(t)]$ is the set of Laurent polynomials
copointed at degree $g$. In addition, we have the subset of copointed
formal Laurent series $\tCPtSet^{t}(\eta)=x^{\eta}\cdot(1+\widehat{\kk[-\yCone^{>0}(t)]})$
of $x^{\eta}\cdot\widehat{\kk[-\yCone^{\geq0}(t)]}$. Notice that we
have $\cPtSet^{t}(g)\subset\tCPtSet^{t}(g)\subset\widetilde{\LP(t)}$.

Finally, the subset $\bPtSet^{t}(\eta,g):=\ptSet^{t}(g)\cap\cPtSet^{t}(\eta)$
of $\kk[\Mc(t)]$ is the set of Laurent polynomials bipointed at
bidegree $(\eta,g)$.

\begin{Lem}[inverse]\label{lem:inverse_series}

(1) For any given pointed formal Laurent series $u\in\hPtSet^{t}(g)$,
where $g\in\Mc(t)$, $u$ has a multiplicative inverse $v$ in the
ring of formal Laurent series $\widehat{\LP(t)}$. In addition, $v$
belongs to $\hPtSet^{t}(-g)$.

(2) For any given copointed element $u'\in\tCPtSet^{t}(\eta)$, where
$\eta\in\Mc(t)$, $u'$ has a multiplicative inverse $v'$ in $\widetilde{\LP(t)}$.
In addition, $v'$ belongs to $\tCPtSet^{t}(-\eta)$.

\end{Lem}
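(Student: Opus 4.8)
The plan is to reduce both parts to the single fact that a power series in the $y$-variables with constant term $1$ is invertible via the geometric series, and that multiplication by the monomial $x^{\pm g}$ is an isomorphism shifting degrees. First I would record that $\widehat{\LP(t)}$ is a commutative ring: a product of two elements in normal form $a\,x^{g}\sum_{n\in\yCone^{\geq0}(t)}b_{n}y^{n}$ is again a finite sum of such elements, since $x^{g_{1}}y^{n_{1}}\cdot x^{g_{2}}y^{n_{2}}=x^{g_{1}+g_{2}}y^{n_{1}+n_{2}}$ and each resulting $y$-series lies in the $\mm$-adic completion $\widehat{\Z[\yCone^{\geq0}(t)]}$, which is complete and separated. (Here $p^{*}$ injective, i.e. $\tB(t)$ of full rank, is what makes the normal forms unambiguous.) The same remarks apply to $\widetilde{\LP(t)}$ with $-\yCone^{\geq0}(t)$ and the $\mm$-adic completion at $\Z[-\yCone^{>0}(t)]$ in place of their positive counterparts.

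For part (1), I would write $u=x^{g}\cdot w$ with $w\in 1+\widehat{\Z[\yCone^{>0}(t)]}$, which is exactly what membership in $\hPtSet^{t}(g)$ means. Then $1-w\in\widehat{\Z[\yCone^{>0}(t)]}$ is topologically nilpotent, so the series $w^{-1}:=\sum_{j\geq 0}(1-w)^{j}$ converges in $\widehat{\Z[\yCone^{\geq0}(t)]}$ and is a multiplicative inverse of $w$ there (the ring being commutative). Moreover $w^{-1}=1+\sum_{j\geq 1}(1-w)^{j}\in 1+\widehat{\Z[\yCone^{>0}(t)]}$. Setting $v:=x^{-g}\cdot w^{-1}$, and using that $x^{-g}=\chi^{-g}$ is a unit of $\LP(t)$ with $x^{g}x^{-g}=1$, commutativity of $\widehat{\LP(t)}$ gives $uv=x^{g}w\cdot x^{-g}w^{-1}=1$; and $v\in x^{-g}\bigl(1+\widehat{\Z[\yCone^{>0}(t)]}\bigr)=\hPtSet^{t}(-g)$, as required. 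Uniqueness of the inverse in $\widehat{\LP(t)}$ is automatic once it is a ring.

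For part (2), I would simply transport the argument under the replacement $\yCone^{\geq0}(t)\rightsquigarrow -\yCone^{\geq0}(t)$: write $u'=x^{\eta}w'$ with $w'\in 1+\widehat{\Z[-\yCone^{>0}(t)]}$, invert $w'$ by the geometric series inside $\widehat{\Z[-\yCone^{\geq0}(t)]}$, and put $v':=x^{-\eta}(w')^{-1}\in\tCPtSet^{t}(-\eta)$, giving $u'v'=1$ in $\widetilde{\LP(t)}$. I do not expect any genuine obstacle here; the only thing to be careful about — and the first thing I would nail down — is the bookkeeping that $\widehat{\LP(t)}$ and $\widetilde{\LP(t)}$ are well-defined commutative rings with complete, separated coefficient rings, so that the geometric series actually converge and the formal manipulations above are legitimate.
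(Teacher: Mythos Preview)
Your proof is correct and follows essentially the same approach as the paper: factor $u=x^{g}F$ with $F\in 1+\widehat{\Z[\yCone^{>0}(t)]}$, invert $F$ in the completed coefficient ring, and set $v=x^{-g}F^{-1}$. The only difference is that you make explicit the geometric-series construction of $F^{-1}$ and the ring-theoretic preliminaries, whereas the paper simply asserts that $F$ has a unique inverse in $1+\widehat{\Z[\yCone^{>0}(t)]}$ and that part (2) is similar.
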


\begin{proof}

(1) $u$ takes the form $u=x(t)^{g}* F$, where $F\in1+\widehat{\kk[\yCone^{>0}(t)]}$, and $*$ denote the twisted product.
Notice that $F$ has a unique inverse $F'\in1+\widehat{\kk[\yCone^{>0}(t)]}$
in $\widehat{\kk[\yCone^{\geq0}(t)]}$. Then $u$ has the inverse $v=F'*x(t)^{-g}$. 

(2) The proof is similar to (1).

\end{proof}

\begin{Lem}[product]\label{lem:product_series}

(1) For any given pointed series $z_{g},z_{\eta}$ with degree $g$
and $\eta$ respectively, their product is a well-defined pointed
series with degree $g+\eta$.

(2) For any given copointed series $z_{g},z_{\eta}$ with codegree
$g$ and $\eta$ respectively, their product is a well-defined copointed
series with codegree $g+\eta$.

\end{Lem}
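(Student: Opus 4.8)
The plan is to reduce both statements to multiplication in the completed monoid algebras $\widehat{\Z[\yCone^{\geq0}(t)]}$ and $\widehat{\Z[-\yCone^{\geq0}(t)]}$, in the same spirit as the proof of Lemma~\ref{lem:inverse_series}. First I would record the ring-theoretic input: since $\yCone^{\geq0}(t)\simeq\N^{I_{\ufv}}$ is a finitely generated, cancellative, sharp monoid, the maximal ideal $\mm=\Z[\yCone^{>0}(t)]$ of $\Z[\yCone^{\geq0}(t)]$ satisfies $\mm^{a}\mm^{b}\subseteq\mm^{a+b}$ and $\bigcap_{n}\mm^{n}=0$ (an element of $\mm^{n}$ is a $\Z$-combination of monoid elements of total weight $\geq n$), so its $\mm$-adic completion $\widehat{\Z[\yCone^{\geq0}(t)]}$ is again a commutative ring; the same applies to $\widehat{\Z[-\yCone^{\geq0}(t)]}$. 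Consequently, for each $g\in\Mc(t)$ the $\Z$-module $x(t)^{g}\cdot\widehat{\Z[\yCone^{\geq0}(t)]}$ is a free rank-one module over $\widehat{\Z[\yCone^{\geq0}(t)]}$ sitting inside $\widehat{\LP(t)}$, and $\bigl(x(t)^{g}\widehat{\Z[\yCone^{\geq0}(t)]}\bigr)\cdot\bigl(x(t)^{\eta}\widehat{\Z[\yCone^{\geq0}(t)]}\bigr)\subseteq x(t)^{g+\eta}\widehat{\Z[\yCone^{\geq0}(t)]}$, so the product $z_{g}z_{\eta}$ in part (1) is well defined once one knows both factors lie in such shifted copies.

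For part (1), the remaining point is that a formal Laurent series of degree $g$ lies in $x(t)^{g}\cdot\widehat{\Z[\yCone^{\geq0}(t)]}$. Indeed, an element of $\widehat{\LP(t)}$ is a finite sum of ``towers'' $x(t)^{g_{j}}F_{j}$ with $F_{j}\in\widehat{\Z[\yCone^{\geq0}(t)]}$, so its support is contained in the union of the down-sets $g_{j}+p^{*}(\yCone^{\geq0}(t))$; peeling off towers from the top and using that there are only finitely many, a unique $\prec_{t}$-maximal support element $g$ forces $g_{j}\preceq_{t}g$ for every $j$, whence the claim. Thus write $z_{g}=x(t)^{g}F$ and $z_{\eta}=x(t)^{\eta}F'$ with $F,F'\in\widehat{\Z[\yCone^{\geq0}(t)]}$ whose constant terms are the leading coefficients $c_{g},c_{\eta}\neq0$. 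Then $z_{g}z_{\eta}=x(t)^{g+\eta}\cdot FF'$, and $FF'\in\widehat{\Z[\yCone^{\geq0}(t)]}$ has constant term $c_{g}c_{\eta}$, which is nonzero because $\Z$ is an integral domain; hence $z_{g}z_{\eta}$ has all Laurent degrees $\preceq_{t}g+\eta$ with nonzero coefficient $c_{g}c_{\eta}$ at $g+\eta$, i.e. $\deg^{t}(z_{g}z_{\eta})=g+\eta$, and if both factors are pointed ($c_{g}=c_{\eta}=1$) so is the product.

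Part (2) is formally dual: replace $\yCone^{\geq0}(t)$ by $-\yCone^{\geq0}(t)$, the completion $\widehat{\Z[\yCone^{\geq0}(t)]}$ by $\widehat{\Z[-\yCone^{\geq0}(t)]}$, and $\widehat{\LP(t)}$ by $\widetilde{\LP(t)}$. A series of codegree $g$ has support inside $g-p^{*}(\yCone^{\geq0}(t))$ by the argument above (now peeling towers from the bottom), hence equals $x(t)^{g}G$ with $G\in\widehat{\Z[-\yCone^{\geq0}(t)]}$ of nonzero constant term, and the identical computation in $\widehat{\Z[-\yCone^{\geq0}(t)]}$ yields $\codeg^{t}(z_{g}z_{\eta})=g+\eta$.

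The argument is essentially routine bookkeeping; the only two substantive points are (a) verifying that the completed monoid algebras are genuinely rings so that the products are well defined, which relies on $\yCone^{\geq0}(t)$ being a sharp monoid, and (b) the observation that the extreme coefficient $c_{g}c_{\eta}$ of the product does not vanish, which uses that the coefficient ring $\Z$ is a domain (over a ring with zero divisors one would have to weaken ``degree $g+\eta$'' to ``$\preceq_{t}g+\eta$''). I do not expect any real difficulty beyond these.
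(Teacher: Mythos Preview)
Your proof is correct and essentially the same as the paper's: both amount to multiplying in the completed monoid ring $\widehat{\Z[\yCone^{\geq0}(t)]}$ and reading off the leading (resp.\ trailing) term, the paper phrasing well-definedness via the Finite Interval Lemma (each output Laurent degree receives only finitely many contributions) and you via the completion being a ring. One small point: for pointed series the factorization $z_{g}=x(t)^{g}F$ with $F\in 1+\widehat{\Z[\yCone^{>0}(t)]}$ is immediate from the definition $\hPtSet^{t}(g)=x(t)^{g}\bigl(1+\widehat{\Z[\yCone^{>0}(t)]}\bigr)$, so your ``peeling'' paragraph is unnecessary here---and as written the claim that every tower base $g_{j}$ satisfies $g_{j}\preceq_{t}g$ is slightly loose (towers can cancel), though the \emph{support} of $z$ does indeed lie in $\Mc(t)_{\preceq_{t}g}$ by the Finite Interval Lemma.
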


\begin{proof}

(1) Notice that, for each Laurent degree $g'$ in the product, only
finitely many Laurent monomials of the pointed series $z_{g}$ and
$z_{\eta}$ will have contribution, because $_{g'\preceq}\tropSet_{\preceq g}$
and $_{g'\preceq}\tropSet_{\preceq\eta}$ are finite by Lemma \ref{lem:finite_interval}.
Therefore, the product is well-defined. In addition, it is pointed
at degree $g+\eta$ by direct computation. 

(2) The proof is similar to that of (1).

\end{proof}

\subsection{Degrees and codegrees under mutation}\label{sec:mutation_degrees}

Given two seeds $t,t'$ connected by a mutation sequence $t'=\overleftarrow{\mu}t$.
Recall that the lattice $\Mc(t)\simeq\Z^{I}$ has a natural basis
$\{f_{i}=f_{i}(t)|i\in I\}$.

\begin{Def}[Degree transformation]\label{def:deg_transform}

We define the linear map $\psi_{t',t}:\Mc(t)\ra\Mc(t')$ such
that 
\begin{align*}
\psi_{t',t}(\sum_{i\in I}g_{i}f_{i}) & =\sum_{i\in I}g_{i}\phi_{t',t}(f_{i})
\end{align*}
 for any $(g_{i})_{i\in I}\in\Z^{I}$.

\end{Def}

We have the following result, see Example \ref{eg:diff_psi_phi}.
\begin{Lem}\label{lem:diff_psi_phi}
Let there be given seeds $t'=\mu_k t\in \Delta^+$ for some $k\in I_{\ufv}$. Denote $\phi=\phi_{t',t}$ and $\psi=\psi_{t',t}$. Let $e_k'$ denote the $k$-th unit vector in $N_{\ufv}(t')$. For any $i\neq k\in I$ and $g\in \Mc(t)$, we have $\psi g-\phi g=[-g_k]_+ \tB' e_k'$.
\end{Lem}
\begin{proof}
Note that, in the lattice $\Mc(t')$, we have $\psi(f_k)=\deg^{t'}x_k(t)=\phi(f_k)=-f_k'+[b_{ik}]_+ f_i'$, see Definition \ref{def:tropical_transformation}. Direct calculation shows that, for any $i\neq k\in I$,
\begin{align*}
(\phi g-\psi g)_{i} & =(g_{i}+[b_{ik}]_{+}[g_{k}]_{+}-[-b_{ik}]_{+}[-g_{k}]_{+})-(g_{i}+[b_{ik}]_{+}g_{k})\\
 & =([b_{ik}]_{+}[g_{k}]_{+}-[-b_{ik}]_{+}[-g_{k}]_{+})-[b_{ik}]_{+}g_k\\
 & =([b_{ik}]_{+}[g_{k}]_{+}-[-b_{ik}]_{+}[-g_{k}]_{+})-[b_{ik}]_{+}([g_{k}]_{+}-[-g_{k}]_{+})\\ 
 & =-[-b_{ik}]_{+}[-g_{k}]_{+}+[b_{ik}]_{+}[-g_{k}]_{+}\\
 & =b_{ik}[-g_{k}]_{+}\\
 & =-b_{ik}'[-g_{k}]_{+}.
\end{align*}
Moreover, $(\phi g)_k=-g_k=(\psi g)_k$. We deduce that $\phi g-\psi g=-(\tB')\cdot[-g_{k}]_{+}e'_{k}$.
\end{proof}

\begin{Eg}\label{eg:diff_psi_phi}
Choose a seed $t$ such that $I=I_{\ufv}=\{1,2\}$, $\tB=(b_{ij})=\begin{pmatrix}
0&-1\\
1&0
\end{pmatrix}$. Take any $g=g_1 f_1+g_2 f_2\in \Mc(t)$, $g_1,g_2\in \Z$.

First, take $t'=\mu_1 t$. Then $\tB'=-\tB$. We have $\phi(g)=(-g_1)f_1'+(g_2+[g_1]_+)f_2'$, see Definition \ref{def:tropical_transformation}. In particular, $\psi(f_1)= \phi(f_1)=-f_1'+f_2'$ and $\psi(f_2)=\phi(f_2)=f_2'$. It follows that $\psi(g)=g_1\psi (f_1)+g_2\psi(f_2)=(-g_1)f_1'+(g_1+g_2)f_2'$. Therefore, $\psi g-\phi g=-[-g_1]_+ f_2'=[-g_1]_+\tB' e_1'$.

Second, take $t'=\mu_2 t$. Then $\tB'=-\tB$. We have $\phi(g)=(g_1-[-g_2]_+)f_1'+(-g_2)f_2'$, see Definition \ref{def:tropical_transformation}. In particular, $\psi(f_1)= \phi(f_1)=f_1'$ and $\psi(f_2)=\phi(f_2)=-f_2'$. It follows that $\psi(g)=g_1\psi (f_1)+g_2\psi(f_2)=g_1 f_1'+(-g_2)f_2'$. Therefore, $\psi g-\phi g=[-g_2]_+ f_1'=[-g_2]_+\tB' e_2'$.
\end{Eg}

\begin{Rem}[Non-trivial monodromy]\label{rem:monodromy}

Recall that the maps $\phi_{t',t}$ are piecewise linear and $\phi_{t,t'}\phi_{t',t}=\phi_{t,t}=\Id_{\Mc(t)}$.
By contrast, the maps $\psi_{t',t}$ are linear, but at the cost that
$\psi_{t,t'}\psi_{t',t}\neq\Id_{\Mc(t)}$ in general. 

It would be interesting to understand such non-trivial monodromy. We observe that this  monodromy for adjacent seeds agrees with the monodromy of signed mutations. 

More precisely, take $t$ as the initial seed and assume that $t'=\mu_{k,+} t$ for some unfrozen vertex $k$. Note that $b_{ik}'=-b_{ik}$ for any $i\in I$. Direct computations shows that, for $i\neq k$,
\begin{align}
\psi_{t,t'}\psi_{t',t}(f_{i}) =f_i
\end{align}
\begin{align*}
\psi_{t,t'}\psi_{t',t}(f_{k}) &=\psi_{t,t'}(
-f_k'+\sum_{i}[-b_{ik}']_{+}f_{i}' )\\
& =f_k-\sum_{j}[- b_{jk}]_{+}f_{j} +\sum_{i}[-b_{ik}']_{+}f_{i}' \\
&=f_k+\sum_{i}b_{ik}f_{i}
\end{align*}

On the other hand, if we apply signed mutations $\mu_{k,+}$ twice on the initial seed $t$, we obtain a seed $t''=\mu_{k,+}\mu_{k,+}(t)$. Let us compute $\tau_{k,+}\tau_{k,+}:\Mc(t'')\simeq \Mc(t)$. For $i\neq k$, we have
\begin{align}
\tau_{k,+}\tau_{k,+}(f_{i}'') =f_i
\end{align}
\begin{align*}
\tau_{k,+}\tau_{k,+}(f_{k}'') &=\tau_{k,+}(
-f_k'+\sum_{i}[- b_{ik}']_{+}f_{i}' )\\
& =f_k-\sum_{j}[-  b_{jk}]_{+}f_{j} +\sum_{i}[- b_{ik}']_{+}f_{i}' \\
&=f_k+\sum_{i}b_{ik}f_{i}
\end{align*}
We deduce that $\psi_{t,t'}\psi_{t',t}=\tau_{k,+}\tau_{k,+}$ if we identify $f_i''=f_i$ for any $i$. 

Note that the signed mutation monodromy $\mu_{k,+}\mu_{k,+}$ was discussed in \cite[Remark 2.5]{gross2013birational}. 
\end{Rem}

\begin{Lem}\label{lem:bijective_linear_map}

$\psi_{t't}$ is bijective.

\end{Lem}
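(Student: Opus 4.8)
The map $\psi_{t',t}$ is built by composing the maps attached to single mutations in the sequence $\overleftarrow{\mu}$, so by induction it suffices to treat the case $t'=\mu_k t$ for a single $k\in I_{\ufv}$. The plan is to exhibit an explicit matrix for $\psi_{t',t}$ in the bases $\{f_i(t)\}$ and $\{f_i'(t')\}$ and check it is invertible over $\Z$ by computing its determinant (which will be $\pm 1$), hence bijective; then invertibility for a general sequence follows since a composite of bijections is bijective.

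First I would unwind the definition. By Definition \ref{def:deg_transform}, $\psi_{t',t}(f_i)=\phi_{t',t}(f_i)$, so I need the values of the tropical transformation $\phi_{t',t}$ on the basis vectors. Using Definition \ref{def:tropical_transformation} with $g=f_j$ (so $g_k=\delta_{jk}$): for $j\neq k$ one gets $\phi_{t',t}(f_j)=f_j'+[b_{jk}(t)]_+ f_k'$ — wait, more carefully, $g_k=0$ when $j\neq k$, so in fact $\phi_{t',t}(f_j)=f_j'$ for $j\neq k$; and for $j=k$, $g_k=1$, giving $(\phi_{t',t}(f_k))_k=-1$ and $(\phi_{t',t}(f_k))_i=[b_{ik}(t)]_+$ for $i\neq k$, i.e. $\phi_{t',t}(f_k)=-f_k'+\sum_{i\neq k}[b_{ik}(t)]_+ f_i'$. (This matches $\tau_{k,\varepsilon}$ with the sign choice $\varepsilon=+$ in equation \eqref{eq:tropical_f}.) So the matrix of $\psi_{t',t}$ in these bases is the identity except in the $k$-th column, which is $(-1$ in position $k$, and $[b_{ik}(t)]_+$ elsewhere$)$. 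This is exactly $\tE_{+}$ for the mutation at $k$ (up to the frozen-part bookkeeping), and Proposition \ref{prop:calculate_EF}(2) — or the remark right after the definition of $\tE_\varepsilon$, namely $\tE_\varepsilon^2=\Id_I$ — shows it is its own inverse, hence bijective.

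For the general case $t'=\overleftarrow{\mu}t$ with $\overleftarrow{\mu}=\mu_{k_r}\cdots\mu_{k_1}$, the cleanest route is to observe directly that $\psi_{t',t}=\psi_{t',t_{r-1}}\circ\cdots\circ\psi_{t_1,t}$ where $t_s$ are the intermediate seeds. This is not entirely automatic because the $\phi$'s compose on the nose ($\phi_{t',t_{r-1}}\circ\cdots\circ\phi_{t_1,t}=\phi_{t',t}$, which is how $\phi_{t',t}$ was defined for a sequence), whereas the $\psi$'s are merely the "linearizations at the basis vectors"; so I would check that linearizing a composite equals the composite of the linearizations here, which holds because each $\psi_{t_{s+1},t_s}$ is genuinely linear (it is represented by the integer matrix $\tE$ above), so applying it to $\psi_{t_s,t_{s-1}}(f_i)$ — an integer combination of basis vectors of $\Mc(t_s)$ — and applying the honest piecewise-linear $\phi_{t_{s+1},t_s}$ to the same vector give the same answer only if $\phi_{t_{s+1},t_s}$ is linear on the relevant cone; this subtlety is the one place I would be careful. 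The safe alternative, which I would actually use, is to define $\psi_{t',t}$ for a sequence as the composite $\prod_s \tE_{k_s,+}(t_{s})$ of the single-step linear maps from the outset (and separately note this agrees with Definition \ref{def:deg_transform}), so that bijectivity is immediate from each factor being invertible.

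The main obstacle is therefore purely bookkeeping: matching the sign conventions $\varepsilon=\pm$ in the various $\tau_{k,\varepsilon}$, $\tE_{k,\varepsilon}$ and the piecewise-linear $\phi_{t',t}$ (which corresponds to $\varepsilon=+$ on the positive-$g_k$ chamber), and confirming that on basis vectors $f_i$ the value of $\phi_{t',t}$ coincides with a single linear map — which it does, since a basis vector $f_j$ with $j\neq k$ has $g_k=0$, and $f_k$ has $g_k=1\ge 0$, so all of them lie in (the closure of) the chamber where $\phi_{t',t}$ is linear. Once this is pinned down, the determinant of the single-step matrix is visibly $\pm 1$ (expand along the $k$-th column, or use $\tE_{k,\varepsilon}^2=\Id$), and the lemma follows.
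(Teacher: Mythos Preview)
Your single-mutation case is essentially correct (up to a sign: the matrix is $\tE_{k,-}(t)$ rather than $\tE_{k,+}(t)$, since the $k$-th column of $\tE_{k,\varepsilon}$ has entries $[-\varepsilon b_{ik}]_+$; but both square to the identity, so this does not affect bijectivity). The genuine problem is the passage to a general mutation sequence. You correctly flag the subtlety---``linearizing a composite equals the composite of the linearizations here \ldots\ only if $\phi_{t_{s+1},t_s}$ is linear on the relevant cone''---but you do not resolve it, and in fact it fails. The map $\psi_{t',t}$ is \emph{not} the composite of the single-step maps: already for $\overleftarrow{\mu}=\mu_k\mu_k$ (so $t'=t$) one has $\psi_{t,t}=\Id$, whereas the composite $\psi_{t,\mu_k t}\circ\psi_{\mu_k t,t}$ is the product of two different $\tE$-matrices and is generally not the identity; this is exactly the ``nontrivial monodromy'' noted in the Remark following Definition~\ref{def:deg_transform}. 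Consequently your ``safe alternative''---redefining $\psi_{t',t}$ as the product of single-step $\tE$'s and then noting parenthetically that this agrees with Definition~\ref{def:deg_transform}---does not work: the two maps differ.

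The paper bypasses this by arguing directly for arbitrary $t'=\overleftarrow{\mu}t$: one has $\psi_{t',t}(f_i(t))=\phi_{t',t}(f_i(t))=\deg^{t'}x_i(t)$, which are precisely the extended $g$-vectors of the seed $t$ with respect to the initial seed $t'$, and these are known to form a $\Z$-basis of $\Mc(t')$ (by \cite{gross2018canonical}, or via the factorization $\tG^{t'}(t)=\prod_s\tE_{i_s,\varepsilon_s}(t_s)$ coming from Theorem~\ref{thm:cg_vector}, where now the signs $\varepsilon_s$ are dictated by $c$-vectors and need not all be $+$). If you want to salvage your matrix approach, this last factorization is the fix: the matrix of $\psi_{t',t}$ is the extended $G$-matrix $\tG^{t'}(t)$, which \emph{is} a product of $\tE$'s---just not with the constant sign you proposed---and each factor is an involution.
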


\begin{proof}
Identify $\Mc(t')\simeq \Z^I$ such that $f_i'$ is viewed as the $i$-th unit vector. Let $\pr_{I_{\ufv}}$ denote the natural projection from $\Z^I$ to $\Z^{I_{\ufv}}$.

Denote $g_k(t;t')=\phi_{t',t}f_k(t)=\psi_{t',t}f_k(t)\in \Mc(t')$. It follows from Corollary \ref{cor:c_g_basis} that the principal $g$-vectors $\pr_{I_{\ufv}}g_k(t;t')$ with respect to the initial seed $t'$, $k\in I_{\ufv}$, form a basis of $\Z^{I_{\ufv}}$. Note that $g_j(t;t')=f_j'$ for any frozen vertex $j$. It follows that $g_i(t;t')$, $i\in I$, is a basis for $\Z^I$. In particular, the linear map $\psi_{t't}$ is bijective.
\end{proof}

Notice that we have two inclusions $\LP(t)\subset\cF(t)$ and $\LP(t)\subset\widehat{\LP(t)}$.
On the one hand, the mutation map $\overleftarrow{\mu}^{*}$ is an
isomorphism from the rational function field $\cF(t')$ to $\cF(t)$.
On the other hand, we have $\overleftarrow{\mu}^{*}(\kk[x_{i}(t')]_{\forall i})\subset\LP(t)\subset\widehat{\LP(t)}$.
In addition, $\overleftarrow{\mu}^{*}(x_{i}(t'))$, $\forall i$,
are pointed Laurent polynomials in $\LP(t)$, which are invertible
in $\widehat{\LP(t)}$ by Lemma \ref{lem:inverse_series}. Consequently,
the mutation map $\overleftarrow{\mu}^{*}$ induces an algebraic homomorphism
$\iota:\LP(t')\rightarrow\widehat{\LP(t)}$.

Our next observation shows that the linear map $\psi_{t',t}$ tracks
the degree of a Laurent monomial under change of seeds.

\begin{Lem}\label{lem:linear_transform_monomial}
	
	Given $t'=\overleftarrow{\mu}t$, any $g'\in\Mc(t')$ and $z=x(t')^{g'}\in\LP(t')$.
	Then we have $\iota(z)\in\hPtSet^{t}(\psi_{t,t'}g')$.
	
\end{Lem}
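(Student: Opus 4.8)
The plan is to reduce the claim to the case of a single mutation and then proceed by induction on the length of $\overleftarrow{\mu}$. For a single mutation $t' = \mu_k t$, the map $\iota$ is induced by the mutation birational map $\mu_k^*$, and by the formulas recalled in Section~\ref{sec:cluster_notation}, $\iota(x_i(t')) = x_i(t)$ for $i \neq k$ while $\iota(x_k(t')) = x_k(t)^{-1}\prod_j x_j(t)^{[-\varepsilon b_{jk}]_+}(1 + \chi^{\varepsilon v_k})$ for any choice of sign $\varepsilon$. Choosing $\varepsilon = +$, the factor $1 + \chi^{v_k} = 1 + y_k(t)$ lies in $1 + \Z[\yCone^{>0}(t)]$, so $\iota(x_k(t'))$ is pointed at $-f_k + \sum_j [-b_{jk}]_+ f_j = \phi_{t,t'}(f_k)$ by Definition~\ref{def:tropical_transformation}. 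For $i \neq k$ we have $\iota(x_i(t')) = x_i(t)$ pointed at $f_i = \phi_{t,t'}(f_i)$. Thus in the single-mutation case, $\iota(x_i(t')) \in \hPtSet^t(\phi_{t,t'}(f_i))$ for all $i$.

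First I would assemble from this the single-mutation version of the lemma for an arbitrary Laurent monomial $z = x(t')^{g'}$ with $g' = \sum_i g'_i f_i$. Since $\iota$ is a ring homomorphism into $\widehat{\LP(t)}$ and $z = \prod_i x_i(t')^{g'_i}$ (with negative exponents allowed), I would invoke Lemma~\ref{lem:inverse_series} to make sense of the inverses of pointed series and Lemma~\ref{lem:product_series} to control products: $\iota(z) = \prod_i \iota(x_i(t'))^{g'_i}$ is a product of pointed formal Laurent series, hence pointed, with degree $\sum_i g'_i \deg^t \iota(x_i(t')) = \sum_i g'_i \phi_{t,t'}(f_i) = \psi_{t,t'}(g')$, where the last equality is exactly the definition of the linear map $\psi_{t,t'}$ (Definition~\ref{def:deg_transform}). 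So $\iota(z) \in \hPtSet^t(\psi_{t,t'}(g'))$.

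Next I would handle the general mutation sequence $t' = \overleftarrow{\mu} t$ by induction on its length. Write $\overleftarrow{\mu} = \mu_{k_r}\cdots\mu_{k_1}$, let $t'' = \mu_{k_{r-1}}\cdots\mu_{k_1} t$, and factor $\iota = \iota_{t,t''} \circ \iota_{t'',t'}$, where $\iota_{t'',t'}\colon \LP(t') \to \widehat{\LP(t'')}$ comes from the last mutation $\mu_{k_r}$ and $\iota_{t,t''}\colon \LP(t'') \to \widehat{\LP(t)}$ from the shorter sequence. The subtlety here is that $\iota_{t,t''}$ must be extended to formal Laurent series $\widehat{\LP(t'')} \to \widehat{\LP(t)}$; I would note this extension is well defined because $\iota_{t,t''}$ sends the topologically nilpotent ideal $\Z[\yCone^{>0}(t'')]$ into a topologically nilpotent ideal of $\widehat{\LP(t)}$ (the image of $1 + y_{k_r}(t'')$-type factors sits in $1 + \widehat{\Z[\yCone^{>0}(t)]}$ by the base case, and $\iota_{t,t''}$ of a monomial is pointed hence its expansion is supported in a shifted cone). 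By the single-mutation case, $\iota_{t'',t'}(z) \in \hPtSet^{t''}(\psi_{t'',t'}(g'))$; then by Lemmas~\ref{lem:inverse_series} and~\ref{lem:product_series} together with the inductive hypothesis applied termwise (each Laurent monomial of $\iota_{t'',t'}(z)$ maps under $\iota_{t,t''}$ to a pointed series, and the $\preceq$-maximal one has the strictly largest degree by Lemma~\ref{lem:finite_interval}), we get $\iota(z) \in \hPtSet^t(\psi_{t,t''}\psi_{t'',t'}(g'))$. Finally, $\psi_{t,t''}\psi_{t'',t'} = \psi_{t,t'}$ because each is determined by its values $\phi_{t,\bullet}(f_i)$ on the basis and the tropical transformations compose: $\phi_{t,t''}\phi_{t'',t'} = \phi_{t,t'}$.

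The main obstacle I anticipate is the convergence/well-definedness bookkeeping when composing the $\iota$'s at the level of formal series — i.e. verifying that applying $\iota_{t,t''}$ to an infinite pointed series in $\widehat{\LP(t'')}$ again yields a well-defined pointed series in $\widehat{\LP(t)}$, and that the degree of the result is controlled by the image of the leading term. This is where the finiteness statement of Lemma~\ref{lem:finite_interval} and the pointedness of $\iota_{t,t''}$ on monomials (the base case, applied to $t'' \leftarrow t$) do the real work; everything else is the linear-algebra identity $\psi_{t,t''}\psi_{t'',t'} = \psi_{t,t'}$, which follows formally from the cocycle property of tropical transformations.
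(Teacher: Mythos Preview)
Your inductive approach has a genuine gap: the identity $\psi_{t,t''}\circ\psi_{t'',t'}=\psi_{t,t'}$ is \emph{false} in general. Your justification (``tropical transformations compose'') only gives $\phi_{t,t''}\circ\phi_{t'',t'}=\phi_{t,t'}$; to conclude the same for $\psi$ you would need $\psi_{t,t''}(v)=\phi_{t,t''}(v)$ for $v=\phi_{t'',t'}(f_i)$. But $\psi_{t,t''}$ is the linearization of the piecewise-linear $\phi_{t,t''}$ on the positive cone, and $v$ can lie outside that cone. In matrix terms, $\psi_{t,t'}$ is the extended $G$-matrix $\tG^{t}(t')$, and Theorem~\ref{thm:cg_vector}(2) with Proposition~\ref{prop:calculate_EF}(6) give $\tG^{t}(t')=\tG^{t}(t'')\cdot\tE_{k_r,\varepsilon}(t'')$ where $\varepsilon=\sign(c_{k_r}^{t}(t''))$, whereas $\tG^{t''}(t')=\tE_{k_r,+}(t'')$; these differ whenever $c_{k_r}^{t}(t'')<0$. (The paper's own remark after Definition~\ref{def:deg_transform} already warns that $\psi_{t,t'}\psi_{t',t}\neq\Id$.) The same sign issue breaks your ``termwise'' step: applying $\iota_{t,t''}$ to the pointed series $\iota_{t'',t'}(z)$ need not preserve the leading degree, since by Proposition~\ref{prop:c_vector_y_deg} one has $\psi_{t,t''}(\tB(t'')e_k)=\tB(t)\,c_k^{t}(t'')$, which fails to lie in $\tB(t)\cdot\yCone^{\geq0}(t)$ when the $c$-vector is negative.

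The paper's proof avoids induction entirely. It uses the (deep) input Theorem~\ref{thm:CC-formula} that each cluster variable $x_i(t')$, for an \emph{arbitrary} mutation sequence, is already a pointed Laurent polynomial in $\LP(t)$ with leading degree $\deg^{t}x_i(t')=\phi_{t,t'}(f_i)=\psi_{t,t'}(f_i)$ (cf.\ the proof of Lemma~\ref{lem:bijective_linear_map}). Then Lemma~\ref{lem:inverse_series} handles $x_i(t')^{-1}$ and Lemma~\ref{lem:product_series} handles the product $x(t')^{g'}=\prod_i x_i(t')^{g'_i}$, giving pointedness at $\sum_i g'_i\,\phi_{t,t'}(f_i)=\psi_{t,t'}(g')$ in one stroke. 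Your single-mutation paragraph is essentially this argument specialized to length~1; the fix is simply to run that same paragraph for the full sequence, replacing your explicit formula for $\iota(x_k(t'))$ by an appeal to Theorem~\ref{thm:CC-formula}.
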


\begin{proof}
	
	Notice that the map $\iota$ identifies
	$x_{i}(t')$ with a pointed Laurent polynomial in $\ptSet^{t}(\deg^{t}x_{i}(t'))$.
	Then Lemma \ref{lem:inverse_series} implies $\iota x_{i}(t')^{-1}\in\hPtSet^{t}(-\deg^{t}x_{i}(t'))$.
	We obtain the claim by taking the product of these pointed formal
	series (Lemma \ref{lem:product_series}).
	
\end{proof}

\begin{Lem}\label{lem:mutation_series}

(1) The map $\iota$ is an embedding. 

(2) If $z\in\LP(t')\cap(\overleftarrow{\mu}^{*})^{-1}\LP(t)$, then
$\iota(z)=\overleftarrow{\mu}^{*}(z)\in\LP(t)$.

\end{Lem}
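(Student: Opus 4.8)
The plan is to prove the two statements separately, in both cases leveraging the leading-degree calculus of pointed formal Laurent series (Lemmas \ref{lem:inverse_series}, \ref{lem:product_series}, \ref{lem:linear_transform_monomial}) together with the bijectivity of the linear map $\psi_{t,t'}$ (Lemma \ref{lem:bijective_linear_map}).

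For (1), I would show that $\iota$ separates the leading degrees of Laurent polynomials. Take a nonzero $z=\sum_{g'}c_{g'}x(t')^{g'}\in\LP(t')$ with finite support $\mathrm{Supp}=\{g'\mid c_{g'}\neq0\}$. By Lemma \ref{lem:linear_transform_monomial}, $\iota(x(t')^{g'})\in\hPtSet^{t}(\psi_{t,t'}g')$, so it is a formal Laurent series all of whose Laurent monomials have degree $\preceq_{t}\psi_{t,t'}g'$ and whose coefficient at $x(t)^{\psi_{t,t'}g'}$ equals $1$. Since $\psi_{t,t'}$ is injective, the finite set $\{\psi_{t,t'}g'\mid g'\in\mathrm{Supp}\}$ is in bijection with $\mathrm{Supp}$; choose $g_{0}\in\mathrm{Supp}$ with $\psi_{t,t'}g_{0}$ maximal in that set for $\preceq_{t}$, which is possible since a finite nonempty poset has a maximal element. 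In the finite sum $\iota(z)=\sum_{g'}c_{g'}\iota(x(t')^{g'})$, only the summand $g'=g_{0}$ can contribute at degree $x(t)^{\psi_{t,t'}g_{0}}$: any contribution from $g'\neq g_{0}$ would force $\psi_{t,t'}g_{0}\preceq_{t}\psi_{t,t'}g'$, contradicting the maximality of $\psi_{t,t'}g_{0}$ (injectivity rules out equality). Hence the coefficient of $x(t)^{\psi_{t,t'}g_{0}}$ in $\iota(z)$ equals $c_{g_{0}}\neq0$, so $\iota(z)\neq0$ and $\iota$ is injective.

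For (2), I would reduce the identity to cancellation by an invertible factor. Write $z=x(t')^{-m}P$ with $m\in\N^{I}$ and $P\in\Z[x_{i}(t')]_{i\in I}$ an ordinary polynomial. Since $P$ and $x(t')^{m}$ involve no inverse variables and both $\iota$ and $\overleftarrow{\mu}^{*}$ are ring homomorphisms sending each $x_{i}(t')$ to $\overleftarrow{\mu}^{*}x_{i}(t')$, we obtain $Q:=\iota(P)=\overleftarrow{\mu}^{*}(P)\in\LP(t)$ and $R:=\iota(x(t')^{m})=\overleftarrow{\mu}^{*}(x(t')^{m})=\prod_{i}(\overleftarrow{\mu}^{*}x_{i}(t'))^{m_{i}}\in\LP(t)$, the latter being a product of pointed Laurent polynomials, hence a unit of $\widehat{\LP(t)}$ by Lemma \ref{lem:inverse_series}. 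In $\widehat{\LP(t)}$ we have $\iota(z)=\iota(x(t')^{-m})\,Q$ and $\iota(x(t')^{-m})\,R=\iota(1)=1$, hence $\iota(z)\,R=Q$. On the other hand, the hypothesis gives $w:=\overleftarrow{\mu}^{*}(z)\in\LP(t)$, and $w\,R=\overleftarrow{\mu}^{*}(z\,x(t')^{m})=\overleftarrow{\mu}^{*}(P)=Q$ in $\cF(t)$ --- an identity of Laurent polynomials, hence valid in $\widehat{\LP(t)}$ as well. Subtracting, $(\iota(z)-w)\,R=0$ in $\widehat{\LP(t)}$, and as $R$ is invertible there we conclude $\iota(z)=w=\overleftarrow{\mu}^{*}(z)\in\LP(t)$.

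Both arguments are short; the only delicate point is bookkeeping about which ring each identity lives in. One should first establish equalities inside $\LP(t)$ or $\cF(t)$ and only then transport them to $\widehat{\LP(t)}$ along the ring inclusion $\LP(t)\hookrightarrow\widehat{\LP(t)}$, never conflating the fraction field $\cF(t)$ with the completion $\widehat{\LP(t)}$; and the coefficient extraction used in (1) is legitimate precisely because $\preceq_{t}$-intervals are finite (Lemma \ref{lem:finite_interval}), so that the coefficient of a fixed Laurent monomial in a finite sum of pointed formal Laurent series is a well-defined integer. I expect this bookkeeping, rather than any real difficulty, to be where care is needed.
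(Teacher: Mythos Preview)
Your proof is correct and follows essentially the same approach as the paper. For (1), the paper argues that $\iota(z)$ is a finite sum of pointed series with distinct leading degrees (by bijectivity of $\psi_{t,t'}$) and hence nonzero, which is exactly your maximal-degree coefficient argument made explicit; for (2), the paper also writes $z=F/(x')^{d}$, observes $\iota$ and $\overleftarrow{\mu}^{*}$ agree on polynomials, and then compares the two quotients---your version carries out the same cancellation with somewhat more care about which ring each identity lives in.
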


\begin{proof}

(1) For any Laurent polynomial $0\neq z=\sum_{g'\in\Mc(t')}b_{g'}x(t')^{g'}\in\LP(t)$,
$b_{g'}\in\kk$, the image $\iota(x(t')^{g'})\in\widehat{\LP(t)}$
is pointed at degree $\psi_{t,t'}g'$. Since $\psi_{t,t'}$ is bijective,
the image $\iota(z)$ is a finite sum of pointed elements with distinct
leading degrees. In particular, $\iota(z)\neq0$.

(2) Take any 
$z=(x')^{-d}*F$ for some $F\in\kk[\Mc(t')]$, $d\in \N^{I}$. On the one hand, we have $\iota((x')^{d})*\iota(z)=\iota(F)$ in $\widehat{\LP(t)}$. On the other hand, we have $\overleftarrow{\mu}^{*}((x')^{d})*\overleftarrow{\mu}^{*}(z)=\overleftarrow{\mu}^{*}(F)$ in $\LP(t)$. By definition of $\iota$, we have $\iota((x')^{d})=\overleftarrow{\mu}^{*}((x')^{d})$ and $\iota(F)=\overleftarrow{\mu}^{*}(F)$ in $\LP(t)$. The claim follows.

\end{proof}

Using this embedding, we can identify any Laurent polynomial $z\in\LP(t')$
as a formal Laurent series $\overleftarrow{\mu}^{*}(z):=\iota(z)$
in $\widehat{\LP(t)}$, called the \emph{formal Laurent series expansion}
of $z$ with respect to the seed $t$, or (formal) Laurent expansion
for short.

\begin{Rem}[Different expansion using codegrees]

Notice that the Laurent polynomials $\overleftarrow{\mu}^{*}(x_{i}(t))$,
$i\in I$, are copointed (Proposition  \ref{prop:cluster_monomial_bipointed}).
Then we can construct a similar embedding $\iota'$ from $\LP(t')$
to $\widetilde{\LP(t)}$ as a different formal series expansion.

\end{Rem}

\begin{Def}[Tropical points as degrees]

Given a formal Laurent series $z\in\widehat{\LP(t_{0})}$ with degree
$g\in\Mc(t_{0})$ such that, for any seeds $t_{0}=\overleftarrow{\mu}t$,
$\overleftarrow{\mu}^{*}z$ is a well-defined formal Laurent series
in $\widehat{\LP(t)}$ with degree $\deg^{t}\overleftarrow{\mu}^{*}z=\phi_{t,t_{0}}g\in\Mc(t)$.
Then we we say $z$ has degree $[g]\in\tropSet$.

\end{Def}

As before, denote $y_{k}(t)=y(t)^{e_{k}}=x(t)^{\sum_i b_{ik}(t)f_i}$,
$k\in I_{\ufv}$, where $e_{k}$ is the $k$-th unit vector in $\yCone(t)\simeq\Z^{I_{\ufv}}$ and $f_i$ the $i$-th unit vector in $\Mc(t)\simeq \Z^I$.
Apparently, $y_{k}(t)$ is a pointed Laurent polynomial in $\LP(t)$
and we have $\deg^{t}y_{k}(t)=\tB(t)\cdot e_{k}=\sum_{i\in I}b_{ik}(t)f_{i}$.
It follows that for any $n\in\yCone(t)$, we have $\deg^{t}(y(t)^{n})=\tB(t)\cdot n$.

The next result shows how $c$-vectors appear when one calculate the
degree of $y$-variables. This result is known for skew-symmetric
seeds via the cluster category approach, see \cite{KellerYang09}\cite{Nagao10}\cite{Keller12}.

\begin{Prop}[{\cite[Proposition 3.13]{FominZelevinsky07}}]\label{prop:c_vector_y_deg}

Given any seeds $t'=\overleftarrow{\mu}t$. For any $k\in I_{\ufv}$,
we have $\deg^{t}\overleftarrow{\mu}^{*}y_{k}(t')=\deg^{t}(y(t)^{c_{k}^{t}(t')})=\tB(t)\cdot c_{k}^{t}(t')$,
where $c_{k}^{t}(t')$ is the $k$-th $c$-vector of the seed $t'$
with respect to the initial seed $t$.

\end{Prop}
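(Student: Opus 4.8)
The plan is to compute the $\prec_{t}$-leading degree of the formal Laurent expansion of $y_{k}(t')$ directly from the Laurent expansions of cluster variables (Theorem \ref{thm:CC-formula}). The second asserted equality is immediate: $y(t)^{c_{k}^{t}(t')}=\chi^{\tB(t)\cdot c_{k}^{t}(t')}$ is an honest Laurent monomial, hence pointed at $\tB(t)\cdot c_{k}^{t}(t')$, so $\deg^{t}(y(t)^{c_{k}^{t}(t')})=\tB(t)\cdot c_{k}^{t}(t')$. It then remains to identify $\deg^{t}\overleftarrow{\mu}^{*}y_{k}(t')$ with the same vector.

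Write $y_{k}(t')=\chi^{v_{k}(t')}=\prod_{i\in I}x_{i}(t')^{b_{ik}(t')}$, where $v_{k}(t')=\sum_{i\in I}b_{ik}(t')f_{i}(t')$. By Theorem \ref{thm:CC-formula} applied with $t$ as initial seed, each $\overleftarrow{\mu}^{*}x_{i}(t')=\iota(x_{i}(t'))$ is a Laurent polynomial pointed at $\deg^{t}x_{i}(t')=\phi_{t,t'}(f_{i}(t'))$ (the displayed equality is the one established in the proof of Lemma \ref{lem:bijective_linear_map}, and holds for frozen $i$ as well, frozen cluster variables being fixed by mutation). Since $\iota$ is an algebra homomorphism and products and inverses of pointed formal Laurent series are pointed with degrees adding (Lemmas \ref{lem:inverse_series} and \ref{lem:product_series}), $\iota(y_{k}(t'))=\prod_{i\in I}\iota(x_{i}(t'))^{b_{ik}(t')}$ is a well-defined pointed element of $\widehat{\LP(t)}$ with
\[
\deg^{t}\overleftarrow{\mu}^{*}y_{k}(t')=\sum_{i\in I}b_{ik}(t')\,\phi_{t,t'}(f_{i}(t'))=\psi_{t,t'}\bigl(v_{k}(t')\bigr),
\]
by the definition of $\psi_{t,t'}$ (Definition \ref{def:deg_transform}).

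The proof thus reduces to the linear identity $\psi_{t,t'}\cdot\tB(t')=\tB(t)\cdot C^{t}(t')$, whose $k$-th column reads $\psi_{t,t'}(v_{k}(t'))=\tB(t)\cdot c_{k}^{t}(t')$. The matrix of $\psi_{t,t'}$ is the $g$-matrix of $t'$ with respect to $t$, read in the convention where every vertex of $I$ is treated as unfrozen (so that its $I_{\fv}$-columns are the unit vectors). Iterating Proposition \ref{prop:calculate_EF}(1) along $\overleftarrow{\mu}=\mu_{i_{r}}\cdots\mu_{i_{0}}$, with intermediate seeds $t_{0}=t,\dots,t_{r+1}=t'$ and signs $\varepsilon_{s}=\sign(c_{i_{s}}^{t}(t_{s}))$, gives
\[
\tB(t')=\bigl(\tE_{i_{r},\varepsilon_{r}}(t_{r})\cdots\tE_{i_{0},\varepsilon_{0}}(t_{0})\bigr)\cdot\tB(t)\cdot\bigl(F_{i_{0},\varepsilon_{0}}(t_{0})\cdots F_{i_{r},\varepsilon_{r}}(t_{r})\bigr).
\]
By Theorem \ref{thm:cg_vector}(2) the $F$-product on the right is $C^{t}(t')$, and $\tE_{i_{0},\varepsilon_{0}}(t_{0})\cdots\tE_{i_{r},\varepsilon_{r}}(t_{r})$ is the $g$-matrix of $t'$, i.e.\ the matrix of $\psi_{t,t'}$; since each $\tE_{k,\varepsilon}$ is an involution (Proposition \ref{prop:calculate_EF}(2)), the reversed $\tE$-product on the left is its inverse, and rearranging gives the identity.

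The only delicate point I anticipate is the bookkeeping in this last step (signs, and the action of the $\tE$-matrices on the frozen directions). An argument bypassing the matrix identity is induction on the length of $\overleftarrow{\mu}$: for $t''=\mu_{j}t'$, apply the $y$-variable mutation rule, so $y_{i}(t'')$ maps to $y_{i}(t')\,y_{j}(t')^{[\varepsilon b_{ji}(t')]_{+}}(1+y_{j}(t')^{\varepsilon})^{-b_{ji}(t')}$ for $i\neq j$ (the case $i=j$, where it maps to $y_{j}(t')^{-1}$, is immediate). Expanding into $\widehat{\LP(t)}$, by the inductive hypothesis $y_{i}(t')$ and $y_{j}(t')$ are pointed at $\tB(t)c_{i}^{t}(t')$ and $\tB(t)c_{j}^{t}(t')$; choosing $\varepsilon=\sign(c_{j}^{t}(t'))$, sign-coherence (Theorem \ref{thm:cg_vector}(1)) forces $y_{j}(t')^{\varepsilon}$ to have leading degree $\prec_{t}0$, so $1+y_{j}(t')^{\varepsilon}$ expands to a series pointed at $0$ and contributes no degree; hence the leading degree of the product is $\tB(t)\bigl(c_{i}^{t}(t')+[\varepsilon b_{ji}(t')]_{+}c_{j}^{t}(t')\bigr)=\tB(t)c_{i}^{t}(t'')$ by the $c$-vector mutation rule (Theorem \ref{thm:cg_vector}(2)), which is what induction requires; the base case $\overleftarrow{\mu}=\mathrm{id}$ is trivial.
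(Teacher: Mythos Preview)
Your primary approach is exactly the paper's proof: write $\deg^{t}\overleftarrow{\mu}^{*}y_{k}(t')=\sum_{i}b_{ik}(t')\deg^{t}x_{i}(t')=\tG(t')\cdot\tB(t')\cdot e_{k}$, expand $\tG(t')$ and $\tB(t')$ via the $\tE$- and $F$-products of Theorem~\ref{thm:cg_vector}(2) and Proposition~\ref{prop:calculate_EF}(1), cancel using $\tE_{k,\varepsilon}^{2}=\Id$, and identify the remaining $F$-product with $C^{t}(t')$. Your remark about reading $\tG$ as an $I\times I$ matrix (frozen columns being unit vectors) is the same convention the paper uses; the inductive alternative you sketch is also correct and is a genuinely different, more elementary route that trades the global matrix identity for a one-step check via sign-coherence.
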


\begin{proof}

We use the description of $c$-vectors and $g$-vectors by Theorem
\ref{thm:cg_vector}. Denote the mutation sequence $\overleftarrow{\mu}=\mu_{i_{r}}\cdots\mu_{i_{0}}$,
seeds $t_{s}=\mu_{i_{s-1}}\cdots\mu_{i_{0}}t_{0}$ where $t_{0}=t$
and $t_{r+1}=t'$. Choose signs $\varepsilon_{s}$ to be the sign
of the $k$-th $c$-vector $c_{i}(t_{s})$.

Recall that $\tB(t')=E_{i_{r},\varepsilon_{r}}(t_{r})\cdots E_{i_{0},\varepsilon_{0}}(t_{0})\tB(t_{0})F_{i_{0},\varepsilon_{0}}(t_{0})\cdots F_{i_{r},\varepsilon_{r}}(t_{r})$.
Starting with the product $\overleftarrow{\mu}^{*}y_{k}(t')$,
we have

\begin{align*}
&\deg^{t}\overleftarrow{\mu}^{*}y_{k}(t')\\
 & =\sum_{i}\deg^{t}\overleftarrow{\mu}^{*}x_{i}(t')\cdot b_{ik}(t')\\
 & =\tG(t')\cdot\tB(t')\cdot e_{k}\\
 & =E_{i_{0},\varepsilon_{0}}(t_{0})\cdots E_{i_{r},\varepsilon_{r}}(t_{r})\cdot \\
&\qquad \left(E_{i_{r},\varepsilon_{r}}(t_{r})\cdots E_{i_{0},\varepsilon_{0}}(t_{0})\tB(t_{0})F_{i_{0},\varepsilon_{0}}(t_{0})\cdots F_{i_{r},\varepsilon_{r}}(t_{r})\right)\cdot e_{k}\\
 & =\tB(t_{0})\cdot F_{i_{0},\varepsilon_{0}}(t_{0})\cdots F_{i_{r},\varepsilon_{r}}(t_{r})\cdot e_{k}\\
 & =\tB(t_{0})\cdot C^{t_{0}}(t_{r+1})\cdot e_{k}\\
 & =\tB(t)\cdot c_{k}^{t}(t')
\end{align*}

\end{proof}

Assume the cluster algebra is injective-reachable. Then for any seed
$t$ we have seeds $t[1]$ and $t[-1]$ constructed from $t$ by mutation
sequences. The following crucial result tells us that the linear map
$\psi_{t[-1],t}$ reverses the dominance order in $t$ and $t[-1]$.

\begin{Prop}[order reverse]\label{prop:order_reverse}

Given an injective-reachable seed $t=\overleftarrow{\mu}t[-1]$ such
that $C^{t[-1]}(t)=-\Perm_{\sigma}$ for some permutation $\sigma$
of $I_{\ufv}$. Given any $\eta,g\in\Mc(t)$. Then we have $\eta=g+\tB(t)\cdot n$
for some $n\in\yCone(t)$ if and only if $\eta'=g'+\tB(t[-1])\cdot(-\Perm_{\sigma}\cdot n)$
where $\eta'=\psi_{t[-1],t}\eta$ and $g'=\psi_{t[-1],t}g$. In particular,
we have $\eta\preceq_{t}g$ if and only if $\psi_{t[-1],t}\eta\succeq_{t[-1]}\psi_{t[-1],t}g$.

\end{Prop}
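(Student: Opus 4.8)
The plan is to reduce the entire statement to a single linear identity, namely that as linear maps $\yCone(t)\to\Mc(t[-1])$ one has $\psi_{t[-1],t}\circ\tB(t)=\tB(t[-1])\circ(-\Perm_{\sigma})$, i.e. $\psi_{t[-1],t}(\tB(t)\cdot n)=\tB(t[-1])\cdot(-\Perm_{\sigma}\cdot n)$ for every $n\in\yCone(t)$. Granting this, the first equivalence is immediate: if $\eta=g+\tB(t)\cdot n$, apply the linear map $\psi_{t[-1],t}$ to both sides to get $\eta'=g'+\tB(t[-1])\cdot(-\Perm_{\sigma}\cdot n)$; conversely, if $\eta'=g'+\tB(t[-1])\cdot(-\Perm_{\sigma}\cdot n)$ for that particular $n$, then $\psi_{t[-1],t}(g+\tB(t)\cdot n)=\psi_{t[-1],t}\eta$, and since $\psi_{t[-1],t}$ is bijective (Lemma \ref{lem:bijective_linear_map}) we conclude $\eta=g+\tB(t)\cdot n$.

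For the identity itself, I would evaluate $\psi_{t[-1],t}$ on the $k$-th column $\tB(t)\cdot e_k=\sum_{i\in I}b_{ik}(t)f_i(t)$ for $k\in I_{\ufv}$. By definition $\psi_{t[-1],t}(f_i(t))=\phi_{t[-1],t}(f_i(t))=\deg^{t[-1]}x_i(t)$ (the extended $g$-vector, exactly as in the proof of Lemma \ref{lem:bijective_linear_map}). Since each $\overleftarrow{\mu}^{*}x_i(t)$ is a pointed Laurent polynomial in $\LP(t[-1])$, and products of pointed formal Laurent series are pointed with additive degrees while inverses are pointed with negated degree (Lemmas \ref{lem:product_series}, \ref{lem:inverse_series}), the degree of $\overleftarrow{\mu}^{*}y_k(t)=\prod_i(\overleftarrow{\mu}^{*}x_i(t))^{b_{ik}(t)}$ in $\widehat{\LP(t[-1])}$ equals $\sum_i b_{ik}(t)\deg^{t[-1]}x_i(t)=\psi_{t[-1],t}(\tB(t)\cdot e_k)$. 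On the other hand, Proposition \ref{prop:c_vector_y_deg} applied with $t[-1]$ as the initial seed and $t=\overleftarrow{\mu}t[-1]$ gives $\deg^{t[-1]}\overleftarrow{\mu}^{*}y_k(t)=\tB(t[-1])\cdot c_k^{t[-1]}(t)$, and the hypothesis $C^{t[-1]}(t)=-\Perm_{\sigma}$ means $c_k^{t[-1]}(t)=-e_{\sigma(k)}=-\Perm_{\sigma}\cdot e_k$. Combining these and using linearity yields $\psi_{t[-1],t}(\tB(t)\cdot n)=\tB(t[-1])\cdot(-\Perm_{\sigma}\cdot n)$ for all $n$. (Equivalently, since $\psi_{t[-1],t}$ is represented by the extended $g$-matrix $\tG^{t[-1]}(t)$, the same identity $\tG^{t[-1]}(t)\cdot\tB(t)=\tB(t[-1])\cdot C^{t[-1]}(t)$ drops out by telescoping the $\tE$-matrices: using Theorem \ref{thm:cg_vector}(2) to write both $\tG^{t[-1]}(t)$ and $\tB(t)$ as products of the $\tE_{i_s,\varepsilon_s}(t_s)$ and $F_{i_s,\varepsilon_s}(t_s)$ with common signs, and then cancelling via $\tE_{k,\varepsilon}^2=\Id$ from Proposition \ref{prop:calculate_EF}(2). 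I would keep whichever of the two derivations reads more cleanly.)

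Finally, for the ``in particular'' clause I would use that $\Perm_{\sigma}$ restricts to a bijection of $\yCone^{\geq0}(t)\simeq\N^{I_{\ufv}}$ onto $\yCone^{\geq0}(t[-1])$, so negating switches the sign class. Indeed $\eta\preceq_t g$ means $\eta=g+\tB(t)\cdot n$ with $n\in\yCone^{\geq0}(t)$, which by the equivalence just established is the same as $\eta'=g'+\tB(t[-1])\cdot(-m)$ with $m=\Perm_{\sigma}\cdot n\in\yCone^{\geq0}(t[-1])$, i.e. $g'=\eta'+\tB(t[-1])\cdot m$ with $m\geq0$, i.e. $g'\preceq_{t[-1]}\eta'$, i.e. $\psi_{t[-1],t}\eta\succeq_{t[-1]}\psi_{t[-1],t}g$. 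The only place requiring care is the bookkeeping of which seed plays the role of the ``initial'' seed in Proposition \ref{prop:c_vector_y_deg} and Theorem \ref{thm:cg_vector}, and the distinction between $\Perm_{\sigma}$ and $\Perm_{\sigma^{-1}}$; there is no substantive obstacle, as all the content sits in the telescoping/$g$-vector identity.
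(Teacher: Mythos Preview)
Your proof is correct and follows essentially the same route as the paper: reduce to the linear identity $\psi_{t[-1],t}(\tB(t)\cdot n)=\tB(t[-1])\cdot(-\Perm_{\sigma}\cdot n)$, then verify it column by column by computing $\deg^{t[-1]}\overleftarrow{\mu}^{*}y_k(t)$ via Proposition~\ref{prop:c_vector_y_deg} and the hypothesis $C^{t[-1]}(t)=-\Perm_{\sigma}$. The only cosmetic difference is that the paper packages your use of Lemmas~\ref{lem:inverse_series} and~\ref{lem:product_series} into a single citation of Lemma~\ref{lem:linear_transform_monomial}, and it omits the alternative telescoping-matrix derivation (which is in effect already the proof of Proposition~\ref{prop:c_vector_y_deg}).
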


\begin{proof}

Notice that $\psi_{t[-1],t}$ is  a bijective linear map from $\Mc(t)$
to $\Mc(t[-1])$ by Lemma \ref{lem:bijective_linear_map}. The claim
is equivalent to $\psi_{t[-1],t}(\tB(t)\cdot n)=\tB(t[-1])\cdot(-\Perm_{\sigma}\cdot n)$. Also, recall that $\deg^{t}(y(t)^{n})=\tB(t)\cdot n$.

Applying
the linear map $\psi_{t[-1],t}:\Mc(t)\rightarrow\Mc(t[-1])$ and
using Lemma \ref{lem:linear_transform_monomial} Proposition \ref{prop:c_vector_y_deg}, we obtain

\begin{align*}
\psi_{t[-1],t}(\tB(t)\cdot n)= & \psi_{t[-1],t}\deg^{t}(y(t)^{n})\\
= & \sum_{k}\psi_{t[-1],t}\deg^{t}(y_{k}(t))\cdot n_{k}\\
(\mathrm{Lemma\ }\ref{lem:linear_transform_monomial})= & \sum_{k}\deg^{t[-1]}\overleftarrow{\mu}^{*}(y_{k}(t))\cdot n_{k}\\
(\mathrm{Proposition\ }\ref{prop:c_vector_y_deg})= & \sum_{k}\deg^{t[-1]}y(t[-1])^{c_{k}^{t[-1]}(t)}\cdot n_{k}\\
= & \deg^{t[-1]}y(t[-1])^{C^{t[-1]}(t)\cdot n}\\
= & \tB(t[-1])\cdot(-\Perm_{\sigma}\cdot n).
\end{align*}

\end{proof}

We have the following consequence which tells us that the degree and
codegree in $t$ and $t[-1]$ swap.

\begin{Prop}[degree/codegree swap]\label{prop:cpt_pt_swap}

Let there be given an injective-reachable seed $t=\overleftarrow{\mu}t[-1]$
and any $z\in\LP(t)$ such that $\overleftarrow{\mu}^{*}z\in\LP(t[-1])$.
Then $z$ is copointed in $\LP(t)$ at the codegree $\codeg^{t}z=\eta$
if and only if $\overleftarrow{\mu}^{*}z$ is pointed at $\LP(t[-1])$
with the degree $\deg^{t[-1]}(\overleftarrow{\mu}^{*}z)=\psi_{t[-1],t}\eta$.

\end{Prop}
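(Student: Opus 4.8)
The plan is to expand $z$ monomial by monomial in the seed $t$, transport everything into $\widehat{\LP(t[-1])}$ through the embedding $\iota$ attached to $\overleftarrow{\mu}^*$, and then read off pointedness/copointedness from the order-reversing bijection $\psi_{t[-1],t}$ furnished by Proposition~\ref{prop:order_reverse}. We may assume $z\neq 0$, since for $z=0$ both sides of the equivalence fail trivially.

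First I would write $z=\sum_{h\in S}c_h\,x(t)^h$ with $S=\{h\in\Mc(t)\mid c_h\neq 0\}$ finite. Since $\overleftarrow{\mu}^*z\in\LP(t[-1])$ by hypothesis, $\iota$ coincides with $\overleftarrow{\mu}^*$ on $z$, so $\overleftarrow{\mu}^*z=\iota(z)=\sum_{h\in S}c_h\,\iota(x(t)^h)$ in $\widehat{\LP(t[-1])}$. Applying Lemma~\ref{lem:linear_transform_monomial} to the pair of seeds $t=\overleftarrow{\mu}\,t[-1]$, each summand $\iota(x(t)^h)$ lies in $\hPtSet^{t[-1]}(\psi_{t[-1],t}h)$; concretely, its support is contained in $\{k\in\Mc(t[-1])\mid k\preceq_{t[-1]}\psi_{t[-1],t}h\}$ and the coefficient of $x(t[-1])^{\psi_{t[-1],t}h}$ in it equals $1$. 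Write $\psi:=\psi_{t[-1],t}$; by Lemma~\ref{lem:bijective_linear_map} it is a bijection, and by Proposition~\ref{prop:order_reverse} it reverses the dominance order, $a\preceq_t b\iff\psi a\succeq_{t[-1]}\psi b$.

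The combinatorial core is the observation that \emph{no cancellation can occur at the top}: if $g''$ is a $\preceq_{t[-1]}$-maximal element of the finite poset $\psi(S)$, then the only summand $\iota(x(t)^h)$ whose support can contain $g''$ is the one with $\psi h=g''$ (the containment forces $\psi h\succeq_{t[-1]}g''$, and maximality of $g''$ then forces $\psi h=g''$), and injectivity of $\psi$ singles out $h=\psi^{-1}(g'')$; hence the coefficient of $x(t[-1])^{g''}$ in $\overleftarrow{\mu}^*z$ equals $c_{\psi^{-1}(g'')}\neq 0$, so every maximal element of $\psi(S)$ lies in $\supp(\overleftarrow{\mu}^*z)$. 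Combining this with the containment $\supp(\overleftarrow{\mu}^*z)\subseteq\bigcup_{h\in S}\{k\preceq_{t[-1]}\psi h\}$ and the antisymmetry of $\preceq_{t[-1]}$ (Lemma~\ref{lem:finite_interval}), one gets: $\overleftarrow{\mu}^*z$ is pointed at a degree $g'$ (Definition~\ref{def:pointed}) if and only if $\psi(S)$ has a \emph{unique} $\preceq_{t[-1]}$-maximal element $g'$ and $c_{\psi^{-1}(g')}=1$. Pushing this statement back through the order-reversing bijection $\psi$, it is equivalent to $S$ having a unique $\preceq_t$-minimal element $\eta=\psi^{-1}(g')$ with $c_\eta=1$, i.e. to $z$ being copointed at $\eta$ (Definition~\ref{def:copointed}); and in that case $g'=\psi\eta=\psi_{t[-1],t}\eta$, which is exactly the asserted equivalence.

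The step I expect to require the most care is this ``top degrees survive'' claim: one must rule out accidental cancellation among the finitely many superimposed pointed series $\iota(x(t)^h)$. What makes this automatic is that a maximal element of $\psi(S)$ is reached from $\psi h$ only when $\psi h$ equals it, together with injectivity of $\psi$, so that exactly one summand is responsible for the top coefficient; the remainder is routine bookkeeping with the dominance order, its antisymmetry, and the bijectivity/order-reversal of $\psi$.
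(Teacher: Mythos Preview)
Your proposal is correct and follows essentially the same route as the paper's own proof: expand $z$ into monomials in $\LP(t)$, push each monomial through $\iota$ to obtain a pointed series at $\psi_{t[-1],t}h$ via Lemma~\ref{lem:linear_transform_monomial}, and then use the order-reversing bijection $\psi_{t[-1],t}$ from Proposition~\ref{prop:order_reverse} to identify unique $\preceq_t$-minimal elements of $S$ with unique $\preceq_{t[-1]}$-maximal elements of $\psi(S)$. Your write-up is in fact more careful than the paper's about the ``no cancellation at the top'' step, which the paper simply asserts without isolating it.
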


\begin{proof}

Let us denote the Laurent expansion of $z$ in $\LP(t)$ by $z=\sum_{m\in\Mc(t)}b_{m}x(t)^{m}$,
where only finitely many coefficients $b_{m}$ are non-zero. Taking
the formal Laurent expansion in $\widehat{\LP(t[-1])}$, we obtain
$\overleftarrow{\mu}^{*}z=\sum_{m\in\Mc(t)}\overleftarrow{\mu}^{*}(b_{m}x(t)^{m})$.

Each formal Laurent series $\overleftarrow{\mu}^{*}(x(t)^{m})$ in
$\widehat{\LP(t[-1])}$ has the degree $\psi_{t[-1],t}m$ by Lemma
\ref{lem:linear_transform_monomial}. On the one hand, $z$ is copointed
at $\eta$ if and only if $\{m|b_{m}\neq0\}$ has a unique $\prec_{t}$-minimal
element $\eta$ and $b_{\eta}=1$. On the other hand, $\overleftarrow{\mu}^{*}z$
is pointed at some degree $g$ if and only if $\{\psi_{t[-1],t}m|b_{m}\neq0\}$
has a unique $\prec_{t[-1]}$-maximal element $g=\psi_{t[-1],t}\eta$
and $b_{\eta}=1$. Because $\psi_{t[-1],t}$ reverses the order $\preceq_{t}$
and $\preceq_{t[-1]}$ by Proposition \ref{prop:order_reverse}, these
two conditions are equivalent. 

\end{proof}

\subsection{Support of bipointed Laurent polynomials}

\begin{Def}[Support]\label{def:f_dim}

The support of any $n=\sum n_{k}e_{k}\in\yCone(t)$ is defined to
be the set of vertices $\supp n=\{i\in I_{\ufv}|n_{i}\neq0\}$. 

Given any Laurent polynomial $z\in\LP(t)$ with bidegree $(\eta,g)$.
Its support dimension $\suppDim^{t}z$ is defined to be the unique
element $n\in\yCone^{\geq0}(t)$ such that $\eta=g+p^{*}n$. We define
its support to be $\supp^{t}z=\supp(n)$.

\end{Def}

Recall that, for any seeds $t'=\overleftarrow{\mu}t$, the mutation
map $\overleftarrow{\mu}^{*}$ identifies $\cF(t')$ and $\cF(t)$,
and $\LP(t')\cap\LP(t)$ denote $\LP(t')\cap(\overleftarrow{\mu}^{*})^{-1}\LP(t)$.

\begin{Def}\label{def:compatably_pointed}

Let $S$ be any given set of seeds connected by mutations. A Laurent
polynomial $z\in\cap_{t_{i}\in S}\LP(t_{i})$ is said to be compatibly
pointed at the seeds in $S$, if we have $z\in\cap_{t\in S}\ptSet^{t}(g(t))$
for some degrees $g(t)\in\Mc(t)$ such that $g(t')=\phi_{t',t}g(t)$
for all $t,t'\in S$.

\end{Def}

Similarly, given any formal Laurent series $z\in\widehat{\LP(t_{0})}$,
$t_{0}\in S$, such that its formal Laurent expansion in $\widehat{\LP(t)}$
are well defined for all $t\in S$ (NOT always true). We can say $z$
is compatibly pointed at the seeds in $S$, if $z$ is pointed at
degrees $g(t)\in\Mc(t)$ in $\widehat{\LP(t)}$ such that $g(t')=\phi_{t',t}g(t)$
for all $t,t'\in S$.

\begin{Eg}
Let us give an example of an element $z$ in the upper cluster algebra which is NOT compatibly pointed at all seeds. 

Consider the classical case $\kk=\Z$. Take a type $A_2$ cluster algebra, whose initial seed $t$ consists of the initial cluster variables $x_1,x_2$ and the initial $B$-matrix $B=\begin{pmatrix}
0&-1\\
1&0
\end{pmatrix}$.
Denote $y_1=x_2$ and $y_2=x_1^{-1}$.

Applying the mutation $\mu_1$ to $t$, we obtain a new seed $t'=\mu_1 t$ with new variables $x_1'=x_1^{-1}(1+y_1)=x_1^{-1}(1+x_2)$ and $x_2'=x_2$, $B'=\begin{pmatrix}
0&1\\
-1&0
\end{pmatrix}$, $y_1'=(x_2')^{-1}=y_1^{-1}$, $y_2'=x_1'$.

Define $z:=x_1\cdot  x_1'=1+x_2=1+y_1$. Then $z$ lies in the upper cluster algebra. It is $0$-pointed in the seed $t_0$, but its leading term comes from the contribution of $x_2=x_2'$ in the seed $t'$. In particular, it is not $0$-pointed in the seed $t'$, i.e. not compatibly pointed at the seeds $\{t,t'\}$.
\end{Eg}

Next, we define the ``correct'' support dimension for bipointed
Laurent polynomials, as we shall show in Proposition \ref{prop:bipointed_support}.

\begin{Def}\label{def:support_dimension}

Given any injective-reachable seed $t$ and $g\in\Mc(t)$. If there exists $n\in\yCone^{\geq0}(t)$
such that
\begin{align*}
\eta & =g+\tB(t)\cdot n
\end{align*}
 where $\eta=\psi_{t[-1],t}^{-1}\phi_{t[-1],t}g$, we
define the support dimension associated to $g$ to be 
\begin{align*}
\suppDim g & =n
\end{align*}
 and the bidegree interval associated to $g$ to be the following
subset of $\Mc(t)$:

\begin{align*}
\intv_{g} & =_{\eta\preceq_{t}}\Mc(t)_{\preceq_{t}g}.
\end{align*}

Given any tropical point $[g]\in\tropSet$. If for all $t\in\Delta^{+}$,
$g\in\Mc(t)$ has a support dimension, where $[g]=g$ under the
identification $\tropSet\simeq\Mc(t)$, then we say $[g]$ has the support
dimensions.

\end{Def}

Notice that the support dimension $\suppDim g $ is well-defined if and only if
$\psi_{t[-1],t}^{-1}\phi_{t[-1],t}g\preceq_{t}g$. It will turn out that it is always well-defined by Proposition \ref{prop:all_support_defined} and the existence of generic cluster characters (for skew-symmetric cases) or the existence of theta functions (for skew-symmetrizable cases).

\begin{Rem}

We claim that the support dimension $\suppDim g$ and $\intv_{g}$
do not depend on the choice of $t[-1]$ up to permutations $\sigma$
of $I_{\ufv}$. To see this, for any permutation $\sigma$, we introduce
the index relabelling operation $\sigma$ on the seed $t$ which generates
a new seed $\sigma t=((b_{\sigma i,\sigma j})_{i,j\in I},(x_{\sigma i}(t))$.
Then $\sigma$ commutes with $\phi_{t,t'}$, $\psi_{t,t'}$, and induces
automorphisms on fraction fields which commute with mutations. The
claim follows from direct comparison between different choices of
$t[-1]$ via the relabelling $\sigma$.

\end{Rem}

The following result tells us that the subset $\tropSet_{\preceq_{\{t,t[-1]\}}[g]}$
of tropical points could be described by the inclusion of the bidegree
intervals. Notice that the inclusion gives a natural partial order
bounded from below, and it will be crucial when we construct bases
later.

\begin{Prop}[Inclusion property]\label{prop:inclusion_description}

Let there be given an injective-reachable seed $t=\overleftarrow{\mu}t[-1]$
and $g,g'\in\Mc(t)$ with support dimension. 

(1) We have $g'\prec_{t}g,\ \phi_{t[-1],t}g'\prec_{t[-1]}\phi_{t[-1],t}g$
if and only if $\intv_{g'}\subsetneq\intv_{g}$.

(2) Under the assumption in (1), we have $\Mc(t)_{\prec_{\{t,t[-1]\}}g}=\{g'\in\Mc(t)|\intv_{g'}\subsetneq\intv_{g}\}$
for any $g\in\Mc(t)$. In addition, $\Mc(t)_{\prec_{\{t,t[-1]\}}g}$
is finite.

\end{Prop}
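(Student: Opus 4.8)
The plan is to unwind the definitions of the bidegree interval $\intv_{g}={}_{\eta\preceq_{t}}\Mc(t)_{\preceq_{t}g}$, where $\eta=\psi_{t[-1],t}^{-1}\phi_{t[-1],t}g$ is the codegree-endpoint, and to use the order-reversal property of $\psi_{t[-1],t}$ (Proposition \ref{prop:order_reverse}) to translate the two conditions ``$g'\prec_{t}g$'' and ``$\phi_{t[-1],t}g'\prec_{t[-1]}\phi_{t[-1],t}g$'' into statements about containment of intervals. The key linear-algebra fact I will use repeatedly is that $\psi_{t[-1],t}$ is a bijective linear map that sends the cone $\tB(t)\cdot\yCone^{\ge0}(t)$ onto $\tB(t[-1])\cdot(-\Perm_{\sigma})\cdot\yCone^{\ge0}(t)=\tB(t[-1])\cdot\yCone^{\ge0}(t[-1])$ (since $-\Perm_\sigma$ permutes coordinates), and hence reverses the dominance order; this is exactly Proposition \ref{prop:order_reverse}. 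So $h\preceq_{t}h'$ iff $\psi_{t[-1],t}h\succeq_{t[-1]}\psi_{t[-1],t}h'$, and $\psi_{t[-1],t}$ restricted to $\Mc(t)$ agrees with $\phi_{t[-1],t}$ in the relevant sense needed to identify $\psi_{t[-1],t}^{-1}\phi_{t[-1],t}g$ correctly.

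For part (1): first I would establish the ``backward'' direction. Suppose $\intv_{g'}\subsetneq\intv_{g}$. Since $g$ is the unique $\preceq_{t}$-maximal element of $\intv_{g}$ and $g'$ is the unique $\preceq_{t}$-maximal element of $\intv_{g'}$, the inclusion $\intv_{g'}\subseteq\intv_{g}$ forces $g'\in\intv_{g}$, hence $g'\preceq_{t}g$; and similarly $\eta=\psi^{-1}_{t[-1],t}\phi_{t[-1],t}g$ is the $\preceq_{t}$-minimal element of $\intv_{g}$ and $\eta'$ the minimal element of $\intv_{g'}$, so $\eta\preceq_{t}\eta'$. Applying $\phi_{t[-1],t}$ (equivalently $\psi_{t[-1],t}$ on these elements) and using order reversal, $\eta\preceq_t\eta'$ translates to $\phi_{t[-1],t}g\succeq_{t[-1]}\phi_{t[-1],t}g'$, i.e.\ $\phi_{t[-1],t}g'\preceq_{t[-1]}\phi_{t[-1],t}g$. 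Properness of the inclusion then rules out $g'=g$, giving strict inequalities. For the ``forward'' direction: assume $g'\prec_t g$ and $\phi_{t[-1],t}g'\prec_{t[-1]}\phi_{t[-1],t}g$. The first says $g'=g+\tB(t)n_1$ for $n_1\in\yCone^{\ge0}(t)$, $n_1\ne 0$; the second, via order reversal, says $\eta\preceq_t\eta'$, i.e.\ $\eta'=\eta+\tB(t)n_2$ for $n_2\in\yCone^{\ge0}(t)$. Now for any $h\in\intv_{g'}$ we have $\eta'\preceq_t h\preceq_t g'$, hence $\eta\preceq_t\eta'\preceq_t h\preceq_t g'\preceq_t g$, so $h\in\intv_g$; this gives $\intv_{g'}\subseteq\intv_g$, and since $g\notin\intv_{g'}$ (as $g\not\preceq_t g'$, because $g'\prec_t g$ and the Finite Interval Lemma \ref{lem:finite_interval} forbids $g\preceq_t g'\preceq_t g$ unless $g=g'$), the inclusion is proper.

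For part (2): the characterization $\Mc(t)_{\prec_{\{t,t[-1]\}}g}=\{g'\mid\intv_{g'}\subsetneq\intv_g\}$ is an immediate restatement of part (1), since by definition $g'\in\Mc(t)_{\prec_{\{t,t[-1]\}}g}$ means precisely $g'\prec_t g$ and $\phi_{t[-1],t}g'\prec_{t[-1]}\phi_{t[-1],t}g$ (here using that the dominance orders $\prec_{t}$ and $\prec_{t[-1]}$ are transported to $\Mc(t)$ as in Section \ref{sec:bidegrees_and_support}, and that $S=\{t,t[-1]\}$). Finiteness follows because any such $g'$ lies in $\intv_g={}_{\eta\preceq_t}\Mc(t)_{\preceq_t g}$, which is a finite set by the Finite Interval Lemma \ref{lem:finite_interval}; thus $\Mc(t)_{\prec_{\{t,t[-1]\}}g}\subseteq\intv_g$ is finite.

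I expect the main subtlety to be the bookkeeping in the forward direction of (1): one must be careful that ``$\phi_{t[-1],t}g'\prec_{t[-1]}\phi_{t[-1],t}g$'' is correctly converted, via the \emph{linear} map $\psi_{t[-1],t}$ and Proposition \ref{prop:order_reverse}, into a statement comparing the codegree endpoints $\eta,\eta'$ in the seed $t$ (rather than in $t[-1]$), and that $\eta'$ indeed equals $\psi_{t[-1],t}^{-1}\phi_{t[-1],t}g'$ and is the minimal element of $\intv_{g'}$ — this requires knowing $g'$ has a support dimension, which is part of the hypothesis. Once the dictionary ``$\intv_g$ has top $g$ and bottom $\eta$, and $\psi_{t[-1],t}$ swaps top and bottom while reversing order'' is set up cleanly, both inclusions are essentially formal consequences of transitivity of $\preceq_t$ together with the Finite Interval Lemma to handle properness and finiteness.
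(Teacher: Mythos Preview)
Your proposal is correct and follows essentially the same approach as the paper: both use Proposition~\ref{prop:order_reverse} to convert the condition $\phi_{t[-1],t}g'\prec_{t[-1]}\phi_{t[-1],t}g$ into the comparison $\eta'\succ_t\eta$ of codegree-endpoints in seed $t$, then read off the interval inclusion from the definition of $\intv_g$, and obtain finiteness from the Finite Interval Lemma~\ref{lem:finite_interval} via $g'\in\intv_g$. Your version is in fact more explicit than the paper's (which leaves ``the claim follows from definition of the bidegree intervals'' to the reader); the only place to tighten the exposition is the phrase ``applying $\phi_{t[-1],t}$ (equivalently $\psi_{t[-1],t}$ on these elements)''---what you mean is that $\psi_{t[-1],t}\eta=\phi_{t[-1],t}g$ by construction, not that the two maps coincide.
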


\begin{proof}

(1) By Proposition\ref{prop:order_reverse}, $\phi_{t[-1],t}g'\prec_{t[-1]}\phi_{t[-1],t}g$
is equivalent to $\psi_{t[-1],t}^{-1}\phi_{t[-1],t}g'\succ_{t}\psi_{t[-1],t}^{-1}\phi_{t[-1],t}g$.
Because $g,g'$ have support dimensions, we have $g\succ\psi_{t[-1],t}^{-1}\phi_{t[-1],t}g$
and $g'\succ_t\psi_{t[-1],t}^{-1}\phi_{t[-1],t}g'$. The claim follows
from definition of the bidegree intervals $\intv_{g'},\ \intv_{g}$.

(2) The first claim follows from (1). Notice that $\intv_{g}$ is
finite by Lemma \ref{lem:finite_interval} and $g'\in\intv_{g}$ for
any $g'\in\Mc(t)_{\prec_{\{t,t[-1]\}}g}$, the second claim follows.

\end{proof}

\begin{Rem}

It might be possible to generalize the notion of support dimensions
by removing the restriction $n\in\yCone^{\geq0}(t)$. It is also an
interesting question to write down the mutation rule of these dimensions,
see \cite{fujiwara2018duality} for a formula for the support dimensions
for cluster variables (called $f$-vectors).

\end{Rem}

The following result gives an equivalence between being bipointed with
the ``correct'' support dimension and being compatibly pointed at
$t,t[-1]$.

\begin{Prop}[Compatibility and support dimensions]\label{prop:bipointed_support}

Given seeds $t=\overleftarrow{\mu}t[-1]$ and a pointed Laurent polynomial
$z\in\ptSet^{t}(g)$, $g\in\Mc(t)$.

(1) If $z$ is compatibly pointed at seeds $t,t[-1]$, then $g$ has
a support dimension. Moreover, $z$ is bipointed with $\suppDim z=\suppDim g$
in this case.

(2) If $g$ has a support dimension and $z\in\LP(t)$ is bipointed
with $\suppDim z=\suppDim g$, then $z$ is compatibly pointed at
seeds $t,t[-1]$.

\end{Prop}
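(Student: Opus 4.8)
The plan is to get both implications from the order‑reversing linear map $\psi_{t[-1],t}$ together with the degree/codegree swap of Proposition~\ref{prop:cpt_pt_swap}. Throughout set $\eta:=\psi_{t[-1],t}^{-1}\phi_{t[-1],t}g\in\Mc(t)$ — the ``codegree attached to $g$'' — and note $\psi_{t[-1],t}\eta=\phi_{t[-1],t}g$ by bijectivity of $\psi_{t[-1],t}$ (Lemma~\ref{lem:bijective_linear_map}); by Definition~\ref{def:support_dimension} (and the remark after it) $g$ has a support dimension exactly when $\eta\preceq_t g$, in which case $\suppDim g$ is the unique $n\in\yCone^{\geq0}(t)$ with $\eta=g+\tB(t)\cdot n$. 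For part (1): the compatibility hypothesis provides in particular $\overleftarrow{\mu}^*z\in\LP(t[-1])$, pointed there at degree $\phi_{t[-1],t}g=\psi_{t[-1],t}\eta$. Proposition~\ref{prop:cpt_pt_swap} then says exactly that $z$ is copointed in $\LP(t)$ at codegree $\eta$; since $z\in\ptSet^t(g)$, $z$ is bipointed at bidegree $(\eta,g)$. The basic bidegree lemma gives $\eta\preceq_t g$, so $\eta=g+\tB(t)\cdot n$ for a unique $n\in\yCone^{\geq0}(t)$, which is $\suppDim^t z$ by Definition~\ref{def:f_dim}; because this $n$ witnesses $\eta\preceq_t g$ for the specific $\eta$ attached to $g$, Definition~\ref{def:support_dimension} yields that $\suppDim g$ exists and equals $n=\suppDim^t z$.

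For part (2): by hypothesis $\suppDim^t z=\suppDim g=:n$, so $z$ is bipointed at $(\eta,g)$ with $\eta=g+\tB(t)\cdot n=\psi_{t[-1],t}^{-1}\phi_{t[-1],t}g$. Write $z=\sum_{m}b_m x(t)^m$ with $m$ ranging over the interval ${}_{\eta\preceq_t}\Mc(t)_{\preceq_t g}$, which is finite by Lemma~\ref{lem:finite_interval}, and $b_\eta=b_g=1$. Pass to the formal Laurent expansion $\overleftarrow{\mu}^*z=\sum_m b_m\,\overleftarrow{\mu}^*(x(t)^m)$ in $\widehat{\LP(t[-1])}$: by Lemma~\ref{lem:linear_transform_monomial} each $\overleftarrow{\mu}^*(x(t)^m)$ is pointed at degree $\psi_{t[-1],t}m$, and by Proposition~\ref{prop:order_reverse} the map $\psi_{t[-1],t}$ turns $\preceq_t$ into $\succeq_{t[-1]}$. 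Since every $m$ in the interval satisfies $m\succeq_t\eta$, every Laurent degree occurring in $\overleftarrow{\mu}^*z$ is $\preceq_{t[-1]}\psi_{t[-1],t}\eta$; moreover a summand $b_m\,\overleftarrow{\mu}^*(x(t)^m)$ can contribute to the degree $\psi_{t[-1],t}\eta$ only if $\psi_{t[-1],t}\eta\preceq_{t[-1]}\psi_{t[-1],t}m$, which together with the previous inequality forces $m=\eta$, with coefficient $b_\eta=1$. Hence $\overleftarrow{\mu}^*z$ is pointed at degree $\psi_{t[-1],t}\eta=\phi_{t[-1],t}g$ in $\widehat{\LP(t[-1])}$, which — together with $z\in\ptSet^t(g)$ and $\phi_{t[-1],t}\circ\phi_{t,t[-1]}=\mathrm{id}$ — is exactly the assertion that $z$ is compatibly pointed at the seeds $t,t[-1]$.

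The technical core of both parts is Proposition~\ref{prop:order_reverse}; granting it, (1) is a one‑line application of Proposition~\ref{prop:cpt_pt_swap} and (2) is the leading‑term bookkeeping above, the only routine care being to keep straight which completion ($\widehat{\LP}$ versus $\widetilde{\LP}$) one works in and to use the Finite Interval Lemma to see the relevant sums and products converge. The one point I expect to be genuinely delicate is the transition in (2) between ``$\overleftarrow{\mu}^*z$ is a \emph{pointed formal Laurent series}'' and ``$\overleftarrow{\mu}^*z$ is an \emph{honest Laurent polynomial} in $\LP(t[-1])$'': if ``compatibly pointed'' is read in the formal‑series sense of Definition~\ref{def:compatably_pointed} the argument above already suffices (as $z\in\LP(t)\subset\widehat{\LP(t)}$ and its expansion in $\widehat{\LP(t[-1])}$ is well defined), but if the Laurent‑polynomial form is wanted one must also rule out an infinite tail; I would do this by running the mirror computation with the copointed expansion $\iota'$ into $\widetilde{\LP(t[-1])}$ (using that $z$ is copointed at $\eta$ in $\LP(t)$, via the codegree analogue of Lemma~\ref{lem:linear_transform_monomial}) to produce a codegree for $\overleftarrow{\mu}^*z$ compatible with its degree, and then conclude by the bidegree lemma / Lemma~\ref{lem:finite_interval} that the support is finite.
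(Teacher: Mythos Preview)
Your proof is correct and follows essentially the same route as the paper. For part~(1) you use Proposition~\ref{prop:cpt_pt_swap} exactly as the paper does; for part~(2) the paper simply invokes the reverse direction of Proposition~\ref{prop:cpt_pt_swap} (copointed at $\eta$ in $\LP(t)$ $\Rightarrow$ pointed at $\psi_{t[-1],t}\eta=\phi_{t[-1],t}g$ in $t[-1]$), whereas you unpack that proposition's proof in place via Lemma~\ref{lem:linear_transform_monomial} and Proposition~\ref{prop:order_reverse} --- the content is identical.

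Your closing caveat is well taken: the hypothesis of Proposition~\ref{prop:cpt_pt_swap} asks that $\overleftarrow{\mu}^*z\in\LP(t[-1])$, which is not part of the assumptions in~(2). The paper handles this by allowing the formal-series reading of ``compatibly pointed'' (the paragraph following Definition~\ref{def:compatably_pointed}); since $z\in\LP(t)$, its formal Laurent expansion in $\widehat{\LP(t[-1])}$ via $\iota$ is always defined, and the proof of Proposition~\ref{prop:cpt_pt_swap} in fact only uses this formal expansion. So no extra argument (your proposed mirror computation with $\iota'$) is needed for the proposition as stated, though you are right that upgrading to an honest Laurent polynomial would require more.
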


\begin{proof}

(1) By Proposition \ref{prop:cpt_pt_swap}, we know that $z\in\LP(t)$
is copointed with codegree $\psi_{t[-1],t}^{-1}\deg^{t[-1]}\overleftarrow{\mu}^{*}z$,
which equals $\psi_{t[-1],t}^{-1}\phi_{t[-1],t}g$ because $z$ is
compatibly pointed at seeds $t,t[-1]$. The claims follow.

(2) By definition, $z$ is bipointed at bidegree $(g,\psi_{t[-1],t}^{-1}\phi_{t[-1],t}g)$.
By Proposition \ref{prop:cpt_pt_swap}, we know that $\overleftarrow{\mu}^{*}z$
is pointed with degree $\phi_{t[-1],t}g$.

\end{proof}

Recall that we have the following result which tells us that a finite
decomposition of pointed Laurent series is unitriangular.

\begin{Lem}\cite[Lemma 3.1.10(iii)]{qin2017triangular}\label{lem:triangular_decomposition}

Given any finite linear decomposition of pointed formal Laurent series
$u,z_{j}$ in $\widehat{\LP(t)}$, where $z_{j}$ have distinct degrees:

\begin{align*}
u & =\sum_{0\leq j\leq r}b_{j}z_{j},
\end{align*}
with $r\in\N$ and the coefficients $b_{j}\in\kk$. Then the decomposition
must be $\prec_{t}$-unitriangular, i.e., we can reindex $z_{j}$
such that $u=z_{0}+\sum_{1\leq j\leq r}b_{j}z_{j}$, with $b_{0}=1$,
$\deg^{t}z_{0}=\deg^{t}u$ and $\deg^{t}z_{j}\prec_{t}\deg^{t}u$
for all $j\geq1$.

\end{Lem}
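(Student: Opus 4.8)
The plan is a leading-degree argument. Write $g_j := \deg^t z_j \in \Mc(t)$ and $g^\ast := \deg^t u$; after discarding the summands with $b_j = 0$ we may assume every $b_j$ is nonzero, and we use the hypothesis, present in the original statement \cite[Lemma 3.1.10(iii)]{qin2017triangular}, that the degrees $g_j$ are pairwise distinct. Put $D := \{g_j \mid 0 \le j \le r\}$, a finite nonempty subset of $\Mc(t)$. The one structural fact I would invoke is that a formal Laurent series pointed at $g$ lies in $\hPtSet^t(g) = x(t)^{g}(1 + \widehat{\Z[\yCone^{>0}(t)]})$, so every Laurent monomial it contains has degree $\preceq_t g$.

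The first thing I would prove is the following key step: \emph{if $g_0$ is a $\prec_t$-maximal element of $D$, then the coefficient of $x(t)^{g_0}$ in $u$ equals $b_{j_0}$}, where $j_0$ is the unique index with $g_{j_0} = g_0$. Indeed $x(t)^{g_0}$ can occur in $z_j$ only when $g_0 \preceq_t g_j$; since $g_j \in D$ and $g_0$ is $\prec_t$-maximal this forces $g_j = g_0$, hence $j = j_0$, and there the coefficient of $x(t)^{g_0}$ is $1$ because $z_{j_0}$ is pointed at $g_0$. In particular this coefficient is $b_{j_0} \ne 0$, so $g_0$ is a Laurent degree occurring in $u$, whence $g_0 \preceq_t g^\ast$. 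Thus every $\prec_t$-maximal element of $D$ is $\preceq_t g^\ast$.

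Next I would show that $g^\ast$ is the unique $\prec_t$-maximal element of $D$. Since $u$ is pointed, the coefficient of $x(t)^{g^\ast}$ in $u$ is $1 \ne 0$, so some $z_j$ with $g^\ast \preceq_t g_j$ contributes; picking a $\prec_t$-maximal $g_0 \in D$ with $g_j \preceq_t g_0$ (possible because $D$ is finite) and applying the key step, we get $g^\ast \preceq_t g_j \preceq_t g_0 \preceq_t g^\ast$, hence $g^\ast = g_j = g_0 \in D$ by the Finite Interval Lemma (Lemma \ref{lem:finite_interval}). So $g^\ast$ is $\prec_t$-maximal in $D$, and by the previous paragraph it is the only maximal element, so every element of $D$ is $\preceq_t g^\ast$. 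Applying the key step to $g_0 = g^\ast$ now identifies the coefficient of $x(t)^{g^\ast}$ in $u$, namely $1$, with $b_{j^\ast}$ for the unique $j^\ast$ with $g_{j^\ast} = g^\ast$; hence $b_{j^\ast} = 1$. Relabelling so that $j^\ast = 0$ gives $b_0 = 1$ and $\deg^t z_0 = g^\ast = \deg^t u$, while for $j \ge 1$ we have $g_j \preceq_t g^\ast$ and $g_j \ne g^\ast$, hence $g_j \prec_t g^\ast$, as required. The only mildly delicate point is obtaining $g^\ast \in D$ itself rather than merely $g^\ast \preceq_t$ some element of $D$; this is exactly where finiteness of $D$ and antisymmetry of $\preceq_t$ enter. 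The hypothesis that cannot be dropped is pairwise-distinctness of the $g_j$: without it the conclusion fails, as in $u = z_1 - z_2$ with $z_1, z_2$ sharing their leading monomial but differing in lower-degree terms, so it should be kept among the hypotheses.
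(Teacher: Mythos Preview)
Your argument is correct and is the standard leading-degree comparison one would expect; the paper does not supply its own proof here but simply cites \cite[Lemma 3.1.10(iii)]{qin2017triangular}, where the same kind of argument appears. Your observation that the pairwise-distinctness hypothesis on the $\deg^t z_j$ is needed (and is present in the cited source but omitted in the restatement) is accurate, and your counterexample is valid.

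One small streamlining: once you have established that every $\prec_t$-maximal element of $D$ lies $\preceq_t g^\ast$, you already know $g_j \preceq_t g^\ast$ for all $j$ (every element of a finite poset sits below a maximal one). Then the coefficient of $x(t)^{g^\ast}$ in $u$ being nonzero forces some $g_j$ with $g^\ast \preceq_t g_j$, hence $g_j = g^\ast$ directly, without the extra squeeze through another maximal element. Also, your step of discarding the $b_j = 0$ terms is fine for producing $z_0$ and $b_0 = 1$, but note that the conclusion as literally stated demands $\deg^t z_j \prec_t \deg^t u$ for \emph{all} $j \ge 1$, which cannot be inferred for the discarded indices; so in addition to distinctness, one should read the hypothesis as $b_j \ne 0$ throughout (as is implicit in calling it a decomposition).
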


We have a better control of a finite decomposition of Laurent polynomials
compatibly pointed at $t,t[-1]$ (or, equivalently, bipointed with
correct support dimensions by Proposition \ref{prop:bipointed_support}).

\begin{Prop}[decomposition]\label{prop:decompose_bipointed}

Given seeds $t=\overleftarrow{\mu}t[-1]$ and any finite decomposition
of pointed Laurent polynomials $u,z_{j}$ in $\LP(t)$, where $z_{j}$ have distinct degrees:

\begin{align*}
u & =\sum_{0\leq j\leq r}b_{j}z_{j},
\end{align*}
with $\deg^{t}z_{0}=\deg^{t}u$ and all coefficients $b_{j}\neq0$.
We further assume that all $u,z_{j}$ are compatibly pointed at $t$
and $t[-1]$. Then the following claims are true:

(1) All $u,z_{j}$ are bipointed.

(2) We have $\deg^{t}u=\deg^{t}z_{0}$ and $\deg^{t}z_{0}\succ_{t}\deg^{t}z_{j}$
for all $j>0$. 

(3) We have $\codeg^{t}u=\codeg^{t}z_{0}$ and $\codeg^{t}z_{j}\succ_{t}\codeg^{t}z_{0}$
for all $j>0$.

(4) We have inclusion between bidegree intervals $\intv_{\deg^{t}z_{j}}\subsetneq\intv_{\deg^{t}z_{0}}$
for all $j>0$.

(5) We have $\suppDim\deg^{t}z_{j}<\suppDim\deg^{t}z_{0}$ for all
$j>0$ in $\yCone^{\geq0}(t)$.

\end{Prop}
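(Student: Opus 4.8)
The plan is to reduce everything to Proposition \ref{prop:cpt_pt_swap} (degree/codegree swap) together with Lemma \ref{lem:triangular_decomposition} (unitriangularity of finite decompositions of pointed series), applied both in the seed $t$ and in the seed $t[-1]$.

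First I would prove (2). Since $u, z_j \in \LP(t)$ are pointed and we have a finite decomposition $u = \sum_{0\le j\le r} b_j z_j$ with all $b_j \ne 0$, Lemma \ref{lem:triangular_decomposition} applies: after reindexing we get $u = z_0 + \sum_{j\ge 1} b_j z_j$ with $b_0 = 1$, $\deg^t z_0 = \deg^t u$, and $\deg^t z_j \prec_t \deg^t u = \deg^t z_0$ for $j \ge 1$. (The hypothesis $\deg^t z_0 = \deg^t u$ tells us $z_0$ is the distinguished summand; but actually one should check no other $z_j$ has degree equal to $\deg^t u$ — this follows because the leading coefficient of $u$ is $1$ and a cancellation would force two $\prec_t$-maximal monomials, contradicting pointedness of the $z_j$ having that degree and $b_j\ne 0$. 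I would spell this out briefly.) This gives (2).

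Next, (3): apply $\overleftarrow{\mu}^*$ to pass into $\LP(t[-1])$. Since all $u, z_j$ are compatibly pointed at $t, t[-1]$, each $\overleftarrow{\mu}^* z_j$ and $\overleftarrow{\mu}^* u$ is a genuine pointed Laurent polynomial in $\LP(t[-1])$ (in particular lies in $\LP(t)\cap\LP(t[-1])$), with degree $\phi_{t[-1],t}(\deg^t(\cdot)) = \psi_{t[-1],t}(\deg^t(\cdot))$ — wait, one must be careful: compatibly pointed means the degree in $t[-1]$ is $\phi_{t[-1],t}\deg^t$, while Proposition \ref{prop:cpt_pt_swap} identifies $\deg^{t[-1]}\overleftarrow{\mu}^* z = \psi_{t[-1],t}\codeg^t z$. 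Equating these two expressions for $\deg^{t[-1]}\overleftarrow{\mu}^* z_j$ recovers $\codeg^t z_j = \psi_{t[-1],t}^{-1}\phi_{t[-1],t}\deg^t z_j$, which is exactly the statement that $z_j$ is bipointed with the ``correct'' support dimension; this is (1), and it is really just Proposition \ref{prop:bipointed_support}(1). Now the decomposition $\overleftarrow{\mu}^* u = \sum_j b_j \overleftarrow{\mu}^* z_j$ is again a finite decomposition of pointed Laurent polynomials in $\LP(t[-1])$, so Lemma \ref{lem:triangular_decomposition} applies there too: $\deg^{t[-1]}\overleftarrow{\mu}^* z_j \prec_{t[-1]} \deg^{t[-1]}\overleftarrow{\mu}^* z_0$ for $j\ge 1$. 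Translating back via Proposition \ref{prop:cpt_pt_swap}, this says $\psi_{t[-1],t}\codeg^t z_j \prec_{t[-1]} \psi_{t[-1],t}\codeg^t z_0$, and since $\psi_{t[-1],t}$ reverses the dominance orders (Proposition \ref{prop:order_reverse}), we get $\codeg^t z_j \succ_t \codeg^t z_0$ for $j \ge 1$, and $\codeg^t u = \codeg^t z_0$. This is (3).

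Finally (4) and (5) are formal consequences. By (1) each $z_j$ is bipointed with $\suppDim^t z_j = \suppDim \deg^t z_j$ in the sense of Definition \ref{def:support_dimension}; by (2) we have $\deg^t z_j \prec_t \deg^t z_0$ and by (3) (after applying $\phi_{t[-1],t}$, which via Proposition \ref{prop:order_reverse} and the identification of codegrees with the swapped degrees corresponds to the $t[-1]$-order reversal already established) we have $\phi_{t[-1],t}\deg^t z_j \prec_{t[-1]} \phi_{t[-1],t}\deg^t z_0$. So Proposition \ref{prop:inclusion_description}(1) gives $\intv_{\deg^t z_j} \subsetneq \intv_{\deg^t z_0}$, which is (4). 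For (5), write $\eta_j = \deg^t z_j + \tB(t)\cdot\suppDim \deg^t z_j$ (the codegree side); the strict inclusion of intervals together with $\deg^t z_j \prec_t \deg^t z_0$ and $\eta_j \succ_t \eta_0$ forces $\suppDim\deg^t z_0 - \suppDim\deg^t z_j \in \yCone^{\ge 0}(t)\setminus\{0\}$ by injectivity of $p^* = \tB(t)\cdot(-)$ (full rank assumption), i.e.\ $\suppDim\deg^t z_j < \suppDim\deg^t z_0$ in $\yCone^{\ge0}(t)$. The main obstacle, such as it is, is purely bookkeeping: keeping straight the three different-but-compatible descriptions of ``degree in $t[-1]$'' ($\phi_{t[-1],t}\deg^t$ from compatibility, $\psi_{t[-1],t}\codeg^t$ from the swap proposition, and their coincidence), and making sure the uniqueness of the distinguished summand $z_0$ is genuinely forced rather than assumed.
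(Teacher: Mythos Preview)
Your proof is correct and largely follows the paper's argument for (1), (2), (4), (5). The one genuine difference is your treatment of (3): you apply Lemma~\ref{lem:triangular_decomposition} a second time in the seed $t[-1]$ (using compatible pointedness to get pointed Laurent polynomials there, with $z_0$ again the distinguished summand since $\phi_{t[-1],t}$ is a bijection and $\deg^t z_0 = \deg^t u$), then pull the resulting inequality $\deg^{t[-1]}\overleftarrow{\mu}^* z_j \prec_{t[-1]} \deg^{t[-1]}\overleftarrow{\mu}^* z_0$ back to $\codeg^t z_j \succ_t \codeg^t z_0$ via Propositions~\ref{prop:cpt_pt_swap} and~\ref{prop:order_reverse}. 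The paper instead argues directly in $t$: since $u$ and $z_0$ share the same degree and both are compatibly pointed at $t,t[-1]$, they share the same codegree by (1); then the codegree of $u$ must be the $\prec_t$-minimum of the finite set $\{\codeg^t z_j\}$, which forces $\codeg^t z_j \succ_t \codeg^t z_0$ for $j>0$. Your route is more symmetric (``the same lemma applied in two seeds'') and sidesteps the small question of why the set of codegrees has a $\prec_t$-minimum; the paper's route is more direct once (1) is in hand. For (4) you invoke Proposition~\ref{prop:inclusion_description}(1), whereas the paper simply reads off the interval inclusion from (2) and (3) and the definition of $\intv_g$; both are fine. Your parenthetical worry about whether some other $z_j$ might share degree $\deg^t u$ is a legitimate concern about the statement of Lemma~\ref{lem:triangular_decomposition} (it tacitly assumes distinct degrees among the $z_j$), but your proposed justification via ``cancellation forcing two maximal monomials'' does not actually work; in practice this is a non-issue because in every application of the proposition the $z_j$ come from a degree-indexed collection.
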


\begin{proof}

(1) Because $u,z_{j}$ are compatibly bipointed at $t,t[-1]$, we
can apply Proposition \ref{prop:bipointed_support}. As consequences,
$\deg^{t}u$ has a support dimension $\suppDim u=\suppDim\deg^{t}u$,
$u$ is bipointed at the bidegree $(\deg^{t}u,\psi_{t[-1],t}^{-1}\phi_{t[-1],t}\deg^{t}u)$,
all $\deg^{t}z_{j}$ have support dimensions $\suppDim z_{j}=\suppDim\deg^{t}z_{j}$,
and all $z_{j}$ are bipointed at bidegree $(\deg^{t}z_{j},\psi_{t[-1],t}^{-1}\phi_{t[-1],t}\deg^{t}z_{j})$. 

(2) This claim follows from Lemma \ref{lem:triangular_decomposition}.

(3) Because $\deg^{t}u=\deg^{t}z_{0}$, $u$ and $z_{0}$ must have
the same codegree $\psi_{t[-1],t}^{-1}\phi_{t[-1],t}\deg^{t}u=\psi_{t[-1],t}^{-1}\phi_{t[-1],t}\deg^{t}z_{0}$.
Because $u=\sum b_{j}z_{j}$ is a finite decomposition, the $\prec_{t}$-minimal
Laurent degree $\codeg^{t}u$ of $u$ must be the $\prec_{t}$-minimal
element of $\{\codeg^{t}z_{j},\forall j\}$. Therefore, $\codeg^{t}z_{j}\succ_{t}\codeg^{t}z_{0}$
$\forall j>0$.

(4) The claim follows from (2)(3).

(5) By (4), for any $j>0$, we have 
\begin{align*}
\deg^{t}z_{0}\succ_{t}\deg^{t}z_{j}\succeq_{t}\codeg^{t}z_{j}\succ_{t}\codeg^{t}z_{0}.
\end{align*}
Therefore, there exists $n_{1},n_{2},n_{3}\in\yCone^{\geq0}(t)$,
with $n_{1},n_{3}\neq0$, such that 
\begin{align*}
\deg^{t}z_{j} & =\deg^{t}z_{0}+\tB(t)n_{1}\\
\codeg^{t}z_{j} & =\deg^{t}z_{j}+\tB(t)n_{2}\\
\codeg^{t}z_{0} & =\codeg^{t}z_{j}+\tB(t)n_{3}.
\end{align*}
We obtain $\suppDim z_{j}=n_{2}<n_{1}+n_{2}+n_{3}=\suppDim z_{0}$.

\end{proof}

Conversely, by slightly changing the statement in Proposition \ref{prop:decompose_bipointed},
we describe a finite sum of pointed Laurent polynomials with well
controlled bidegrees.

\begin{Prop}[combination]\label{prop:sum_bipointed}

Given seeds $t=\overleftarrow{\mu}t[-1]$ and any finite decomposition
of Laurent polynomials $u,z_{j}$ in $\LP(t)$

\begin{align*}
u & =\sum_{0\leq j\leq r}b_{j}z_{j}
\end{align*}
with coefficients $b_{j}\neq0$. We further assume that all $z_{j}$
are compatibly pointed at $t,t[-1]$ and their bidegrees satisfy $\intv_{\deg^{t}z_{j}}\subsetneq\intv_{\deg^{t}z_{0}}$
for all $j>0$. 

Then $u$ is compatibly pointed at $t$, $t[-1]$, bipointed at $\LP(t)$
with bidegree $(\codeg^{t}z_{0},\deg^{t}z_{0})$, and has a support
dimension $\suppDim u=\suppDim z_{0}$.

\end{Prop}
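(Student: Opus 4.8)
The plan is to leverage Proposition~\ref{prop:bipointed_support} to convert the hypothesis ``compatibly pointed at $t,t[-1]$'' into the more concrete hypothesis ``bipointed with the correct support dimension'', and then to run the argument in the formal Laurent series ring $\widehat{\LP(t)}$ using the unitriangularity Lemma~\ref{lem:triangular_decomposition} together with the order-reversal Proposition~\ref{prop:order_reverse}. First I would observe that, by Proposition~\ref{prop:bipointed_support}(1) applied to each $z_j$, the degree $g_j:=\deg^t z_j$ has a support dimension, $z_j$ is bipointed at $(\eta_j,g_j)$ with $\eta_j=\psi_{t[-1],t}^{-1}\phi_{t[-1],t}g_j$, and $\suppDim z_j=\suppDim g_j$; equivalently $\intv_{g_j}={}_{\eta_j\preceq_t}\Mc(t)_{\preceq_t g_j}$. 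By hypothesis $\intv_{g_j}\subsetneq\intv_{g_0}$ for $j>0$, which by Proposition~\ref{prop:inclusion_description}(1) means precisely $g_j\prec_t g_0$ and $\phi_{t[-1],t}g_j\prec_{t[-1]}\phi_{t[-1],t}g_0$.

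Next I would establish that $u$ is pointed at $g_0$ in $\LP(t)$ with $\deg^t u=g_0$. Since $g_j\prec_t g_0$ for all $j>0$ and $b_0\ne 0$, the $\prec_t$-maximal Laurent monomials among the $z_j$'s all come from $z_0$ alone (no cancellation can occur with the strictly-lower-degree terms), so the leading term of $u$ is $b_0 x(t)^{g_0}$; combined with $z_0$ being pointed (coefficient $1$ at $g_0$), we need $b_0=1$ — but this is automatic from Lemma~\ref{lem:triangular_decomposition} after reindexing, so in fact $\deg^t u=g_0$ and $u$ is pointed at $g_0$. Symmetrically, I would pass to the seed $t[-1]$: by the Different-expansion remark and Proposition~\ref{prop:cpt_pt_swap}, each $z_j$ is copointed at $\eta_j$ with $\eta_j\prec_t$-ordering reversed under $\psi_{t[-1],t}$, and $\phi_{t[-1],t}g_j\prec_{t[-1]}\phi_{t[-1],t}g_0$ means $\eta_j\succ_t\eta_0$ for $j>0$ by Proposition~\ref{prop:order_reverse}. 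Hence the $\prec_t$-minimal Laurent monomial of $u$ is $b_0x(t)^{\eta_0}=x(t)^{\eta_0}$, so $u$ is copointed at $\eta_0=\codeg^t z_0$. Therefore $u$ is bipointed at $(\codeg^t z_0,\deg^t z_0)=(\eta_0,g_0)$, and since $\eta_0=\psi_{t[-1],t}^{-1}\phi_{t[-1],t}g_0$ by the definition of $\eta_0$, the bidegree has exactly the correct support dimension, so $\suppDim u=\suppDim g_0=\suppDim z_0$.

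Finally, to conclude that $u$ is compatibly pointed at $t,t[-1]$, I would invoke Proposition~\ref{prop:bipointed_support}(2): $g_0$ has a support dimension and $u\in\LP(t)$ is bipointed with $\suppDim u=\suppDim g_0$, hence $u$ is compatibly pointed at $t$ and $t[-1]$. One subtlety worth spelling out is that the decomposition $u=\sum b_j z_j$ must be interpreted inside $\widehat{\LP(t)}$ (via the embedding $\iota$) when we apply Lemma~\ref{lem:triangular_decomposition}, so I would first note that all the $z_j$ and $u$ lie in $\LP(t)\subset\widehat{\LP(t)}$ and the identity persists there.

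The main obstacle I anticipate is bookkeeping the copointedness half cleanly: establishing $\codeg^t u=\eta_0$ requires knowing that the $\prec_t$-minimal degrees $\eta_j$ of the summands are strictly above $\eta_0$, which I get from $\intv_{g_j}\subsetneq\intv_{g_0}$ via Proposition~\ref{prop:inclusion_description}(1) and Proposition~\ref{prop:order_reverse}, but one must be careful that ``$\intv_{g_j}\subsetneq\intv_{g_0}$'' genuinely forces $\eta_j\succ_t\eta_0$ rather than merely $\eta_j\succeq_t\eta_0$ — this is exactly the content of the strict inclusion (if $\eta_j=\eta_0$ and $g_j\prec_t g_0$ then $\intv_{g_j}\subsetneq\intv_{g_0}$ still holds, so one only needs $\eta_j\succeq_t\eta_0$, and strictness of $\codeg$ comparison then follows because $z_0$ contributes the unique minimal monomial). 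Once this ordering is pinned down, the rest is a direct application of the cited propositions.
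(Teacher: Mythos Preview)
Your approach is essentially the same as the paper's: convert ``compatibly pointed at $t,t[-1]$'' into ``bipointed with the correct support dimension'' via Proposition~\ref{prop:bipointed_support}(1), use the bidegree-interval inclusion to show that $u$ inherits the bidegree of $z_0$, and then apply Proposition~\ref{prop:bipointed_support}(2) to conclude. The paper's proof is terser---it reads off the bipointedness of $u$ directly from $\intv_{g_j}\subsetneq\intv_{g_0}$ (this inclusion forces every Laurent monomial of each $z_j$, $j>0$, to lie strictly inside $\intv_{g_0}$, so the extremal monomials of $u$ are those of $b_0 z_0$) rather than detouring through Proposition~\ref{prop:inclusion_description} and Proposition~\ref{prop:order_reverse}---but the underlying logic is identical.

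There is one genuine flaw. Your justification that $b_0=1$ ``is automatic from Lemma~\ref{lem:triangular_decomposition} after reindexing'' is circular: that lemma presupposes that $u$ is pointed, which is precisely what you are trying to establish. Without $b_0=1$ the leading coefficient of $u$ at $g_0$ is $b_0$, and the conclusion ``$u$ is bipointed'' (which by Definition~\ref{def:bipointed} requires leading and trailing coefficients equal to $1$) simply fails. The paper's proof does not address this either; in the only place the proposition is invoked (Lemma~\ref{lem:general_bipointed_basis}(3)) the decomposition has $b_0=1$ by construction. So this is best regarded as an implicit hypothesis in the statement rather than something to be derived---you should assume $b_0=1$ and drop the appeal to Lemma~\ref{lem:triangular_decomposition}.
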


\begin{proof}

By the inclusion assumption on bidegrees of $z_{j}$, $u$ must be
bipointed at bidegree $(\codeg^{t}z_{0},\deg^{t}z_{0})$ with support
$\suppDim u=\suppDim z_{0}$. Because $z_{0}$ is compatibly pointed
at $t,t[-1]$, $\deg^{t}z_{0}$ has a support dimension $\suppDim\deg^{t}z_{0}=\suppDim z_{0}$
by Proposition \ref{prop:bipointed_support}. Consequently, $u$ is
compatibly pointed at seeds $t,t[-1]$ by Proposition \ref{prop:bipointed_support}(2).

\end{proof}

Finally, we discuss properties of localized cluster monomials. Given
seeds $t'=\overleftarrow{\mu}t$ and a localized cluster monomial
$x(t')^{d}$ where $d\in\N^{I_{\ufv}}\oplus\Z^{I_{\fv}}$. Recall
that its Laurent expansion in $\LP(t)$ is computed as $\overleftarrow{\mu}^{*}x(t')^{d}$.

\begin{Lem}\label{lem:compatible_at_g_vector}

If any $z\in\LP(t)$ has degree $\deg^{t}z=\deg^{t}\overleftarrow{\mu}^{*}x(t')^{d}$
and is compatibly pointed at $\{t,t',t'[-1]\}$, then $z=\overleftarrow{\mu}^{*}x(t')^{d}$.

\end{Lem}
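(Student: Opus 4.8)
The plan is to show that $z$, viewed inside $\LP(t')$ via $\overleftarrow{\mu}^{*}$, is forced to be a single Laurent monomial, after which the identity $z=\overleftarrow{\mu}^{*}x(t')^{d}$ is immediate, since $\overleftarrow{\mu}^{*}x(t')^{d}$ \emph{is} the monomial $x(t')^{d}$ in $\LP(t')$. Set $M:=\overleftarrow{\mu}^{*}x(t')^{d}\in\LP(t)$. First I would record the relevant tropical properties of $M$: being (the Laurent expansion of) a localized cluster monomial, $M$ is compatibly pointed at all seeds, in particular at $\{t,t',t'[-1]\}$, because the $g$-vectors of cluster monomials transform through the tropical transformations $\phi_{\bullet,\bullet}$ (Theorem \ref{thm:CC-formula}, the $g$-vector transformation law of \cite{DerksenWeymanZelevinsky09}\cite{gross2018canonical}, and Lemma \ref{lem:product_series}; see also Proposition \ref{prop:cluster_monomial_bipointed}). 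Moreover, in its own seed $t'$ the element $M$ equals the Laurent monomial $x(t')^{d}$, hence is bipointed there with bidegree $(d,d)$, i.e.\ with support dimension $0$. Applying Proposition \ref{prop:bipointed_support}(1) to $M$ at the pair $\{t',t'[-1]\}$, the degree $\deg^{t'}M=d\in\Mc(t')$ has a support dimension and $\suppDim d=\suppDim^{t'}M=0$.

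Next I would transport the hypothesis on $z$ to the seed $t'$. Since $z$ is compatibly pointed at $\{t,t'\}$ and $\deg^{t}z=\deg^{t}M$, we get $\deg^{t'}z=\phi_{t',t}(\deg^{t}z)=\phi_{t',t}(\deg^{t}M)=\deg^{t'}M=d$. Since $z$ is also compatibly pointed at $\{t',t'[-1]\}$, Proposition \ref{prop:bipointed_support}(1) applied to $z$ shows that $z$ is bipointed in $\LP(t')$ with $\suppDim^{t'}z=\suppDim(\deg^{t'}z)=\suppDim d=0$. A Laurent polynomial with bidegree $(\eta,g)$ and $\eta=g$ is a Laurent monomial (the Lemma following Definition \ref{def:bipointed}); being moreover pointed at $d$ with coefficient $1$, such a $z$ must equal $x(t')^{d}$ in $\LP(t')$, that is, $z=\overleftarrow{\mu}^{*}x(t')^{d}$, as desired.

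I expect the matching of support dimensions, $\suppDim^{t'}z=\suppDim d=0$, to be the crux: it is precisely this that rigidifies $z$ from merely ``pointed with the correct leading term'' to ``equal to the monomial''. The only input that is not purely formal bookkeeping with degrees and codegrees is the compatibility of localized cluster monomials at all seeds; this reduces, via Theorem \ref{thm:cg_vector}(2), to the fact that the relevant linear piece of the piecewise-linear map $\phi_{t,t'}$ on the $\mathbf{g}$-cone of $t'$ is exactly the $G$-matrix $\tG^{t}(t')$, so that $\phi_{t,t'}$ is linear there and commutes with taking products of pointed Laurent polynomials.
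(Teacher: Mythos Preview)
Your proof is correct and follows essentially the same approach as the paper: transport $z$ to the seed $t'$ via the tropical transformation (obtaining $\deg^{t'}z=d$), then use Proposition~\ref{prop:bipointed_support}(1) at the pair $\{t',t'[-1]\}$ to conclude that $\suppDim^{t'}z=\suppDim d=\suppDim^{t'}x(t')^{d}=0$, which forces $z$ to be the monomial $x(t')^{d}$. The paper's proof is more terse---it skips your preliminary paragraph on the compatibility of $M$ at all seeds, since all that is really needed is the trivial fact that $x(t')^{d}$ has support dimension $0$ in its own seed together with the compatibility of cluster monomials under $\phi_{t',t}$---but the logical skeleton is identical.
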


\begin{proof}

We have $\deg^{t'}(\overleftarrow{\mu}^{-1})^{*}z=\phi_{t',t}\deg^{t}z=\phi_{t',t}\deg^{t}\overleftarrow{\mu}^{*}x(t')^{d}=d$.
Therefore, $(\overleftarrow{\mu}^{-1})^{*}z$ and $x(t')^{d}$ have
the same degree in $\LP(t')$. Because they are compatibly pointed
in $\{t',t'[-1]\}$, by Proposition \ref{prop:bipointed_support},
they have the same support dimension, which is given by $\suppDim x(t')^{d}=0$.
Consequently, we have $(\overleftarrow{\mu}^{-1})^{*}z=x(t')^{d}$.

\end{proof}

It is natural to ask if we can extend the above property without the
injective-reachability assumption. 

The following property is known without this assumption.

\begin{Prop}\cite[Proposition 5.3]{FominZelevinsky07}\label{prop:cluster_monomial_bipointed}

For general initial seed $t_{0}$, the Laurent expansion $\overleftarrow{\mu}^{*}x_{i}(t')^{d}\in\LP(t)$
is bipointed.

\end{Prop}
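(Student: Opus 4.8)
I need to show that for a general initial seed $t_0$, any mutation sequence $\overleftarrow{\mu}$ with $t' = \overleftarrow{\mu}t$, and any $d \in \N^{I_{\ufv}} \oplus \Z^{I_{\fv}}$, the Laurent expansion $\overleftarrow{\mu}^*x(t')^d \in \LP(t)$ is bipointed.

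The plan is to reduce the claim to a single cluster variable and then invoke the representation-theoretic / tropical structure of the $g$-vectors together with Proposition~\ref{prop:c_vector_y_deg}. First I would recall that by the Caldero–Chapoton type expansion (Theorem~\ref{thm:CC-formula}), each $\overleftarrow{\mu}^*x_i(t')$ is pointed at its $g$-vector $g_i^{t}(t') \in \Mc(t)$, with all remaining monomials of the form $g_i^t(t') + \tB(t)\cdot n$ for $n \in \yCone^{>0}(t)$; in particular it lies in $\ptSet^t(g_i^t(t'))$. So the product $\overleftarrow{\mu}^*x(t')^d = \prod_{i} (\overleftarrow{\mu}^*x_i(t'))^{d_i}$ is pointed (by Lemma~\ref{lem:product_series}, noting the frozen variables are honest monomials and invertible, and the unfrozen ones appear with nonnegative exponents) at degree $\sum_i d_i\, g_i^t(t') = \tG^t(t')\cdot d$. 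Thus the degree exists and the hard part is the \emph{codegree}: I must exhibit a unique $\prec_t$-minimal Laurent monomial with coefficient $1$.

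For the codegree I would use the second formal expansion built from codegrees, i.e. the embedding $\iota'\colon \LP(t') \to \widetilde{\LP(t)}$ mentioned in the Remark after the construction of $\iota$. The key input is that $\overleftarrow{\mu}^*x_i(t)$ — the expansion of the \emph{initial} $x$-variables of $t$ in terms of $t'$, run backwards — are copointed; equivalently, inverting, $\overleftarrow{\mu}^*x_i(t')$ written in $\LP(t)$ is copointed, with copointedness tracked by the analogue of Lemma~\ref{lem:linear_transform_monomial} for $\widetilde{\LP(t)}$. Concretely: each $\overleftarrow{\mu}^*x_i(t')$ has, besides its $\prec_t$-maximal monomial $x(t)^{g_i^t(t')}$, a unique $\prec_t$-\emph{minimal} monomial with coefficient $1$; this is exactly the statement that $x_i(t')$ is copointed, which for cluster variables follows from the positivity and the ``$F$-polynomial has constant term $1$ and a unique top-degree monomial'' phenomenon of \cite{FominZelevinsky07} — the codegree monomial corresponds to the top term of the $F$-polynomial, and its uniqueness is the dual of the $g$-vector statement. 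Taking the product over $i$ with exponents $d_i$, Lemma~\ref{lem:product_series}(2) gives that $\overleftarrow{\mu}^*x(t')^d$ is copointed, with codegree $\sum_i d_i \cdot \codeg^t(\overleftarrow{\mu}^*x_i(t'))$. Having both a degree and a codegree, the element is bipointed by Definition~\ref{def:bipointed}.

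The main obstacle is justifying that each $\overleftarrow{\mu}^*x_i(t')$ is copointed, i.e. that its $F$-polynomial (the factor $\sum_{n} c_n\, x(t)^{\tB(t)n}$ in Theorem~\ref{thm:CC-formula}) has a \emph{unique} monomial of maximal $\tB(t)$-degree with respect to $\preceq_t$ and that this monomial has coefficient $1$. This is precisely \cite[Proposition 5.3]{FominZelevinsky07} applied seed by seed, but to make the argument self-contained within this framework I would either cite it directly or deduce it from the order-reversing symmetry: passing to the opposite seed $t^{\op}$ swaps $\preceq_t$ with its reverse and, via Proposition~\ref{prop:calculate_EF}(3) and the $c$/$g$-vector duality (Theorem~\ref{thm:cg_duality}), interchanges the roles of degree and codegree of $F$-polynomials, so the uniqueness of the $\prec_t$-minimal term of $\overleftarrow{\mu}^*x_i(t')$ is reduced to the uniqueness of the $\prec_{t^{\op}}$-maximal term, i.e. to the $g$-vector statement in $t^{\op}$, which is already known. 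Once copointedness of the cluster variables is in hand, everything else is the routine multiplicativity of (co)pointedness recorded in Lemma~\ref{lem:product_series}.
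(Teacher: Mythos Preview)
The paper does not supply its own proof of this proposition; it simply records the citation to \cite[Proposition~5.3]{FominZelevinsky07}. So there is nothing in-paper to compare against, and your task was in effect to reconstruct why the cited result gives bipointedness.

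Your overall strategy is sound. Pointedness is immediate from Theorem~\ref{thm:CC-formula} together with Lemma~\ref{lem:product_series}(1). For copointedness your opposite-seed idea is the right one and does work, but the justification you give is slightly misdirected: Proposition~\ref{prop:calculate_EF}(3) and Theorem~\ref{thm:cg_duality} govern how the matrices $E,F,C,G$ transform, and do not by themselves identify the \emph{Laurent polynomials} of cluster variables for $t$ and for $t^{\op}$. The cleaner observation is that the exchange relation
\[
\mu_k^*x_k' \;=\; x_k^{-1}\Bigl(\prod_j x_j^{[b_{jk}]_+}+\prod_j x_j^{[-b_{jk}]_+}\Bigr)
\]
is manifestly invariant under $b_{jk}\mapsto -b_{jk}$, so by induction on the length of $\overleftarrow{\mu}$ the Laurent expansion $\overleftarrow{\mu}^{*}x_i(t')\in\LP(t)$ coincides literally with the expansion of the corresponding cluster variable of $(t')^{\op}=\overleftarrow{\mu}(t^{\op})$ in $\LP(t^{\op})$. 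Since $\tB(t^{\op})=-\tB(t)$, the order $\preceq_{t^{\op}}$ is the reverse of $\preceq_t$, and hence pointedness for the initial seed $t^{\op}$ (again Theorem~\ref{thm:CC-formula}, applied in the opposite cluster algebra) is exactly copointedness for $t$. The product step via Lemma~\ref{lem:product_series}(2) then finishes. This invariance of the exchange relation is essentially the mechanism behind the cited result in \cite{FominZelevinsky07}; once you make that step explicit, your argument is complete and self-contained modulo Theorem~\ref{thm:CC-formula}.
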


\section{Properties of $\prec_{t}$-decompositions\label{sec:Properties-of--decompositions}}

\subsection{$\prec_{t}$-decompositions}

Given a seed $t=((b_{ij})_{i,j\in I},(x_{i})_{i\in I})$ and a collection
$\cS=\{s_{g}|g\in\Mc(t)\}\subset\widehat{\LP(t)}$ such that $s_{g}$
is pointed at $g$. By definition, any $z=\sum_{g\in\Mc(t)}b_{g}x^{g}\in\widehat{\LP(t)}$
has finitely many $\prec_{t}$-maximal Laurent degrees. Similar to
\cite[Lemma 3.1.10(i) Remark 3.1.8]{qin2017triangular}, we can decompose
$z$ in terms of the pointed elements in $S$ inductively via the
partial order $\prec_{t}$.

\begin{DefLem}[Dominance order decomposition]\label{def:dominance_order_decomposition}

There exists a unique decomposition

\begin{eqnarray}
z & = & \sum_{g\in\Mc(t)}\alpha_{t}(z)(g)\cdot s_{g},\quad\alpha_{t}(z)\in\Hom_{\mathrm{set}}(\Mc(t),\ \kk),\label{eq:dominance_localized_t_decompositon}
\end{eqnarray}
in $\widehat{\LP(t)}$ for some coefficient function $\alpha_{t}(z)$ such
that the support $\supp(\alpha_{t}(z)):=\{g|\alpha_{t}(z)(g)\neq0\}$ has finitely
many $\prec_{t}$-maximal elements. We call it the $\prec_{t}$-decomposition
of $z$ into elements of $\cS$.

\end{DefLem}

\begin{proof}

Let $g^{(j)}$, $1\leq j\leq l$, $0\neq j\in\N$ denote the $\prec_{t}$-maximal
Laurent degrees of $z$. If \eqref{eq:dominance_localized_t_decompositon}
holds, by comparing the Laurent monomials with $\prec_{t}$-maximal
degrees on both sides, we deduce that the $\prec_{t}$-maximal elements
of $\supp(\alpha_{t}(z))$ are exactly $g^{(j)}$, $1\leq j\leq l$,
and their coefficients must be $\alpha_{t}(z)(g^{(j)})=b_{g^{(j)}}$.

Let us draw a directed graph $G$ such that its vertices are $\cup_{1\leq j\leq l}\Mc(t)_{\preceq_{t}g^{(j)}}$
and, whenever $g'=g+\tB\cdot e_{k}$ for some $k\in I_{\ufv}$, we
draw an arrow from $g$ to $g'$. Then there is a (probably length
$0$) path from $g$ to $g'$ if and only if $g'\preceq_{t}g$. 

Notice that the source points of $G$ are the leading degrees $g^{(j)}$.
Moreover, for any vertex $g'$, there exists finitely many vertices
$g$ in $G$ such that $g'\preceq_{t}g$ by the Finite Interval Lemma
\ref{lem:finite_interval}. Then the decomposition coefficients for
general vertices $g\in G$ are inductively determined by travelling
further away from the source points, see \cite[Remark 3.1.8]{qin2017triangular}.

\end{proof}

\subsection{Change of seeds.}\label{sec:change_seed}

We want to show the desired property that the $\prec_{t}$-decomposition
is independent of the seed $t$, provided $S$ satisfy some tropical
properties. We learn from the inspirational paper \cite[Section 6]{gross2018canonical}
to give a proof based on the nilpotent Nakayama Lemma.

The idea of the proof is straightforward for the principal coefficient cases in the sense of \cite{FominZelevinsky07}. Endow such (partially compactified) cluster algebras with natural adic-topologies. Then the nilpotent Nakayama Lemma provides a method to verify that a given collection of elements is a basis. Our proof looks more technical because it treats general cases, and we need to modify calculation for the principal coefficient cases in the spirit of the correction technique (\cite[Section 9]{Qin12} or \cite[Section 4]{qin2017triangular}).

Given $k\in I_{\ufv}$. We denote the mutated seed $t'=\mu_{k}t=((b_{ij}'),(x_{i}'))$.
Recall that we have the tropical transformation $\phi=\phi_{t',t}:\Mc(t)\simeq\Mc(t')$.
For any $g\in\Mc(t)$, denote $g'=\phi_{t',t}g$ for simplicity.

For simplicity, let us assume $z\in\LP(t)\cap\LP(t')$ and $\cS\subset\LP(t)\cap\LP(t')$,
which is sufficient for this paper. Further assume that the collection
$\cS=\{s_{g}|g\in\Mc(t)\}$ is compatibly pointed at the seeds $t$,
$t'$, i.e., $s_{g}$ is pointed at $g'$ in $\LP(t')$. Then we have
a (possibly infinite) $\prec_{t'}$-decomposition in $\LP(t')$:

\begin{eqnarray}
z & = & \sum_{g'\in\Mc(t')}\alpha_{t'}(z)(g')\cdot s_{g},\quad\alpha_{t'}(z)\in\Hom_{\mathrm{set}}(\Mc(t'),\kk).\label{eq:dominance_localized_t_prime_decompositon}
\end{eqnarray}

The aim of this section is to prove the following result.

\begin{Prop}\label{prop:invariant_decomposition}

We have $\alpha_{t}(z)(g)=\alpha_{t'}(z)(g')$ $\forall g\in\Mc(t)$.
In particular, we have $\phi\supp(\alpha_{t}(z))=\supp(\alpha_{t'}(z))$.

\end{Prop}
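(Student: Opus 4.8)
The plan is to compare the two dominance order decompositions of $z$ — one in $\widehat{\LP(t)}$, one in $\widehat{\LP(t')}$ — by passing through a single ambient ring where both make sense, and then use the nilpotent Nakayama Lemma to conclude they agree. First I would reduce to the case of principal coefficients: since $\cS$ and $z$ are assumed compatibly pointed at $t, t'$, the mutation $\mu_k$ involves only one unfrozen direction $k$, and both decompositions are governed entirely by the $\prec_t$- and $\prec_{t'}$-structures, which live in the $y_k$-direction. By adjoining principal coefficients at $t$ (view all of $I$ as unfrozen, in the sense of \cite{FominZelevinsky07}), the $B$-matrix becomes of full rank automatically, each $y_k(t)$ becomes a genuine independent variable, and the dominance order $\prec_t$ becomes the divisibility/adic order attached to the ideal generated by $y_k(t)$. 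The key point is that the formal Laurent series ring $\widehat{\LP(t)}$ (with principal coefficients) is then complete with respect to the $\mm$-adic topology where $\mm = \Z[\yCone^{>0}(t)]$, and similarly for $t'$, and crucially $\mu_k^*$ extends to a topological ring isomorphism identifying the two completions, because $\mu_k^*(y_k(t')) = y_k(t)^{-1}(1 + y_k(t))^{\ldots}$ (or its inverse) and the tropical transformation $\phi_{t',t}$ tells us exactly how the two adic filtrations match up.

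Next I would set up the Nakayama-style argument. Fix a leading degree $g^{(0)}$ of $z$ and work modulo the submodule of series whose degrees are $\prec_t g^{(0)}$ but not among finitely many chosen ones — more precisely, one induces on the (well-founded, by the Finite Interval Lemma \ref{lem:finite_interval}) partial order $\prec_t$ restricted to $\Mc(t)_{\preceq_t g^{(0)}}$. At the top degree, comparing leading terms shows $\alpha_t(z)(g^{(0)}) = b_{g^{(0)}}$, and by Lemma \ref{lem:bijective_linear_map} and Lemma \ref{lem:linear_transform_monomial}, $s_{g^{(0)}}$ has degree $\phi_{t',t}g^{(0)}$ in $\widehat{\LP(t')}$, which is $\prec_{t'}$-maximal among $\{\phi_{t',t}g : g \in \supp(\alpha_t(z))\}$ by Proposition \ref{prop:order_reverse}-type order-matching (here just the elementary fact that $\phi_{t',t}$ intertwines $\prec_t$ and $\prec_{t'}$, which is built into the definition of the transported order). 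Hence $\alpha_{t'}(z)(g^{(0)\prime}) = b_{g^{(0)}}$ as well. Subtracting $b_{g^{(0)}} s_{g^{(0)}}$ from $z$ strictly decreases the set of maximal degrees in the $\prec_t$-order (and correspondingly in $\prec_{t'}$), and one repeats. The subtlety is that the decomposition can be infinite, so I cannot literally induct finitely; instead I would phrase it as: the difference $\sum_g (\alpha_t(z)(g) - \alpha_{t'}(z)(\phi g)) s_g$ is a well-defined element of the common completion whose dominance order decomposition (into $\cS$, which is a topological basis by Definition-Lemma \ref{def:dominance_order_decomposition}) is identically zero, hence all coefficients vanish. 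This is where the nilpotent Nakayama Lemma enters: working modulo each power $\mm^N$ reduces to a finite problem, and completeness plus the uniqueness clause in Definition-Lemma \ref{def:dominance_order_decomposition} upgrades the mod-$\mm^N$ equalities to an honest equality.

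The step I expect to be the main obstacle is handling the \emph{general} (non-principal-coefficient) case cleanly. For principal coefficients the $y$-variables are bona fide independent indeterminates and the two adic completions are literally isomorphic as topological rings via $\mu_k^*$; but in general $\tB(t)$ is merely of full rank, the $y_k$ are Laurent monomials in the $x_i$, and the naive "reduction mod $\mm^N$" map need not be a ring map on the nose. The fix — following the correction technique of \cite[Section 9]{Qin12} / \cite[Section 4]{qin2017triangular} — is to compare the general seed with its principal-coefficient cover: one shows the dominance order decomposition coefficients $\alpha_t(z)(g)$ are computed by a universal formula that only involves the $\prec_t$-order and the leading coefficients, data which is pulled back faithfully from the principal-coefficient seed, so the invariance proved there descends. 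Concretely I would (i) prove the proposition for principal coefficients via the Nakayama/adic argument above, (ii) observe that specializing the principal coefficients to their actual values (a ring homomorphism $\widehat{\LP(t)^{\prin}} \to \widehat{\LP(t)}$ respecting the filtrations and compatible with $\mu_k^*$) sends the principal-coefficient decomposition of the lift of $z$ to the decomposition of $z$, and (iii) conclude $\alpha_t(z)(g) = \alpha_{t'}(z)(g')$ for all $g$, whence $\phi_{t',t}\supp(\alpha_t(z)) = \supp(\alpha_{t'}(z))$. The remaining routine checks — that $\mu_k^*$ is continuous for the relevant adic topologies, that $s_g$ stays pointed with the expected degree in the completion, that the specialization map is well-defined on completions — I would relegate to short lemmas or cite from Section \ref{sec:bidegrees_and_support}.
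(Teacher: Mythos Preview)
Your overall architecture --- pass to principal coefficients, use Nakayama, specialize back --- matches the paper's. But the inductive core of your argument has a real gap. You write that after subtracting $b_{g^{(0)}}s_{g^{(0)}}$ the set of maximal degrees strictly decreases ``in the $\prec_t$-order (and correspondingly in $\prec_{t'}$)''. The parenthetical is exactly the point at issue: $\phi_{t',t}$ is only piecewise linear and does \emph{not} carry $\prec_t$-maximal degrees of $z$ to $\prec_{t'}$-maximal degrees of $z$. There is no ``elementary fact that $\phi_{t',t}$ intertwines $\prec_t$ and $\prec_{t'}$''; the transported order on $\Mc(t)$ is defined so that $\phi$ is tautologically order-preserving for $\prec_{t'}$, not for $\prec_t$. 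So your induction on leading terms cannot get started simultaneously in both orders. (Relatedly, your claim that $\mu_k^*$ extends to a topological isomorphism of the two completions is false: $\mu_k^*(y_k(t'))$ involves $y_k(t)^{-1}$, which does not lie in $\widehat{\LP(t)}$.)

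The paper circumvents this by never directly comparing the two dominance orders. It adds principal coefficients only at the vertices $i\in I_{\ufv}$ with $i\neq k$, which produces a $\Z_{\geq 0}$-grading $\gr$ on $\upClAlg_k:=\LP(t^{\prin})\cap\LP((t')^{\prin})$ that is refined by \emph{both} $\prec_{t^{\prin}}$ and $\prec_{(t')^{\prin}}$ (Lemma \ref{lem:dominance_refine_grading}). The key technical step you are missing is that the lowest graded piece of each lifted $s_{\tg}$ is exactly the cluster monomial $m_{\tg}$ of the type-$A_1$ upper cluster algebra $\upClAlg_k$ --- proved by applying Lemma \ref{lem:compatible_at_g_vector} with $k$ as the only unfrozen vertex. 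Nakayama (Lemma \ref{lem:Nakayama}) then says $\{\pi^{\leq r}s_{\tg}\}$ is a $\Z$-basis of $U_k^{\leq r}$ for every $r$, so the decomposition of $z$ in this basis is unique; since both the $\prec_{t^{\prin}}$- and the $\prec_{(t')^{\prin}}$-decomposition restrict to it grading by grading, they coincide. Setting $x_{i'}=1$ recovers the original claim. You should replace the leading-term induction by this graded/type-$A_1$ comparison.
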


Our strategy is to use the nilpotent Nakayama Lemma \cite[Theorem 8.4]{Matsumura86}
as in \cite{gross2018canonical}, and compare the collection $\cS$
with the natural basis of the type $A_{1}$ cluster algebra $\LP(t)\cap\LP(t')$
using the tropical properties (Lemma \ref{lem:compatible_at_g_vector}).

\begin{Lem}[Nilpotent Nakayama Lemma]\label{lem:Nakayama}

Let $A$ denote a ring, $\mm$ its nilpotent $2$-sided ideal such that $\mm^{r}=0$,
and $U$ its left module. For any subset $S$ of $U$, if its image in
$U/\mm U$ generates $U/\mm U$ as an $A/\mm$-module, then $S$ generates
$U$ as an $A$-module.

\end{Lem}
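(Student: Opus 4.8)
The plan is to prove the nilpotent Nakayama Lemma directly, by a short induction on the index of nilpotency $r$, since this is a standard commutative-algebra fact (it is even Theorem 8.4 in Matsumura, as the statement notes). First I would set up notation: let $N\subseteq U$ be the $A$-submodule generated by $S$, and observe that the hypothesis says the composite $N\hookrightarrow U\twoheadrightarrow U/\mm U$ is surjective, i.e.\ $U=N+\mm U$. The goal is $N=U$, equivalently $\mm U\subseteq N$.

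The key step is an iteration argument. From $U=N+\mm U$ I would substitute $U$ into itself: $\mm U=\mm N+\mm^2 U$, hence $U=N+\mm N+\mm^2 U=N+\mm^2 U$ (using $\mm N\subseteq N$). Repeating this substitution $r$ times gives $U=N+\mm^r U$. Since $\mm^r=0$ by the nilpotency hypothesis, $\mm^r U=0$, so $U=N$, which is exactly the conclusion. I would write this as the short induction: the claim $U=N+\mm^j U$ holds for $j=1$ by hypothesis, and if it holds for $j$ then $U=N+\mm^j U=N+\mm^j(N+\mm U)\subseteq N+\mm^{j+1}U\subseteq U$, giving it for $j+1$; take $j=r$.

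I do not anticipate a genuine obstacle here: the lemma is elementary and the proof is three lines of submodule inclusions. The only point requiring any care is the clean handling of $\mm N\subseteq N$ (which is immediate since $N$ is an $A$-module and $\mm\subseteq A$) and making sure the induction is phrased with the nilpotency bound $r$ used exactly once at the end. If one wanted to avoid even invoking induction, one could just expand $U=N+\mm U=N+\mm(N+\mm U)=N+\mm^2 U=\cdots=N+\mm^r U=N$ as a displayed chain, but the inductive phrasing is cleaner to typeset. Either way the proof is complete once $\mm^r U=0$ is inserted, so there is effectively nothing hard to surmount — the work of this section lies entirely in the later Proposition~\ref{prop:invariant_decomposition}, where this lemma is applied to a cleverly chosen ring $A$, nilpotent ideal $\mm$, and module $U$ built from the principal-coefficient adic topology.
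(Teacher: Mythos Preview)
Your proof is correct and is essentially the same repeated-substitution argument as the paper's: the paper writes $U=AS+\mm U$ and iterates to get $U=AS+\mm S+\cdots+\mm^{r-1}S+\mm^{r}U=AS$, which is exactly your chain $U=N+\mm U=N+\mm^{2}U=\cdots=N+\mm^{r}U=N$ with the simplification $\mm^{j}S\subseteq N$ applied at each step rather than at the end.
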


\begin{proof}

We learn the following proof from Matthew Emerton. By assumption,
$U=AS+\mm U$. Repeat the substitution, we get

\begin{align*}
U & =AS+\mm(AS+\mm U)\\
 & =AS+\mm(AS+\mm(AS+\mm U))\\
 & =\cdots\\
 & =AS+\mm S+\mm^{2}S+\cdots+\mm^{r-1}S+\mm^{r}U\\
 & =AS+\mm S+\mm^{2}S+\cdots+\mm^{r-1}S.
\end{align*}

The claim follows.

\end{proof}

For applying the Nakayama Lemma, we want to work with an $\mm$-adic
topology where the ideal $\mm$ is generated by the $y$-variables.
Correspondingly, it is convenient to add extra principal framing frozen
vertices $I'=\{i'|i\neq k,i\in I_{\ufv}\}$. Extending the vertex
set $I$ to $\tI=I\sqcup I'$. Extend the matrix $(b_{ij})_{i,j\in I}$
to $(b_{ij})_{i,j\in\tI}$ such that, for $i\neq k$, $i\in I_{\ufv}$,
\begin{eqnarray*}
b_{i',i} & =&1 \\
b_{i,i'} & =&-1
\end{eqnarray*}
and other entries are extended by zero. We obtain the principal framing seed $t^{\prin}=((b_{ij})_{i,j\in\tI},(x_{i})_{i\in\tI})$, which is said to have (a modified version of) the \emph{principal coefficients} in the sense of \cite{FominZelevinsky07}. 
Then its mutated seed $(t^{\prin})':=\mu_{k}(t^{\prin})$ agrees with
the principal framing $(t')^{\prin}$ of $t'$. 

When working with the quantum case $\kk=\Z[q^{\pm\Hf}]$, we extend the compatible bilinear form $\lambda$ on $\Mc(t)$ to $\Mc(t^{\prin})$ by zero. The resulting bilinear form on $\Mc(t^{\prin})$, still denoted by $\lambda$, is compatible with $t^{\prin}$.

We have the natural
embedding $\Mc(t)\simeq\Mc(t)\oplus0\subset\Mc(t^{\prin})$.
Conversely, for any $\tg$ from the extended degree lattice $\Mc(t^{\prin})$, denote its projection
to $\Mc(t)$ by $g$. Denote $\phi_{(t')^{\prin},t^{\prin}}\tg=\tg'$.

Notice that the $y$-variables in $t^{\prin}$ and $t$ satisfy $$y_{i}(t^{\prin})=\begin{cases}
x_{i'}\cdot y_{i} & i\neq k\in I_{\ufv}\\
y_{k} & i=k
\end{cases},$$ and the same formula holds for $(t')^{\prin}$ and $t'$. Define
the grading $\gr(\ )$ on $\Mc(t^{\prin})$ such that $\gr(f_{i})=\begin{cases}
1 & i\in I'\\
0 & i\notin I'
\end{cases}$, and similarly $\gr'(f_{i}')=\begin{cases}
1 & i\in I'\\
0 & i\notin I'
\end{cases}$ on $\Mc((t')^{\prin})$. Then $\phi:\Mc(t^{\prin})\simeq\Mc((t')^{\prin})$
is homogeneous, i.e., $\gr(\tg)=\gr'(\tg')$. We have the following
observation.

\begin{Lem}\label{lem:dominance_refine_grading}

If $\tilde{\eta}=\tg+\tB\cdot n$ in $\Mc(t^{\prin})$ for some
$n\in N_{\ufv}^{\geq0}(t^{\prin})$, then $\gr(\tilde{\eta})\geq\gr(\tg)$.
Moreover, we have $\gr(\tilde{\eta})>\gr(\tg)$ if and only if $n_{i}>0$
for some $i\neq k,i\in I_{\ufv}$.

\end{Lem}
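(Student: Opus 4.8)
The plan is to regard $\gr$ as a linear functional on the whole degree lattice $\Mc(t^{\prin})$, namely $\gr(\sum_{i\in\tI}\tm_i f_i)=\sum_{i\in I'}\tm_i$, and then to feed the relation $\tilde\eta=\tg+\tB\cdot n$ through it. Since $\gr$ is linear this immediately gives $\gr(\tilde\eta)=\gr(\tg)+\gr(\tB\cdot n)$, so the entire lemma reduces to computing $\gr(\tB\cdot n)$ and checking that it is $\geq 0$ with the stated equality criterion.

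First I would write $\tB\cdot n=\sum_{j\in I_{\ufv}}n_j\,v_j(t^{\prin})$, where $v_j(t^{\prin})=\sum_{i\in\tI}b_{ij}(t^{\prin})f_i$ is the $j$-th column of $\tB(t^{\prin})$, so that
\[
\gr(\tB\cdot n)=\sum_{j\in I_{\ufv}}n_j\,\gr(v_j(t^{\prin}))=\sum_{j\in I_{\ufv}}n_j\sum_{i\in I'}b_{ij}(t^{\prin}).
\]
Next I would evaluate the inner sum using the defining equations of the principal framing: for the framing vertex $i'$ attached to an unfrozen vertex $i\in I_{\ufv}\setminus\{k\}$, the only nonzero entry $b_{i',j}(t^{\prin})$ with $j\in I_{\ufv}$ is $b_{i',i}(t^{\prin})=1$, while all other extended entries vanish. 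Hence $\sum_{i\in I'}b_{ij}(t^{\prin})$ equals $1$ when $j\in I_{\ufv}\setminus\{k\}$ and equals $0$ when $j=k$, the latter precisely because $I'$ contains no framing vertex attached to $k$. Therefore $\gr(\tB\cdot n)=\sum_{j\in I_{\ufv}\setminus\{k\}}n_j$.

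Since $n\in N^{\geq0}(t^{\prin})$ has nonnegative coordinates, this sum is $\geq 0$, giving $\gr(\tilde\eta)\geq\gr(\tg)$; and it is strictly positive exactly when $n_j>0$ for some $j\in I_{\ufv}\setminus\{k\}$, which is the asserted equivalence. I do not anticipate a genuine obstacle: the argument is a short direct computation, and the only point that needs care is the bookkeeping around the framing, namely that $I'$ supplies a framing vertex for every unfrozen vertex \emph{except} $k$, so the $k$-th column of $\tB(t^{\prin})$ contributes nothing to the grading.
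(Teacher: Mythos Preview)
Your argument is correct: extending $\gr$ linearly and computing $\gr(\tB(t^{\prin})\cdot n)=\sum_{j\in I_{\ufv}\setminus\{k\}}n_j$ from the defining entries $b_{i',i}=1$ of the framing (with no framing vertex attached to $k$) gives exactly the claim. The paper itself provides no proof, presenting the lemma as an immediate observation, so your computation is precisely the verification the paper leaves implicit.
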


We have an induced grading $\gr$ on $\LP(t^{\prin})$ such that $\gr(x_{i}):=\gr(f_{i})$
and similarly $\gr'$ on $\LP((t')^{\prin})$.

The intersection $\mathcal{\upClAlg}_{k}:=\LP(t^{\prin})\cap\LP((t')^{\prin})$
is the (type $A_{1}$) upper cluster algebra obtained from the initial
seed $t^{\prin}$ such that $k$ is the only unfrozen vertex. It is well-known that it has the basis
$\{m_{\tg}|\tg\in\Mc(t^{\prin})\}$ where $m_{\tg}$ are its localized
cluster monomials with degree $\tg$. Recall that, for the classical case $\kk=\Z$, $m_{\tg}=x^{\tg}(1+y_{k})^{[-g_{k}]_{+}}$
for this type $A_{1}$ upper cluster algebra (see Section \ref{sec:quantization} for the quantum case $\kk=\Z[q^{\pm\Hf}]$). In particular, $m_{\tg}$ has
homogeneous grading $\gr(\tg)$ in $\LP(t^{\prin})$. Similarly, $m_{\tg}$
has homogeneous grading $\gr'(\tg')=\gr(\tg)$ in $\LP((t')^{\prin})$.
Therefore, the two gradings in $\LP(t^{\prin})$ and $\LP((t')^{\prin})$
give the same grading on the algebra $\upClAlg_{k}$.

\begin{Lem}\label{lem:homogenous_Laurent}

Given any element $z\in\LP((t')^{\prin})$ and decompose $z=\sum z_{i}$
into homogeneous parts $z_{i}\in\LP((t')^{\prin})$ of different gradings.
Then $\mu_{k}^{*}z\in\LP(t^{\prin})$ if and only if all $\mu_{k}^{*}z_{i}\in\LP(t^{\prin})$.

\end{Lem}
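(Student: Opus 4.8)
The plan is to use the homogeneity of the mutation birational map $\mu_k^*$ with respect to the gradings $\gr$ and $\gr'$. First I would record the key structural fact: for the principal framing seeds, the single unfrozen vertex is $k$, and the mutation birational map acts by $\mu_k^*(x_i) = x_i$ for $i \neq k$ and $\mu_k^*(x_k) = x_k^{-1} \prod_j x_j^{[-\varepsilon b_{jk}]_+}(1 + \chi^{\varepsilon v_k})$, where $y_k = \chi^{v_k}$ has grading $\gr(y_k) = 0$ in $\LP(t^{\prin})$ (since $b_{i'k} = 0$ for the framing vertices $i'$, as $k$ is excluded from the framing). Hence $\mu_k^*$ sends a homogeneous element of $\LP((t')^{\prin})$ of $\gr'$-degree $d$ to an element of $\cF(t^{\prin})$ that is homogeneous of $\gr$-degree $d$: the only subtlety is that $\mu_k^*(x_k')$ involves $x_k^{-1}$ and the factor $(1+y_k)$, but both the monomial part $\tau_{k,\varepsilon}$ and the Hamiltonian part $\rho_{k,\varepsilon}$ respect the gradings because $\gr = \gr' \circ \phi$ is preserved by the tropical transformation (this is exactly the statement preceding Lemma \ref{lem:dominance_refine_grading}, that $\phi$ is homogeneous), and $y_k$ is $\gr$-homogeneous of degree $0$.

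Granting that $\mu_k^*$ is a graded isomorphism of fraction fields (sending $\gr'$-degree $d$ to $\gr$-degree $d$), the lemma becomes a general observation: if $A \subset \cF$ is a $\Z$-graded subring and $z = \sum_i z_i \in \cF$ is the decomposition into homogeneous components, then $z \in A$ if and only if each $z_i \in A$. The ``if'' direction is trivial. For ``only if'', I would note that $\LP(t^{\prin})$ is a graded subring of $\cF(t^{\prin})$: indeed $\LP(t^{\prin}) = \Z[x_i^{\pm}]$ with each $x_i$ homogeneous, so a Laurent polynomial lies in it precisely when all of its graded pieces do. Since $\mu_k^* z = \sum_i \mu_k^* z_i$ is the decomposition of $\mu_k^* z$ into $\gr$-homogeneous pieces (the $\mu_k^* z_i$ having pairwise distinct $\gr$-degrees, because the $z_i$ have pairwise distinct $\gr'$-degrees and $\mu_k^*$ preserves degree), we get $\mu_k^* z \in \LP(t^{\prin})$ if and only if every $\mu_k^* z_i \in \LP(t^{\prin})$.

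The only real point requiring care — and the step I expect to be the mildest obstacle — is verifying that $\mu_k^* z_i$ is genuinely $\gr$-homogeneous, i.e. that applying $\mu_k^*$ to a $\gr'$-homogeneous Laurent series does not mix gradings. This follows once one checks that $\mu_k^* z_i$, a priori an element of $\cF(t^{\prin})$, has a Laurent expansion (in $\widehat{\LP(t^{\prin})}$, say) all of whose monomials share the same $\gr$-degree; and for that it suffices that each generator image $\mu_k^*(x_j')$, expanded as a (possibly infinite) Laurent series, is $\gr$-homogeneous. For $j \neq k$ this is immediate, and for $j = k$ one expands $(1 + y_k)$, which is $\gr$-homogeneous of degree $0$, so $\mu_k^*(x_k') = x_k^{-1}\prod_j x_j^{[-\varepsilon b_{jk}]_+}(1 + y_k)$ is $\gr$-homogeneous of degree $\gr(f_k') $, and a product of $\gr$-homogeneous elements is $\gr$-homogeneous. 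Everything else is bookkeeping with the definitions already set up in this subsection.
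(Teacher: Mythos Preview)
Your proof is correct, but it takes a different route from the paper's. The paper argues via the explicit homogeneous basis $\{m_{\tg}\}$ of the type $A_1$ upper cluster algebra $\upClAlg_k=\LP(t^{\prin})\cap\LP((t')^{\prin})$: if $z$ lies in both Laurent rings, expand it as $z=\sum\alpha_{\tg}m_{\tg}$ and regroup by grading to see that each $z_i$ is itself a combination of the $m_{\tg}$, hence lies in $\upClAlg_k$. Your argument instead observes directly that $\mu_k^*$ is a graded map of fraction fields, essentially because the framing variables $x_{i'}$ are fixed by $\mu_k^*$ and do not appear in $y_k$ (since $b_{i'k}=0$); then $\LP(t^{\prin})$ being a graded subring forces each homogeneous piece $\mu_k^*z_i$ to be Laurent. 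Your approach is more elementary and avoids invoking the cluster-monomial basis, though the paper's version meshes naturally with the surrounding argument where $\{m_{\tg}\}$ is already in play. One small presentational point: the cleanest ambient graded ring in which to run your ``graded subring'' step is $\cF(\Z[x_j^{\pm}]_{j\in I})[x_{i'}^{\pm}]_{i'\in I'}$ rather than the full fraction field or $\widehat{\LP(t^{\prin})}$; this makes the homogeneous decomposition of $\mu_k^*z$ unambiguous without any appeal to formal series.
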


\begin{proof}

If $z\in\LP(t^{\prin})\cap\LP((t')^{\prin})$, then we can decompose
it into a finite sum $z=\sum\alpha_{\tg}m_{\tg}$. Since $m_{\tg}$
are homogeneous, we obtain that $z_{i}=\sum_{\gr(\tg)=\gr(z_{i})}\alpha_{\tg}m_{\tg}$.
In particular, $z_{i}\in\LP(t^{\prin})\cap\LP((t')^{\prin})$. The
converse statement is trivial.

\end{proof}

Take any $\tg\in \Mc(t^{\prin})$. Since $s_{g}\in\cS$ is pointed at $g$, it takes the form $s_{g}=x^{g}\cdot F_{g}((y_{i})_{i\in I_{\ufv}})$
where $F_{g}(\ )$ is a multivariate polynomial with constant $1$ and we use the commutative product.
Correspondingly, define $s_{\tg}:=x^{\tg}\cdot F_{g}((y_{i}(t^{\prin}))_{i\in I_{\ufv}})$
and $\tcS:=\{s_{\tg}|\tg\in\Mc(t^{\prin})\}$. Note that $s_{\tg}$ belongs to $\LP(t^{\prin})$ and $\LP((t')^{\prin})$, see \cite[Theorem 9.2]{Qin12}.

\begin{Lem}\label{lem:dominance_under_projection}

If $\tg'=\tg+\tB(t^{\prin})\cdot n$ in $\Mc(t^{\prin})$ for some
$0\neq n\in\N^{I_{\ufv}}$, then $g'=g+\tB(t)\cdot n$.

\end{Lem}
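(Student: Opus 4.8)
The plan is to deduce the projected identity by simply applying to the hypothesis the coordinate projection $\pr\colon\Mc(t^{\prin})\simeq\Z^{\tI}\to\Mc(t)\simeq\Z^{I}$ that kills the coordinates attached to the new framing vertices $I'=\{i'\mid i\neq k,\ i\in I_{\ufv}\}$. Two compatibilities make this work, and I would establish them first.

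First I would record the block shape of $\tB(t^{\prin})$: its rows are indexed by $\tI=I\sqcup I'$, its columns by $\tI_{\ufv}=I_{\ufv}$, the $I\times I_{\ufv}$ block is literally $\tB(t)$, and the $I'$-rows carry only the principal-framing entries $b_{i'j}=\delta_{ij}$ for $i,j\in I_{\ufv}\setminus\{k\}$; in particular $b_{i'k}(t^{\prin})=0$ for all $i'\in I'$. This immediately gives $\pr(\tB(t^{\prin})\cdot n)=\tB(t)\cdot n$ for every $n\in\N^{I_{\ufv}}$, and $\pr(\tg)=g$ by the definition of $g$ as the projection of $\tg$.

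Second I would check that $\pr$ intertwines the tropical transformations, i.e.\ $\pr\circ\phi_{(t')^{\prin},t^{\prin}}=\phi_{t',t}\circ\pr$. Since both mutations are performed at $k\in I_{\ufv}$ and, by the previous paragraph, $b_{i'k}(t^{\prin})=0$, the formula in Definition \ref{def:tropical_transformation} shows that the $I$-coordinates of $\phi_{(t')^{\prin},t^{\prin}}\tg$ depend only on the $I$-coordinates of $\tg$ and are computed by exactly the rule defining $\phi_{t',t}$ (the $I'$-coordinates being left unchanged, as befits frozen vertices disconnected from $k$). I expect this small verification — that the framing vertices are genuinely disconnected from $k$, so the mutation at $k$ does not leak $I'$-coordinates into the $I$-coordinates — to be the only point requiring care, and it is immediate from the construction of $t^{\prin}$.

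Finally I would apply $\pr$ to the hypothesis $\tg'=\tg+\tB(t^{\prin})\cdot n$, where $\tg'=\phi_{(t')^{\prin},t^{\prin}}\tg$ and $g'=\phi_{t',t}g$. The left-hand side becomes $\pr(\tg')=\phi_{t',t}(\pr\tg)=\phi_{t',t}(g)=g'$ and the right-hand side becomes $\pr(\tg)+\pr(\tB(t^{\prin})\cdot n)=g+\tB(t)\cdot n$, yielding $g'=g+\tB(t)\cdot n$. The hypothesis $n\neq 0$ plays no role in this implication; it is retained only because of how the lemma is used afterwards.
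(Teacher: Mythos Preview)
Your argument is correct and is essentially the paper's one-line proof (``the claim follows by taking the projection from $\Mc(t^{\prin})\to\Mc(t)$''): the block form of $\tB(t^{\prin})$ gives $\pr(\tB(t^{\prin})\cdot n)=\tB(t)\cdot n$, which is all that is needed once $g'$ is read as the projection of $\tg'$. Your additional verification that $\pr$ intertwines the tropical transformations at $k$ (using $b_{i'k}(t^{\prin})=0$, since the framing vertices $i'$ are attached only to $i\neq k$) is correct and reconciles the two possible readings of $g'$ under the section's conventions, but the paper does not spell this out.
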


\begin{proof}

The claim follows by taking the projection from $\Mc(t^{\prin})\rightarrow\Mc(t)$.

\end{proof}

\begin{Lem}\label{lem:tg_compatible_pointed}

$s_{\tg}$ is compatibly pointed at $\tg$ and $\tg'$ at seeds $t^{\prin}$and
$(t')^{\prin}$ respectively.

\end{Lem}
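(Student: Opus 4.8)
The plan is to reduce the statement to the compatibility of $\cS$ at the pair $t,t'$ that is already assumed, the mechanism being that the extra framing data of $t^{\prin}$ is inert under $\mu_k$. First, pointedness of $s_{\tg}$ at $\tg$ in $\LP(t^{\prin})$ is immediate: by construction $s_{\tg}=x(t^{\prin})^{\tg}F_g((y_i(t^{\prin}))_{i\in I_{\ufv}})$ with $F_g$ a polynomial of constant term $1$ and non-negative exponents (since $s_g$ is pointed at $g$ in $\LP(t)$), so $s_{\tg}$ is a finite $\Z$-combination of the monomials $x(t^{\prin})^{\tg+\tB(t^{\prin})n}$, $n\in\yCone^{\geq0}(t^{\prin})$, the term $n=0$ occurring with coefficient $1$; since the projection $\Mc(t^{\prin})\to\Mc(t)$ carries $\tB(t^{\prin})n\mapsto\tB(t)n$ and $\tB(t)$ is of full rank, so is $\tB(t^{\prin})$, whence $\tg+\tB(t^{\prin})n\prec_{t^{\prin}}\tg$ for $n\neq 0$. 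In particular $s_{\tg}\in\ptSet^{t^{\prin}}(\tg)\subset\LP(t^{\prin})$.

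For the seed $(t')^{\prin}$ I would first record that, because $\cS$ is compatibly pointed at $t,t'$, the Laurent expansion of $s_g$ with respect to $t'$ has the shape $s_g=x(t')^{g'}H((y_i(t'))_{i\in I_{\ufv}})$ for a polynomial $H$ of constant term $1$ and non-negative exponents, with $g'=\phi_{t',t}g$ (as $s_g$ is a Laurent polynomial pointed at $g'$, all its Laurent degrees are $\preceq_{t'}g'$ by the Finite Interval Lemma \ref{lem:finite_interval}). Thus inside $\cF(t)=\cF(t')$ one has the identity $x(t)^g F_g((y_i(t)))=x(t')^{g'}H((y_i(t')))$. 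The crux of the proof is to show this identity lifts verbatim to the framed seeds, namely $x(t^{\prin})^{\tg}F_g((y_i(t^{\prin})))=x((t')^{\prin})^{\tg'}H((y_i((t')^{\prin})))$ with $\tg'=\phi_{(t')^{\prin},t^{\prin}}\tg$. Granting this, its right-hand side is visibly a Laurent polynomial in $\LP((t')^{\prin})$ pointed at $\tg'$ (same argument as above, $H$ having constant term $1$), which combined with the first paragraph is precisely the assertion that $s_{\tg}\in\LP(t^{\prin})\cap\LP((t')^{\prin})$ is compatibly pointed at $\tg$ and $\tg'$ in the sense of Definition \ref{def:compatably_pointed}.

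To prove the lifting I would argue that the identity relating the two $y$-expansions of $s_g$ is produced by iterating the mutation substitutions $x_i(t)\leftrightarrow x_i(t')$ and $y_i(t)\leftrightarrow y_i(t')$ of Section \ref{sec:cluster_notation}, whose only ingredients are the binomials $1+y_k^{\pm1}$, the entries $b_{jk}(t)$ ($j\in I$) appearing in the exchange relation for $x_k$, and the entries $b_{ki}(t)$ ($i\in I_{\ufv}$) appearing in the exchange relations for the $y_i$. For the framed seeds these coincide: $b_{jk}(t^{\prin})=b_{jk}(t)$ and $b_{ki}(t^{\prin})=b_{ki}(t)$ for $j\in I,\ i\in I_{\ufv}$, whereas $b_{i'k}(t^{\prin})=0$ for the new frozen vertices $i'\in I'$ (so they drop out of the $x_k$-exchange) and every $x_{i'}$, being frozen, is fixed by $\mu_k^*$; moreover $y_k(t^{\prin})$ and $y_k((t')^{\prin})$ agree with $y_k(t)$ and $y_k(t')$ under the inclusion $\Mc(t)\hookrightarrow\Mc(t^{\prin})$. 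Hence the same chain of substitutions carries $x(t^{\prin})^{\tg}F_g((y_i(t^{\prin})))$ to $x((t')^{\prin})^{\tg''}H((y_i((t')^{\prin})))$, where $\tg''$ is transformed on the $I$-coordinates exactly as $g\mapsto g'$ and left untouched on the $I'$-coordinates; the same fact $b_{i'k}(t^{\prin})=0$ shows that $\phi_{(t')^{\prin},t^{\prin}}$ acts by $\phi_{t',t}$ on the $I$-part and by the identity on the $I'$-part, so $\tg''=\tg'$.

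The one genuinely delicate step is this last piece of bookkeeping — making rigorous that the passive framing variables $x_{i'}$ enter the substitutions only through $y_i(t^{\prin})=x_{i'}\cdot y_i(t)$, so that their exponents are tracked by the grading $\gr$, for which $\phi$ is homogeneous by Lemma \ref{lem:dominance_refine_grading}. A tidy way to make this airtight is to bring in the ring homomorphism $\LP(t^{\prin})\to\LP(t)$, $x_{i'}\mapsto 1$, $x_i\mapsto x_i$ ($i\in I$), observe that it intertwines $\mu_k^*$ on the framed and unframed seeds, and use it to reduce the lifting to matching the powers of the $x_{i'}$ on the two sides, which is exactly the homogeneity of $\phi$. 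Everything else in the argument is routine manipulation of pointed Laurent polynomials.
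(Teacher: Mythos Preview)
Your argument is correct and is essentially the paper's own approach: both rest on the observation that, because $b_{i'k}(t^{\prin})=0$ and $b_{ki}(t^{\prin})=b_{ki}(t)$ for $i\in I_{\ufv}$, the $x_k$-exchange and the $y$-mutation rules are formally identical in $t$ and $t^{\prin}$, so the rewriting of $x^gF_g((y_i))$ as $(x')^{g'}G_{g'}((y_i'))$ transfers verbatim to the framed variables. The paper carries this out by an explicit calculation --- computing $\psi g-g'=\tB(t')\cdot[g_k]_+e_k$ and its framed analogue, and writing $\mu_k^*F_g((y_i))=P((y_i'))/((1+y_k')^r(y_k')^s)$ with the same $P,r,s$ in the framed case --- whereas you phrase it as a ``lifting'' of identities; the content is the same. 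Two small remarks: your appeal to the Finite Interval Lemma for the shape $s_g=x(t')^{g'}H((y_i'))$ is misplaced (this is just finite-poset reasoning from the definition of pointedness), and the citation of Lemma~\ref{lem:dominance_refine_grading} for the homogeneity of $\phi$ is off (that lemma is about the grading refining $\preceq$; the homogeneity of $\phi$ is the separate observation $\gr(\tg)=\gr'(\tg')$ stated just before). Your closing ``tidy'' route via the specialization $x_{i'}\mapsto 1$ correctly intertwines with $\mu_k^*$ but by itself only matches the two sides after killing the $x_{i'}$; you still need the formal substitution argument (or an $I'$-multigrading argument) to recover the full framed identity, so it does not replace the main step.
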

\begin{proof}

(i) Denote $\psi=\psi_{t',t}$, $\tilde{\psi}=\psi_{(t')^{\prin},t^{\prin}}$
for simplicity. By Lemma \ref{lem:diff_psi_phi}, we have $\psi g-g'=(\tB')\cdot[-g_{k}]_{+}e'_{k}$, where $e'_{k}$ denote the $k$-th unit vector in $N_{\ufv}(t')$. Similarly, we have $\tilde{\psi}\tg-\tg'=\tB((t')^{\prin}))\cdot[-g_{k}]_{+}e'_{k}$.

(ii)
Let $\mu_k^*$ denote the mutation map from $\LP(t)$ to $\LP(t')$. For any $Z=\sum c_n y^n\in \widehat{\kk[N_{\ufv}^{\geq 0}(t)]}$, we denote its evaluation $Z|_{y^n=x^{\tB n}}$ by $Z(x^{\tB n})$. Similarly, we denote $Z((x')^{\tB' n})=Z|_{(y')^n=(x')^{\tB' n}}$ for $Z\in \widehat{\kk[N_{\ufv}^{\geq 0}(t')]}$. Note that we have $\widehat{\kk[N_{\ufv}^{\geq 0}(t)]}=\widehat{\kk[N_{\ufv}^{\geq 0}(t^{\prin})]}$ and $\widehat{\kk[N_{\ufv}^{\geq 0}(t')]}=\widehat{\kk[N_{\ufv}^{\geq 0}((t')^{\prin})]}$.

By assumption, $s_g$ is compatibly pointed at $t$ and $t'$. Then there exists $F\in \kk[N_{\ufv}^{\geq 0}(t)]$ and $G\in \kk[N_{\ufv}^{\geq 0}(t')]$ with constant term $1$, such that $s_g=x^g * F( x^{\tB n})$ and $\mu_k^* s_g=(x')^{g'}*G((x')^{\tB' n})$.

Note that $\mu^* F$ and $G^{-1}$ are well defined in $\widehat{\kk[N_{\ufv}^{\geq 0}(t')]}$. By \eqref{eq:mutation} and \eqref{eq:q_mutation}, we can write $\mu_k^*(x^g)$ as $(x')^{\psi g}* Q(x^{\tB n})$, where $Q\in \widehat{\kk[N_{\ufv}^{\geq 0}(t')]}$ is a formal series in $y'_k$. Moreover, the mutation rule for $x^{\tB n}$ and $y^n$ are the same. We deduce that $\mu_k^*(F(x^{\tB n}))=(\mu_k^* F)((x')^{\tB' n})$. Then we have 
\begin{align*}
\mu_k^*(s_g)&=(x')^{\psi g}* Q((x')^{\tB' n})* \mu_k^*(F)((x')^{\tB' n})\\
&=(x')^{g'}*G(x^{\tB' n})
\end{align*}

(iii)
It follows that
\begin{align*}
(Q*\mu_k^*(F)*G^{-1})((x')^{\tB' n})&=(x')^{-\psi g}*(x')^{g'}\\
&=q^{\alpha}(x')^{-\psi g+g'}\\
&=q^\alpha (x')^{-\tB' [-g_k]_+ e'_k}
\end{align*}
Here, $q=1$ for the classical case $\kk=\Z$. For the quantum case $\kk=\Z[q^{\pm\Hf}]$, we have $\alpha=\Hf\lambda(-\psi g,g')$.

We explicitly compute that
\begin{align*}
2\alpha&=\lambda(g'-\psi g,g')\\
&=\lambda(g',\psi g-g')\\
&=-g'_k[-g_k]_+d'_k\\
&= g_k[-g_k]_+ d'_k.
\end{align*}
Similarly, we have $\lambda(\tg'-\tilde{\psi} \tg,\tg')= \tg_k [-\tg_k]_+d'_k$. Note that $\tg_k=g_k$. We deduce that
\begin{align}\label{eq:mutation_s_tg}
\begin{split}
(Q*\mu_k^*(F)*G^{-1})((x')^{\tB((t')^{\prin} n})&=q^\alpha (x')^{-\tB((t')^{\prin} )[-g_k]_+ e'_k}\\
&=(x')^{-\tilde{\psi}\tg}*(x')^{\tg'}
\end{split}
\end{align}

(iv) Let us apply mutation $\mu_k^*$ to $s_{\tg}=x^{\tg}*F(x^{\tB(t^{\prin}) n})$. Since $Q$ only depends on $g_k=\tg_k$, we deduce that $\mu_k^*(x^{\tg})=(x')^{\tilde{\psi} \tg}*Q((x')^{\tB((t')^{\prin}) n})$. Then \eqref{eq:mutation_s_tg} implies
\begin{align*}
\mu_k^*(s_{\tg})&=(x')^{\psi \tg}* Q((x')^{\tB((t')^{\prin}) n})* \mu_k^*(F)((x')^{\tB((t')^{\prin}) n})\\
&=(x')^{\tg'} *G((x')^{\tB((t')^{\prin})  n})
\end{align*}
In particular, $\mu_k^*(s_{\tg})$ is $\tg'$-pointed.

\end{proof}

Consider the following subalgebra of $\upClAlg_{k}$:
\begin{eqnarray*}
U_{k}: & = & \{z\in\upClAlg_{k}|z\mathrm{\ has\ no\ pole\ at\ }x_{i'}=0,\forall i\in I_{\ufv},i\neq k\}
\end{eqnarray*}
In fact, $U_{k}$ is a locally compactified version of the cluster algebra where the frozen variables $x_{i'}$, $i'\in I_{\ufv}$, are not invertible, and thus allows us to use the nilpotent Nakayama Lemma. Define $$C:=\{\tg\in\Mc(t^{\prin})|(\tg)_{i'}\geq0\ \forall i\in I_{\ufv},i\neq k\}.$$

\begin{Lem}\label{lem:optimized_seed_pole}

If $\tg\in C,$ then any $\tilde{\eta}\preceq_{t}\tg$ is contained
in $C$.

\end{Lem}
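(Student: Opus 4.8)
The plan is to unwind the dominance order on $\Mc(t^{\prin})$ and observe that it can only increase the framing coordinates $(\ )_{i'}$, $i\in I_{\ufv}$, $i\neq k$. By Definition \ref{def:dominance_order}, the relation $\tilde{\eta}\preceq_{t^{\prin}}\tg$ (the order written $\preceq_t$ in the statement) means $\tilde{\eta}=\tg+\tB(t^{\prin})\cdot n$ for some $n=\sum_{j\in I_{\ufv}}n_{j}e_{j}\in\yCone^{\geq0}(t^{\prin})\simeq\N^{I_{\ufv}}$. Note here that adjoining the principal framing vertices $I'$ does not enlarge the set of unfrozen vertices, so the sum is still indexed by $I_{\ufv}$, with all $n_{j}\geq0$.

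Next I would read off the $i'$-th row of the extended matrix $(b_{pq})_{p,q\in\tI}$ for a framing vertex $i'\in I'$, which is attached to the unfrozen vertex $i\in I_{\ufv}\setminus\{k\}$. By the construction of $t^{\prin}$, the only nonzero entry of this row is $b_{i',i}=1$; in particular $b_{i',j}=0$ for every $j\in I_{\ufv}$ with $j\neq i$ (including $j=k$). Hence $(\tB(t^{\prin})\cdot n)_{i'}=\sum_{j\in I_{\ufv}}b_{i',j}n_{j}=b_{i',i}n_{i}=n_{i}\geq0$.

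Combining the two observations, $(\tilde{\eta})_{i'}=(\tg)_{i'}+n_{i}\geq(\tg)_{i'}\geq0$ for every $i'\in I'$, the last inequality because $\tg\in C$. Therefore $\tilde{\eta}\in C$, which is exactly the claim. There is no genuine obstacle here: the lemma is an immediate consequence of the one-directional shape of the framing rows of the principal-coefficient $B$-matrix, which forces each framing coordinate to be weakly monotone along $\preceq_{t^{\prin}}$; equivalently, it records the fact that the framing frozen variables $x_{i'}$ occur with nonnegative exponents throughout the dominance-order cone $\Mc(t^{\prin})_{\preceq_{t^{\prin}}\tg}$, which is precisely the ``no pole at $x_{i'}=0$'' condition that defines $U_{k}$ and is what makes the adic-topology argument with the nilpotent Nakayama Lemma applicable.
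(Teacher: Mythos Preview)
Your proof is correct and follows essentially the same approach as the paper: both observe that $\tilde{\eta}=\tg+\tB(t^{\prin})\cdot n$ with $n\in\N^{I_{\ufv}}$, and that the $I'$-rows of $\tB(t^{\prin})$ are nonnegative (the paper phrases this as ``column vectors of $\tB(t^{\prin})$ have non-negative coordinates at $I'$''), so the framing coordinates can only increase. Your version is simply more explicit in reading off the row entries and adds helpful context about the role of the lemma in the Nakayama argument.
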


\begin{proof}

Notice that we have $\tilde{\eta}=\tg+\tB(t^{\prin})\cdot n$ for
$n\in\N^{I_{\ufv}}$ and column vectors of $\tB(t^{\prin})$ have
non-negative coordinates at $I'$. The claim follows.

\end{proof}

As a consequence, we have $s_{\tg}=x^{\tg}\cdot F_{\tg}((y_{i}(t^{\prin})_{i\in I_{\ufv}})\in U_{k}$
if and only if $\tg\in C$.

\begin{Prop}

The set $\{m_{\tg}|\tg\in C\}$ is a basis of $U_{k}$.

\end{Prop}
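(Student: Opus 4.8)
The plan is to deduce everything from the two facts already in hand: that $\{m_{\tg}\mid\tg\in\Mc(t^{\prin})\}$ is a $\Z$-basis of $\upClAlg_{k}$, and that membership of an $m_{\tg}$ in $U_{k}$ is controlled by Lemma \ref{lem:optimized_seed_pole}. Linear independence of $\{m_{\tg}\mid\tg\in C\}$ will be free, since it is a subset of a basis. For the inclusion $m_{\tg}\in U_{k}$ when $\tg\in C$, I would expand $m_{\tg}=x^{\tg}(1+y_{k})^{[-g_{k}]_{+}}$ in the seed $t^{\prin}$: every Laurent degree that occurs is of the form $\tg+j\,\tB(t^{\prin})\cdot e_{k}\preceq_{t}\tg$ with $j\geq 0$, hence lies in $C$ by Lemma \ref{lem:optimized_seed_pole}, so all exponents of the $x_{i'}$, $i'\in I'$, are non-negative and $m_{\tg}$ has no pole at $x_{i'}=0$.

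The substance is the spanning statement, and the tool is a multigrading. I would equip $\LP(t^{\prin})=\Z[x_{i}^{\pm}]_{i\in\tI}$ with the $\Z^{I'}$-grading in which the Laurent monomial $x^{\tg}$ has multidegree $((\tg)_{i'})_{i'\in I'}$. The key point, coming from the chosen framing, is that $b_{i',k}(t^{\prin})=0$ for every $i'\in I'$, so that $y_{k}=y_{k}(t^{\prin})$ has multidegree $0$; consequently each cluster monomial $m_{\tg}=x^{\tg}(1+y_{k})^{[-g_{k}]_{+}}$ is homogeneous of multidegree $((\tg)_{i'})_{i'\in I'}$. (This is the analogue of Lemma \ref{lem:homogenous_Laurent} for the finer grading; the same one-line argument shows the homogeneous components of any element of $\upClAlg_{k}$ lie again in $\upClAlg_{k}$, though we will not strictly need this.)

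Given $z\in U_{k}$, I would write $z=\sum_{\tg}\alpha_{\tg}m_{\tg}$ as a finite $\Z$-linear combination using the basis of $\upClAlg_{k}$, and then group the right-hand side by multidegree inside $\LP(t^{\prin})$. For each $\vec m\in\Z^{I'}$ having a negative coordinate, the corresponding homogeneous component $\sum_{((\tg)_{i'})_{i'}=\vec m}\alpha_{\tg}m_{\tg}$ of $z$ must vanish, because $z$, having no pole at any $x_{i'}=0$, involves only non-negative powers of each $x_{i'}$; the linear independence of the $m_{\tg}$ then forces $\alpha_{\tg}=0$ for all such $\tg$. Hence only $\tg\in C$ contribute, and $z$ lies in the $\Z$-span of $\{m_{\tg}\mid\tg\in C\}$, which together with the independence already noted finishes the proof.

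I do not expect a serious obstacle here: the only points requiring care are that $y_{k}(t^{\prin})$ is genuinely homogeneous of degree $0$ — which hinges on the precise choice of the principal framing $b_{i',i}=1$, $b_{i,i'}=-1$, zero elsewhere — and the elementary observation that, for a Laurent polynomial, ``no pole at $x_{i'}=0$'' is exactly ``non-negative $x_{i'}$-exponents''. Note that this argument does not invoke the nilpotent Nakayama Lemma; that tool is reserved for the subsequent comparison of $\prec_{t}$-decompositions in Proposition \ref{prop:invariant_decomposition}, which is what $U_{k}$ is being set up for.
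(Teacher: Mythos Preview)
Your proof is correct, and it takes a genuinely different route from the paper's. The paper argues by contradiction using the dominance order: writing $z=\sum_{\tg}b_{\tg}m_{\tg}$ and assuming some $\tg\notin C$ occurs, it picks a $\prec_{t^{\prin}}$-maximal such $\eta$ and shows that the leading monomial $x^{\eta}$ of $m_{\eta}$ survives in $z$ (nothing from $C$ can hit it since those monomials all lie in $C$, and nothing else from $G\setminus C$ can hit it by maximality), forcing a pole. Your argument replaces this order-theoretic step with the observation that the $m_{\tg}$ are homogeneous for the finer $\Z^{I'}$-multigrading, which immediately splits off the contributions from $\tg\notin C$ as a single vanishing piece. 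Your approach is more elementary and self-contained; the paper's approach is the one that recurs throughout the article (leading terms under $\prec_{t}$), so it is stylistically uniform with the surrounding arguments. Both hinge on the same underlying fact that $y_{k}(t^{\prin})$ does not involve any $x_{i'}$, which in the paper appears via Lemma~\ref{lem:optimized_seed_pole} and in your argument as the homogeneity of $m_{\tg}$.
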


\begin{proof}
We have $m_{\tg}=x^{\tg}\cdot(1+y_{k})^{[-g_{k}]_{+}}$ for the classical case $\kk=\Z$. See Section \ref{sec:quantization} for the quantum case $\kk=\Z[q^{\pm\Hf}]$. We deduce that $m_{\tg}$ has a pole
at some $x_{i'}=0$ if and only if $\tg\notin C$.

For any $z\in U_{k}\subset\upClAlg_{k}$, we have a finite decomposition
$z=\sum b_{\tg}m_{\tg}$ in terms of the basis $\{m_{\tg}|\tg\in\Mc(t)\}$. Define the support $G=\{\tg | b_{\tg}\neq0 \}$.

Assume that $G\backslash C\neq\emptyset$. Let $\eta$ denote a $\prec_{t^{\prin}}$-maximal element in $G\backslash C$. Then $m_\eta$ contributes a Laurent monomial $b_{\tilde{\eta}}x^{\tilde{\eta}}$
with a pole at some $x_{i'}=0$. Since all $\tg$ from $C$ do not have a pole here, they do not contribute to the Laurent degree $\eta$. Since $\eta$ is maximal, other $m_\tg$ appearing with $\tg\notin C$ do not contribute to this degree neither. Therefore, $z$ has a pole here and does not belong to $U_{k}$. This
contradiction shows that every $z\in U_{k}$ is a finite combination
into $m_{\tg}$, $\tg\in C$. The claim follows.

\end{proof}

Define the graded polynomial ring $A=\kk[x_{i'}]_{i'\in I'}$ with
the grading $\gr(x_{i'})=1$ (endowed with the twisted product in the quantum case). Take its homogeneous decomposition $A=\oplus_{r\in\N}A^{r}$.
It has the maximal ideal $\mm:=\oplus_{r>0}A^{r}$. Then $\mm$ gives
a nilpotent ideal $\mm$ in the quotient ring $A^{\leq r}:=A/\oplus_{d\geq r+1}A^{d}$.

We take the homogeneous decomposition $U_{k}=\oplus_{r\in\N}U_{k}^{r}$.
It is an $A$-module such that the action is given by the multiplication.
The quotient algebra $U_{k}^{\leq r}=U_{k}/\oplus_{d\geq r+1}U_{k}^{d}$
is an $A^{\leq r}$-module, and it equals $\oplus_{0\leq d\leq r}U_{k}^{d}$
as a $\kk$-module. We have the natural projections $\pi^{r}:U_{k}\rightarrow U_{k}^{r}$
as $\kk$-modules and $\pi^{\leq r}:U_{k}\rightarrow U_{k}^{\leq r}$
as algebras.

\begin{Lem}

For any $\tg\in C$, we have $\pi^{\leq\gr(\tg)}s_{\tg}=\pi^{\gr(\tg)}s_{\tg}=m_{\tg}$.

\end{Lem}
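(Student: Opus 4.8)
The plan is to establish the identity $\pi^{\leq\gr(\tg)}s_{\tg}=\pi^{\gr(\tg)}s_{\tg}=m_{\tg}$ for $\tg\in C$ by reducing both sides modulo the ideal of degrees strictly larger than $\gr(\tg)$, and then comparing the resulting homogeneous pieces using the tropical rigidity of cluster monomials. First I would observe that since $s_{\tg}\in U_k$ (because $\tg\in C$, as established above), its homogeneous decomposition $s_{\tg}=\sum_{r\geq 0}\pi^r s_{\tg}$ has all components in $U_k$; moreover $s_{\tg}=x^{\tg}F_g((y_i(t^{\prin}))_i)$, and each $y_i(t^{\prin})$ for $i\neq k$ has grading $1$ while $y_k(t^{\prin})=y_k$ has grading $0$, so the lowest-grading component of $s_{\tg}$ is exactly $x^{\tg}F_g^{(0)}(y_k)$ where $F_g^{(0)}$ collects the monomials in $F_g$ involving only $y_k$. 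In particular $\pi^{\gr(\tg)}s_{\tg}$ — which by Lemma \ref{lem:dominance_refine_grading} is precisely the component with the minimal possible grading $\gr(\tg)$ — equals $x^{\tg}F_g^{(0)}(y_k)$. The point is that all other Laurent degrees appearing in $s_{\tg}$ are of the form $\tg+\tB(t^{\prin})\cdot n$ with $n\in\N^{I_{\ufv}}$, $n\neq 0$, and by Lemma \ref{lem:dominance_refine_grading} they have grading $\geq\gr(\tg)$, with equality exactly when $n$ is supported at $\{k\}$; hence $\pi^{\leq\gr(\tg)}s_{\tg}=\pi^{\gr(\tg)}s_{\tg}$.

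Next I would identify $\pi^{\gr(\tg)}s_{\tg}$ with $m_{\tg}$. This is where Lemma \ref{lem:compatible_at_g_vector} enters. The element $\pi^{\gr(\tg)}s_{\tg}=x^{\tg}F_g^{(0)}(y_k)$ is a pointed Laurent polynomial in $\LP(t^{\prin})$ of degree $\tg$ supported in the $A_1$-subalgebra generated by $x^{\tg}$ and $y_k$ — indeed, by the previous Lemma, $s_{\tg}$ is compatibly pointed at $t^{\prin}$ and $(t^{\prin})'$, and its minimal-grading component $\pi^{\gr(\tg)}s_{\tg}$ remains in $\LP(t^{\prin})\cap\LP((t^{\prin})')=\upClAlg_k$ by Lemma \ref{lem:homogenous_Laurent}. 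Since $\upClAlg_k$ is the type-$A_1$ upper cluster algebra with $k$ the only unfrozen vertex, and the chain $t^{\prin}=\mu_k((t^{\prin})')$ makes the seed injective-reachable with $t^{\prin}[-1]=(t^{\prin})'$ (up to relabeling), I can apply Lemma \ref{lem:compatible_at_g_vector}: both $\pi^{\gr(\tg)}s_{\tg}$ and the cluster monomial $m_{\tg}$ are compatibly pointed at the relevant seeds and share the same degree $\tg$, hence they coincide. Alternatively, and perhaps more directly, one can verify that $\pi^{\gr(\tg)}s_{\tg}$ is compatibly pointed at $\{t^{\prin},(t^{\prin})'\}$ with degree $\tg$ — using that taking a homogeneous component commutes with the mutation $\mu_k^*$ by Lemma \ref{lem:homogenous_Laurent} and that $s_{\tg}$ is itself compatibly pointed — and then invoke Proposition \ref{prop:bipointed_support} to conclude it has support dimension $\suppDim\tg$, which for the $A_1$ situation forces it to equal the unique cluster monomial $m_{\tg}$ with that degree.

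The main obstacle I anticipate is the bookkeeping needed to make the homogeneity argument airtight: I must check carefully that passing to the minimal-grading component $\pi^{\gr(\tg)}$ genuinely lands back inside $\upClAlg_k$ (so that one may speak of its degree in both $\LP(t^{\prin})$ and $\LP((t^{\prin})')$ and apply the compatibility lemmas), and that this operation is compatible with $\mu_k^*$ — both of which follow from Lemma \ref{lem:homogenous_Laurent}, but only because $m_{\tg}$ are homogeneous, so the argument is somewhat circular-looking and needs to be phrased in the right order. A secondary subtlety is confirming that the component of $s_{\tg}$ of grading exactly $\gr(\tg)$ involves only $y_k$ (not the other $y_i$), which is immediate from the grading assignment $\gr(y_i(t^{\prin}))=1$ for $i\neq k$ and $\gr(y_k)=0$, combined with $F_g$ having constant term $1$. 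Once these points are settled, the identification with $m_{\tg}$ is forced, since in the type-$A_1$ upper cluster algebra the only pointed element of a given degree $\tg$ lying in the subalgebra generated by $x^{\tg}$ and $y_k$ with the correct (vanishing or $[-g_k]_+$) support dimension is the cluster monomial itself.
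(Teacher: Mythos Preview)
Your proposal is correct and follows essentially the same approach as the paper: extract the minimal-grading homogeneous component of $s_{\tg}$ (which, by Lemma~\ref{lem:dominance_refine_grading}, sits in grading $\gr(\tg)$ and involves only $y_k$), use Lemma~\ref{lem:homogenous_Laurent} to see that this component lies in $\upClAlg_k$ and is compatibly pointed at $\tg$ and $\tg'$ in the two seeds of the type-$A_1$ algebra, and then invoke Lemma~\ref{lem:compatible_at_g_vector} to identify it with the cluster monomial $m_{\tg}$. Your worry about circularity is unfounded: the homogeneity of $m_{\tg}$ is a direct computation from its explicit form $x^{\tg}(1+y_k)^{[-g_k]_+}$ and does not depend on the lemma being proved.
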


\begin{proof}

By Lemma \ref{lem:dominance_refine_grading}, the homogeneous part
of $s_{\tg}$ in $\LP(t^{\prin})$ with the \textbf{minimal} grading has the
grading $\gr(\tg)$ and contains the leading term $x^{\tg}$. Similarly,
the homogeneous part of $\mu_{k}^{*}s_{\tg}$ in $\LP((t')^{\prin})$
with the \textbf{minimal} grading has the grading $\gr'(\tg')=\gr(\tg)$ and
contains the leading term $(x')^{\tg'}$. By Lemma \ref{lem:homogenous_Laurent},
these homogeneous parts of $s_{\tg}$ in $\LP(t^{\prin})$ and $\LP((t')^{\prin})$
respectively are related by mutation. We obtain that $\pi^{\gr(\tg)}s_{\tg}$
is pointed at $\tg$, $\tg'$ in $\LP(t^{\prin})$ and $\LP((t')^{\prin})$
respectively.

Because $\pi^{\gr(\tg)}s_{\tg}\in\LP(t^{\prin})$ is pointed at $\tg$
and has homogeneous grading, we have $\pi^{\gr(\tg)}s_{\tg}=x^{\tg}* F(y_{k}(t^{\prin}))$
for some polynomial $F$ with constant term $1$. Similarly, in $\LP(t')$
we have $\mu_{k}^{*}(\pi^{\gr(\tg)}s_{\tg})=\pi^{\gr'(\tg')}(\mu_{k}^{*}s_{\tg})=(x')^{\tg'}* G((y_{k}((t')^{\prin}))$
for some polynomial $G$ with constant term $1$. Therefore, it is
pointed at $\tg$ and $\tg'$ for the dominance orders associated
to the seeds of the (type $A_{1}$) upper cluster algebra $\upClAlg_{k}$
respectively, where $k$ is the only unfrozen vertex. By using Lemma
\ref{lem:compatible_at_g_vector}, we deduce that $\pi^{\gr(\tg)}s_{\tg}$
agrees with the localized cluster monomial $m_{\tg}$ of $\upClAlg_{k}$.

\end{proof}

\begin{Lem}\label{lem:finite_order_basis}

For any $r\in\N$, $\{\pi^{\leq r}s_{\tg}|\tg\in C,\gr(\tg)\leq r\}$
is a $\kk$-basis of $U_k^{\leq r}$.

\end{Lem}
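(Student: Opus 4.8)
The plan is to deduce the statement from the nilpotent Nakayama Lemma (Lemma \ref{lem:Nakayama}), applied to the ring $A^{\leq r}$ with its nilpotent ideal $\mm$ and the module $U=U_{k}^{\leq r}$, taking as candidate generating set $S=\{\pi^{\leq r}s_{\tg}\mid\tg\in C,\ \gr(\tg)\leq r\}$. The first thing to pin down is $U/\mm U$. Given $\tg\in C$ with $\gr(\tg)=d$, split off its $I'$-coordinates as $\tg=\tg_{0}+\alpha$ with $\alpha\in\N^{I'}$, $|\alpha|=d$, $\tg_{0}\in C$ and $\gr(\tg_{0})=0$; since $m_{\tg}=x^{\tg}(1+y_{k})^{[-g_{k}]_{+}}$ and the $k$-coordinate is unchanged, this gives $m_{\tg}=x^{\alpha}m_{\tg_{0}}$, hence $U_{k}^{d}=A^{d}\cdot U_{k}^{0}$ for all $d\geq0$. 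Consequently $\mm\cdot U_{k}^{\leq r}=\oplus_{1\leq d\leq r}U_{k}^{d}$, so $U/\mm U\cong U_{k}^{0}$ and $A^{\leq r}/\mm=\Z$.

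Next I would check the Nakayama hypothesis. The image of $\pi^{\leq r}s_{\tg}$ in $U/\mm U\cong U_{k}^{0}$ is its degree-zero homogeneous part $\pi^{0}s_{\tg}$, which by the previous lemma equals $m_{\tg}$ when $\gr(\tg)=0$, and is $0$ when $\gr(\tg)\geq1$ (the minimal grading of $s_{\tg}$ being $\gr(\tg)$). Since $\{m_{\tg}\mid\tg\in C,\ \gr(\tg)=0\}$ is part of the basis $\{m_{\tg}\mid\tg\in C\}$ of $U_{k}$ and spans $U_{k}^{0}$, the image of $S$ generates $U/\mm U$ over $\Z=A^{\leq r}/\mm$, so Nakayama yields $U=A^{\leq r}\!\cdot S$. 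To upgrade this to a $\Z$-span statement, note that adding $\alpha\in\N^{I'}$ to the $I'$-coordinates of $\tg$ does not change its projection $g$ to $\Mc(t)$, so $s_{\tg+\alpha}=x^{\alpha}s_{\tg}$, and applying $\pi^{\leq r}$ gives $x^{\alpha}\pi^{\leq r}s_{\tg}=\pi^{\leq r}s_{\tg+\alpha}$, which lies in $S$ when $\gr(\tg)+|\alpha|\leq r$ and is $0$ otherwise. As $A^{\leq r}$ is $\Z$-spanned by the monomials $x^{\alpha}$, $|\alpha|\leq r$, it follows that $U$ is the $\Z$-span of $S$.

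For linear independence, expand $s_{\tg}$ in the basis $\{m_{\tilde h}\mid\tilde h\in C\}$ of $U_{k}$; since $s_{\tg}$ and all $m_{\tilde h}$ are pointed in $\LP(t^{\prin})$, Lemma \ref{lem:triangular_decomposition} shows this expansion is $\prec_{t^{\prin}}$-unitriangular, $s_{\tg}=m_{\tg}+\sum_{\tilde h\prec_{t^{\prin}}\tg}\beta_{\tg,\tilde h}m_{\tilde h}$ with finitely many nonzero coefficients, and the terms that occur satisfy $\gr(\tilde h)\geq\gr(\tg)$ by Lemma \ref{lem:dominance_refine_grading}. Applying $\pi^{\leq r}$ (which kills $m_{\tilde h}$ precisely when $\gr(\tilde h)>r$) gives $\pi^{\leq r}s_{\tg}=m_{\tg}+\sum_{\tilde h\prec_{t^{\prin}}\tg,\ \gr(\tilde h)\leq r}\beta_{\tg,\tilde h}m_{\tilde h}$ in $U_{k}^{\leq r}$, whose $\Z$-basis is $\{m_{\tilde h}\mid\tilde h\in C,\ \gr(\tilde h)\leq r\}$ (the $m_{\tilde h}$ are homogeneous). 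From a finite relation $\sum_{\tg\in F}c_{\tg}\pi^{\leq r}s_{\tg}=0$, picking a $\prec_{t^{\prin}}$-maximal $\tg^{*}\in F$ with $c_{\tg^{*}}\neq0$ and reading off the coefficient of $m_{\tg^{*}}$ forces $c_{\tg^{*}}=0$, a contradiction. Hence $S$ is a $\Z$-basis of $U^{\leq r}$.

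I expect the only genuinely delicate point to be the reduction $U/\mm U\cong U_{k}^{0}$, that is, verifying that $U_{k}$ is generated in grading $0$ over $A$ so that the nilpotent Nakayama Lemma applies cleanly; one must also stay careful about which dominance order (the one on $t^{\prin}$ with its full unfrozen set) and which $y$-variables are in play. The unitriangularity and the passage from an $A^{\leq r}$-span to a $\Z$-span are then essentially bookkeeping, with the one caveat that the index set $\{\tg\in C\mid\gr(\tg)\leq r\}$ is infinite, so linear independence has to be obtained through a maximal-element argument rather than by inverting a finite unitriangular matrix.
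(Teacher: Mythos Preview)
Your proof is correct and follows essentially the same approach as the paper's: apply the nilpotent Nakayama Lemma to get $A^{\leq r}$-generation, reduce to $\Z$-span via $s_{\tg+\alpha}=x^{\alpha}s_{\tg}$, and deduce linear independence from the distinct pointed degrees. You are more explicit than the paper in identifying $U_k^{\leq r}/\mm U_k^{\leq r}\cong U_k^{0}$ (which the paper handles implicitly by treating the case $r=0$ first) and in unpacking linear independence via the unitriangular expansion in the basis $\{m_{\tilde h}\}$, but the substance is the same.
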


\begin{proof}

First consider the case $r=0$. For any $\tg\in C$, we have $\pi^{\leq0}s_{\tg}=\pi^{\leq0}(\pi^{\gr(\tg)}s_{\tg})=\pi^{\leq0}m_{\tg}$.
The claim follows from the fact that $\{m_{\tg}|\gr(\tg)=0,\tg\in C\}$
is a $\kk$-basis of the homogeneous component $U_k^{0}$ of $U_k$.

By the nilpotent Nakayama Lemma \ref{lem:Nakayama}, $\{\pi^{\leq r}s_{\tg}|\tg\in C\}$
generates $U_k^{\leq r}$ over $A^{\leq r}$. Notice that $A^{\leq r}$
acts on $s_{\tg}$ by multiplication. We observe that $\{\pi^{\leq r}s_{\tg}|\tg\in C\}$
in fact generates $U_k^{\leq r}$ over $\kk$. Because its non-zero elements
have different leading terms, they are linearly independent and form
a $\kk$-basis.

\end{proof}

\begin{proof}[Proof of Proposition \ref{prop:invariant_decomposition}]

Denote $C^{\leq r}=\{\tg\in C|\gr(\tg)\leq r\}$. Given any $z\in\upClAlg_{k}$,
there exists some $c\in\N^{I'}$ such that $z\cdot x^{c}\in U_{k}$.
Then, up to any order $r\in\N$, we have a finite decomposition inside
the $\kk$-module $U^{\leq r}$ by Lemma \ref{lem:finite_order_basis}:
\begin{align}
\pi^{\leq r}(z\cdot x^{c}) & =\sum_{\tg\in C^{\leq r}}\alpha^{\leq r}(z\cdot x^{c})(\tg)\cdot\pi^{\leq r}s_{\tg}.\label{eq:finite_grading_decomposition}
\end{align}
By letting $r$ tends to $+\infty$, the decomposition \eqref{eq:finite_grading_decomposition}
becomes a possibly infinite decomposition (which converges under the
$\mm$-adic topology on the $A$-module $U_{k}$):

\begin{align}
z\cdot x^{c} & =\sum_{\tg\in C}\alpha(z\cdot x^{c})(\tg)\cdot s_{\tg}\label{eq:converge_gradig_decomposition}
\end{align}

Meanwhile, we have a $\prec_{t^{\prin}}$ decomposition with finitely
many $\prec_{t^{\prin}}$-leading terms in $\LP(t^{\prin})$:

\begin{align}
z\cdot x^{c} & =\sum_{\tg\in C}\alpha_{(t^{\prin})}(z\cdot x^{c})(\tg)\cdot s_{\tg}\label{eq:dominance_t_decomposition}
\end{align}
and a $\prec_{(t')^{\prin}}$-decomposition with finitely many $\prec_{(t')^{\prin}}$-leading
terms in $\LP((t')^{\prin})$: 

\begin{align}
z\cdot x^{c} & =\sum_{\tg\in C}\alpha_{((t')^{\prin})}(z\cdot x^{c})(\tg')\cdot s_{\tg}\label{eq:dominance_t_prime_decomposition}
\end{align}

Recall that $\preceq_{t^{\prin}}$ and $\preceq_{(t')^{\prin}}$ implies
the grading order by Lemma \ref{lem:dominance_refine_grading}. It
follows that both decompositions \eqref{eq:dominance_t_decomposition}\eqref{eq:dominance_t_prime_decomposition}
agree with the decomposition \eqref{eq:converge_gradig_decomposition}.
To be more precise, we can compare the decompositions as follows:
taking the restrictions of both decompositions \eqref{eq:dominance_t_decomposition}\eqref{eq:dominance_t_prime_decomposition}
in grading $\leq r$, then they agree with the finite decomposition
\eqref{eq:finite_grading_decomposition} by Lemma \ref{lem:finite_order_basis}.
Let $r$ tends to $+\infty$, then the restrictions grow to the triangular
decompositions \eqref{eq:dominance_t_decomposition}\eqref{eq:dominance_t_prime_decomposition}
by Lemma \ref{lem:dominance_refine_grading}, while \eqref{eq:finite_grading_decomposition}
grows to \eqref{eq:converge_gradig_decomposition}. 

Notice that $s_{\tg-c}=s_{\tg}\cdot x^{-c}$ by construction. Dividing
both sides of the decomposition \eqref{eq:converge_gradig_decomposition}
by $x^{c}$, we obtain

\begin{align}
\alpha(z)(\tg-c) & :=\alpha(z\cdot x^{c})(\tg)\nonumber \\
z & =\sum_{\tg-c\in\Mc(t^{\prin})}\alpha(z)(\tg-c)\cdot s_{\tg-c}\nonumber \\
 & =\sum_{\tg\in\Mc(t)}\alpha(z)(\tg)\cdot s_{\tg},\label{eq:converge_localized_decomposition}
\end{align}
which gives simultaneously the $\prec_{t^{\prin}}$-decomposition in $\LP(t^{\prin})$
and the $\prec_{(t')^{\prin}}$-decomposition in $\LP((t')^{\prin})$.
We obtain that $\alpha_{(t^{\prin})}(z)(\tg)=\alpha_{((t')^{\prin})}(z)(\tg')=\alpha(z)(\tg)$
for any $\tg\in\Mc(t^{\prin})$.

Finally, let us return to the seeds $t$, $t'$. Let $\proj$ denote the natural projection from $\Z^{I\sqcup I'}$ to $\Z^I$. It induces the $\kk$-linear map $\proj$ from $\LP(t^{\prin})$ to $\LP(t)$ such that $\pr (x^\tg)=x^g$, and similarly the $\kk$-linear map $\proj$ from $\LP((t')^{\prin})$ to $\LP(t')$ such that $\pr (x^{\tg'})=x^{g'}$. We deduce the claim follows
by applying the linear maps $\proj$ to the decomposition \eqref{eq:converge_localized_decomposition}
and by Lemma \ref{lem:dominance_under_projection}.

\end{proof}

\subsection{Bases with tropical properties}

We show that tropical properties of a collection $\cS$ implies that
it is a basis. Assume that $t$ is injective-reachable and denote
$t=\overleftarrow{\mu}t[-1]$.

As in Section \ref{sec:change_seed}, we restrict ourselves to consider elements in the upper cluster algebras to avoid the difficulty of defining mutations for formal Laurent series.

\begin{Thm}\label{thm:tropical_finite_decomposition}

Assume that the full rank assumption holds. If a subset $\cS$ of the upper cluster algebra $\upClAlg(t)$ is compatibly pointed at the seeds appearing along
the mutation sequence $\overleftarrow{\mu}$ from $t[-1]$ to $t$,
then $\cS$ is a basis of $\upClAlg(t)$.

\end{Thm}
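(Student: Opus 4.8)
The plan is to combine the two main technical tools already developed: the seed-invariance of the dominance-order decomposition (Proposition~\ref{prop:invariant_decomposition}) and the compatibility/support-dimension machinery (Propositions~\ref{prop:bipointed_support}, \ref{prop:decompose_bipointed}, \ref{prop:sum_bipointed}, \ref{prop:order_reverse}). First I would fix any $z\in\upClAlg(t)$. Since $z\in\LP(t)$, we may form its $\prec_t$-decomposition into $\cS$ as in Definition-Lemma~\ref{def:dominance_order_decomposition}, obtaining $z=\sum_{g\in\Mc(t)}\alpha_t(z)(g)\cdot s_g$, where $\supp(\alpha_t(z))$ has finitely many $\prec_t$-maximal elements. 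Applying Proposition~\ref{prop:invariant_decomposition} repeatedly along each step of the mutation sequence $\overleftarrow{\mu}$ from $t$ back to $t[-1]$ (legitimate because, by hypothesis, $\cS$ is compatibly pointed at all seeds appearing along $\overleftarrow{\mu}$, and $z\in\upClAlg$ lies in every $\LP$), we conclude that the coefficient functions match up under the tropical transformations: $\alpha_t(z)(g)=\alpha_{t[-1]}(z)(\phi_{t[-1],t}g)$ for all $g$, and in particular $\phi_{t[-1],t}\supp(\alpha_t(z))=\supp(\alpha_{t[-1]}(z))$.

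Next I would show the decomposition is in fact \emph{finite}, which is the crux. The idea is that the support $G:=\supp(\alpha_t(z))$ is bounded from above in the $\prec_t$ order (finitely many maximal elements) and, via the seed-invariance just established, also bounded from above in the $\prec_{t[-1]}$ order. By Proposition~\ref{prop:order_reverse}, the linear map $\psi_{t[-1],t}$ reverses $\preceq_t$ and $\preceq_{t[-1]}$; hence being bounded above in $\prec_{t[-1]}$ translates, after applying $\psi_{t[-1],t}^{-1}\phi_{t[-1],t}$, into $G$ being bounded \emph{below} in $\prec_t$. A set of tropical points (equivalently, elements of $\Mc(t)$) that is simultaneously bounded above and below in $\prec_t$ is contained in a finite union of bidegree intervals $_{\eta\preceq_t}\Mc(t)_{\preceq_t g}$, each of which is finite by the Finite Interval Lemma~\ref{lem:finite_interval}. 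Therefore $G$ is finite, so $z=\sum_{g\in G}\alpha_t(z)(g)\,s_g$ is a genuine finite $\Z$-linear combination of elements of $\cS$. This proves $\cS$ spans $\upClAlg(t)$ (one also checks $s_g\in\upClAlg$, which follows from compatible pointedness along a mutation sequence generating $\Delta^+$, or is part of the standing hypothesis that $\cS\subset\LP(t)\cap\LP(t')$ for the relevant seeds). Linear independence is immediate: any finite $\Z$-relation $\sum b_g s_g=0$ is a $\prec_t$-decomposition of $0$, and by the uniqueness in Definition-Lemma~\ref{def:dominance_order_decomposition} (or directly by comparing $\prec_t$-maximal Laurent monomials and Lemma~\ref{lem:triangular_decomposition}), all $b_g=0$.

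I expect the main obstacle to be the finiteness step, specifically making precise the claim that ``bounded above in both $\prec_t$ and $\prec_{t[-1]}$ implies finite.'' The subtlety is that the upper bound for $G$ in the $t[-1]$ order is not a single element but a finite set of $\prec_{t[-1]}$-maximal elements; one must track that each $\prec_t$-maximal element $g^{(j)}$ of $G$ has the property that $\phi_{t[-1],t}g^{(j)}$ lies below one of the finitely many $\prec_{t[-1]}$-maximal images, and then invoke Proposition~\ref{prop:order_reverse} to get, for each such pair, a bidegree interval $\intv$ containing the relevant portion of $G$. Concretely: for each $\prec_t$-maximal $g^{(j)}$ of $G$, every $g'\in G$ with $g'\preceq_t g^{(j)}$ also satisfies $\phi_{t[-1],t}g'\preceq_{t[-1]}$ (some $\prec_{t[-1]}$-maximal image), hence $\psi_{t[-1],t}^{-1}\phi_{t[-1],t}g'\succeq_t \psi_{t[-1],t}^{-1}\phi_{t[-1],t}g^{(j)}=:\eta^{(j)}$, so $g'\in{}_{\eta^{(j)}\preceq_t}\Mc(t)_{\preceq_t g^{(j)}}$, a finite set. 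Summing over the finitely many $j$ finishes the argument. Once this is set up carefully, the rest is bookkeeping, and Lemma~\ref{lem:compatible_at_g_vector} / Proposition~\ref{prop:bipointed_support} can be used to double-check that the tropical data of the $s_g$ behave as required when one passes between $t$ and $t[-1]$.
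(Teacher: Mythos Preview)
Your approach is exactly the paper's: apply Proposition~\ref{prop:invariant_decomposition} step by step along $\overleftarrow{\mu}$ to identify the $\prec_t$- and $\prec_{t[-1]}$-decompositions, then argue that the common support is trapped in a finite union of bidegree intervals. Linear independence via distinct leading degrees is also how the paper handles it implicitly.

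There is, however, a genuine slip in your ``Concretely'' paragraph. You write
\[
\phi_{t[-1],t}g'\preceq_{t[-1]}(\text{some }\prec_{t[-1]}\text{-maximal image})
\quad\Longrightarrow\quad
\psi_{t[-1],t}^{-1}\phi_{t[-1],t}g'\succeq_t \psi_{t[-1],t}^{-1}\phi_{t[-1],t}g^{(j)}=:\eta^{(j)},
\]
but the ``hence'' does not follow: the $\prec_{t[-1]}$-maximal elements of $\phi_{t[-1],t}(G)$ are \emph{not} in general the images $\phi_{t[-1],t}g^{(j)}$ of the $\prec_t$-maximal elements (the piecewise-linear map $\phi$ does not respect either order). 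The correct lower bounds are indexed by a \emph{separate} finite set: let $\phi_{t[-1],t}h^{(k)}$, $1\le k\le r$, be the $\prec_{t[-1]}$-maximal elements of $\supp(\alpha_{t[-1]}(z))$, and set $\eta^{(k)}:=\psi_{t[-1],t}^{-1}\phi_{t[-1],t}h^{(k)}$. Then order-reversal (Proposition~\ref{prop:order_reverse}) gives $\psi_{t[-1],t}^{-1}\phi_{t[-1],t}g'\succeq_t\eta^{(k)}$ for some $k$.

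There is also a missing link you should make explicit: from $\psi_{t[-1],t}^{-1}\phi_{t[-1],t}g'\succeq_t\eta^{(k)}$ you still need $g'\succeq_t\eta^{(k)}$, i.e.\ $g'\succeq_t\psi_{t[-1],t}^{-1}\phi_{t[-1],t}g'$. This is exactly where the compatible-pointedness of $s_{g'}$ at $t,t[-1]$ is used: by Proposition~\ref{prop:bipointed_support}(1), $s_{g'}$ is bipointed with $\codeg^t s_{g'}=\psi_{t[-1],t}^{-1}\phi_{t[-1],t}g'$, and a codegree always satisfies $\codeg^t s_{g'}\preceq_t\deg^t s_{g'}=g'$. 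With these two fixes one obtains $G\subset\bigcup_{i,k}\,{}_{\eta^{(k)}\preceq_t}\Mc(t)_{\preceq_t g^{(i)}}$, which is finite by Lemma~\ref{lem:finite_interval}, and the rest of your argument goes through.
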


\begin{proof}
Let there be given any $z\in\upClAlg$. Working with the seed $t$, we have a $\prec_{t}$-decomposition in $\widehat{\LP(t)}$:
\begin{eqnarray*}
z & = & \sum\alpha_{t}(z)(g)\cdot s_{g}
\end{eqnarray*}
Notice that $\cS$ remains pointed in the seed $t[-1]$ by our assumption. Similarly, working with the seed $t[-1]$, we have a $\prec_{t[-1]}$-decomposition in $\widehat{\LP(t[-1])}$:
\begin{eqnarray*}
z & = & \sum\alpha_{t[-1]}(z)(\phi_{t[-1],t}g)\cdot s_{g}
\end{eqnarray*}
Since $z$ and $\cS$ are contained in the upper cluster algebra $\upClAlg(t)$, the above decompositions take place in $\LP(t)$ and  $\LP(t[-1])$ respectively.

By applying Proposition \ref{prop:invariant_decomposition} for adjacent
seeds along the sequence $\overleftarrow{\mu}$ from $t[-1]$ to $t$,
we obtain that $\alpha_{t}(z)(g)=\alpha_{t[-1]}(z)(\phi_{t[-1],t}g)$,
and $\phi_{t,t[-1]}\supp(\alpha_{t[-1]}(z))=\supp(\alpha_{t}(z))=\{g|\alpha_{t}(z)(g)\neq0\}$.

Notice that $\supp(\alpha_{t}(z))$ has finitely many $\prec_{t}$-maximal
elements which we denote by $g^{(i)}$, $1\leq i\leq l$, $0\neq l\in\N$.
Then any $s_{g}$ appearing satisfies $\deg^{t}s_{g}=g\preceq_{t}g^{(i)}$
for some $i$. Similarly, $\supp(\alpha_{t[-1]}(z))$ has finitely
many $\prec_{t[-1]}$-maximal elements which we denote by $\phi_{t[-1],t}h^{(j)}$,
$1\leq j\leq r$, $0\neq r\in\N$, for some $h^{(j)}\in \Mc(t)$. Then any $s_{g}$ appearing satisfies
$\deg^{t[-1]}s_{g}=\phi_{t[-1],t}g\preceq_{t[-1]}\phi_{t[-1],t}h^{(j)}=\deg^{t[-1]}s_{h^{(j)}}$
for some $j$. By Proposition \ref{prop:order_reverse}, this is equivalent
to $\psi_{t[-1],t}^{-1}\deg^{t[-1]}s_{g}\succeq_{t}\psi_{t[-1],t}^{-1}\deg^{t[-1]}s_{h^{(j)}}$,
i.e. $\codeg^{t}s_{g}\succeq_{t}\codeg^{t}s_{h^{(j)}}$ by Definition
\ref{def:support_dimension} and Proposition \ref{prop:bipointed_support}(1).
It follows that $g\succeq_{t}\codeg^{t}s_{g}\succeq_{t}\eta^{(j)}:=\codeg^{t}s_{h^{(j)}}$.

Therefore, $\supp(\alpha_{t}(z))$ is contained in $\cup_{i,j}({}_{\eta^{(j)}\preceq_{t}}\tropSet_{\preceq_{t}g^{(i)}})$.
In particular, it is a finite set by the Finite Interval Lemma \ref{lem:finite_interval}.

\end{proof}

Theorem \ref{thm:tropical_finite_decomposition} immediately implies Theorem \ref{thm:all_pointed_bases}(1) and the existence of the generic basis for
injective-reachable skew-symmetric seed $t$ (Theorem \ref{thm:generic_basis_existence}),
see Section \ref{sec:Generic-bases-and} for more details.

\begin{Rem}
When we take $\cS$ to be the collection of theta functions, this
result recovers Theorem \ref{thm:theta_basis} originally proved by
\cite{gross2018canonical}. Their proof is based on a thorough study
of the global monomials, tropical functions, convexity, boundedness
of polytopes and EGM arguments, see \cite[Section 7 8]{gross2018canonical}.
Our proof is specific for the injective-reachable case, but more direct
and elementary.

Note that for both works need the full rank assumption to obtain bases for the (upper) cluster algebra.
\end{Rem}

\section{Main results\label{sec:results_proofs}}

As before, we assume that the seeds satisfy the full rank assumption throughout this section.

\subsection{Bases parametrized by tropical points}

\begin{Lem}\label{lem:general_bipointed_basis}

Let there be given an injective-reachable seed $t=\overleftarrow{\mu}t[-1]$ subject to the full rank assumption,
a subset $\Theta\subset\Mc(t)$, and a collection of Laurent polynomials
$\mathcal{Z}=\{z_{g}\in\LP(t)|g\in\Theta\}$ such that the $z_{g}$
are compatibly pointed at seeds $t,t[-1]$ with $\deg^{t}z_{g}=g$.
Let $\clAlg^{\Theta}$ denote the free $\kk$-module $\oplus_{g\in\Theta}\kk\cdot z_{g}$.
Then the following claims are true.

(1) Let $\mathcal{S}$ be any collection $\mathcal{S}=\{s_{g}\in\clAlg^{\Theta}|g\in\Theta\}$
such that the $s_{g}$ satisfy $\deg^{t}s_{g}=g$ and are compatibly
pointed at seeds $t,t[-1]$. Then it is a $\kk$-basis of $\clAlg^{\Theta}$.

(2) Given any $g\in\Theta$ and $s_{g}\in\clAlg^{\Theta}$ such that
it satisfies $\deg^{t}s_{g}=g$ and is compatibly pointed at seeds
$t,t[-1]$. Then $s_{g}$ has the following decomposition in $\{z_{g}|g\in\Theta\}$:

\begin{align}\label{eq:transition_sz}
s_{g} & =z_{g}+\sum_{g'\in\Theta\cap\Mc(t)_{\prec_{\{t,t[-1]\}}g}}b_{g,g'}z_{g'}
\end{align}
with coefficients $b_{g,g'}\in\kk$. In addition, $\Theta\cap\Mc(t)_{\prec_{\{t,t[-1]\}}g}$
are finite sets for all $g\in\Theta$.

(3) Given any set $\mathcal{S}=\{s_{g}\in\clAlg^{\Theta}|g\in\Theta\}$
such that the $s_{g}$ have decomposition into $\{z_{g}\}$ as in
\eqref{eq:transition_sz}. Then the $s_{g}$ satisfy $\deg^{t}s_{g}=g$ and are compatibly
pointed at seeds $t,t[-1]$. In particular, $\{s_{g}|g\in\Theta\}$
is a $\kk$-basis of $\clAlg^{\Theta}$ by (1).

\end{Lem}

Notice that Lemma \ref{lem:general_bipointed_basis} gives a complete
description of the bases $\mathcal{S}$ in (1) using the special chosen
basis $\mathcal{Z}$ and the transition rule in claim (2).

\begin{proof}

Claim (2):

For any $g\in\Theta$, because $s_{g}\in\clAlg^{\Theta}$ and $\{z_{g}|g\in\Theta\}$
is a basis of $\clAlg^{\Theta}$, $s_{g}$ has a finite decomposition
into $z_{g}$:

\begin{align*}
s_{g} & =\sum_{0\leq i\leq r}b_{i}z_{g_{i}}
\end{align*}
with coefficients $b_{i}\neq0$. By assumption, $s_{g},z_{g_{i}}$ are
compatibly pointed at $t,t[-1]$. Then we can apply
Proposition \ref{prop:decompose_bipointed} and deduce that, by reindexing
$z_{g_{i}}$, we have $g_{0}=g$, $b_{0}=1$, $g_{i}\in\Theta$, $\intv_{g_{i}}\subsetneq\intv_{g}$
for any $i>0$. Notice that the last condition is equivalent to $g_{i}\in\Mc(t)_{\prec_{t,t[-1]}g}$
by Proposition \ref{prop:inclusion_description}. Therefore, we obtain
the claim on the decomposition of $s_{g}$. Finally, $\Theta\cap\Mc(t)_{\prec_{t,t[-1]}g}$
are finite by Proposition \ref{prop:inclusion_description}.

Claim (1):

Because $s_{g}$ are pointed at different degrees, they are linearly
independent by Lemma \ref{lem:triangular_decomposition}. It suffices
to verify the claim that any $z_{g}$, $g\in\Theta$, is a finite
sum of elements from $\{s_{g}|g\in\Theta\}$.

Let us do an induction on the cardinality of the finite set $\Theta\cap\Mc(t)_{\prec_{\{t,t[-1]\}}g}$.
If it is an empty set, we have $z_{g}=s_{g}$ by (2). 

Assume that the claim has been verified for all cardinalities no larger
than $d\in\N$. Let us check the case $|\Theta\cap\Mc(t)_{\prec_{\{t,t[-1]\}}g}|=d+1$.
Take any $g'\in\Theta\cap\Mc(t)_{\prec_{\{t,t[-1]\}}g}$. By Proposition
\ref{prop:inclusion_description}, we have 
\begin{align*}
\Mc(t)_{\prec_{t,t[-1]}g'} & =\{g''\in\Mc(t)|\intv_{g''}\subsetneq\intv_{g'}\}\\
 & \subset\{g''\in\Mc(t)|\intv_{g''}\subsetneq\intv_{g}\}\\
 & =\Mc(t)_{\prec_{\{t,t[-1]\}}g}
\end{align*}
and, in addition, $\Theta\cap\Mc(t)_{\prec_{t,t[-1]}g'}\neq\Theta\cap\Mc(t)_{\prec_{t,t[-1]}g}$
because only the right hand side contains $g'$. Therefore, $|\Theta\cap\Mc(t)_{\prec_{t,t[-1]}g'}|\leq d$
and $z_{g'}$ is a finite sum of elements of $\{s_{g}|g\in\Theta\}$
by our induction hypothesis. By (2), $z_{g}$ is a finite linear composition
of $s_{g}$ and $z_{g'}$, $g'\in\Theta\cap\Mc(t)_{\prec_{\{t,t[-1]\}}g}$,
the claim follows.

Claim (3):

The claim follows from Proposition \ref{prop:sum_bipointed}.

\end{proof}

By applying Lemma \ref{lem:general_bipointed_basis} to injective-reachable
upper cluster algebras, we obtain the following consequences.

\begin{Thm}\label{thm:bipointed_bases}

Let there be given an injective-reachable seed $t=\overleftarrow{\mu}t[-1]$ subject to the full rank assumption. Consider the classical case $\kk=\Z$.

(1) For any collection $\mathcal{S}=\{s_{g}\in\upClAlg|g\in\Mc(t)\}$
such that the $s_{g}$ satisfy $\deg^{t}s_{g}=g$ and are compatibly
pointed at $t$ and $t[-1]$, $\mathcal{S}$ must be a $\kk$-basis of $\upClAlg$.

(2) There exists at least one such basis, which we choose and denote by $\mathcal{Z}=\{z_{[g]}\}$.

(3) The set of all such bases $\mathcal{S}$ is parametrized as follows:

\begin{eqnarray*}
\prod_{g\in\Mc(t)}\kk^{\Mc(t)_{\prec_{\{t,t[-1]\}}g}} & \simeq & \{\mathcal{S}\}\\
((b_{g,g'})_{g'\in\Mc(t)_{\prec_{\{t,t[-1]\}}g}})_{g\in\Mc(t)} & \mapsto & \mathcal{S}=\{s_{g}|g\in\Mc(t)\}
\end{eqnarray*}
such that $s_{g}=z_{g}+\sum_{g'\in\Mc(t)_{\prec_{\{t,t[-1]\}}g}}b_{g,g'}z_{g'}$. In addition, the $\Mc(t)_{\prec_{\{t,t[-1]\}}g}$
are finite sets. 

\end{Thm}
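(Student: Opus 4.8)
The plan is to deduce Theorem \ref{thm:bipointed_bases} directly from the abstract Lemma \ref{lem:general_bipointed_basis} by taking $\Theta = \Mc(t)$ and choosing $\clAlg^\Theta$ to be the upper cluster algebra $\upClAlg$ itself. The first step is therefore to produce \emph{some} collection $\mathcal{Z} = \{z_g \mid g \in \Mc(t)\}$ of elements of $\upClAlg$, compatibly pointed at $t$ and $t[-1]$ with $\deg^t z_g = g$, which moreover forms a $\Z$-basis of $\upClAlg$; this is exactly what is needed to identify $\upClAlg$ with $\clAlg^\Theta = \oplus_{g\in\Mc(t)}\Z z_g$. For skew-symmetric seeds one takes $\mathcal{Z}$ to be the generic basis (Theorem \ref{thm:generic_basis_existence}), whose elements have the required tropical behavior by Plamondon's mutation formula quoted in Section \ref{sec:cluster_category}; for general skew-symmetrizable seeds one takes $\mathcal{Z}$ to be the theta basis of \cite{gross2018canonical} (discussed in Section \ref{sec:Theta-functions}), whose theta functions are compatibly pointed by construction. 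Both collections are known to be bases of $\upClAlg$ (via Theorem \ref{thm:tropical_finite_decomposition} in the first case), so in either situation $\upClAlg = \clAlg^\Theta$.

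Once this identification is in place, part (1) is immediate: any $\mathcal{S} = \{s_g \in \upClAlg \mid g \in \Mc(t)\}$ with $\deg^t s_g = g$ compatibly pointed at $t, t[-1]$ satisfies the hypotheses of Lemma \ref{lem:general_bipointed_basis}(1), hence is a basis of $\clAlg^\Theta = \upClAlg$. For part (2), the map sending $\mathcal{S}$ to its matrix of structure constants is well-defined by Lemma \ref{lem:general_bipointed_basis}(2): each $s_g$ decomposes as $z_g + \sum_{g'\in\Mc(t)_{\prec_{\{t,t[-1]\}}g}} b_{g,g'} z_{g'}$ with the index set finite by Proposition \ref{prop:inclusion_description}. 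Conversely, Lemma \ref{lem:general_bipointed_basis}(3) says that \emph{any} choice of coefficients $(b_{g,g'}) \in \prod_{g}\Z^{\Mc(t)_{\prec_{\{t,t[-1]\}}g}}$ produces, via this same formula, a collection $\mathcal{S}$ whose elements are compatibly pointed at $t, t[-1]$ with $\deg^t s_g = g$ — hence a basis by (1). These two directions are mutually inverse: starting from $(b_{g,g'})$, building $\mathcal{S}$, and then reading off structure constants returns $(b_{g,g'})$ by the uniqueness in Lemma \ref{lem:general_bipointed_basis}(2) (the decomposition of a bipointed element into a bipointed basis is unitriangular, by Lemma \ref{lem:triangular_decomposition}); and starting from $\mathcal{S}$, extracting $(b_{g,g'})$, and rebuilding gives back $\mathcal{S}$ tautologically. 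Thus the displayed correspondence is a bijection, and the finiteness of $\Mc(t)_{\prec_{\{t,t[-1]\}}g}$ is again Proposition \ref{prop:inclusion_description}.

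The one point requiring care — and the place I expect to spend the most effort — is verifying that the chosen special basis $\mathcal{Z}$ genuinely lies in $\upClAlg$ and is compatibly pointed at \emph{both} $t$ and $t[-1]$, not merely at $t$. In the skew-symmetric case this needs the compatibility of generic cluster characters along the whole mutation sequence $\overleftarrow{\mu}$ from $t[-1]$ to $t$, which is where Theorem \ref{thm:tropical_finite_decomposition} enters; in the general case one invokes the corresponding property of theta functions. A subtlety is that Theorem \ref{thm:bipointed_bases} as stated applies to all injective-reachable (skew-symmetrizable) seeds, so the argument must not secretly rely on skew-symmetry — hence the two-case structure above, with theta functions supplying the special basis in full generality. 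Beyond this, everything reduces to invoking Lemma \ref{lem:general_bipointed_basis}, and no new calculation is needed.
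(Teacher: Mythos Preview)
Your proposal is correct and follows essentially the same approach as the paper: reduce to Lemma \ref{lem:general_bipointed_basis} with $\Theta=\Mc(t)$ by first exhibiting a special basis $\mathcal{Z}$ of $\upClAlg$ (generic cluster characters in the skew-symmetric case, theta functions in general), then read off parts (1) and (2) directly from parts (1), (2), (3) of that lemma. The paper's own proof is more terse but structurally identical, including the two-case split for producing $\mathcal{Z}$ and the appeal to Theorem \ref{thm:tropical_finite_decomposition} (or alternatively \cite{gross2018canonical}) to certify that $\mathcal{Z}$ is indeed a basis.
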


\begin{proof}

It suffices to show that there exists a collection $\mathcal{Z}=\{z_{g}|g\in\Mc(t)\}$
in $\{\mathcal{S}\}$ such that it is a basis of $\upClAlg$. Then
the claim follows from Lemma \ref{lem:general_bipointed_basis}
where we take $\Theta=\Mc(t)$.

If $t$ is skew-symmetric, we can choose $\mathcal{Z}$ to be the
collection of the localized generic cluster characters, which are
known to be compatibly pointed at $t'\in\Delta^{+}$ by \cite[Theorem 1.3]{plamondon2013generic}.
Then, by Theorem \ref{thm:tropical_finite_decomposition}, it is a
basis. See Section \ref{sec:Generic-bases-and} for more details.

For general $t$, we have the theta functions $\theta_{t,g}^{t}$
for any $g\in\Mc(t)$, which are compatibly pointed at $t\in\Delta^{+}$
by \cite{carl2010tropical}\cite{gross2018canonical} (see Theorem
\ref{thm:theta_compatable}). Therefore, the set $\{\theta_{t,g}^{t}|g\in\Mc(t)\}$
is a basis of $\upClAlg$ by Theorem \ref{thm:tropical_finite_decomposition}
(alternatively, see Theorem \ref{thm:theta_basis} by \cite{gross2018canonical}).

\end{proof}

Recall that $s_{[g]}\in\upClAlg$, $[g]\in\tropSet$, is said to be pointed
at $[g]$ if $s_{[g]}$ is pointed at the representative $g\in\Mc(t)$
of $[g]$ in $\LP(t)$ for all seeds $t\in\Delta^{+}$.

\begin{Thm}[Theorem \ref{thm:all_pointed_bases}]

Let there be given an injective-reachable seed $t=\overleftarrow{\mu}t[-1]$ subject to the full rank assumption. Consider the classical case $\kk=\Z$.

(1) For any collection $\mathcal{S}=\{s_{[g]}\in\upClAlg|[g]\in\tropSet\}$
such that $s_{[g]}$ are pointed at the tropical points $[g]$, $\mathcal{S}$
must be a $\kk$-basis of $\upClAlg$ containing all cluster monomials. 

(2) There exists at least one such basis, which we choose and denote by $\mathcal{Z}=\{z_{[g]}\}$.

(3) The set of all such bases $\mathcal{S}$ and is parametrized as follows:

\begin{eqnarray*}
\prod_{g\in\tropSet}\kk^{\tropSet_{\prec_{\Delta^{+}}[g]}} & \simeq & \{\mathcal{S}\}\\
((b_{[g],[g']})_{[g']\in\tropSet_{\prec_{\Delta^{+}}[g]}})_{[g]\in\tropSet} & \mapsto & \mathcal{S}=\{s_{[g]}|[g]\in\tropSet\}
\end{eqnarray*}
such that $s_{[g]}=z_{[g]}+\sum_{[g']\in\tropSet_{\prec_{\Delta^{+}}[g]}}b_{[g],[g']}z_{[g']}$. In addition, the $\tropSet_{\prec_{\Delta^{+}}[g]}$
are finite sets. 

\end{Thm}

\begin{proof}

Notice that being compatibly pointed at $\Delta^{+}$ is a stronger
property than being compatibly pointed at $t,t[-1]$. Theorem \ref{thm:bipointed_bases}
gives a complete description of the bases $\{s_{g}|g\in\Mc(t)\}$
such that $s_{g}$ are compatibly pointed at $t,t[-1]$. Let us choose
a basis $\mathcal{Z}$ such that it is compatibly pointed at $\Delta^{+}$,
where possible candidates include the theta basis or the generic basis
for skew-symmetric seeds (see the proof of Theorem \ref{thm:bipointed_bases}).

Then a basis $\{s_{g}|g\in\Mc(t)\}$, where $s_{g}=z_{g}+\sum_{g'\in\Mc(t)_{\prec_{\{t,t[-1]\}}g}}b_{g,g'}z_{g'}$
satisfy this stronger property if and only if $\deg^{t'}s_{g}=\phi_{t',t}g=\deg^{t'}z_{g}$
$\forall t'$, i.e. if and only if $\deg^{t'}z_{g}\succ_{t'}\deg^{t'}z_{g'}$
for any $t'$ and $g'\in\Mc(t)_{\prec_{\{t,t[-1]\}}g}$ with non-vanishing
coefficient $b_{g,g'}$. This condition is equivalent to require all
$z_{g'}$ appearing satisfy $g'\in\Mc(t)_{\prec_{\Delta^{+}}g}$.
The parametrization of $\{\mathcal{S}\}$ follows. 

Finally, $\mathcal{S}$ contains all cluster monomials by Lemma \ref{lem:compatible_at_g_vector}.

\end{proof}

We can understand the bijection in Theorem \ref{thm:all_pointed_bases}
as a statement that the set of bases with a choice of a special one
is parametrized by the transition matrices, which are all nilpotent
lower $\prec_{\Delta^{+}}$-triangular matrices with indices given
by the tropical points.

\begin{Rem}[Basis and frozen factors] \label{rem:frozen_shift_basis}

In cluster theory, it is often natural to ask for pointed bases that factor
through the frozen variables, i.e. $s_{g}\cdot x^{c}=s_{g+c}$ for
$c\in\Z^{I_{\fv}}$, see Definition \ref{def:factor_through_frozen}.
To adapt Theorem \ref{thm:all_pointed_bases} for this purpose, we
simply impose the restriction that the special basis $\mathcal{Z}$
factors through the frozen variables, and that the transition matrix
satisfy $b_{g+c,g'+c}=b_{g,g'}$. Possible candidates include the
theta basis or the generic basis, see the proof of Theorem \ref{thm:bipointed_bases}.

\end{Rem}

Finally, let us give a description of the bases in terms of ``correct''
support dimensions, which is more natural from the view of representation
theory.

\begin{Prop}\label{prop:all_support_defined}

Let there be given an injective-reachable seed $t=\overleftarrow{\mu}t[-1]$ and any $g\in\Mc(t)$.

(1) The support dimension $\suppDim g$
is well-defined in $\yCone^{\geq0}(t)$.

(2) The support dimension $\suppDim g$ only depends on the principal part $\pr_{I_{\ufv}} g$ and $B(t)$. 
\end{Prop}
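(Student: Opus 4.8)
\textbf{Proof proposal for Proposition \ref{prop:all_support_defined}.}

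The plan is to produce, for each $g \in \Mc(t)$, an explicit upper cluster algebra element that is compatibly pointed at $t$ and $t[-1]$ with degree $g$; then Proposition \ref{prop:bipointed_support}(1) immediately forces $\psi_{t[-1],t}^{-1}\phi_{t[-1],t}g \preceq_t g$, which is precisely the condition for $\suppDim g$ to be well-defined in $\yCone^{\geq0}(t)$, and moreover identifies $\suppDim g$ with the support dimension of that element. For the skew-symmetric case, I would take the localized generic cluster character $\gen_{g+m}^t$ for a suitable $m \in \N^{I_{\fv}}$ chosen so that $g+m$ has nonnegative frozen coordinates; by \cite{plamondon2013generic} it is compatibly pointed at all seeds of $\Delta^+$, in particular at $t$ and $t[-1]$, and by Theorem \ref{thm:tropical_finite_decomposition} it lies in $\upClAlg$ with degree $g+m$. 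Since multiplication by the (invertible) monomial $x^{-m}$ in the frozen variables shifts the degree by $-m$ and preserves the property of being compatibly pointed at $t,t[-1]$, we get a compatibly pointed element of degree exactly $g$. For the general skew-symmetrizable case I would instead invoke the theta function $\theta^t_{t,g}$, which by \cite{gross2018canonical} (cf. Theorem \ref{thm:theta_compatable}, Theorem \ref{thm:theta_basis}) is an element of $\upClAlg$ compatibly pointed at $t \in \Delta^+$ with degree $g$, so in particular at $t,t[-1]$. Either way, Proposition \ref{prop:bipointed_support}(1) gives that $g$ has a support dimension and that it equals the support dimension of the chosen element; this proves (1).

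For part (2), the claim is that $\suppDim g$ depends only on the principal part $\pr_{I_{\ufv}}g$ and on the principal $B$-matrix $B(t)$. By definition $\suppDim g$ is the unique $n \in \yCone^{\geq0}(t) \simeq \N^{I_{\ufv}}$ with $\psi_{t[-1],t}^{-1}\phi_{t[-1],t}g = g + \tB(t)\cdot n$, equivalently $\phi_{t[-1],t}g = \psi_{t[-1],t}(g + \tB(t)\cdot n) = \psi_{t[-1],t}g + \psi_{t[-1],t}(\tB(t)\cdot n)$. By Proposition \ref{prop:order_reverse}, $\psi_{t[-1],t}(\tB(t)\cdot n) = \tB(t[-1])\cdot(-\Perm_\sigma \cdot n)$, and the relation defining $\suppDim g$ can be rewritten purely in terms of the vectors $\phi_{t[-1],t}g - \psi_{t[-1],t}g$ in the lattice $\Mc(t[-1])$. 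The point is now that (i) the tropical transformation $\phi_{t[-1],t}$ and the linear map $\psi_{t[-1],t}$ applied to $g$, taken modulo the frozen sublattice $\Z^{I_{\fv}}$, depend only on $\pr_{I_{\ufv}}g$ and on the mutation sequence $\overleftarrow\mu$ restricted to its combinatorial effect on principal $g$-vectors/$c$-vectors --- which by Theorem \ref{thm:cg_vector} depend only on $B(t)$ --- and (ii) $\tB(t[-1])$ has full rank, so $n$ is determined by its image $\tB(t[-1])\cdot(-\Perm_\sigma n)$, and in fact already by the principal part $B(t[-1])$, which again depends only on $B(t)$. I would spell this out by projecting the whole defining equation to $\Z^{I_{\ufv}}$ via $\pr_{I_{\ufv}}$: the frozen coordinates of $g$ contribute only to frozen coordinates on both sides (since the $y$-variable degrees $\tB(t)\cdot e_k$ are arbitrary but the solvability for $n$ is detected after projecting, using full rank of $B(t)$), so the equation $\pr_{I_{\ufv}}\phi_{t[-1],t}g = \pr_{I_{\ufv}}\psi_{t[-1],t}g + B(t[-1])\cdot(-\Perm_\sigma n)$ determines $n$ in terms of data depending only on $\pr_{I_{\ufv}}g$ and $B(t)$.

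The main obstacle I anticipate is the bookkeeping in part (2): carefully verifying that the piecewise-linear map $\phi_{t[-1],t}$ and the linear map $\psi_{t[-1],t}$, when composed with the projection $\pr_{I_{\ufv}}$, genuinely factor through $\pr_{I_{\ufv}}g$ and depend on the seed only through $B(t)$. This requires the standard but slightly delicate fact that $g$-vectors and $c$-vectors split into a principal part governed solely by $B$ and a ``framing'' part governed linearly by the frozen rows of $\tB$ --- precisely the content recalled after Theorem \ref{thm:cg_vector} (``the $c$-vectors and principal $g$-vectors only depend on the principal part $B(t_0)$'', together with $\tG(t) = \tE_{i_0,\varepsilon_0}\cdots\tE_{i_r,\varepsilon_r}$). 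Once that splitting is invoked, part (2) reduces to linear algebra over $\Z^{I_{\ufv}}$ with the full-rank matrix $B(t[-1])$, and part (1) is essentially immediate from the existence results already in the paper.
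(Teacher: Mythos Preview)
Your argument for part (1) is correct and matches the paper's own proof exactly: exhibit a compatibly pointed element of degree $g$ (theta function in general, localized generic cluster character in the skew-symmetric case) and apply Proposition \ref{prop:bipointed_support}(1).

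For part (2), however, your direct linear-algebra approach has a genuine gap. You claim that after projecting the defining equation to $\Z^{I_{\ufv}}$ you can solve for $n$ ``using full rank of $B(t)$'' and later speak of ``the full-rank matrix $B(t[-1])$''. But the paper's standing full-rank hypothesis is on the $I\times I_{\ufv}$ matrix $\tB(t)$, \emph{not} on the principal $I_{\ufv}\times I_{\ufv}$ matrix $B(t)$. The principal part $B(t)$ is skew-symmetrizable and need not be invertible; indeed any skew-symmetric $B$ with $|I_{\ufv}|$ odd has determinant zero. So the projected equation $\pr_{I_{\ufv}}(\phi_{t[-1],t}g-\psi_{t[-1],t}g)=B(t[-1])\cdot(-\Perm_\sigma n)$ does not in general determine $n$, and your reduction to ``linear algebra over $\Z^{I_{\ufv}}$ with the full-rank matrix $B(t[-1])$'' fails. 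Your observation that the left-hand side depends only on $\pr_{I_{\ufv}}g$ and $B(t)$ is correct and useful, but it is not enough to conclude that the unique solution $n$ (which exists by the \emph{full} equation in $\Z^I$) is independent of the frozen data.

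The paper sidesteps this by an indirect argument: having realized $\suppDim g$ in (1) as the support dimension of the theta function $\theta^t_{t,g}$ (or of $\gen_g$), it simply observes that these elements have the form $x^g\cdot F((y_k)_{k\in I_{\ufv}})$ where the $F$-polynomial depends only on $\pr_{I_{\ufv}}g$ and $B(t)$ (this is a known property of theta functions and of generic cluster characters, coming ultimately from the separation-of-coefficients phenomenon). Hence the support dimension, which is read off from $F$, inherits the same dependence. If you want to salvage your direct approach, you would need an additional argument bridging the kernel of $B(t[-1])$; the cleanest fix is to follow the paper and invoke the $F$-polynomial property of the witnesses from (1).
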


\begin{proof}

(1) By Proposition \ref{prop:bipointed_support}, it suffices to find
a Laurent polynomial $z_{g}\in\LP(t)$ with degree $g$ and compatibly
pointed at seeds $t,t[-1]$. One can take $z_{g}$ to be the theta
function $\theta_{t,g}^{t}$ or the localized generic cluster character
$\gen_{g}$ in Section \ref{sec:Generic-bases-and} for skew-symmetric
$t$.

(2) We see in (1) that $\suppDim g$ can be realized as the support dimension of the corresponding theta function or the localized generic cluster character. (2) follows from the properties of such elements.
\end{proof}

\begin{Thm}\label{thm:correct_support_basis}

Consider the classical case $\kk=\Z$. Let there be given an injective-reachable seed $t$  subject to the full rank assumption and a collection
of bipointed elements $\mathcal{S}=\{s_{g}|g\in\Mc(t)\}$ of $\upClAlg$.
Then $\mathcal{S}$ is a basis of $\upClAlg$ whose elements $s_{g}$
are compatibly pointed at seeds $t,t[-1]$ if and only if $\suppDim s_{g}=\suppDim g$
for all $g$.

\end{Thm}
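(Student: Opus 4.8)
The plan is to deduce the statement directly from the structural results already established in Sections~\ref{sec:bidegrees_and_support} and~\ref{sec:Properties-of--decompositions}, since it is essentially a repackaging of Proposition~\ref{prop:bipointed_support} together with Theorem~\ref{thm:bipointed_bases}. First I would invoke Proposition~\ref{prop:all_support_defined}(1) to guarantee that $\suppDim g$ is a well-defined element of $\yCone^{\geq0}(t)$ for every $g\in\Mc(t)$, so that the equality $\suppDim s_g=\suppDim g$ even makes sense unconditionally. This is the only spot where something external enters, namely the existence of theta functions (or, in the skew-symmetric case, of localized generic cluster characters), which is what Proposition~\ref{prop:all_support_defined} relies on.

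For the forward direction, suppose $\mathcal S$ is a basis of $\upClAlg$ all of whose members $s_g$ are compatibly pointed at the seeds $t$ and $t[-1]$, with the indexing convention $\deg^t s_g=g$. Then Proposition~\ref{prop:bipointed_support}(1) applies verbatim to each $s_g$: it tells us that $g$ has a support dimension and that $s_g$ is bipointed with $\suppDim s_g=\suppDim g$. Hence the asserted equality holds for all $g$, and this direction is immediate.

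For the converse, assume each $s_g$ is bipointed with $\deg^t s_g=g$ and $\suppDim s_g=\suppDim g$. Since $\suppDim g$ is defined by Proposition~\ref{prop:all_support_defined}(1), Proposition~\ref{prop:bipointed_support}(2) applies and shows that each $s_g$ is compatibly pointed at $t$ and $t[-1]$. The collection $\{s_g\mid g\in\Mc(t)\}$ then satisfies exactly the hypotheses of Theorem~\ref{thm:bipointed_bases}(1) --- elements of $\upClAlg$, pointed at pairwise distinct degrees $g$, compatibly pointed at $t,t[-1]$ --- so it is a basis of $\upClAlg$ whose elements are, by construction, compatibly pointed at $t,t[-1]$.

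I do not expect a genuine obstacle here: the mathematical content has already been carried out. The only point requiring a little care is the bookkeeping that ``$s_g$ bipointed with $\suppDim s_g=\suppDim g$'' and ``$s_g$ compatibly pointed at $t,t[-1]$ with $\deg^t s_g=g$'' are interchangeable hypotheses, which is precisely the two halves of Proposition~\ref{prop:bipointed_support} once Proposition~\ref{prop:all_support_defined} is in hand. One should also record that the indexing $\deg^t s_g=g$ is implicit in writing $\mathcal S=\{s_g\mid g\in\Mc(t)\}$ for a family of bipointed elements, so that distinctness of leading degrees (hence linear independence via Lemma~\ref{lem:triangular_decomposition}, already folded into Theorem~\ref{thm:bipointed_bases}(1)) comes for free.
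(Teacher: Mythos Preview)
Your proposal is correct and follows essentially the same approach as the paper: the paper's proof is the one-line remark that the claim follows from Theorem~\ref{thm:bipointed_bases} and Proposition~\ref{prop:bipointed_support}, and you have simply unpacked that citation into its two directions, with the helpful addition of invoking Proposition~\ref{prop:all_support_defined}(1) to ensure $\suppDim g$ is defined.
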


\begin{proof}

The claim follows from Theorem \ref{thm:bipointed_bases} and Proposition
\ref{prop:bipointed_support}.

\end{proof}

\subsection{Generic bases and its analog\label{sec:Generic-bases-and}}

Let us investigate the generic basis and analogous bases constructed
from cluster characters. At this moment, generic quantum cluster characters are not defined in general. Correspondingly, we have to restrict to the classical case $\kk=\Z$.

\begin{Def}\label{def:factor_through_frozen}

Given a seed $t$ and a subset $\Theta$ of $\Mc(t)$. A set of
pointed formal Laurent series $\mathcal{Z}=\{z_{g}|g\in\Theta\}$,
where $\deg^{t}z_{g}=g$, is said to factor through the frozen variables
$x_{j}$, $j\in I_{\fv}$, if for any $g,g'\in\Theta$ such that $g'=g+f_{j}$,
we have $z_{g'}=z_{g}\cdot x_{j}$. 

In this case, we define the localization of $\mathcal{Z}$ to be the
set $\mathcal{Z}[x_{j}^{-1}]_{j\in I_{\fv}}=\{z_{g}\cdot x^{m}|g\in\Theta,m\in\Z^{I_{\fv}}\}$.

\end{Def}

Let there be given an injective-reachable skew-symmetric seed $t$.
Take $T$ to be the corresponding cluster tilting object and identify
$K_{0}(\add T)\simeq\Mc(t)\simeq\Z^{I}$. For any $g\in\Z^{I}$,
there exists some $m\in\Z^{I_{\fv}}$ depending on $g$, such that $\gen_{g+m}$
is the \emph{generic cluster character} in \cite{plamondon2013generic} (see Section \ref{sec:cluster_category}).
Define the localized generic cluster character $\gen_{g}$ to be the
localization $\gen_{g+m}\cdot x(t)^{-m}$.

\begin{Thm}[Theorem \ref{thm:generic_basis_existence}]

Let there be given an injective-reachable skew-symmetric seed $t$.
Then the set of the localized generic cluster characters $\{\gen_{g}|g\in\Mc(t)\}$
is a basis of $\upClAlg$, called the generic basis.

\end{Thm}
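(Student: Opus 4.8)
The plan is to reduce the statement to the general basis criterion Theorem \ref{thm:tropical_finite_decomposition}. That result says that a collection $\cS=\{s_g\mid g\in\Mc(t)\}$ which is compatibly pointed at the seeds appearing along the mutation sequence $\overleftarrow{\mu}$ from $t[-1]$ to $t$ is automatically a basis of $\upClAlg(t)$; so it suffices to take $\cS=\{\gen_g\mid g\in\Mc(t)\}$ and verify that each $\gen_g$ lies in $\upClAlg$, is a Laurent polynomial at every seed, and is compatibly pointed, i.e.\ satisfies $\deg^{t'}\gen_g=\phi_{t',t}(\deg^t\gen_g)$ for all $t'$ reachable from $t$. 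In fact I would check compatibility for all seeds of $\Delta^+$, which is more than needed.

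First I would record pointedness and well-definedness. For $g\in\Mc(t)\simeq\Z^I$ choose $m\in\N^{I_{\fv}}$ so that $\gen_{g+m}=CC^t(\mathrm{cone}\,f)$ is the generic cluster character of Section \ref{sec:cluster_category}, with $\Ind^T(\mathrm{cone}\,f)=g+m$. By the Caldero--Chapoton formula (Definition \ref{def:CC_formula}) the summand indexed by $n=0$ contributes $\chi(\Gr_0)=1$, so $\gen_{g+m}$ is pointed at degree $g+m$ in $\LP(t)$; consequently the localization $\gen_g:=\gen_{g+m}\cdot x(t)^{-m}$ is pointed at $g$. Since forming $CC$ is multiplicative under adding direct summands in $\add T_{\fv}$, the generic cluster characters factor through the frozen variables, so $\gen_g$ is independent of the choice of $m$ and $\cS$ is genuinely indexed by all of $\Mc(t)$.

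Next I would check membership and compatibility. By the cluster character mutation formula \cite{plamondon2013generic}, $\overleftarrow{\mu}^*\gen_{g+m}^t=\gen_{\phi_{t',t}g+m}^{t'}$ for any $t'=\overleftarrow{\mu}\,t$; in particular $\gen_{g+m}^t$ expands to a Laurent polynomial in every seed, hence lies in $\cap_{t'\in\Delta^+}\LP(t')\subset\upClAlg$, and so does $\gen_g=\gen_{g+m}^t\cdot x(t)^{-m}$. Because the frozen variables are common to all seeds, $\overleftarrow{\mu}^*x(t)^m=x(t')^m$, so dividing the mutation formula by $x(t)^m$ gives $\overleftarrow{\mu}^*\gen_g^t=\gen_{\phi_{t',t}g}^{t'}$, which by the previous paragraph is pointed at $\phi_{t',t}g$ in $\LP(t')$. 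Iterating along an arbitrary mutation sequence shows that $\cS$ is compatibly pointed at every seed of $\Delta^+$, in particular at the seeds along $\overleftarrow{\mu}$ from $t[-1]$ to $t$.

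Thus Theorem \ref{thm:tropical_finite_decomposition} applies and yields that $\cS=\{\gen_g\}$ is a basis of $\upClAlg(t)=\upClAlg$, which is the generic basis by definition. The one point requiring care is the interaction between the ``$+m$''-shifted mutation rule and the localized definition of $\gen_g$: one must check that the shift $m$ chosen at $t$ remains admissible at $t'$, or, equivalently, invoke the frozen-factoring property to see that $\gen_{\phi_{t',t}g}^{t'}$ does not depend on which admissible shift is used. Everything else is a direct appeal to tools already in place --- the Caldero--Chapoton formula for pointedness, the compatibility of generic cluster characters with mutation for the tropical behaviour, and Theorem \ref{thm:tropical_finite_decomposition} for turning that tropical behaviour into the basis property.
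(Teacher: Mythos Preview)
Your proposal is correct and follows essentially the same approach as the paper's direct proof: invoke Plamondon's result that generic cluster characters are compatibly pointed at all seeds, then apply Theorem~\ref{thm:tropical_finite_decomposition}. Your write-up is more detailed than the paper's (which dispatches the argument in two sentences), and you correctly flag the one point the paper leaves implicit, namely that the frozen shift $m$ is harmless under mutation because frozen variables are common to all seeds; the paper also offers an alternative indirect route via the theta basis and Theorem~\ref{thm:all_pointed_bases}, which you do not need.
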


\begin{proof}

Recall that the generic cluster characters are known to be compatibly
pointed in all seeds by by Plamondon \cite[Theorem 1.3]{plamondon2013generic}.
So do the localized generic cluster characters. Then, Theorem \ref{thm:tropical_finite_decomposition} provides a
direct proof for the statement.

Alternatively, as an indirect proof, we use the fact that the theta
basis exists (\cite{gross2018canonical}, Theorem \ref{thm:theta_basis})
and choose it to be the special basis in the main theorem (Theorem
\ref{thm:all_pointed_bases}). Then the collection of the generic
cluster characters is also a basis by the main theorem.

\end{proof}

Let us discuss analog of the generic basis, where the objects chosen
are not necessarily generic.

\begin{Lem}\label{lem:support_factor_through_frozen}

Given injective-reachable seeds $t=\overleftarrow{\mu}t[-1]$. Assume
that some $g\in\Mc(t)$ has a support dimension $\suppDim g$, then
for any $m\in\Z^{I_{\fv}}$, $g+m$ has the support dimension $\suppDim(g+m)=\suppDim g$.

\end{Lem}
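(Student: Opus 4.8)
The plan is to track how the frozen part of a degree vector behaves under the tropical transformation $\phi_{t[-1],t}$ and under its linearization $\psi_{t[-1],t}$, and then to observe that the defining equation for the support dimension in Definition \ref{def:support_dimension} is insensitive to translation by $\Z^{I_{\fv}}$.

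First I would record the elementary observation that for a single mutation $t' = \mu_k t$ with $k \in I_{\ufv}$, the formula in Definition \ref{def:tropical_transformation} gives $\phi_{t',t}(g + m) = \phi_{t',t}(g) + m$ for any $m \in \Z^{I_{\fv}} \subset \Mc(t)$. Indeed, the component of $g$ at the mutation vertex $k$ is unchanged when we add $m$ (since $k$ is unfrozen), so the quantities $[g_k]_+$ and $[-g_k]_+$ occurring in the formula are unchanged, and the only effect of adding $m$ is to add $m_i$ to the $i$-th coordinate for every $i \neq k$, i.e. to add $m$. Composing along the mutation sequence $\overleftarrow{\mu}$ from $t[-1]$ to $t$ then yields $\phi_{t[-1],t}(g+m) = \phi_{t[-1],t}(g) + m$; in particular, taking $g = 0$ gives $\phi_{t[-1],t}(f_j) = f_j$ for each $j \in I_{\fv}$.

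Next I would deduce the analogous statement for $\psi_{t[-1],t}$. Since $\psi_{t[-1],t}$ is by Definition \ref{def:deg_transform} the linear map sending $f_i$ to $\phi_{t[-1],t}(f_i)$, the previous step gives $\psi_{t[-1],t}(f_j) = f_j$ for $j \in I_{\fv}$, hence $\psi_{t[-1],t}(x + m) = \psi_{t[-1],t}(x) + m$ for all $x \in \Mc(t)$ and $m \in \Z^{I_{\fv}}$; using that $\psi_{t[-1],t}$ is bijective (Lemma \ref{lem:bijective_linear_map}), also $\psi_{t[-1],t}^{-1}(x + m) = \psi_{t[-1],t}^{-1}(x) + m$. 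Combining, for $\eta(g) := \psi_{t[-1],t}^{-1}\phi_{t[-1],t}(g)$ we obtain $\eta(g+m) = \eta(g) + m$.

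Finally, I would unwind Definition \ref{def:support_dimension}: by hypothesis there is $n = \suppDim g \in \yCone^{\geq 0}(t)$ with $\eta(g) = g + \tB(t)\cdot n$; adding $m$ to both sides and using the identity above gives $\eta(g+m) = (g+m) + \tB(t)\cdot n$ with the very same $n \in \yCone^{\geq 0}(t)$, so $g+m$ has a support dimension and $\suppDim(g+m) = n = \suppDim g$. There is no genuine obstacle here; the only point requiring care is the distinction between the piecewise-linear $\phi_{t[-1],t}$ and its linearization $\psi_{t[-1],t}$, but both act as a translation in the frozen directions for the reason indicated, so the argument goes through uniformly.
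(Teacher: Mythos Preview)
Your proof is correct and follows essentially the same route as the paper: both establish $\phi_{t[-1],t}(g+m)=\phi_{t[-1],t}(g)+m$ by checking a single mutation and composing, then use that $\psi_{t[-1],t}^{-1}$ fixes the frozen directions to conclude $\psi_{t[-1],t}^{-1}\phi_{t[-1],t}(g+m)-(g+m)=\psi_{t[-1],t}^{-1}\phi_{t[-1],t}(g)-g$. If anything, you are slightly more explicit than the paper in justifying $\psi_{t[-1],t}(f_j)=f_j$ for $j\in I_{\fv}$ via Definition~\ref{def:deg_transform}, whereas the paper leaves this implicit.
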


\begin{proof}

For any $k\in I_{\ufv}$, we have $\phi_{\mu_{k}t,t}(g+m)=\phi_{\mu_{k}t,t}(g)+\phi_{\mu_{k}t,t}(m)=\phi_{\mu_{k}t,t}(g)+m$.
Repeatedly applying tropical transformations along $\overleftarrow{\mu}^{-1}$
from $t$ to $t[-1]$, we obtain that $\phi_{t[-1],t}(g+m)=\phi_{t[-1],t}(g)+m$.
Because the map $\psi_{t[-1],t}^{-1}$ is linear, we obtain that 
\begin{align*}
\psi_{t[-1],t}^{-1}\phi_{t[-1],t}(g+m)-(g+m) & =\psi_{t[-1],t}^{-1}\phi_{t[-1],t}g+\psi_{t[-1],t}^{-1}m-g-m\\
 & =\psi_{t[-1],t}^{-1}\phi_{t[-1],t}g-g\\
 & =\tB(t)\cdot\suppDim g
\end{align*}

The claim follows from definition of support dimension.

\end{proof}

\begin{Prop}\label{prop:support_generic_character}

Given an injective-reachable skew-symmetric seed $t$. For any $g\in\Mc(t)$,
it has the support dimension given by that of the localized generic
cluster character: $\suppDim g=\suppDim\gen_{g}$.

\end{Prop}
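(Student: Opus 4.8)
The plan is to invoke Proposition \ref{prop:bipointed_support}, which tells us that the support dimension attached to a degree $g$ (in the sense of Definition \ref{def:support_dimension}) is realized by the support dimension $\suppDim z$ of any Laurent polynomial $z \in \LP(t)$ that is pointed at $g$ and compatibly pointed at the pair of seeds $t, t[-1]$. So the whole statement reduces to exhibiting the localized generic cluster character $\gen_g$ as such an element. First I would recall from Section \ref{sec:cluster_category} that for a skew-symmetric seed, Plamondon's work \cite{plamondon2013generic} guarantees that the generic cluster characters $\gen_{g+m}$, for a suitable $m \in \N^{I_{\fv}}$ depending on $g$, transform under mutation via $\overleftarrow{\mu}^* \gen_{g+m}^t = \gen_{\phi_{t',t}g + m}^{t'}$; in particular they are compatibly pointed at all seeds in $\Delta^+$, hence a fortiori at $t$ and $t[-1]$. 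The localized version $\gen_g = \gen_{g+m} \cdot x(t)^{-m}$ then has $\deg^t \gen_g = g$ and is still compatibly pointed at $t, t[-1]$ (localization by frozen variables does not disturb compatibility of pointedness, since the tropical transformations act by the identity on the frozen coordinates).

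Next I would apply Proposition \ref{prop:bipointed_support}(1) to $\gen_g$: since $\gen_g$ is pointed at $g$ and compatibly pointed at $t, t[-1]$, the degree $g$ has a well-defined support dimension $\suppDim g$, and moreover $\gen_g$ is bipointed with $\suppDim \gen_g = \suppDim g$. This is exactly the claimed equality. (Note that the well-definedness of $\suppDim g$ here is also the content of Proposition \ref{prop:all_support_defined}(1), whose proof runs along the same lines; so one may either cite that proposition or reprove the needed instance inline.)

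The argument is essentially a bookkeeping matter once Proposition \ref{prop:bipointed_support} and Plamondon's compatibility result are in hand; there is no real obstacle. The one point that warrants a sentence of care is the passage from $\gen_{g+m}$ to the localized $\gen_g$: one must check that $\gen_{g+m}$ being compatibly pointed at $t, t[-1]$ with degree $g+m$ forces $\gen_g = \gen_{g+m}\cdot x(t)^{-m}$ to be compatibly pointed with degree $g$. This follows because multiplication by $x(t)^{-m}$, $m \in \Z^{I_{\fv}}$, simply shifts all Laurent degrees by $-m$ uniformly, and the tropical transformations $\phi_{t',t}$ commute with such shifts on the frozen part (as already observed in the proof of Lemma \ref{lem:support_factor_through_frozen}); so the degree of $\gen_g$ in each seed $t'$ is $\phi_{t',t}(g+m) - m = \phi_{t',t}(g)$ restricted appropriately, and pointedness (coefficient $1$ on the leading monomial) is preserved. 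Combining this with Proposition \ref{prop:bipointed_support}(1) yields $\suppDim g = \suppDim \gen_g$.
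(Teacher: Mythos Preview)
Your proposal is correct and follows essentially the same approach as the paper: both use Plamondon's compatibility result for generic cluster characters together with Proposition \ref{prop:bipointed_support}, and handle the frozen shift via the observation underlying Lemma \ref{lem:support_factor_through_frozen}. The only cosmetic difference is the order of operations---the paper applies Proposition \ref{prop:bipointed_support} to $\gen_{g+m}$ and then invokes Lemma \ref{lem:support_factor_through_frozen} to pass to general $g$, whereas you first localize to $\gen_g$, check compatibility is preserved, and then apply the proposition.
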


\begin{proof}

It follows from \cite[Theorem 1.3]{plamondon2013generic} that generic
cluster characters $\gen_{g+m}$, $g\in\Z^{I_{\ufv}}$, $m\in\Z^{I_{\fv}}$,
are compatibly pointed at $t$ and $t[-1]$. This implies the claim
for such $g+m$ by Proposition \ref{prop:bipointed_support}. Finally,
the claim holds for all $g\in\Mc(t)$ by Lemma \ref{lem:support_factor_through_frozen}. 

\end{proof}

\begin{Thm}\label{thm:CC_basis_existence}

Let there be given an injective-reachable skew-symmetric seed $t$.
Denote $\Theta=\{\Ind^{T}M|M\in^{\bot}(\Sigma T_{\fv})\}$ where $T$ is
the cluster tilting object corresponding to $t$. Let $\{M_{g}|g\in\Theta\}$
denote the set of any given objects in $^{\bot}(\Sigma T_{\fv})$ such that
$\Ind^{T}M_{g}=g$ and $\dim FM_{g}=\suppDim(g)$. Then, the set of
localized cluster characters $\{CC(M_{g})|g\in\Theta\}[x_{j}^{-1}]_{j\in I_{\fv}}$
is a basis of the upper cluster algebra $\upClAlg$.

\end{Thm}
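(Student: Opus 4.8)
The plan is to reduce Theorem~\ref{thm:CC_basis_existence} to Theorem~\ref{thm:correct_support_basis} by verifying that the localized cluster characters $CC(M_g)$, $g\in\Theta$, together with their frozen localizations, form a collection $\mathcal{S}=\{s_g\mid g\in\Mc(t)\}$ of bipointed elements of $\upClAlg$ with $\suppDim s_g=\suppDim g$ for all $g$. First I would recall that by the Caldero--Chapoton formula (Definition~\ref{def:CC_formula}) the element $CC^t(M_g)$ lies in $\LP(t)$ and is pointed at degree $\Ind^T M_g=g$, with the sum ranging over submodule Grassmannians $\Gr_n FM_g$ of the module $FM_g$. The support of the Laurent polynomial $CC^t(M_g)$ is governed by the dimension vector $\dim FM_g$: the extremal submodules (the zero submodule and the whole module) contribute the degree and codegree, so $CC^t(M_g)$ is bipointed with bidegree $(g+\tB(t)\cdot\dim FM_g,\ g)$, whence $\suppDim^t CC(M_g)=\dim FM_g$. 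By hypothesis $\dim FM_g=\suppDim(g)$, so the ``correct support dimension'' condition of Theorem~\ref{thm:correct_support_basis} holds once we know these elements actually lie in the upper cluster algebra.

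Next I would address membership in $\upClAlg$. The cleanest route is to compare with the generic cluster character: by Proposition~\ref{prop:support_generic_character} we have $\suppDim g=\suppDim\gen_g$, and the localized generic cluster character $\gen_g$ is known to be an element of $\upClAlg$ compatibly pointed at all seeds (Plamondon~\cite{plamondon2013generic}, as used in Theorem~\ref{thm:generic_basis_existence}). Since $CC(M_g)$ and $\gen_g$ are both pointed at $g$ with the same support dimension, Proposition~\ref{prop:bipointed_support} shows $CC(M_g)$ is compatibly pointed at $t,t[-1]$; to conclude it lies in $\upClAlg$ one invokes the fact that a cluster character $CC^t(M)$ for $M\in{}^{\bot}(\Sigma T_{\fv})$ is always a Laurent polynomial in every seed (this is a standard property of the Caldero--Chapoton map on $2$-Calabi--Yau categories, and also follows from expanding $M$ via triangles involving the cluster tilting objects $T(t')$). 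Then one extends the indexing set from $\Theta$ to all of $\Mc(t)$ by localizing at the frozen variables: for $g=g_0+m$ with $g_0\in\Theta$ and $m\in\Z^{I_{\fv}}$, set $s_g:=CC(M_{g_0})\cdot x(t)^m$; by Lemma~\ref{lem:support_factor_through_frozen} this has $\suppDim s_g=\suppDim g$, and it is still compatibly pointed at $t,t[-1]$.

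Having assembled $\mathcal{S}=\{s_g\mid g\in\Mc(t)\}$ as a collection of bipointed elements of $\upClAlg$ with $\suppDim s_g=\suppDim g$ for all $g\in\Mc(t)$, Theorem~\ref{thm:correct_support_basis} applies directly and gives that $\mathcal{S}$ is a basis of $\upClAlg$ whose elements are compatibly pointed at $t,t[-1]$. Restricting back to the original indexing, the set $\{CC(M_g)\mid g\in\Theta\}[x_j^{-1}]_{j\in I_{\fv}}$ is exactly this basis, which is the assertion.

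The main obstacle, I expect, is the bipointedness-and-support computation for $CC^t(M_g)$, namely showing carefully that the Laurent polynomial produced by the Caldero--Chapoton formula is copointed precisely at codegree $g+\tB(t)\cdot\dim FM_g$, so that its support dimension is exactly $\dim FM_g$ rather than something smaller. This requires noting that the submodule Grassmannian $\Gr_n FM_g$ is nonempty (indeed a point) for $n=\dim FM_g$ and that the term $x(t)^{\tB(t)\cdot n}$ for this maximal $n$ is the unique $\prec_t$-minimal Laurent monomial with coefficient $\chi(\Gr_{\dim FM_g}FM_g)=1$. The secondary subtlety is confirming $CC(M_g)\in\upClAlg$ without circularity; using the comparison with $\gen_g$ via Proposition~\ref{prop:support_generic_character} sidesteps any need to re-derive mutation compatibility of arbitrary (non-generic) cluster characters.
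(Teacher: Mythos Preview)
Your proposal is correct and follows the same route as the paper: establish that $CC(M_g)$ is bipointed with $\suppDim^t CC(M_g)=\dim FM_g=\suppDim g$ directly from the Caldero--Chapoton formula, extend from $\Theta$ to all of $\Mc(t)$ by localizing at frozen variables (the paper packages this step as Lemma~\ref{lem:localized_index}), invoke Proposition~\ref{prop:support_generic_character}, and apply Theorem~\ref{thm:correct_support_basis}.

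One minor correction to your discussion: in the final paragraph you suggest that the comparison with $\gen_g$ via Proposition~\ref{prop:support_generic_character} sidesteps the need to verify $CC(M_g)\in\upClAlg$, but that proposition only identifies $\suppDim g$ and says nothing about membership of a non-generic $CC(M_g)$ in $\upClAlg$. Having the same degree and support dimension as $\gen_g$ does not by itself force $CC(M_g)$ to be a Laurent polynomial in other seeds. You still need the standard fact (which you correctly invoked earlier) that cluster characters of objects in ${}^{\bot}(\Sigma T_{\fv})$ from a $2$-Calabi--Yau cluster category are Laurent polynomials with respect to every reachable cluster tilting object; the paper takes this as understood.
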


\begin{proof}

By the following Lemma \ref{lem:localized_index}, for any $g\in\Mc(t)$,
there is a localized cluster character $CC(M_{g+m})\cdot x^{-m}$
pointed at $g$ such that $g+m\in\Theta$. The claim follows from
Proposition \ref{prop:support_generic_character} and Theorem \ref{thm:correct_support_basis}.

\end{proof}

\begin{Lem}\label{lem:localized_index}

For any $g\in\Mc(t)$, there exists some $m\in\N^{I_{\fv}}$ such
that $g+m=\Ind^{T}X$ for some $X\in^{\bot}(\Sigma T_{\fv})$

\end{Lem}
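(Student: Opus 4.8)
The plan is to exhibit, for any given $g\in\Mc(t)$, an explicit object $X\in{}^{\bot}(\Sigma T_{\fv})$ whose index differs from $g$ only by a vector supported on the frozen part $I_{\fv}$ with non-negative entries. The starting point is the standard presentation of objects in the cluster category: for a generic morphism $f\in\Hom(T^{[-g]_{+}},T^{[g]_{+}})$ one would like $X=\mathrm{cone}\,f$ to satisfy $\Ind^{T}X=[T^{[g]_{+}}]-[T^{[-g]_{+}}]=g$. The issue, exactly as in the construction of generic cluster characters recalled in Section \ref{sec:cluster_category}, is that $\mathrm{cone}\,f$ need not lie in ${}^{\bot}(\Sigma T_{\fv})$ and may have direct summands in $\add T_{\fv}$. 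The cited result of Plamondon \cite{plamondon2013generic} repairs this: for any $g\in\Z^{I}$ there is some $m\in\N^{I_{\fv}}$, depending on $g$, such that for a generic $f\in\Hom(T^{[-g]_{+}},T^{[g]_{+}+m})$ the cone belongs to ${}^{\bot}(\Sigma T_{\fv})$ and has no summand in $\add T_{\fv}$; this cone is what defines $\gen_{g+m}=CC(\mathrm{cone}\,f)$.

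First I would simply take $X:=\mathrm{cone}\,f$ for such a generic $f\in\Hom(T^{[-g]_{+}},T^{[g]_{+}+m})$, with $m\in\N^{I_{\fv}}$ supplied by Plamondon's theorem. Then by construction $X\in{}^{\bot}(\Sigma T_{\fv})$. Second, I would compute its index from the defining triangle
\[
T^{[-g]_{+}}\to T^{[g]_{+}+m}\to X\to\Sigma T^{[-g]_{+}},
\]
which gives $\Ind^{T}X=[T^{[g]_{+}+m}]-[T^{[-g]_{+}}]$. Under the identification of $K_{0}(\add T)$ with $\Mc(t)\simeq\Z^{I}$ sending $[T_{i}]$ to $f_{i}$, this reads $\Ind^{T}X=([g]_{+}+m)-[-g]_{+}=g+m$, since $[g]_{+}-[-g]_{+}=g$ coordinatewise and $m$ is supported on $I_{\fv}$. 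Therefore $g+m=\Ind^{T}X$ with $X\in{}^{\bot}(\Sigma T_{\fv})$ and $m\in\N^{I_{\fv}}$, which is exactly the claim.

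There is essentially no obstacle here beyond correctly quoting \cite{plamondon2013generic}: the only point requiring a little care is that the $m$ produced there a priori guarantees $\mathrm{cone}\,f\in{}^{\bot}(\Sigma T_{\fv})$, and one should note that even if $\mathrm{cone}\,f$ happened to acquire summands in $\add T_{\fv}$ this would only change the index by a further vector in $\N^{I_{\fv}}$ (indices of objects of $\add T_{\fv}$ being non-negative combinations of $f_{j}$, $j\in I_{\fv}$), so the conclusion $g+m'=\Ind^{T}X'$ with $m'\in\N^{I_{\fv}}$ still holds after absorbing that contribution into $m'$ and replacing $X$ by the complement of the $T_{\fv}$-summands. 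This is precisely the mechanism by which Theorem \ref{thm:CC_basis_existence} then applies, localizing away the frozen variables to pass from $\Theta$-indexed characters to an $\Mc(t)$-indexed basis.
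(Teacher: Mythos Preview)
Your proof is correct, but it takes a different route from the paper. You invoke Plamondon's genericity result (recalled in Section~\ref{sec:cluster_category}) to produce $X=\mathrm{cone}\,f$ directly, and then read off $\Ind^{T}X=g+m$ from the defining triangle. The paper instead gives an explicit elementary construction: it sets
\[
Y=\Bigl(\bigoplus_{k\in I_{\ufv}}T_{k}^{[g_{k}]_{+}}\Bigr)\oplus\Bigl(\bigoplus_{k\in I_{\ufv}}I_{k}^{[-g_{k}]_{+}}\Bigr),
\]
using the objects $I_{k}\in{}^{\bot}(\Sigma T_{\fv})$ with $\pr_{I_{\ufv}}\Ind^{T}I_{k}=-f_{k}$, and then adjusts the frozen part by adding summands $T_{j}^{[-m'_{j}]_{+}}$. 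No genericity is needed; the object is written down by hand. Your approach is shorter because it is essentially a citation, while the paper's is more self-contained and makes visible exactly which objects realize each index.

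One small point to tidy up: the paper states Plamondon's result for $g\in\Z^{I_{\ufv}}$, not for $g\in\Z^{I}$ as you write. So strictly speaking you should first apply it to $\pr_{I_{\ufv}}g$, obtaining $X'\in{}^{\bot}(\Sigma T_{\fv})$ with $\Ind^{T}X'=\pr_{I_{\ufv}}g+m'$, and then take $X=X'\oplus\bigl(\bigoplus_{j\in I_{\fv}}T_{j}^{[g_{j}]_{+}}\bigr)$ and $m=m'+[-g_{\fv}]_{+}$. This is the same trivial frozen-coordinate bookkeeping the paper does at the end of its proof.
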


\begin{proof}

Consider the object $Y=(\oplus_{k\in I_{\ufv}}T_{k}^{[g_{k}]_{+}})\oplus(\oplus_{k\in I_{\ufv}}I_{k}^{[-g_{k}]_{+}})$.
It follows that $\Ind^{T}Y=\pr_{I_{\ufv}}g+m'$ for some $m'\in\Z^{I_{\fv}}$.
Then we can take $m=([m'_{j}]_{+})_{j\in I_{\fv}}$ and $X=Y\oplus(\oplus_{j\in I_{\fv}}T_{j}^{[-m'_{j}]_{+}})$.

\end{proof}

By \cite[Theorem 1.18]{BerensteinFominZelevinsky05}, the cluster
algebra $\clAlg$ agrees with the upper cluster algebra $\upClAlg$
when the initial quiver $Q(t_{0})$ is acyclic. The following result
shows that a basis consisting of cluster characters can be constructed
quite easy in this case.

\begin{Cor}

Let there be given a skew-symmetric seed $t$ and the corresponding
principal quiver $Q(t)$ is acyclic. Let $T$ denote the corresponding
cluster tilting object.

(1) Denote $\Theta=\{\Ind^{T}M|M\in^{\bot}(\Sigma T_{\fv})\}$. Then
for any choice of objects $M_{g}\in^{\bot}(\Sigma T_{\fv})$ with
$\Ind^{T}M_{g}=g$, the set of localized cluster characters $\{CC(M_{g})|g\in\Theta\}[x_{j}^{-1}]_{j\in I_{\fv}}$
is a basis of the cluster algebra $\clAlg=\upClAlg$.

(2) Choose a pair $(V_{d},m)$ for each dimension vector $d\in\N^{I_{\ufv}}$
and $m\in\N^{I}$ such that $V_{d}$ is a $d$-dimensional $\C Q(t)$-module
and $\supp m\cap\supp d=\emptyset$. Then the set of localized cluster
characters $\{x^{m}CC(V_{d})|\forall(V_{d,}m)\}[x_{j}^{-1}]_{j\in I_{\fv}}$
is a basis of the cluster algebra $\clAlg=\upClAlg$.

\end{Cor}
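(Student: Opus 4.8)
The plan is to deduce both parts from Theorem \ref{thm:CC_basis_existence}. Since $Q(t)$ is acyclic we may take the potential $W=0$, so $J_{(Q(t),W)}=\C Q(t)$ is hereditary and $\cC_{(Q(t),W)}$ is the classical cluster category; by \cite[Theorem 1.18]{BerensteinFominZelevinsky05} we also have $\clAlg=\upClAlg$, so it suffices to produce bases of $\upClAlg$.

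For (2): given a dimension vector $d$ and $m\in\N^{I}$ with $\supp m\cap\supp d=\emptyset$, set $M:=V_{d}\oplus\bigoplus_{k\in I}T_{k}^{m_{k}}$, where $V_{d}$ is viewed as a coefficient-free object of ${}^{\bot}(\Sigma T_{\fv})$. Then $M\in{}^{\bot}(\Sigma T_{\fv})$ since $\Hom(T_{k},\Sigma T_{\fv})=0$, and $FM\cong FV_{d}$ because $FT_{k}=\Hom(T,\Sigma T_{k})=0$; moreover $CC(M)=x^{m}\,CC(V_{d})$ by multiplicativity of $CC$ together with $CC(T_{k})=x_{k}$, and this element is pointed at $\Ind^{T}M=\Ind^{T}V_{d}+m$. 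Thus it is enough to check two things: first, that as $(d,m)$ ranges over the index set the degrees $\Ind^{T}V_{d}+m$ exhaust the set $\Theta$ modulo $\Z^{I_{\fv}}$; second, that $\dim FV_{d}=\suppDim(\Ind^{T}V_{d}+m)$. Both are statements about the hereditary cluster category: every object decomposes as a $\C Q(t)$-module direct-summed with shifted summands $\Sigma\uT_{k}$ ($k\in I_{\ufv}$), and since the Cartan matrix of $\C Q(t)$ is upper unitriangular for a topological ordering of $Q(t)$, the module indices together with the corrections coming from the shifted summands tile the index lattice; the condition $\supp m\cap\supp d=\emptyset$ selects one representative per class, matching the ``reduced'' object underlying the generic cluster character $\gen_{g}$ of \cite{plamondon2013generic}. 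The support equality then follows by comparing $M$ with the corresponding generic object (which has the same index) using the Calabi--Yau reduction of Section \ref{sec:cluster_category} and Proposition \ref{prop:support_generic_character}. Granting this, Theorem \ref{thm:CC_basis_existence} gives that $\{x^{m}CC(V_{d})\}[x_{j}^{-1}]_{j\in I_{\fv}}$ is a basis of $\clAlg=\upClAlg$.

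Part (1) is proved the same way: for arbitrary $M_{g}\in{}^{\bot}(\Sigma T_{\fv})$ with $\Ind^{T}M_{g}=g$, one reduces via Calabi--Yau reduction to $\cC_{(Q(t),W)}$, decomposes $\uM_{g}$ into a module and shifted cluster tilting summands, and checks that in the hereditary case the dimension vector of $F\uM_{g}$ is determined by $\Ind^{\uT}\uM_{g}$ and equals $\suppDim$ of it --- again by comparison with the generic object of the same index and Proposition \ref{prop:support_generic_character}. Theorem \ref{thm:CC_basis_existence} then yields that $\{CC(M_{g})\mid g\in\Theta\}[x_{j}^{-1}]_{j\in I_{\fv}}$ is a basis of $\clAlg=\upClAlg$.

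The crux --- and the step I expect to be most delicate --- is the hereditary-case claim that $\dim F\uM$ is a function of $\Ind^{\uT}\uM$ alone and coincides with $\suppDim$ of that index. Establishing it cleanly requires unwinding Definition \ref{def:support_dimension} (the description of $\suppDim$ via $\phi_{t[-1],t}$, $\psi_{t[-1],t}$ and the Cartan matrix) and matching it with the module-theoretic computation of $F$ on objects of $\cC_{(Q(t),W)}$; alternatively one can derive it from Plamondon's compatibility theorem together with the upper semicontinuity of $\dim F$ along the families of objects of a fixed index.
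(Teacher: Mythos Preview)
Your overall strategy is the paper's: reduce to Theorem \ref{thm:CC_basis_existence} by showing that, for \emph{every} object $M\in{}^{\bot}(\Sigma T_{\fv})$ with $\Ind^{T}M=g$, one has $\dim FM=\suppDim g$. You correctly identify this as the crux, and you correctly use Calabi--Yau reduction and Proposition \ref{prop:support_generic_character} to reduce it to the claim that $\dim F\uM$ depends only on $\Ind^{\uT}\uM$. But you do not prove this claim; the two routes you suggest are either vague or insufficient (upper semicontinuity of $\dim F$ only tells you the generic value is minimal, not that all values agree).

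The paper's missing ingredient is a one-line use of heredity. For any $\uM\in\cC_{(Q,W)}$ take an $\add\uT$-approximation $\uT^{(1)}\to\uT^{(0)}\to\uM\to\Sigma\uT^{(1)}$ and apply $F=\Hom(\uT,\Sigma(-))$. The long exact sequence begins
\[
0\to F\uM\to F\Sigma\uT^{(1)}\to F\Sigma\uT^{(0)}\to\cdots,
\]
and since $Q$ is acyclic, $W=0$ and $J_{(Q,W)}=\C Q$ is hereditary, so the injective resolution terminates and this is actually a short exact sequence $0\to F\uM\to F\Sigma\uT^{(1)}\to F\Sigma\uT^{(0)}\to 0$. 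Hence $\dim F\uM$ is determined by $[\uT^{(0)}]-[\uT^{(1)}]=\Ind^{\uT}\uM$, and comparing with the generic object of the same index (Proposition \ref{prop:support_generic_character}) gives $\dim FM=\suppDim g$. This proves (1). For (2), rather than your tiling argument, the paper simply reads off from the short exact sequence that $d=\dim F\uM=-R\cdot\pr_{I_{\ufv}}g$ where $R$ is the matrix of injective dimension vectors; acyclicity makes $R$ unitriangular, hence invertible, so the parametrization by $(d,m)$ with $\supp m\cap\supp d=\emptyset$ is a relabeling of the parametrization in (1), and (2) follows from (1).
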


\begin{proof}

(1) Notice that $^{\bot}(\Sigma T_{\fv})$ is a full subcategory of
$\cC_{(\tQ,\tW)}$ and all morphisms from $M\in^{\bot}(\Sigma T_{\fv})$
to $\Sigma T_{k}$ do not factor through $T_{\fv}$. We obtain that,
for any $k\in I_{\ufv}$, 
\begin{align*}
\Hom_{\cC_{(\tQ,\tW)}}(M,\Sigma T_{k}) & =\Hom_{^{\bot}(\Sigma T_{\fv})/(T_{\fv})}(M,\Sigma T_{k})\\
 & =\Hom_{\cC_{(Q,W)}}(\underline{M},\Sigma\underline{T}_{k}).
\end{align*}
Therefore, the support dimension of $CC(M)$ equals that of $CC(\underline{M})$.

Let us work in $\cC_{(Q,W)}$. Any object $\underline{M}$ has an
$\add\uT$-approximation $\underline{T}^{(1)}\rightarrow\underline{T}^{(0)}\rightarrow\uM$.
By applying the functor $F=\Hom(\underline{T},\Sigma(\ ))$, we obtain
a long exact sequence

\begin{align*}
0 & \rightarrow F\underline{M}\rightarrow F\Sigma\underline{T}^{(1)}\rightarrow F\Sigma\underline{T}^{(0)}\rightarrow\cdots.
\end{align*}
Notice that $\Sigma\underline{T}^{(1)},\Sigma\underline{T}^{(0)}$
are injective modules of the Jacobian algebra $J_{(Q,W)}$. Because
$Q$ is acyclic, we have $W=0$ and $J_{(Q,W)}$ agrees with the hereditary
path algebra $\C Q$. As a consequence, we obtain a short exact sequence

\begin{align*}
0 & \rightarrow F\underline{M}\rightarrow F\Sigma\underline{T}^{(1)}\rightarrow F\Sigma\underline{T}^{(0)}\rightarrow0.
\end{align*}

It turns out that $\suppDim CC(\underline{M})=\dim F\underline{M}$
only depends on the index $\Ind^{\underline{T}}\underline{M}$.

Therefore, for any $M\in^{\bot}(\Sigma T_{\fv})$, $\dim FM=\dim FM_{g}=\suppDim g$
where $M_{g}$ is an generic object of index the $\Ind^{T}M$. The
claim follows from Theorem \ref{thm:CC_basis_existence}.

(2) In the proof for (1), set $V_{d}=F\underline{M}$ and $d=\dim V_{d}$.
Let $R$ denote the matrix whose column vectors are the dimension
vectors of the injectives $F(\Sigma T_{k})$, $k\in I_{\ufv}$. Then
$d=-R\cdot\pr_{I_{\ufv}}g$. Since $Q$ is acyclic, $R$ is a unitriangular
matrix after relabelling the vertices. In particular, $R$ is invertible.
We can then deduce (2) from (1).

\end{proof}

\section{Related topics and discussion\label{sec:Other-applications-and}}

As before, we assume that the seeds satisfy the full rank assumption in the following discussion.

\subsection{Deformation factors\label{sec:Deformation-factors}}

\begin{Def}

The subset $\tropSet_{\prec_{\Delta^{+}}[g]}$ is called the deformation
factor associated to $[g]$.

\end{Def}

We have seen in the main theorem (Theorem \ref{thm:all_pointed_bases})
that basis deformation are controlled by the deformation factors $\tropSet_{\prec_{\Delta^{+}}[g]}$
, $[g]\in\tropSet$. These factors are important for constructing the
bases. It is
therefore a natural question to understand them. One might want to
interpret these deformation factors in terms of homology in cluster
category, or representation theory (such as quiver representations
or Lie theory), or tropical geometry. 

As a first step, one might ask when the deformation factors are empty
set, i.e., one can not do a deformation. Recall that all bases in
construction share the localized cluster monomials by Lemma \ref{lem:compatible_at_g_vector}.
This immediately implies the following property.

\begin{Prop}

If $g\in\Mc(t)$ is the maximal $\prec_{t}$-degree of any localized
cluster monomial, then $\Mc(t)_{\prec_{\Delta^{+}}g}=\emptyset$.

\end{Prop}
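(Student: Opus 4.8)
The plan is to reduce the statement to a one-line contradiction inside the seed where the relevant cluster monomial is an honest monomial. By hypothesis there is a seed $t'=\overleftarrow{\mu}t$ and an exponent vector $d\in\N^{I_{\ufv}}\oplus\Z^{I_{\fv}}$ with $g=\deg^{t}\overleftarrow{\mu}^{*}x(t')^{d}$; write $M=\overleftarrow{\mu}^{*}x(t')^{d}\in\upClAlg$ for this localized cluster monomial, so that $\phi_{t',t}g=\deg^{t'}M=d$. First I would record the key numerical fact: viewed in $\LP(t')$, $M=x(t')^{d}$ is a Laurent monomial, hence bipointed with $\deg^{t'}M=\codeg^{t'}M=d$ and $\suppDim^{t'}M=0$. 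Since a localized cluster monomial is compatibly pointed at every seed (Theorem \ref{thm:CC-formula} together with Proposition \ref{prop:cluster_monomial_bipointed}), in particular $M$ is compatibly pointed at $t'$ and $t'[-1]$, so Proposition \ref{prop:bipointed_support} gives $\suppDim d=\suppDim^{t'}M=0$, i.e. $\psi_{t'[-1],t'}^{-1}\phi_{t'[-1],t'}d=d$.

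Next I would argue by contradiction: suppose $g'\in\Mc(t)_{\prec_{\Delta^{+}}g}$ and set $h=\phi_{t',t}g'\in\Mc(t')$. Since $\{t',t'[-1]\}\subseteq\Delta^{+}$, the hypothesis $g'\prec_{\Delta^{+}}g$ forces both $h\prec_{t'}d$ and $\phi_{t'[-1],t'}h\prec_{t'[-1]}\phi_{t'[-1],t'}d$. Applying Proposition \ref{prop:order_reverse} to the second inequality turns it into $\psi_{t'[-1],t'}^{-1}\phi_{t'[-1],t'}h\succ_{t'}\psi_{t'[-1],t'}^{-1}\phi_{t'[-1],t'}d=d$, using the computation of the previous paragraph. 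On the other hand, by Proposition \ref{prop:all_support_defined} the support dimension $\suppDim h$ is well defined, which by Definition \ref{def:support_dimension} is precisely the assertion $\psi_{t'[-1],t'}^{-1}\phi_{t'[-1],t'}h\preceq_{t'}h$. Chaining these gives $d\prec_{t'}\psi_{t'[-1],t'}^{-1}\phi_{t'[-1],t'}h\preceq_{t'}h\prec_{t'}d$, hence $d\prec_{t'}d$, contradicting the Finite Interval Lemma \ref{lem:finite_interval}. Therefore $\Mc(t)_{\prec_{\Delta^{+}}g}=\emptyset$.

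If a more structural phrasing is wanted, the same reasoning can be packaged through Proposition \ref{prop:inclusion_description}: since $\suppDim d=0$ the bidegree interval is the single point $\intv_{d}=\{d\}$, and no $h$ can satisfy $\intv_{h}\subsetneq\{d\}$ because $h\in\intv_{h}$; thus $\Mc(t')_{\prec_{\{t',t'[-1]\}}d}=\emptyset$, and transporting back along $\phi_{t',t}$, together with the fact that $\prec_{\Delta^{+}}$ refines $\prec_{\{t',t'[-1]\}}$, yields the claim. I do not expect any real obstacle here: the only points requiring care are citing that cluster monomials are compatibly pointed at all seeds and correctly identifying the support dimensions of $d$ and of $h$ at the relevant seeds, both of which are immediate from results already established. (Alternatively one can deduce this from Theorem \ref{thm:all_pointed_bases} and Lemma \ref{lem:compatible_at_g_vector}: a nonempty deformation factor at $[g]$ would produce two distinct bases each containing an element pointed at $[g]$, and by Lemma \ref{lem:compatible_at_g_vector} both such elements must equal the cluster monomial $M$ — a contradiction; but the direct argument above is shorter.)
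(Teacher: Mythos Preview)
Your proof is correct, and you even identify the paper's actual argument in your parenthetical alternative at the end. The paper's proof is literally one sentence: since by Lemma~\ref{lem:compatible_at_g_vector} every basis parametrized by tropical points must have the cluster monomial itself as its element at $[g]$, the deformation factor $\tropSet_{\prec_{\Delta^{+}}[g]}$ governing the choices in Theorem~\ref{thm:all_pointed_bases}(2) must be empty.

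Your main argument takes a genuinely different route: rather than invoking the full parametrization theorem, you work directly with the combinatorics of support dimensions at the seed $t'$ where the monomial lives. The observation $\suppDim d=0$ collapses the bidegree interval $\intv_{d}$ to a point, and then either a direct chain of inequalities via Propositions~\ref{prop:order_reverse} and~\ref{prop:all_support_defined}, or the inclusion description of Proposition~\ref{prop:inclusion_description}, forces $\Mc(t')_{\prec_{\{t',t'[-1]\}}d}=\emptyset$, which a fortiori gives $\Mc(t')_{\prec_{\Delta^{+}}d}=\emptyset$. This is more self-contained (it does not presuppose the existence of any basis or the parametrization of all bases) and in fact proves the slightly stronger statement that already the two-seed constraint $\prec_{\{t',t'[-1]\}}$ suffices. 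The paper's route, by contrast, is shorter once Theorem~\ref{thm:all_pointed_bases} is in hand and makes transparent the conceptual reason: rigidity of the basis element at a cluster-monomial degree.
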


This property is a supporting evidence for the following natural expectation.

\begin{Conj}\label{conj:rigid_deformation_factor}

Assume that $t$ is skew-symmetric. If a generic object $M_{g}$ for
some $g\in\Mc(t)$ in the cluster category is rigid, then $\Mc(t)_{\prec_{\Delta^{+}}g}=\emptyset$.

\end{Conj}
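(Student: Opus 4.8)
\textbf{Proof plan for Conjecture \ref{conj:rigid_deformation_factor}.}

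The statement to be established is: if $t$ is skew-symmetric and a generic object $M_g$ of index $g\in\Mc(t)$ in the cluster category is rigid, then $\Mc(t)_{\prec_{\Delta^+}g}=\emptyset$. The plan is to exploit the interplay between rigidity of $M_g$, the structure of the generic cluster character $\gen_g$ (which is a cluster monomial when $M_g$ is rigid), and Lemma \ref{lem:compatible_at_g_vector}, which says that an upper cluster algebra element sharing the tropical behaviour of a cluster monomial \emph{is} that cluster monomial. First I would recall that $\Mc(t)_{\prec_{\Delta^+}g}=\emptyset$ means: for every $[g']\neq[g]$ in $\tropSet$ there is some seed $t'\in\Delta^+$ in which $\phi_{t',t}g'\not\prec_{t'}\phi_{t',t}g$. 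Equivalently, by Theorem \ref{thm:all_pointed_bases}(2), the deformation factor of $g$ is empty precisely when $z_{[g]}$ cannot be altered by any lower-order correction, i.e. $\{s_{[g]}\}=\{z_{[g]}\}$ at the index $g$ across all admissible bases. So the target reduces to showing that a rigid generic object forces its index to be ``extremal'' in the strong sense of lying at a $\prec_{\Delta^+}$-maximal position.

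The key steps, in order. (1) Use the theory of \cite{plamondon2013generic}: when $M_g$ is rigid, $\gen_g$ equals a localized cluster monomial $x(t')^d$ for the seed $t'$ in which the rigid object becomes (a shift of) an almost-summand of the cluster-tilting object — more precisely, rigid objects of $^\bot(\Sigma T_\fv)$ without summands in $\add T_\fv$ correspond bijectively to cluster monomials, and $\gen_g=CC^t(M_g)$ is that cluster monomial. (2) By Proposition \ref{prop:support_generic_character}, $\suppDim g=\suppDim\gen_g$, and since $\gen_g$ is a cluster monomial in some seed, $\suppDim\gen_g$ is the support dimension of a cluster monomial, computable via $f$-vectors; for a cluster monomial viewed in its own seed the support dimension is $0$, which is the minimal possible value. (3) Combine this with the Inclusion Property (Proposition \ref{prop:inclusion_description}) and Proposition \ref{prop:decompose_bipointed}: if some $g'$ satisfied $g'\prec_{\Delta^+}g$, then in particular $g'\prec_{\{t',t'[-1]\}}g$ for the seed $t'$ where $g$ is the $g$-vector of a cluster monomial, giving $\intv_{g'}\subsetneq\intv_{g}$. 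But $\intv_g$ in that seed is a single point (support dimension $0$), so it has no proper nonempty sub-interval containing another valid bidegree interval — contradiction. (4) Since $\prec_{\Delta^+}$ is finer than $\prec_{\{t',t'[-1]\}}$, the containment $\tropSet_{\prec_{\Delta^+}[g]}\subseteq\tropSet_{\prec_{\{t',t'[-1]\}}[g]}=\emptyset$ yields the claim.

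The main obstacle I anticipate is Step (1): rigorously identifying a rigid generic object with a cluster monomial requires care about frozen summands, the reachability hypothesis, and whether ``rigid'' is meant in $\cC_{(\tQ,\tW)}$ or in the reduction $\cC_{(Q,W)}$ — one must check that rigidity in the coefficient-free category still forces the index $g$ to be a $g$-vector of an actual cluster. If the rigid object is \emph{not} reachable (not in the mutation class of a summand of a cluster-tilting object), then $\gen_g$ need not be a cluster monomial and the argument must instead show directly that a rigid object still has support dimension $0$ in \emph{some} seed — this is essentially the statement that rigid objects are ``maximal Green'' somewhere, which is subtle and may be exactly why this is only a conjecture. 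A secondary difficulty is making Step (2) precise: one needs that $\suppDim$ of a cluster monomial, evaluated in the seed adapted to it, genuinely vanishes, which follows from $\Ind^T$ of a summand of $T'$ and the bipointedness with $\suppDim=0$ (Proposition \ref{prop:bipointed_support} applied to $x(t')^d$), but tracking this back through $\psi$ and $\phi$ to the original seed $t$ requires the linearity arguments of Lemma \ref{lem:support_factor_through_frozen} and the relabelling remark after Definition \ref{def:support_dimension}.
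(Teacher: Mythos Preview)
The statement you are attempting to prove is labelled as a \emph{Conjecture} in the paper, and the paper does not supply a proof; it only offers the preceding Proposition (the case where $g$ is the degree of a localized cluster monomial) as supporting evidence and notes in Remark~\ref{rem:open_orbit_conjecture} that the full statement would imply the open orbit conjecture of \cite{GeissLeclercSchroeer10}. So there is no ``paper's own proof'' to compare against.

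Your plan is sound precisely in the case the paper already handles, and you have correctly located the genuine obstruction yourself. Steps (2)--(4) are fine: once you know $\gen_g$ is a localized cluster monomial in some seed $t'$, then in that seed $\suppDim(\phi_{t',t}g)=0$, so $\intv_{\phi_{t',t}g}$ is a single point; by Proposition~\ref{prop:inclusion_description} this forces $\Mc(t')_{\prec_{\{t',t'[-1]\}}\phi_{t',t}g}=\emptyset$, and hence $\tropSet_{\prec_{\Delta^+}[g]}=\emptyset$. But this chain of reasoning recovers exactly the Proposition preceding the Conjecture, just phrased through support dimensions rather than through Lemma~\ref{lem:compatible_at_g_vector}.

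The gap is Step (1). A rigid generic object $M_g$ need not be reachable: it is not known in general that every rigid indecomposable in $^{\bot}(\Sigma T_{\fv})$ (or its image in $\cC_{(Q,W)}$) arises as a summand of some reachable cluster-tilting object. Without reachability, there is no seed $t'$ in which $\gen_g$ becomes a Laurent monomial, and your argument has nothing to work with. The alternative you suggest---showing directly that a rigid object has support dimension $0$ in \emph{some} seed---is equivalent to producing such a $t'$, and is essentially the content of the conjecture itself. This is precisely why the statement remains open and why its resolution would yield the open orbit conjecture for unipotent subgroups.
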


\begin{Rem}[Open obit conjecture]\label{rem:open_orbit_conjecture}
If Conjecture \ref{conj:rigid_deformation_factor} is true, then all bases parametrized by tropical points must share the same elements for the $g$-vectors corresponding to rigid modules. In particular, if we consider the cluster algebras arising from the coordinate rings of unipotent subgroups, then the generic bases (dual semi-canonical bases) and the dual canonical bases share such elements. Then we obtain the open orbit conjecture for these coordinate rings (see \cite{GeissLeclercSchroeer10}).
\end{Rem}

One might also study the cardinality $|\Mc(t)_{\prec_{\Delta^{+}}g}|$.

\begin{Eg}[Bases for Kronecker type]\label{eg:Kronecker}

Take $\kk=\Z$, $I=I_{\ufv}=\{1,2\}$, and the initial seed $t_{0}$ such that
$B(t_{0})=\left(\begin{array}{cc}
0 & -2\\
2 & 0
\end{array}\right)$. Then $y_{1}=x_{2}^{2}$ and $y_{2}=x_{1}^{-2}$, which in particular
have even degrees. Denote $\delta=(1,-1)$, $z=x^{\delta}(1+y_{2}+y_{1}y_{2})$.
It is well known that the corresponding upper cluster algebra $\upClAlg$ has the generic
basis which consists of the cluster monomials and $z^{d}$, $d\geq1$. 

Notice that $\delta$ is invariant under tropical transformations.
Then any pointed element $s_{d\delta}\in\upClAlg$ parametrized by
the tropical point $d\delta$ must always have the leading degree
$d\delta$ in all seeds. One can deduce that the deformation from
$z^{d}$ to $s_{d\delta}$ cannot involve any cluster monomials. Also
notice that $s_{d\delta}$ is pointed and $\eta-d\delta$ have even
degrees whenever $\eta\prec_{t}d\delta$. We obtain

\begin{eqnarray*}
s_{d\delta} & = & z^{d}+\sum_{k\geq0,d-2k\geq0}b_{d-2k}z^{d-2k},\ b_{d-2k}\in\Z.
\end{eqnarray*}
Therefore, the deformation factors has cardinality $|\Mc(t_{0})_{\prec_{t}d\delta}|=[\frac{d}{2}]$
where $[\ ]$ denote the integer part. 

The infinite families of bases in this Kronecker example is also found
in \cite{rupel2019affine} by using Lie theory.

\end{Eg}

Finally, still working with the Kronecker Example \ref{eg:Kronecker},
it is known that the the triangular basis (dual canonical basis) and
theta basis (greedy basis) differ by taking the usual quiver Grassmannians
or the transverse quiver Grassmannians \cite{dupont2010transverse}\cite{irelli2013homological}.
We expect that one might relate the deformation factor to such a difference.

\subsection{Quantum bases}\label{sec:quantum_bases}

Theorems \ref{thm:all_pointed_bases} \ref{thm:bipointed_bases} \ref{thm:correct_support_basis}  are stated for the classical case $\kk=\Z$. Let us consider their analogs for the the quantum case $\kk=\Z[q^{\pm\Hf}]$.

\begin{Thm}
Consider the quantum case $\kk=\Z[q^{\pm\Hf}]$. Assume the quantum seeds are injective-reachable and satisfy the full rank assumption. 

(1) The analog of Theorem \ref{thm:all_pointed_bases}(1) remains true.

(2) If the analog of Theorem \ref{thm:all_pointed_bases}(2) is true, then the analog of Theorem \ref{thm:all_pointed_bases}(3) is true.

(3) If the analog of Theorem \ref{thm:bipointed_bases}(2) is true, then the analogs of Theorem \ref{thm:bipointed_bases} \ref{thm:correct_support_basis} are true.
\end{Thm}
\begin{proof}
The analog of Theorem \ref{thm:all_pointed_bases}(1) is a direct consequence of Theorem \ref{thm:tropical_finite_decomposition}.

Assume that a basis has been given by the analog of Theorem \ref{thm:all_pointed_bases}(2) (resp. \ref{thm:bipointed_bases}(2)), the proof for the analog of Theorem \ref{thm:all_pointed_bases}(3) (resp. \ref{thm:bipointed_bases}) is the same as before. More precisely, we use Lemma \ref{lem:general_bipointed_basis} by setting $\Theta=\Mc(t)$ and $\cA^{\Theta}=\upClAlg$ the free $\kk$-module spanned by the given basis.

As before, the analog of Theorem \ref{thm:correct_support_basis} is a consequence of Proposition \ref{prop:bipointed_support} and the analog of Theorem \ref{thm:bipointed_bases}.
\end{proof}

The obstruction appears in the analogs of Theorems \ref{thm:bipointed_bases}(2) and \ref{thm:all_pointed_bases}(2), i.e. we do not know a quantum basis $\mathcal{Z}$ inside a quantum upper cluster algebra. Thanks to \cite{davison2019strong}, the quantum theta functions provide such a
basis for an injective-reachable skew-symmetric seed $t$ subject to the full rank assumption, see Remark  \ref{rem:update_DM}. By \cite{qin2020dual}, the dual canonical basis provides another such basis, when $t$ arises from a quantum unipotent cell with symmetrizable Cartan datum.

\subsection{Weak genteelness\label{sec:Weakly-genteelness}}

For skew-symmetric injective-reachable seeds, we have seen the existence
of the generic basis, which is constructed using the representation
theory. It is natural to ask if we can also interpret the theta basis
using the representation theory in this case.

For finite dimensional Jacobian algebra $J_{(Q,W)}$, Bridgeland has
defined a representation theoretic version of the scattering diagram
called the stability scattering diagram, for which some theta functions
have a representation theoretic formula \cite{bridgeland2017scattering}.
Then this formula is effective for theta functions appearing in upper
cluster algebras, if the stability scattering diagram is equivalent
to the cluster scattering diagram in \cite{gross2018canonical}. If
so, we say the quiver with potential is \emph{weakly genteel}.

We refer the reader to Section \ref{sec:Theta-functions} \ref{sec:proof_weak-genteelness}
for necessary definitions for the statements below.

\begin{Thm}[Theorem \ref{thm:genteel}]

Take $\kk=\Z$. Let there be given a skew-symmetric injective-reachable seed $t$.
Then Bridgeland's representation theoretic formula is effective for
theta functions in the cluster scattering diagram. Moreover, the stability
scattering diagram and the cluster scattering diagram are equivalent.

\end{Thm}

The proof is given in Section \ref{sec:proof_weak-genteelness}.

\begin{Conj}\label{conj:weakly_genteel}

Let $(Q,W)$ be any quiver with a generic potential such that the
Jacobian algebra $J_{(Q,W)}$ is finite dimensional, then it is weakly
genteel.

\end{Conj}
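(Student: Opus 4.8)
The plan is to reduce the conjecture to the uniqueness theorem for consistent scattering diagrams, so that it becomes a statement purely about incoming walls. By \cite{bridgeland2017scattering}, the stability scattering diagram $\frD_{\mathrm{stab}}$ attached to the finite-dimensional algebra $J=J_{(Q,W)}$ is a \emph{consistent} scattering diagram living in the same ambient space, and governed by the same skew-form $\{\,,\,\}$, as the cluster scattering diagram $\frD_{\mathrm{cl}}$ of \cite{gross2018canonical} for the initial seed $t_0$ attached to $(Q,W)$; here finite-dimensionality of $J$ is exactly what makes Bridgeland's integration map, hence the construction of $\frD_{\mathrm{stab}}$, well defined. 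By the Kontsevich--Soibelman / GHKK uniqueness theorem, a consistent scattering diagram over these data is determined up to equivalence by its set of incoming walls. The incoming walls of $\frD_{\mathrm{cl}}$ are the $|I_{\ufv}|$ standard ones, one per unfrozen vertex $k$, namely $e_k^{\perp}$ carrying the wall-function $1+y_k$. Hence it suffices to prove that $\frD_{\mathrm{stab}}$ has \emph{exactly} these incoming walls. The ``expected'' direction is the easy half: an incoming wall of a stability scattering diagram is supported on $\gamma^{\perp}$ for $\gamma$ the class of a simple object of $\mathrm{mod}\,J$; since $Q$ has no loops the simple $J$-modules are precisely the vertex simples $S_i$, whose classes are the unit vectors and which satisfy $\Ext^1_J(S_i,S_i)=0$, so the attached Donaldson--Thomas series collapses to $1+y_i$ and we recover the incoming wall of $\frD_{\mathrm{cl}}$ at $e_i^{\perp}$. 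Writing out this local computation, with the genericity of $W$ used only to fix the normalisation, is routine and essentially contained in \cite{bridgeland2017scattering}.

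The substance is the converse: $\frD_{\mathrm{stab}}$ has no \emph{other} incoming walls, i.e. every wall of $\frD_{\mathrm{stab}}$ distinct from the $|I_{\ufv}|$ standard ones is outgoing. This is where finite-dimensionality of $J$ must be genuinely exploited, and where the argument of Theorem \ref{thm:genteel} is unavailable: there one passes through the opposite scattering diagram and uses that $t$ and $t[-1]$ mutation-generate $\Delta^{+}$, which has no analogue without injective-reachability. My proposed route is a torsion-pair / Harder--Narasimhan (HN) argument. A wall $\gamma^{\perp}$ is incoming precisely when the moduli of $\gamma$-semistable modules, for a central charge lying on that wall, contributes along the whole line spanned by $\gamma$; this forces every such semistable module to admit no proper submodule and no proper quotient of the same slope. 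One then runs an induction on dimension vectors: because $\mathrm{mod}\,J$ is finite-length and $J$ is finite-dimensional, HN filtrations exist and terminate, and the absence of same-slope sub/quotients should force any such module to be simple, hence $\gamma$ to be a unit vector. If this implication ``incoming $\Rightarrow$ class of a simple'' can be pushed through Bridgeland's general framework, the conjecture follows immediately from the previous paragraph. Establishing it is, I expect, the main obstacle, since one must bound the stable objects of arbitrarily large dimension vector using only the finiteness of $J$, with no cluster-combinatorial crutch such as a reddening sequence.

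Two further remarks on strategy. First, an a priori independent route is to try to transfer Theorem \ref{thm:genteel} along a ``partial completion'' of $(Q,W)$ to an injective-reachable quiver with potential whose stability and cluster scattering diagrams restrict to those of $(Q,W)$ on the relevant sublattice; I am skeptical that such a completion exists in general, and would treat this only as a fallback and a source of examples. Second, before attempting the full statement one should verify it in the two best-understood families: all acyclic $Q$ (where $W=0$, $J=\kk Q$ is hereditary, and $\frD_{\mathrm{stab}}$ is the classical semistable scattering diagram of $Q$-representations), and the quivers with potential of triangulated surfaces with at least one puncture (where finite-dimensionality of $J$ and the chamber structure are both under control); matching these against \cite{gross2018canonical} would pin down precisely which genericity hypotheses on $W$ are actually needed.
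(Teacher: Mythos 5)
This statement is stated as a \emph{conjecture} in the paper: it is open, and the paper proves only the injective-reachable case (Theorem \ref{thm:genteel}), by comparing the wall-crossing $\frp_{t_0[1],t_0}$ of the two diagrams on the cluster expansions of the variables of $t_0[1]$ via the opposite scattering diagram --- a route you correctly identify as unavailable without a reddening sequence. So there is no proof in the paper to compare yours against, and your proposal does not supply one. Your first reduction (consistency plus the Kontsevich--Soibelman/GHKK uniqueness theorem, so that equivalence of $\frD^{(st)}$ and $\frD$ is equivalent to $\frD^{(st)}$ having only the $|I_{\ufv}|$ standard incoming walls) is correct but is essentially a restatement of Bridgeland's framework, in which weak genteelness is by design a statement about incoming walls; the entire mathematical content of the conjecture is exactly the step you defer, namely that $\frD^{(st)}$ has no other incoming walls, equivalently that no module $V$ other than the vertex simples contributes at the self-stability parameter $p^{*}(\dim V)$. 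This is in substance Bridgeland's genteelness question, which the paper records as a further (stronger) conjecture immediately after this one.

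Two concrete problems with the route you sketch for that step. First, the HN/torsion-pair argument does not do what you want: ``no proper submodule and no proper quotient of the same slope'' is just the definition of stability for the given parameter, and stable modules are very far from being simple (already for the Kronecker quiver there are stable modules of dimension vector $(1,1)$); the whole difficulty is to control modules that are stable for the very particular skew-symmetric parameter $\{\dim V,\,\cdot\,\}$, and finiteness of $J_{(Q,W)}$ together with existence of HN filtrations gives no bound on such modules of large dimension vector, so the induction has no inductive leverage. Second, you assert that genericity of $W$ enters ``only to fix the normalisation''; on the contrary, genericity is expected to be essential (without it the statement is expected to fail), so any correct argument must use it in a genuine way --- for instance through the geometry of the semistable moduli for generic $W$ --- and your sketch never does. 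As written, the proposal is a reasonable reformulation of the conjecture plus a programme whose key implication (``self-stable $\Rightarrow$ vertex simple'', or at least ``non-simple self-semistables do not contribute an incoming wall'') is both unproved and not approachable by the argument you outline.
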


Here, by a generic potential, we mean a generic point in the space of all potentials in the sense of \cite{DerksenWeymanZelevinsky08}. In particular, it is assumed to be non-degenerate.

\begin{Conj}

The
Jacobian algebra $J_{(Q,W)}$ in Conjecture \ref{conj:weakly_genteel} is genteel.

\end{Conj}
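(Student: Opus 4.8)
The plan is to deduce genteelness from the weak genteelness already established in Theorem \ref{thm:genteel}, promoting the equivalence of scattering diagrams to the sharper statement using the basis-theoretic machinery of this paper, and to organize the argument in two stages: first the injective-reachable case, then the reduction of the general case to it. In the injective-reachable stage one starts from the conclusion of Theorem \ref{thm:genteel}: the stability scattering diagram of $J_{(Q,W)}$ is equivalent to the cluster scattering diagram, and Bridgeland's representation-theoretic wall-functions are identified with the cluster wall-functions. To upgrade this to genteelness one must rule out any ``spurious'' scattering on the stability side and show that \emph{every} theta function of the stability scattering diagram (not only those visible inside $\upClAlg$) is computed by the representation-theoretic/DT formula. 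The key input is that, by Theorem \ref{thm:tropical_finite_decomposition} and Proposition \ref{prop:all_support_defined}, the theta functions $\{\theta_{t,g}\}_{g\in\Mc(t)}$ are honest Laurent polynomials forming a basis of $\upClAlg$ whose bidegrees and support dimensions behave coherently under all mutations; this finiteness, transported across the equivalence of Theorem \ref{thm:genteel}, forces the stability scattering diagram to be equally tame, and the support-dimension calculus of Section \ref{sec:bidegrees_and_support} (the ``correct'' support dimension $\suppDim g$) should identify, class by class, exactly which semistable $J_{(Q,W)}$-representations contribute to each wall, matching Bridgeland's formula term by term.

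For a general $(Q,W)$ with generic potential and finite-dimensional Jacobian algebra I would first attack Conjecture \ref{conj:weakly_genteel} itself, i.e. the equivalence of the two scattering diagrams without the injective-reachability hypothesis. The natural route is to run the argument of Section \ref{sec:Properties-of--decompositions} directly on the stability side: apply the nilpotent Nakayama Lemma (Lemma \ref{lem:Nakayama}) to the principal-coefficient models of the wall-crossing automorphisms and show, as in Proposition \ref{prop:invariant_decomposition}, that the $\prec_t$-decomposition of each stability wall-function into cluster wall-functions is independent of the chosen seed; finite-dimensionality of $J_{(Q,W)}$ must then be used, in place of the seed $t[-1]$, to guarantee that these decompositions terminate. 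Granting weak genteelness in general, the injective-reachable argument of the previous paragraph should carry over once one has produced a basis of theta functions with finite, seed-coherent tropical data, again using finite-dimensionality (boundedness of the module varieties involved, together with a tame/wild input) as the substitute for injective-reachability.

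The hard part will be exactly this last substitution. Without injective-reachability one loses $t[-1]$, the order-reversing linear map $\psi_{t[-1],t}$ of Proposition \ref{prop:order_reverse}, and hence the Finite Interval controls of Proposition \ref{prop:inclusion_description} that make the $\prec_t$-decompositions finite; so genteelness in full generality cannot be obtained from the methods of this paper alone and requires a genuinely new finiteness theorem for stability scattering diagrams of finite-dimensional Jacobian algebras (equivalently, a self-contained proof that such algebras are weakly genteel). By contrast I expect the injective-reachable case to follow fairly mechanically from Theorem \ref{thm:genteel} together with the basis and support-dimension results, once the bookkeeping of which representations realize the prescribed support dimensions is carried out.
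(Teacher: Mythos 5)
The statement you are addressing is not proved in the paper at all: it is stated as a conjecture (a strengthening of the weak genteelness Conjecture \ref{conj:weakly_genteel}), and the author deliberately leaves it open even in the skew-symmetric injective-reachable case where Theorem \ref{thm:genteel} (weak genteelness) \emph{is} proved. So there is no paper proof to compare against, and your proposal should be judged on whether it actually closes the gap between the two notions. It does not. Your central step is to ``promote'' the equivalence of the stability and cluster scattering diagrams to genteelness via the basis and support-dimension machinery. But genteelness, as defined in Section \ref{sec:proof_weak-genteelness}, is a statement about modules: the only $V$ that are $p^*(\dim V)$-stable are the simples $S_k$. Equivalence of consistent scattering diagrams only pins down the wall-crossing automorphisms (equivalently the path-ordered product $\frp_{\cC^-,\cC^+}$) up to equivalence; it says nothing directly about which stable $J_{(Q,W)}$-modules exist, because a non-simple stable module could in principle contribute to the Hall-algebra wall functions in a way that is invisible after applying the integration map (Euler-characteristic cancellations, combinations with other strata), leaving the diagram in the same equivalence class. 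All of the tools you invoke --- Theorem \ref{thm:tropical_finite_decomposition}, Proposition \ref{prop:all_support_defined}, the support-dimension calculus --- live inside $\upClAlg$ and only see theta functions and their pointedness data, so they cannot detect or exclude such ``invisible'' stable modules. This is precisely why the author states genteelness as a separate, stronger conjecture rather than as a corollary of Theorem \ref{thm:genteel}; note also that the conjecture requires a \emph{generic} potential, a hypothesis your argument never uses, whereas weak genteelness was proved for any non-degenerate potential --- another sign that the implication you rely on cannot be formal.

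Your second and third paragraphs concede the remaining issues yourself: without injective-reachability you lose $t[-1]$, $\psi_{t[-1],t}$, and the finiteness controls, and you explicitly say a ``genuinely new finiteness theorem'' would be needed. That is an accurate self-assessment, but it means the proposal is a research outline with an acknowledged missing core, not a proof. Concretely, what is missing is a module-theoretic argument (on the Hall-algebra side, before integration) showing that for a generic potential no non-simple $p^*(\dim V)$-stable module can exist, or at least that its existence would force an inequivalence of the diagrams; neither the paper nor your proposal supplies such an argument.
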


Here, we take a generic potential from the space of all potentials
\cite{DerksenWeymanZelevinsky07}. It might be possible to only assume
the potential $W$ to be non-degenerate. We can also generalize the
conjectures to the case when $J_{(Q,W)}$ has infinite dimension,
for which we need to modify the stability scattering diagram by working
with nilpotent modules, see \cite{Nagao10}.

\subsection{Partial compactification\label{sec:partial_compactification}}

In representation theory, it is often natural to work with a partial
compactified upper cluster algebra $\bUpClAlg$, defined as the ring
of regular functions over some partial compactification $\bAVar$
of the cluster variety $\AVar$. Correspondingly, it is natural to
ask if the basis of $\upClAlg$ give rise to a basis of $\bUpClAlg$,
defined by choosing those basis elements without poles on the boundary
$\bAVar\backslash\AVar$. 

For example, for some important cluster algebras arising from representation
theory, $\bUpClAlg$ agrees with the compactified cluster algebra
$\bClAlg$, and the boundary condition demands the functions in $\upClAlg$
to have no pole at the frozen variables $x_{j}=0$, $j\in I_{\fv}$.
Moreover, in such examples, for any $j\in I_{\fv}$, there exists
a seed $t\in\Delta^{+}$ such that $b_{jk}(t)\geq0$ for any $k\in I_{\ufv}$,
called a seed optimized for $x_{j}$ following \cite{gross2018canonical}.

This is a difficult and largely open question in general. Consider the classical case $\kk=\Z$. \cite[Section 9]{gross2018canonical}
gives an affirmative answer when one has enough optimized seeds, for
which a subset of the theta functions form a basis of $\bUpClAlg$.
Let $\Theta$ denote the set of tropical points parametrizing this
subset.

Then, $\bUpClAlg$ is a $\Z$-module spanned by the basis $\{\theta_{g}|g\in\Theta\}$.
We can apply our Lemma \ref{lem:general_bipointed_basis} and obtain
many bases of $\bUpClAlg$. As in the proof of Theorem \ref{thm:all_pointed_bases},
we deduce that the set the bases of $\bUpClAlg$ compatibly pointed
at seeds in $\Delta^{+}$ is in bijection with $\prod_{[g]\in\Theta}\Z^{\Theta\cap(\tropSet_{\prec_{\Delta^{+}}[g]})}$.
Again, the restriction of the generic basis $\{\gen_{\tg}|\tg\in\Theta\}$
is a such basis.

\appendix

\section{Scattering diagrams} \label{sec:appendix_scattering_diagram}
For simplicity, we assume that the seeds satisfy the full rank assumption so that the scattering diagrams and theta functions can be easily constructed, except in the proof of Theorem \ref{thm:genteel}. The construction for an arbitrary seed can be obtained by taking a projection from the construction for the corresponding seed with principal coefficients, see \cite{gross2018canonical}.

\subsection{Basics of scattering diagrams and theta functions\label{sec:Theta-functions}}

We refer the reader to the original paper of Gross-Hacking-Keel-Kontsevich
\cite{gross2018canonical} for more details.

Let $t_{0}$ be any chosen initial seed. Recall that we have an isomorphism
$N(t_{0})\simeq\Z^{I}$ with the natural basis $\{e_{i}\}$ which
endows $\Z^{I}$ with the bilinear form $\{\ ,\ \}$, and an isomorphism
$\Mc(t_{0})\simeq\Z^{I}$ with the natural basis $\{f_{i}\}$. Define
the $\yCone^{\geq0}(t_{0})$-graded Poisson algebra $A=\Z[\yCone^{\geq0}(t_{0})]=\oplus_{n\in\yCone^{\geq0}(t_{0})}y(t_{0})^{n}$
such that $\{y(t_{0})^{n},y(t_{0})^{n'}\}=-\{n,n'\}y(t_{0})^{n+n'}$.
Let $|n|$ denote $\sum n_{i}$. Then $\frg=A_{>0}$ is naturally
a graded Lie algebra via its Poisson bracket. Its completion $\hat{\frg}$
is defined to be the inverse limit of $\mathfrak{g}/\oplus_{n:|n|>k}\frg_{n}$,
$k>0$. Let $G$ denote the group $\exp\mathfrak{\hat{g}}$ defined
via the Baker–Campbell–Hausdorff formula. 

Recall that the matrix $\tB(t_{0})$ gives us an embedding $p^{*}:\Z^{I_{\ufv}}\rightarrow\Z^{I}$
such that $p^{*}(e_{k})=\sum_{i\in I}b_{ik}f_{i}$. Let $A$ acts
linearly on $\widehat{\LP(t_{0})}$ via the derivation $\{A,\ \}$
such that $\{y(t_{0})^{n},x(t_{0})^{m}\}=\langle m,n\rangle x(t_{0})^{m+p^{*}(n)}$.
In particular, $\{y(t_{0})^{n},x(t_{0})^{p^{*}n'}\}=-\{n,n'\}x(t_{0})^{p^{*}(n'+n)}$,
which explains the minus sign in the definition of $A$. By the injectivity
of $p^{*}$, this induces a faithful action of $G$ on $\widehat{\LP(t_{0})}$.

A wall in $M(t_{0})_{\R}=M(t_{0})\otimes\R$ is a pair $(\mathfrak{d},\mathfrak{p}_{\mathfrak{d}})$
such that $\frd$ is a codimension $1$ rational polyhedral cone,
$\frd\subset n_{0}^{\bot}$ for some primitive normal direction $n_{0}\in\N^{I_{\ufv}}$,
and the wall crossing operator $\frp_{\frd}\in\exp(y(t_{0})^{n_{0}}\Z[[y(t_{0})^{n_{0}}]])\subset G$.
It is said to be non-trivial if $\frp_{\frd}$ is. A scattering diagram
$\frD$ is a collection of walls subject to some finiteness condition
in \cite{gross2018canonical}. $\frD$ cuts out many chambers in $M(t_{0})_{\R}$,
among which we have two special ones $\cC^{\pm}:=(\pm\R_{\geq0}^{I_{\ufv}})\oplus\R^{I_{\fv}}$. 

Given two chambers $\cC^{1},\cC^{2}$ and any smooth path $\gamma:[0,1]\rightarrow\R^{I}$
from the interior of $\cC^{1}$ to that of $\cC^{2}$. We first assume
that $\gamma$ intersects transversely the interior of finitely many
walls $\frd_{i}$ with normal direction $n_{i}\in\N^{I}$, $1\leq i\leq r$,
at time $t_{1}\leq t_{2}\leq\ldots\leq t_{r}$, and we define the
wall crossing operation along $\gamma$ to be $\frp_{\gamma}:=\frp_{\frd_{r}}^{\varepsilon_{r}}\cdots\frp_{\frd_{1}}^{\varepsilon_{1}}$
where $\varepsilon_{i}=-\sign\langle\gamma'(t_{i}),n_{i}\rangle$.
Let $\gamma^{-1}:[0,1]\rightarrow\R^{I}$ denote the inverse path
$\gamma^{-1}(t)=\gamma(1-t)$. Then $\frp_{\gamma^{-1}}=\frp_{\gamma}^{-1}$.
We further define $\frp_{\gamma}$ for the case of infinitely many
intersections as an inverse limit, see \cite{gross2018canonical}.

We say $\frD$ is consistent if $\frp_{\gamma}$ is always independent
of the choice of $\gamma$, which we can denote by $\frp_{\cC^{2},\cC^{1}}$.
Two scattering diagrams are equivalent if they give the same $\frp_{\gamma}$
for any $\gamma$. The equivalent class of a consistent $\mathfrak{D}$
is determined by $\frp_{\cC^{-},\cC^{+}}$ \cite[Theorem 1.17]{gross2018canonical}\cite[2.1.6]{kontsevich2014wall}.

A wall $(\frd,\frp_{\frd})$ with primitive normal direction $n_{0}\in\N^{I_{\ufv}}$
is said to be incoming if $p^{*}(n_{0})\in\frd$. Up to equivalence,
for any collection $\frD_{\mathrm{in}}$ of incoming walls, there
exists a unique consistent scattering diagram $\frD$ such that $\frD_{\mathrm{in}}\subset\frD$
and there is no incoming walls in $\frD\backslash\frD_{\mathrm{in}}$.

For any chosen base point $Q\in M(t_{0})_{\R}$ not contained in any non-trivial
wall, the theta functions $\theta_{Q,g}^{t_{0}}$, $\forall g\in\Z^{I}$,
are certain formal Laurent series in $\widehat{\LP(t_{0})}$ which
takes the form $x(t_{0})^{g}(1+\sum_{n>0}c_{n}y(t_{0})^{n})$ with
coefficients $c_{n}\in\Z$. It has the property $\theta_{Q',g}^{t_{0}}=\mathfrak{p}_{\gamma}\theta_{Q,g}^{t_{0}}$
for any path $\gamma$ from $Q$ to $Q'$. If $Q$ is a generic point
in some chamber $\cC$, then $\theta_{Q,g}^{t_{0}}$ only depends
on the chamber, and we write $\theta_{\cC,g}^{t_{0}}=\theta_{Q,g}^{t_{0}}$.
We write $\theta_{g}=\theta_{\cC^{+},g}^{t_{0}}$ for simplicity.

Notice that, to each seed $t\in\Delta^{+}$, one can associate a chamber
$\cC^{t}$. In particular, we have $\cC^{t_{0}}=\cC^{+}$ and, when
$t_{0}[1]$ exists, $\cC^{t_{0}[1]}=\cC^{-}$. So we can write $\theta_{t,g}^{t}=\theta_{\cC^{t},g}^{t}$.

Let $\mathrm{Li}_2(\ )$ denote the dilogarithm function, see \cite{gross2018canonical}.

\begin{Def}

Let there be given an initial seed $t_{0}$. The consistent scattering
diagram $\frD$ whose incoming walls are $(e_{k}^{\bot},\exp(-d_{k}\mathrm{Li}_{2}(-y(t_{0})_{k})))$,
$k\in I_{\ufv}$, is called the cluster scattering diagram associated
to $t_{0}$.

\end{Def}

Consider cluster scattering diagrams from now on. Let us compare our
tropical transformations with that of \cite{gross2018canonical}.
By \cite{gross2018canonical}, for any $k\in I_{\ufv}$, we have the
tropical transformation which preserves the theta functions
\begin{eqnarray*}
T_{k}: & \Z^{I} & \rightarrow\Z^{I}\\
 & m=\sum m_{i}f_{i} & \mapsto m+[m_{k}]_{+}\sum_{i}b_{ik}f_{i}
\end{eqnarray*}

Consider the seed $t'=\mu_{k}t_{0}$. We identify $\Mc(t')\simeq\Z^{I}\simeq\Mc(t_{0})$
such that the basis elements $f_{i}'=f_{i}(t')$ are given by \eqref{eq:tropical_f}
with the sign $\varepsilon=+$:

\begin{align*}
f_{i}' & =\begin{cases}
f_{i} & i\neq k\\
-f_{k}+\sum_{j}[-b_{jk}]_{+}f_{j} & i=k
\end{cases}
\end{align*}

\begin{Lem}

For any $m=\sum m_{i}f_{i}$, the coordinates of its image $m'=T_{k}m=\sum m_{i}'f_{i}'$
are given by the tropical transformation $\phi_{t',t_{0}}$ (Definition
\eqref{def:tropical_transformation}):

\begin{align*}
m_{i}' & =\begin{cases}
-m_{k} & i=k\\
m_{i}+m_{k}[b_{ik}]_{+} & i\neq k,m_{k}>0\\
m_{i}+m_{k}[-b_{ik}]_{+} & i=k,m_{k}<0
\end{cases}
\end{align*}

\end{Lem}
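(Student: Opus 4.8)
The plan is to unwind the two different coordinate systems and check that they match entry-by-entry. The statement to prove compares $T_k m$, written in the basis $\{f_i'\}$ of $\Mc(t')$, with the piecewise-linear map $\phi_{t',t_0}$ from Definition \ref{def:tropical_transformation}. Since both maps send $\Z^I$ to $\Z^I$, it suffices to write $m' = T_k m = \sum m_i' f_i'$ and solve for the coordinates $m_i'$ in terms of the $m_i$, using the explicit formula $f_i' = f_i$ for $i\neq k$ and $f_k' = -f_k + \sum_j [-b_{jk}]_+ f_j$.

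First I would substitute the definition of $T_k$: $T_k m = m + [m_k]_+ \sum_i b_{ik} f_i$, so the $f_i$-coordinate of $T_k m$ (in the \emph{old} basis) is $m_i + [m_k]_+ b_{ik}$ for all $i$, including $i=k$ where it is $m_k + [m_k]_+ b_{kk} = m_k$ since $b_{kk}=0$. Then I would re-expand in the new basis: from $f_k' = -f_k + \sum_{j\neq k}[-b_{jk}]_+ f_j$ and $f_i'=f_i$ ($i\neq k$), one inverts to get $f_k = -f_k' + \sum_{j\neq k}[-b_{jk}]_+ f_j'$ and $f_i = f_i'$ ($i\neq k$). Plugging in, the $f_k'$-coordinate picks up a sign: $m_k' = -(m_k + [m_k]_+ b_{kk}) = -m_k$, matching the first case. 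For $i\neq k$, the $f_i'$-coordinate is $m_i' = (m_i + [m_k]_+ b_{ik}) + [-b_{ik}]_+ \cdot(\text{coefficient of } f_i \text{ coming from } f_k\text{-term})$; carefully, the $f_k$-component $m_k + [m_k]_+b_{kk} = m_k$ contributes $[-b_{ik}]_+ m_k$ to the $f_i'$-coordinate after using $f_k = -f_k' + \sum_{j\neq k}[-b_{jk}]_+ f_j'$. So $m_i' = m_i + [m_k]_+ b_{ik} + m_k [-b_{ik}]_+$ for $i\neq k$.

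The last step is a case analysis on the sign of $m_k$ to see that $m_i + [m_k]_+ b_{ik} + m_k[-b_{ik}]_+$ collapses to the claimed piecewise formula. If $m_k > 0$, then $[m_k]_+ = m_k$ and $m_k[-b_{ik}]_+ = m_k[-b_{ik}]_+$, and using $b_{ik} = [b_{ik}]_+ - [-b_{ik}]_+$ we get $m_i + m_k([b_{ik}]_+ - [-b_{ik}]_+) + m_k[-b_{ik}]_+ = m_i + m_k[b_{ik}]_+$, matching the second line of the target. If $m_k < 0$, then $[m_k]_+ = 0$, so the expression is $m_i + m_k[-b_{ik}]_+$; but the target formula's third case is stated for $i=k$, and for $i\neq k$ with $m_k<0$ the claimed value should also be $m_i + m_k[-b_{ik}]_+$ — I would double-check the indexing in Definition \ref{def:tropical_transformation} (where $g_i' = g_i + [b_{ik}]_+[g_k]_+ - [-b_{ik}]_+[-g_k]_+$) and confirm that $[-b_{ik}]_+[-m_k]_+ = [-b_{ik}]_+(-m_k) = -m_k[-b_{ik}]_+$ when $m_k<0$, so $g_i' = m_i + 0 - (-m_k[-b_{ik}]_+) $, hmm, this needs a sign check: $m_i' = m_i - [-b_{ik}]_+[-m_k]_+$, and $[-m_k]_+ = -m_k$, giving $m_i + m_k[-b_{ik}]_+$, which agrees. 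If $m_k = 0$ both formulas give $m_i$. The main (only) obstacle is bookkeeping: keeping the change-of-basis direction straight and not dropping a sign when passing between $f_k$ and $f_k'$, and matching my case split to the slightly compressed case split printed in the lemma statement. I expect no conceptual difficulty — it is a direct verification once the basis inversion $f_k = -f_k' + \sum_{j\neq k}[-b_{jk}]_+ f_j'$ is written down correctly.
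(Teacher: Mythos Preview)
Your proposal is correct and follows essentially the same approach as the paper: a direct computation comparing the $T_k$-image of $m$ with its expression in the mutated basis $\{f_i'\}$, followed by a sign-of-$m_k$ case split. The only cosmetic difference is the order of operations—the paper writes $\sum m_i' f_i'$ with unknown $m_i'$, expands in the old basis, and equates with $T_k m$ to solve for $m_i'$, whereas you compute $T_k m$ in the old basis first and then re-express via the inverted relation $f_k = -f_k' + \sum_{j\neq k}[-b_{jk}]_+ f_j'$; both routes yield the same intermediate formula $m_i' = m_i + [m_k]_+\, b_{ik} + m_k[-b_{ik}]_+$ for $i\neq k$. You also correctly noticed that the third line of the displayed formula has a typo (it should read $i\neq k,\ m_k<0$), and your sanity check against Definition~\ref{def:tropical_transformation} confirms the intended statement.
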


\begin{proof}

By the mutation rule of $f_{i}'$, we have 
\begin{align*}
m' & =\sum m_{i}'f_{i}'\\
 & =m_{k}'f_{k}'+\sum_{i:\ i\neq k}m_{i}'f_{i}'\\
 & =m_{k}'(-f_{k}+\sum_{i}[-b_{ik}]_{+}f_{i})+\sum_{i\neq k}m_{i}'f_{i}\\
 & =(-m_{k}')f_{k}+\sum_{i:\ i\neq k}(m_{i}'+[-b_{ik}]_{+}m_{k}')f_{i}
\end{align*}

First assume $m_{k}\geq0$, by the transformation $T_{k}$, we have
\begin{align*}
m' & =m+m_{k}\sum_{i}b_{ik}f_{i}\\
 & =m_{k}f_{k}+\sum_{i:\ i\neq k}(m_{i}+b_{ik}m_{k})f_{i}
\end{align*}

Therefore, we obtain 
\begin{align*}
m_{k}' & =-m_{k}\\
m_{i}' & =m_{i}+(b_{ik}+[-b_{ik}]_{+})m_{k}\\
 & =m_{i}+[b_{ik}]_{+}m_{k}
\end{align*}

Next, assume that $m_{k}<0$,by the transformation $T_{k}$, we have 

\begin{align*}
m' & =m\\
 & =m_{k}f_{k}+\sum_{i:\ i\neq k}m_{i}f_{i}
\end{align*}

Therefore, we obtain

\begin{align*}
m_{k}' & =-m_{k}\\
m_{i}' & =m_{i}+[-b_{ik}]_{+}m_{k}.
\end{align*}

\end{proof}

\begin{Thm}\cite{gross2018canonical}

For any $t\in\Delta^{+}$ and $g\in\cC^{t}$, the theta function $\theta_{g}$
is a localized cluster monomial in the seed $t$. In particular, the
cluster variables $x_{i}(t)$ equals $\theta_{g_{i}(t)}$ in $\widehat{\LP(t_{0})}$.

\end{Thm}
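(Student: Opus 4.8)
The plan is to deduce this from the rigidity Lemma \ref{lem:compatible_at_g_vector}, together with two inputs: theta functions are compatibly pointed at all seeds of $\Delta^{+}$ (Theorem \ref{thm:theta_compatable}), and the cluster scattering chamber $\cC^{t}$ coincides with the (extended) $g$-vector cone of the seed $t$, on which the tropical transformation is linear \cite{gross2018canonical}. First I would record the base case $t=t_{0}$: for $g\in\cC^{t_{0}}=\cC^{+}$, the only broken line of asymptotic slope $g$ ending at a base point deep inside $\cC^{+}$ is the straight one, so $\theta_{g}=\theta_{\cC^{+},g}^{t_{0}}=x(t_{0})^{g}$, a localized cluster monomial in $t_{0}$; in particular $x_{i}(t_{0})=\theta_{f_{i}}$.

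Now fix any $t=\overleftarrow{\mu}t_{0}\in\Delta^{+}$ and $g\in\cC^{t}$. Since $\cC^{t}$ is the cone spanned by the extended $g$-vectors $g_{i}^{t_{0}}(t)=\deg^{t_{0}}x_{i}(t)$ and $\phi_{t,t_{0}}$ is linear on it with $\phi_{t,t_{0}}(g_{i}^{t_{0}}(t))=\phi_{t,t_{0}}\deg^{t_{0}}x_{i}(t)=\deg^{t}x_{i}(t)=f_{i}(t)$ (as in the proof of Lemma \ref{lem:bijective_linear_map}), the vector $m:=\phi_{t,t_{0}}g$ satisfies $\pr_{I_{\ufv}}m\geq 0$, so $m\in\N^{I_{\ufv}}\oplus\Z^{I_{\fv}}$ and $M:=x(t)^{m}$ is a localized cluster monomial in $t$. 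Its Laurent expansion $\overleftarrow{\mu}^{*}M\in\LP(t_{0})$ has $\deg^{t_{0}}(\overleftarrow{\mu}^{*}M)=\phi_{t_{0},t}m=\phi_{t_{0},t}\phi_{t,t_{0}}g=g$.

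Both $\theta_{g}$ and $M$ are compatibly pointed at every seed of $\Delta^{+}$ — $\theta_{g}$ by Theorem \ref{thm:theta_compatable}, and $M$ by Proposition \ref{prop:cluster_monomial_bipointed} together with the tropical transformation rule for $g$-vectors — and both are pointed at degree $g$ at the seed $t_{0}$. Applying Lemma \ref{lem:compatible_at_g_vector} with base seed $t_{0}$, mutation sequence $\overleftarrow{\mu}$ and exponent $d=m$ (the hypotheses hold because $\{t_{0},t,t[-1]\}\subset\Delta^{+}$ and $\deg^{t_{0}}\theta_{g}=g=\deg^{t_{0}}\overleftarrow{\mu}^{*}x(t)^{m}$), we conclude $\theta_{g}=\overleftarrow{\mu}^{*}x(t)^{m}=M$ in $\widehat{\LP(t_{0})}$, a localized cluster monomial in $t$. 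The cluster variable statement is the special case $m=f_{i}(t)$, i.e. $g=g_{i}^{t_{0}}(t)$, giving $x_{i}(t)=\theta_{g_{i}(t)}$.

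The main obstacle is a bookkeeping one rather than a conceptual one: Lemma \ref{lem:compatible_at_g_vector} is phrased for Laurent polynomials, whereas $\theta_{g}$ is a priori only a formal Laurent series. Since the theorem is stated in $\widehat{\LP(t_{0})}$, I would either observe that the proof of the Lemma (through Propositions \ref{prop:bipointed_support} and \ref{prop:cpt_pt_swap}) uses only pointedness and copointedness, which are meaningful for formal Laurent series, or else invoke $\theta_{g}\in\upClAlg$ from Theorem \ref{thm:theta_basis}. The only genuinely external input is the identification of the scattering-diagram chamber $\cC^{t}$ with the $g$-vector cone and the linearity of $\phi_{t,t_{0}}$ there, which is part of the GHKK correspondence between the scattering fan and the cluster complex and which I would cite rather than reprove.
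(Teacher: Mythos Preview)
The paper does not supply its own proof of this statement; it is simply cited from \cite{gross2018canonical}. Your argument is essentially sound in the injective-reachable setting but takes a detour that imports an unneeded hypothesis. Lemma~\ref{lem:compatible_at_g_vector} requires the seed $t'[-1]$ to exist (its proof goes through Proposition~\ref{prop:bipointed_support}, which compares support dimensions at $t'$ and $t'[-1]$), so your application of it silently assumes injective-reachability; the GHKK theorem you are proving holds without that assumption. Your workaround~(a) for the Laurent-polynomial issue also does not go through as stated: the proof of Proposition~\ref{prop:cpt_pt_swap} genuinely uses that $z\in\LP(t)$ is a \emph{finite} sum of Laurent monomials, and extending the degree/codegree swap to a general pointed formal series is not automatic. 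Workaround~(b) via Theorem~\ref{thm:theta_basis} again needs injective-reachability, and is arguably circular since the GHKK proof that theta functions form a basis already uses that cluster monomials are theta functions.

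The cleaner route---essentially what GHKK do, and for which you already have all the ingredients---is to apply your base-case observation with $t$ as initial seed rather than $t_0$. You have shown that $m:=\phi_{t,t_0}g$ satisfies $\pr_{I_{\ufv}}m\geq 0$; this says exactly that $m$ lies in the positive chamber $\cC^+$ of the cluster scattering diagram attached to the seed $t$, so the broken-line computation gives $\theta^t_{t,m}=x(t)^m$ directly. Then Theorem~\ref{thm:theta_compatable} yields
\[
\theta_g=\theta^{t_0}_{t_0,g}=\overleftarrow{\mu}^{*}\theta^t_{t,m}=\overleftarrow{\mu}^{*}x(t)^m,
\]
a localized cluster monomial in $t$. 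This bypasses the rigidity lemma, works for arbitrary seeds, and dissolves the formal-series bookkeeping entirely.
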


\begin{Thm}\label{thm:theta_compatable}\cite{gross2018canonical}

Given seeds $t=\overleftarrow{\mu}t_{0}$, then we have $\overleftarrow{\mu}^{*}\theta_{t,g}^{t}=\theta_{t_{0},\phi_{t_{0},t}g}^{t_{0}}$
for any $g\in\Mc(t)$.

\end{Thm}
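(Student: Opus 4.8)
The plan is to derive the statement from the naturality of the cluster scattering diagram under change of seed, following \cite{gross2018canonical}. First reduce to a single mutation: if $\overleftarrow{\mu}=\mu_{k_{r}}\cdots\mu_{k_{1}}$ with intermediate seeds $t_{0},t_{1},\dots,t_{r}=t$, then $\phi_{t_{0},t}=\phi_{t_{0},t_{1}}\circ\cdots\circ\phi_{t_{r-1},t_{r}}$ and $\overleftarrow{\mu}^{*}=\mu_{k_{1}}^{*}\circ\cdots\circ\mu_{k_{r}}^{*}$, so it is enough to prove $\mu_{k}^{*}\theta^{t'}_{t',g}=\theta^{t_{0}}_{t_{0},\phi_{t_{0},t'}g}$ for one mutation $t'=\mu_{k}t_{0}$ and all $g\in\Mc(t')$; the general statement then follows by induction on $r$, applying the single-mutation case to the pair $(t_{s},t_{s+1})$ and composing.

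For a single mutation, the key input from \cite{gross2018canonical} is that $\frD_{t_{0}}$ and $\frD_{t'}$ are, up to equivalence, the same scattering diagram expressed in two charts. The piecewise-linear map $T_{k}$, which by the Lemma preceding the theorem agrees on lattice points with the tropical transformation $\phi_{t',t_{0}}$, identifies the underlying spaces $M(t_{0})_{\R}$ and $M(t')_{\R}$; the mutation birational map $\mu_{k}^{*}$, which intertwines the faithful $G$-actions on $\widehat{\LP(t_{0})}$ and $\widehat{\LP(t')}$ and hence carries wall-crossing operators to wall-crossing operators, identifies the structures; under this identification the positive chamber $\cC^{+}$ of $\frD_{t'}$ corresponds to that of $\frD_{t_{0}}$. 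Since a scattering diagram is determined up to equivalence by its incoming walls, and both carry the incoming data $(e_{j}^{\bot},\exp(-d_{j}\mathrm{Li}_{2}(-y(t')_{j})))_{j\in I_{\ufv}}$, the two diagrams are equivalent.

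Now theta functions are intrinsic to a scattering diagram: $\theta_{Q,m}$ is characterized by being pointed at $m$ near $Q$ and transforming by the wall-crossing $\frp_{\gamma}$ when $Q$ moves. Hence the equivalence above carries $\theta^{t'}_{\cC^{+},g}=\theta^{t'}_{t',g}$ to the theta function of $\frD_{t_{0}}$ based at $\cC^{+}$ whose leading degree is the label $g$ read in the $t_{0}$-chart via $T_{k}^{-1}$, namely $\phi_{t_{0},t'}g$; that is, to $\theta^{t_{0}}_{\cC^{+},\phi_{t_{0},t'}g}=\theta^{t_{0}}_{t_{0},\phi_{t_{0},t'}g}$, the identity holding in $\widehat{\LP(t_{0})}$ after applying $\mu_{k}^{*}$ to pass from the $t'$-chart to the $t_{0}$-chart. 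Combined with the reduction of the first paragraph, this proves the theorem.

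The main obstacle is the naturality input of the second paragraph. The delicate point is that being an \emph{incoming} wall is a seed-dependent property — under $T_{k}$ some non-incoming walls of $\frD_{t_{0}}$ can acquire the shape of incoming walls in the $t'$-chart, and the wall $e_{k}^{\bot}$ changes role — so one cannot match incoming walls naively. \cite{gross2018canonical} handle this by showing that the transported diagram and $\frD_{t'}$ have the same path-ordered product $\frp_{\cC^{-},\cC^{+}}$, which by their structure theorem pins down the equivalence class, together with a direct verification that $e_{k}^{\bot}$ is a common incoming wall. Reproving this reproduces a substantial part of \cite[\S1.3--1.4]{gross2018canonical}, so here one simply cites it; given this input, the rest of the argument is formal.
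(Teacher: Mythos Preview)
Your proposal is correct and follows essentially the same route as the paper: both reduce to a single mutation and then defer to the results of \cite{gross2018canonical} establishing that the cluster scattering diagrams for $t_{0}$ and $\mu_{k}t_{0}$ are equivalent under $T_{k}$ and $\mu_{k}^{*}$, together with the fact that theta functions are intrinsic to the diagram (the CPS Lemma). The only presentational difference is that the paper makes the decomposition $\mu_{k}^{*}=\rho_{k,\varepsilon}\tau_{k,\varepsilon}$ explicit and cites separate GHKK results for the monomial part, the Hamiltonian part, and the tropical relabeling, whereas you package these as a single equivalence-of-diagrams statement.
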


\begin{proof}

It seems that \cite{gross2018canonical} does not present this result
exactly in this way. Nevertheless, it is known that theta functions
are pointed at the tropical points by \cite{gross2018canonical},
and the claim follows.

To prove the statement, one will need the ``CPS Lemma'' \cite[Section 4]{carl2010tropical}
which says the theta functions are sent to theta functions by wall
crossings, as well as \cite[Theorem 3.5 Proposition 3.6 Proposition 4.3 Theorem 4.4]{gross2018canonical}.
These results together tell us the construction of theta functions
is compatible with monomial automorphisms $\tau_{k,\epsilon}$, Hamiltonian
automorphisms (wall-crossings) $\rho_{k,\epsilon}$, and the tropical
transformation $T_{k}=\phi_{\mu_{k}t_{0},t_{0}}$ associated to the
mutation of the initial seeds. Then it is also compatible with mutations
because $\mu_{k}^{*}=\rho_{k,\epsilon}\tau_{k,\epsilon}$.

\end{proof}

\begin{Thm}\cite[Proposition 8.25]{gross2018canonical}\label{thm:theta_basis}

Let there be given an injective-reachable initial seed $t_{0}$. Then
the theta functions $\theta_{g},\ \forall g\in\Mc(t_{0})$, are
pointed Laurent polynomials in $\LP(t_{0})$. In addition, they form
a basis of the upper cluster algebra $\upClAlg$, called the theta
basis.

\end{Thm}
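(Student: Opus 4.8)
The plan is to obtain this as a direct instance of the basis criterion in Theorem~\ref{thm:tropical_finite_decomposition}, applied to the collection $\cS=\{\theta_{g}\mid g\in\Mc(t_{0})\}$ of theta functions of the initial seed $t_{0}$. Two inputs are needed: that each $\theta_{g}$ actually lies in $\upClAlg$ (as opposed to merely being a formal Laurent series), and that $\cS$ is compatibly pointed at the seeds occurring along a mutation sequence $\overleftarrow{\mu}$ realizing $t_{0}=\overleftarrow{\mu}\,t_{0}[-1]$.

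For the compatibility, recall from the appendix that $\theta_{g}=\theta^{t_{0}}_{\cC^{+},g}$ is pointed at $g$ in $\widehat{\LP(t_{0})}$, and that by Theorem~\ref{thm:theta_compatable} --- whose proof combines the CPS Lemma of \cite{carl2010tropical} with the mutation invariance of the cluster scattering diagram \cite{gross2018canonical} --- for every seed $t\in\Delta^{+}$ the transported series $\overleftarrow{\mu}^{*}\theta_{g}$ is a well-defined formal Laurent series in $\widehat{\LP(t)}$ pointed at $\phi_{t,t_{0}}g$. Hence $\cS$ is compatibly pointed, in the formal-series sense, at every seed of $\Delta^{+}$, in particular at all seeds along $\overleftarrow{\mu}$. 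The Laurent polynomiality $\theta_{g}\in\LP(t_{0})$ for the injective-reachable seed $t_{0}$ is exactly the content of \cite[Proposition~8.25]{gross2018canonical}; within the present framework it can also be read off from the compatibility at the single pair $\{t_{0},t_{0}[-1]\}$: writing $\theta_{g}=\sum_{n\in\yCone^{\geq0}(t_{0})}c_{n}\,x(t_{0})^{g+\tB(t_{0})n}$ with $c_{0}=1$, Lemma~\ref{lem:linear_transform_monomial} and Proposition~\ref{prop:order_reverse} show that $\overleftarrow{\mu}^{*}(x(t_{0})^{g+\tB(t_{0})n})$ is pointed at $\psi_{t_{0}[-1],t_{0}}(g)-\tB(t_{0}[-1])(\Perm_{\sigma}n)$ in $\widehat{\LP(t_{0}[-1])}$, so that the degrees of $\overleftarrow{\mu}^{*}\theta_{g}$ are forced both above (by $\phi_{t_{0}[-1],t_{0}}g$) and below, whence $\theta_{g}$ has a bidegree and, by the Finite Interval Lemma~\ref{lem:finite_interval}, only finitely many Laurent monomials. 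Applying the same reasoning at every seed, one gets $\theta_{g}\in\bigcap_{t\in\Delta^{+}}\LP(t)=\upClAlg$.

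With $\cS\subset\upClAlg$ a collection compatibly pointed at the seeds along $\overleftarrow{\mu}$, Theorem~\ref{thm:tropical_finite_decomposition} applies verbatim (its proof invokes Proposition~\ref{prop:invariant_decomposition} along adjacent seeds of $\overleftarrow{\mu}$, whose hypotheses are now met), and concludes that $\cS=\{\theta_{g}\mid g\in\Mc(t_{0})\}$ is a $\Z$-basis of $\upClAlg$ --- the theta basis. I expect the one genuinely delicate step to be the Laurent polynomiality: turning the heuristic ``bounded above and below $\Rightarrow$ finite'' into a rigorous argument requires knowing that $\overleftarrow{\mu}^{*}\theta_{g}$ is genuinely a degree-bounded formal Laurent series and that no cancellation destroys the bidegree, which is where injective-reachability is essential and which is supplied, through the EGM arguments, by \cite[Section~8]{gross2018canonical}. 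Everything downstream of that is a mechanical application of the tropical bookkeeping of Sections~\ref{sec:bidegrees_and_support}--\ref{sec:Properties-of--decompositions}, and the resulting argument reproves \cite[Proposition~8.25]{gross2018canonical} in the injective-reachable case without the convexity and polytope estimates of the original.
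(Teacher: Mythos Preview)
Your approach is correct and is precisely the alternative route the paper itself advertises in the Remark following Theorem~\ref{thm:tropical_finite_decomposition}. The paper's own proof of Theorem~\ref{thm:theta_basis} is simply a citation: injective-reachability gives a large cluster complex in the sense of \cite[Definition~8.23]{gross2018canonical}, hence the EGM condition holds, and both Laurent polynomiality and the basis statement are read off from \cite[Proposition~8.25]{gross2018canonical}. You instead invoke GHKK only for the Laurent polynomiality and then replace their basis argument by the paper's criterion Theorem~\ref{thm:tropical_finite_decomposition} together with the compatibility of theta functions from Theorem~\ref{thm:theta_compatable}.

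Two remarks on what you wrote. First, your bidegree sketch for Laurent polynomiality is not a proof as stated: the embedding $\iota:\LP(t_{0})\hookrightarrow\widehat{\LP(t_{0}[-1])}$ is defined only on Laurent \emph{polynomials}, so applying it termwise to a possibly infinite series $\theta_{g}\in\widehat{\LP(t_{0})}$ is not a priori legitimate; the identity in Theorem~\ref{thm:theta_compatable} is established via the scattering-diagram mutation, not via $\iota$, and does not by itself bound the support of $\theta_{g}$. You acknowledge this, and you are right that the gap is filled by \cite[Section~8]{gross2018canonical}. Second, your closing claim slightly overreaches: since Laurent polynomiality is itself part of the content of \cite[Proposition~8.25]{gross2018canonical} and you import it from there, you have not reproved that proposition without the EGM/polytope machinery; what you have bypassed is only the \emph{spanning} argument, replacing it by the $\prec_{t}$-decomposition analysis of Sections~\ref{sec:bidegrees_and_support}--\ref{sec:Properties-of--decompositions}.
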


\begin{proof}

Because $t_{0}$ is injective-reachable, the cluster algebra has large
cluster complex in the sense of \cite[Definition 8.23]{gross2018canonical}.
In particular, it verifies the EGM condition (enough global monomials).
The claims follow from \cite[Proposition 8.25]{gross2018canonical}.

\end{proof}

\subsection{Weak genteelness and the proofs\label{sec:proof_weak-genteelness}}

We shall show that, by combining known results from the cluster theory,
the scattering diagrams and some theta functions for skew-symmetric
injective-reachable seeds have a representation theoretic description
by Bridgeland \cite[Theorem 1.4]{bridgeland2017scattering}, see also
\cite{cheung2016tropical}. Related definitions could be found in
Section \ref{sec:Theta-functions}.

Let there be given an injective-reachable skew-symmetric initial seed
$t_{0}$ subject to the full rank assumption. We take the corresponding principal quiver with a non-degenerate
potential $(Q,W)$. Omit the symbol $t_{0}$ for simplicity.

We take the stability scattering diagram $\frD_{\ufv}^{(st)}$ constructed
by integrating moduli of semistable modules of $J_{(Q,W)}$ introduced
by \cite[Section 11]{bridgeland2017scattering} . The walls $(\frd,\frp_{\frd})$
of $\frD_{\ufv}^{(st)}$ live in $N_{\ufv}(t_{0})_{\R}^{*}=\Hom_{\Z}(N_{\ufv}(t_{0}),\R)$.
We define the stability scattering diagram $\frD^{(st)}$ to be the
collection of walls $(\frd\oplus\R^{I_{\fv}},\frp_{\frd})$ which
live in $M(t_{0})_{\R}$. As in Section \ref{sec:Theta-functions},
we define the action\footnote{Our action is slightly different than the one in \cite[Section 10.3]{bridgeland2017scattering}
so that it is faithful.} of the Poisson algebra $A=\Z[\N^{I_{\ufv}}]$ on $\widehat{\LP(t_{0})}$
such that $\{y^{n},x^{m}\}=\langle m,n\rangle x{}^{m+\tB n}$. Then
the corresponding group $G$ and its action on $\widehat{\LP(t_{0})}$
are given as in Section \ref{sec:Theta-functions}. 

The scattering diagram $\frD^{(st)}$ can be described via representation
theory \cite[Theorem 1.1 Theorem 1.3]{bridgeland2017scattering}.
Moreover, Bridgeland has the following description of theta functions
in $\frD^{(st)}$ \cite[Theorem 1.4]{bridgeland2017scattering}:

\begin{align*}
\theta_{Q,m}^{(st),t_{0}} & =x^{m}\cdot(\sum K(n,m,Q)\cdot x^{\tB\cdot n})
\end{align*}
where the base point $Q$ does not belong to any non-trivial wall,
$m\in\N^{I_{\ufv}}$, and $K(n,m,Q)$ is the Euler characteristic
of the quotient module Grassmannian $\Quot_{n}U(m,Q)$ consisting
of $n$-dimensional quotient modules of certain module $U(m,Q)$ in
a tilted heart, see \cite[Section 8.4]{bridgeland2017scattering}
for details. A representation theoretic formula for other theta functions
is unknown at the moment. In particular, by taking $Q$ to be a generic
point in $\cC^{-}$ and $m=f_{i}$, the formula reads

\begin{align*}
\theta_{t_{0}[1],f_{i}}^{(st),t_{0}} & =\begin{cases}
x_{i}\cdot(\sum\chi(\Quot_{n}P_{i})\cdot x^{\tB\cdot n}) & i\in I_{\ufv}\\
x_{i} & i\in I_{\fv}
\end{cases}
\end{align*}
where $P_{i}$, $i\in I_{\ufv}$, corresponds to the $i$-th projective
module of $J_{(Q,W)}$.

\begin{Def}[Genteelness]\cite{bridgeland2017scattering}

We say the Jacobian algebra $J_{(Q,W)}$ is genteel, if the only modules
$V$ such that $V$ are $p^{*}(\dim V)$-stable are those simples
$S_{k}$, $k\in I_{\ufv}$.

\end{Def}

Let $\frD$ denote the cluster scattering diagram associated to $t_{0}$,
see Section \ref{sec:Theta-functions}. The following property is
a weaker version of the genteelness.

\begin{Def}[Weak genteelness]

We say the Jacobian algebra $J_{(Q,W)}$ is weakly
genteel, if $\frD^{(st)}$ and $\frD$ are equivalent.

\end{Def}

Given a consistent scattering diagram $\frD$ live in $\R^{I}$, let
us construct the opposite scattering diagram $\frD^{op}$ in $\R^{I}$,
see Example \ref{eg:opposite_scattering_diagram}.

Recall that $A=\Z[y^{n}]_{n\in\N^{I_{\ufv}}}$ is a Poisson algebra
such that $\{y^{n},y^{n'}\}=-\{n,n'\}y^{n+n'}$ and $\frg=A_{>0}$,
see Section \ref{sec:Theta-functions}. Define the opposite Poisson
algebra $A^{op}=\Z[y^{n}]$ with the Poisson bracket $\{\ ,\ \}^{op}=-\{\ ,\ \}$
and Lie algebra $\frg{}^{op}=A_{>0}^{op}$. We have $\iota:A\simeq A{}^{op}$
as $\Z$-modules such that $\iota(y^{n})=y^{n}$.

\begin{Lem}\label{lem:opposite_group}

For any $u,v,w\in\frg$ such that $\exp w=\exp u\cdot\exp v$ we have
$\exp\iota w=\exp\iota v\cdot\exp\iota u$ in $G^{op}:=\exp\hat{\frg}^{op}$ 

\end{Lem}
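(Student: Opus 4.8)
The statement to prove is Lemma \ref{lem:opposite_group}: for $u,v,w\in\frg$ with $\exp w=\exp u\cdot\exp v$ in $G=\exp\hat\frg$, one has $\exp\iota w=\exp\iota v\cdot\exp\iota u$ in $G^{op}=\exp\hat\frg^{op}$, where $\iota\colon A\simeq A^{op}$ is the identity on the underlying $\Z$-module and $\{\ ,\ \}^{op}=-\{\ ,\ \}$.

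The plan is to reduce everything to the Baker--Campbell--Hausdorff formula. Since $G$ and $G^{op}$ are defined via the BCH formula applied to the pronilpotent Lie algebras $\hat\frg$ and $\hat\frg^{op}$, the relation $\exp w=\exp u\cdot\exp v$ in $G$ is literally the statement $w=\mathrm{BCH}(u,v)$, i.e. $w$ is the universal Lie series $u+v+\tfrac12[u,v]+\cdots$ evaluated with the bracket $[\ ,\ ]=\{\ ,\ \}$ of $\frg$; this makes sense because the grading by $|n|$ ensures only finitely many terms survive in each graded piece, so all sums converge in $\hat\frg$. Likewise $\exp\iota v\cdot\exp\iota u=\exp(\mathrm{BCH}(\iota v,\iota u))$ in $G^{op}$, where now BCH is evaluated with the bracket $[\ ,\ ]^{op}=\{\ ,\ \}^{op}=-\{\ ,\ \}$. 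So the lemma amounts to the identity $\iota w=\mathrm{BCH}^{op}(\iota v,\iota u)$, and since $\iota$ is the identity map on elements it suffices to show $\mathrm{BCH}(u,v)$ computed with bracket $\{\ ,\ \}$ equals $\mathrm{BCH}(v,u)$ computed with bracket $-\{\ ,\ \}$.

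First I would record the elementary fact about BCH: if $\frh$ is a Lie algebra with bracket $[\ ,\ ]$, write $\beta_{[\ ,\ ]}(u,v)$ for the BCH series. The two inputs I need to relate are $\beta_{[\ ,\ ]}(u,v)$ and $\beta_{-[\ ,\ ]}(v,u)$. Each homogeneous component of the BCH series is a $\Q$-linear combination of iterated brackets; replacing $[\ ,\ ]$ by $-[\ ,\ ]$ multiplies a bracket monomial of bracket-length $k$ (i.e.\ $k-1$ bracket operations, built from $k$ letters) by $(-1)^{k-1}$. On the other hand, the well-known symmetry $\beta(u,v)=-\beta(-v,-u)$ of the BCH series (which follows from $\exp(u)\exp(v)=(\exp(-v)\exp(-u))^{-1}$), combined with the fact that a monomial of bracket-length $k$ is homogeneous of total degree $k$ in $(u,v)$ so scales by $(-1)^k$ under $(u,v)\mapsto(-u,-v)$, gives exactly the sign bookkeeping needed: $\beta_{[\ ,\ ]}(v,u)$ with each length-$k$ term multiplied by $(-1)^{k-1}\cdot(-1)$ reproduces $\beta_{[\ ,\ ]}(u,v)$. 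Unwinding, $\beta_{-[\ ,\ ]}(v,u)=\beta_{[\ ,\ ]}(u,v)$, which is precisely $\iota w$. I would then translate this back: apply it with $\frh=\hat\frg$, bracket $\{\ ,\ \}$, and note $\iota$ transports the bracket $\{\ ,\ \}$ on $\hat\frg$ to $-\{\ ,\ \}=\{\ ,\ \}^{op}$ on $\hat\frg^{op}$, so $\exp\iota w=\exp(\beta_{\{\ ,\ \}^{op}}(\iota v,\iota u))=\exp\iota v\cdot\exp\iota u$ in $G^{op}$.

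The main obstacle is getting the sign combinatorics in the BCH manipulation exactly right — in particular correctly stating and using the identity $\exp u\,\exp v=(\exp(-v)\,\exp(-u))^{-1}$ at the level of the formal group and matching the degree-$k$ scaling with the bracket-length-$(k)$ sign $(-1)^{k-1}$. An alternative, perhaps cleaner, route avoiding explicit BCH bookkeeping: observe that $G^{op}$ can be identified with the opposite group of $G$ via the map $\exp w\mapsto (\exp w)^{-1}=\exp(-w)$ only after also flipping the bracket; more precisely, the map $\frg\to\frg^{op}$, $x\mapsto -x$ is a Lie algebra isomorphism (it sends $\{x,y\}$ to $-\{-x,-y\}=-\{x,y\}=\{x,y\}^{op}$... wait, sign) — I would double-check which of $x\mapsto x$ or $x\mapsto -x$ is the Lie isomorphism $\frg\cong\frg^{op}$ and use the induced group isomorphism, which is anti-multiplicative or multiplicative accordingly, to deduce the claim in one line. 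Either way the content is purely formal; I expect no difficulty beyond sign vigilance, and would present whichever version is shortest.
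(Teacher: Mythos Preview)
Your proposal is correct and takes essentially the same approach as the paper, which simply states that the claim follows from the Baker--Campbell--Hausdorff formula defining the multiplication on $G$ and $G^{op}$. You have supplied the sign bookkeeping that the paper omits; your identity $\beta_{-[\ ,\ ]}(v,u)=\beta_{[\ ,\ ]}(u,v)$ is exactly the content, and your alternative route via the Lie isomorphism $x\mapsto -x$ (your hesitant computation was in fact correct) gives the same conclusion in one line.
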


\begin{proof}The claim follows from the Baker–Campbell–Hausdorff
formula which defines the group multiplication on $G$ and $G^{op}$.

\end{proof}

Let $\kappa$ denote the isomorphism $m\mapsto-m$ on $\R^{I}$ as
well as the induced automorphism $\kappa(x^{m})=x^{\kappa m}$ on
$\widehat{\Z[x^{m}]_{m\in\Z^{I}}}$. The opposite scattering diagram
$\frD^{op}$ in $\R^{I}$ is defined to be the collection of walls
$(\kappa\frd\subset n_{\frd}^{\bot},\exp\iota u)$ for any wall $(\frd\subset n_{\frd}^{\bot},\exp u)\in\frD$.
Given any path $\gamma$, let $\frp_{\gamma}^{op}$ denote the corresponding
wall crossing operator in $\frD^{op}$. 

\begin{Lem}\label{lem:compare_wall_crossing}

(1) If $\frp_{\gamma^{-1}}=\exp w$, then $\frp_{\kappa\gamma}^{op}=\exp\iota w$.

(2) $\frD^{op}$ is consistent.

\end{Lem}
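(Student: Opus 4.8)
The plan is to reduce the statement to the definition of the wall-crossing operator together with Lemma \ref{lem:opposite_group}, tracking how the triple (wall, crossing sign, crossing order) transforms under the involution $\kappa$ and under path reversal. First I would record two elementary observations about $\kappa\colon m\mapsto-m$: it carries a wall $(\frd\subset n^\bot,\exp u)$ of $\frD$ to the wall $(\kappa\frd\subset n^\bot,\exp\iota u)$ of $\frD^{op}$, the inclusion $\kappa\frd\subset n^\bot$ being automatic since $\langle\kappa m,n\rangle=-\langle m,n\rangle$; and, being a linear isomorphism mapping the walls of $\frD$ and their faces bijectively onto those of $\frD^{op}$, it preserves genericity of paths, so $\frp^{op}_{\kappa\gamma}$ is defined whenever $\frp_\gamma$ is.

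For (1), let $\gamma$ cross walls $\frd_1,\dots,\frd_r$ at times $t_1\le\cdots\le t_r$ with signs $\varepsilon_i=-\sign\langle\gamma'(t_i),n_i\rangle$ and write $\frp_{\frd_i}=\exp u_i$. The reversed path $\gamma^{-1}$ crosses the same walls in the opposite order with signs $-\varepsilon_i$, because $(\gamma^{-1})'(1-t_i)=-\gamma'(t_i)$, so $\exp w:=\frp_{\gamma^{-1}}=(\exp u_1)^{-\varepsilon_1}\cdots(\exp u_r)^{-\varepsilon_r}$. On the other hand $\kappa\gamma$ crosses $\kappa\frd_1,\dots,\kappa\frd_r$ in the \emph{original} order $t_1\le\cdots\le t_r$, with signs $-\sign\langle-\gamma'(t_i),n_i\rangle=-\varepsilon_i$, so $\frp^{op}_{\kappa\gamma}=(\exp\iota u_r)^{-\varepsilon_r}\cdots(\exp\iota u_1)^{-\varepsilon_1}$. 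Since $\iota$ is $\Z$-linear, $\iota(-\varepsilon_i u_i)=-\varepsilon_i\iota(u_i)$, so applying Lemma \ref{lem:opposite_group} inductively to the $r$-fold product defining $\exp w$ (the lemma reverses the order of a product of exponentials upon applying $\iota$) gives $\exp\iota w=(\exp\iota u_r)^{-\varepsilon_r}\cdots(\exp\iota u_1)^{-\varepsilon_1}=\frp^{op}_{\kappa\gamma}$, which is (1). The case of infinitely many wall crossings follows by passing to the same inverse limit used to define $\frp_\gamma$.

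For (2), given any generic path $\delta$ in $\R^{I}$, put $\gamma=\kappa\delta$, so $\delta=\kappa\gamma$ and (1) yields $\frp^{op}_\delta=\exp\iota w$ with $\exp w=\frp_{\gamma^{-1}}=\frp_{(\kappa\delta)^{-1}}$. Consistency of $\frD$ makes $\frp_{(\kappa\delta)^{-1}}$ depend only on the endpoints of $\kappa\delta$, hence only on those of $\delta$ as $\kappa$ is a fixed bijection; since $\exp$, $\log$ and $\iota$ are bijective, $\exp\iota w$ then depends only on the endpoints of $\delta$. Thus $\frp^{op}_\delta$ is independent of the choice of $\delta$ among paths with given endpoints, i.e. $\frD^{op}$ is consistent. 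I do not expect a real obstacle here; the only delicate point is the bookkeeping of the order reversal in passing from $\frp_{\gamma^{-1}}$ to $\frp^{op}_{\kappa\gamma}$, and that is exactly what Lemma \ref{lem:opposite_group} is designed to handle.
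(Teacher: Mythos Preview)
Your proof is correct and follows essentially the same approach as the paper's: compute $\frp_{\gamma^{-1}}$ and $\frp^{op}_{\kappa\gamma}$ as ordered products of wall-crossing operators, observe that the sign change under $\kappa$ and the order reversal under $(\ )^{-1}$ match up precisely so that Lemma \ref{lem:opposite_group} applies. Your version spells out the sign bookkeeping and the genericity preservation more explicitly than the paper does, but the argument is the same.
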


\begin{proof}

(1) For any given generic path $\gamma$ such that $\frp_{\gamma}=\frp_{\frd_{r}}^{\varepsilon_{r}}\cdots\frp_{\frd_{1}}^{\varepsilon_{1}}$,
the wall crossing operator in $\frD^{op}$ along $\kappa\gamma$ is
$$\frp_{\kappa\gamma}^{op}=\exp(-\varepsilon_{r}\iota\log\frp_{\frd_{r}})\cdots\exp(-\varepsilon_{1}\iota\log\frp_{\frd_{1}}).$$
The claim follows from the equality $\frp_{\frd_{1}}^{-\varepsilon_{1}}\cdots\frp_{\frd_{r}}^{-\varepsilon_{r}}=\frp_{\gamma}^{-1}=\frp_{\gamma^{-1}}$
and Lemma \ref{lem:opposite_group}.

(2) The claim follows from (1) by taking all paths.

\end{proof}

\begin{Prop}

Let $t_{0}^{op}$ denote the seed opposite to $t_{0}$ such that $\tB(t_{0}^{op})=-\tB$
and we take the same strictly positive integers $d_{i}$. Let $\frD(t_{0})$
and $\frD(t_{0}^{op})$ denote the cluster scattering diagrams associated
to $t_{0},t_{0}^{op}$ respectively. Then $\frD(t_{0}^{op})$ is equivalent
to the opposite scattering diagram $\frD(t_{0})^{op}$, where we identify
$\Mc(t_{0})\simeq\Z^{I}\simeq\Mc(t_{0}^{op})$ such that $f_{i}(t_{0})\mapsto f_{i}(t_{0}^{op})$
and $N(t_{0})\simeq Z^{I}\simeq N(t_{0}^{op})$ such that $e_{i}(t_{0})\mapsto e_{i}(t_{0}^{op})$.

\end{Prop}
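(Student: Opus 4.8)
The plan is to reduce the statement to a direct comparison of the defining incoming walls of the two cluster scattering diagrams, using the uniqueness of the consistent completion. First I would recall that, by \cite{gross2018canonical}, a consistent scattering diagram is determined up to equivalence by its set of incoming walls (and in fact by $\frp_{\cC^-,\cC^+}$), so it suffices to match incoming data. The cluster scattering diagram $\frD(t_0)$ has incoming walls $(e_k^\bot,\exp(-d_k\,\mathrm{Li}_2(-y(t_0)_k)))$ for $k\in I_{\ufv}$, living in $M(t_0)_\R\simeq\R^I$ with the Poisson algebra $A=\Z[\yCone^{\geq0}(t_0)]$ whose bracket uses $\{n,n'\}$ determined by $\tB(t_0)$; and $\frD(t_0^{op})$ has the formally identical incoming walls but with the bracket of $A^{op}$ determined by $\tB(t_0^{op})=-\tB(t_0)$, i.e.\ with $\{\ ,\ \}$ replaced by $-\{\ ,\ \}=\{\ ,\ \}^{op}$.

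Next I would unwind the construction of $\frD(t_0)^{op}$ on these incoming walls. Under $\kappa:m\mapsto -m$, the hyperplane $e_k^\bot$ is sent to itself (it is a linear subspace), and the wall-crossing operator $\exp u$ with $u\in y(t_0)^{n_0}\Z[[y(t_0)^{n_0}]]$ is sent to $\exp\iota u$, where $\iota$ is the identity on monomials but intertwines the brackets of $A$ and $A^{op}$ (Lemma on the opposite group, and Lemma \ref{lem:compare_wall_crossing}). Since the generator $-d_k\mathrm{Li}_2(-y_k)$ is a power series in the single variable $y_k$, and $\iota$ fixes such monomials, the incoming wall $(e_k^\bot,\exp(-d_k\mathrm{Li}_2(-y(t_0)_k)))$ of $\frD(t_0)$ maps to the wall $(e_k^\bot,\exp(-d_k\mathrm{Li}_2(-y^{e_k})))$ now regarded inside $G^{op}=\exp\hat{\frg}^{op}$. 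This is exactly an incoming wall of $\frD(t_0^{op})$: the identifications $f_i(t_0)\mapsto f_i(t_0^{op})$ and $e_i(t_0)\mapsto e_i(t_0^{op})$ identify $M(t_0)_\R$ with $M(t_0^{op})_\R$ and $N_{\ufv}$ with $N_{\ufv}$, and they intertwine the Poisson bracket on $A^{op}$ (from $t_0$) with the Poisson bracket on $A$ associated to $t_0^{op}$, because $\tB(t_0^{op})=-\tB(t_0)$ flips the sign of $\{\ ,\ \}$, which is precisely what passing to $A^{op}$ does. I would also check the incoming condition is preserved: $p^*(e_k)\in e_k^\bot$ for $t_0$ translates under $\kappa$ and the sign flip to $p^*_{op}(e_k)\in e_k^\bot$ for $t_0^{op}$ (here $p^*_{op}=-p^*$, and $e_k^\bot$ is linear, so membership is unaffected).

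Then I would invoke uniqueness: by \cite{gross2018canonical}, up to equivalence there is a unique consistent scattering diagram with a prescribed set of incoming walls and no other incoming walls. By Lemma \ref{lem:compare_wall_crossing}(2), $\frD(t_0)^{op}$ is consistent; by construction its incoming walls are precisely the images described above (the outgoing walls of $\frD(t_0)$, being non-incoming, map to non-incoming walls of $\frD(t_0)^{op}$ — this is the one point requiring a small argument, since $\kappa$ could a priori turn an outgoing wall into an incoming one, but the incoming condition $p^*(n_0)\in\frd$ for $\frd\subset n_0^\bot$ is a linear condition stable under $m\mapsto -m$ combined with the sign flip of $p^*$). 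Hence $\frD(t_0)^{op}$ is, up to equivalence, the unique consistent scattering diagram with incoming walls matching those of $\frD(t_0^{op})$, so $\frD(t_0^{op})\sim\frD(t_0)^{op}$.

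The main obstacle I expect is purely bookkeeping: being scrupulous about three simultaneous sign reversals — the reflection $\kappa$ on $M_\R$, the passage from $A$ to $A^{op}$ on the Poisson bracket, and the sign flip $\tB\mapsto -\tB$ defining $t_0^{op}$ — and verifying they cancel in exactly the right way so that incoming walls go to incoming walls with the correct wall-crossing automorphisms. There is no deep new idea beyond the opposite-diagram construction already set up in Lemmas \ref{lem:opposite_group} and \ref{lem:compare_wall_crossing}; the work is to confirm the dictionary $f_i(t_0)\leftrightarrow f_i(t_0^{op})$, $e_i(t_0)\leftrightarrow e_i(t_0^{op})$ matches the Poisson structures and the $\mathrm{Li}_2$-generators on the nose, and then to cite the uniqueness theorem of \cite{gross2018canonical} to conclude.
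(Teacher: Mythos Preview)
Your proposal is correct and follows essentially the same route as the paper: identify $A^{op},\frg^{op}$ with the Poisson data for $t_0^{op}$, observe that $\frD(t_0)^{op}$ is consistent with incoming walls $(e_k^\bot,\exp(-d_k\mathrm{Li}_2(-y_k)))$, and invoke the uniqueness of the consistent completion. Your write-up is in fact more careful than the paper's---you explicitly verify that the incoming/outgoing dichotomy is preserved under $\kappa$ and the sign flip of $p^*$, a point the paper leaves implicit.
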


\begin{proof}

Notice that the bilinear form on $N(t_{0}^{op})$ is opposite to that
of $N(t_{0})$ under the identification. So we can view $A^{op}$
and $\frg^{op}$ in the construction of $\frD(t_{0})^{op}$ as $A(t_{0}^{op})$
and $\frg(t_{0}^{op})$ associated to $t_{0}^{op}$. Furthermore,
$\frD(t_{0})^{op}$ is consistent with the incoming walls are $(e_{k}^{\bot},\exp(-d_{k}\mathrm{Li}_{2}(-y_{k})))$.
Therefore, $\frD^{op}(t_{0})$ is equivalent to the cluster scattering
diagram $\frD(t_{0}^{op})$.

\end{proof}

The actions of $A$ and $A^{op}$ on $\widehat{\Z[x^{m}]_{m\in\Z^{I}}}$
are defined as in Section \ref{sec:Theta-functions} using the scattering
diagrams associated to the seeds $t_{0}$, $t_{0}^{op}$ respectively.

\begin{Lem}\label{lem:opposite_wall_crossing}

We have $\frp_{\kappa\gamma}^{op}(\kappa x^{m})=\kappa\frp_{\gamma}x^{m}$
for any path $\gamma$. 

\end{Lem}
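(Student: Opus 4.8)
The plan is to upgrade Lemma \ref{lem:compare_wall_crossing}(1), which compares $\frp_{\gamma^{-1}}$ and $\frp_{\kappa\gamma}^{op}$ as \emph{group elements}, into an identity of \emph{operators} on $\widehat{\LP(t_{0})}$, namely $\kappa\circ\frp_{\gamma}\circ\kappa^{-1}=\frp_{\kappa\gamma}^{op}$. Once this is established the statement is immediate: $\frp_{\kappa\gamma}^{op}(\kappa x^{m})=\kappa\,\frp_{\gamma}\,\kappa^{-1}(\kappa x^{m})=\kappa\,\frp_{\gamma}\,x^{m}$. To reduce the operator identity to Lie-algebra data, write $\frp_{\gamma}=\exp v$ with $v\in\widehat{\frg}$; then $\frp_{\gamma^{-1}}=\frp_{\gamma}^{-1}=\exp(-v)$, so Lemma \ref{lem:compare_wall_crossing}(1) (applied with $w=-v$) gives $\frp_{\kappa\gamma}^{op}=\exp(-\iota v)$ in $G^{op}$. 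Since the action of $G$ on $\widehat{\LP(t_{0})}$ is by $\exp$ of the derivation $\{v,\ \}$, and the action of $G^{op}$ is by $\exp$ of the derivation $\{\iota v,\ \}^{op}$ built from $\tB(t_{0}^{op})=-\tB$ (the actions being defined as in Section \ref{sec:Theta-functions}), everything reduces to the single Lie-algebra identity
\[
\kappa\circ\{y^{n},\ \}\circ\kappa^{-1}=-\{y^{n},\ \}^{op}\qquad\text{on }\widehat{\Z[x^{m}]_{m\in\Z^{I}}},\ \forall n\in\yCone^{\ge 0}(t_{0}),
\]
which extends $\Z$-linearly and continuously to $\kappa\circ\{v,\ \}\circ\kappa^{-1}=-\{\iota v,\ \}^{op}$ for all $v\in\widehat{\frg}$.

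The core of the proof is then a one-line computation. For any $m\in\Z^{I}$,
\[
\kappa\bigl(\{y^{n},\kappa^{-1}x^{m}\}\bigr)=\kappa\bigl(\{y^{n},x^{-m}\}\bigr)=\kappa\bigl(\langle -m,n\rangle\,x^{-m+\tB n}\bigr)=-\langle m,n\rangle\,x^{m-\tB n}=-\{y^{n},x^{m}\}^{op},
\]
using bilinearity of $\langle\ ,\ \rangle$ and $\tB(t_{0}^{op})=-\tB$. Here the two sign sources conspire: the reflection $\kappa\colon m\mapsto-m$ both flips $\langle m,n\rangle$ and turns the exponent shift $+\tB n$ into $-\tB n$, while the passage to the opposite structure already carries $\tB$ to $-\tB$; the net effect is exactly one overall minus sign. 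Exponentiating, $\kappa\circ\frp_{\gamma}\circ\kappa^{-1}=\exp\bigl(\kappa\{v,\ \}\kappa^{-1}\bigr)=\exp\bigl(-\{\iota v,\ \}^{op}\bigr)=\exp\bigl(\{-\iota v,\ \}^{op}\bigr)$, which is precisely the action of $\exp(-\iota v)=\frp_{\kappa\gamma}^{op}$, giving the desired operator identity.

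It remains to treat the case where $\gamma$ crosses infinitely many walls. There both $\frp_{\gamma}$ and $\frp_{\kappa\gamma}^{op}$ are defined as inverse limits over the graded pieces $\frg/\bigoplus_{|n|>k}\frg_{n}$, and both $\kappa$ (which does not touch the $y$-variables) and $\iota$ (which is the identity on monomials) are degree-preserving for the $\yCone^{\ge 0}(t_{0})$-grading, hence compatible with these truncations; so the operator identity proved above passes to the limit. The only genuinely delicate point is the sign bookkeeping just described — ensuring the exponential/derivation dictionary is applied consistently on the completion and that the lone minus sign is neither doubled nor cancelled — but no idea beyond Lemma \ref{lem:compare_wall_crossing} and the definitions of the two $G$-actions is required, so I expect this to be the full content of the argument.
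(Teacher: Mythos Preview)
Your proof is correct and follows essentially the same route as the paper: a one-line computation showing that conjugation by $\kappa$ sends the derivation $\{y^{n},\ \}$ to $-\{y^{n},\ \}^{op}$, then exponentiating and invoking Lemma \ref{lem:compare_wall_crossing}(1). The paper phrases the Lie-algebra identity as $\{\iota y^{n},\kappa x^{m}\}^{op}=\kappa\{-y^{n},x^{m}\}$ rather than your conjugation form, and omits your explicit discussion of the inverse-limit passage, but these are cosmetic differences.
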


\begin{proof}

Recall that the action of $A$ satisfy $\{y^{n},x^{m}\}:=\langle m,n\rangle x^{m+\tB n}$
and the action of $A^{op}$ satisfy $\{y^{n},x^{m}\}^{op}:=\langle m,n\rangle x^{m-\tB\cdot n}$.
Then, we have $\{\iota y^{n},\kappa x^{m}\}^{op}=-\langle m,n\rangle x^{-m-\tB n}=\kappa\{-y^{n},x^{m}\}$.
Therefore, $\exp(\iota w)(\kappa x^{m})=\kappa(\exp(-w)(x^{m}))$.
The claim follows from Lemma \ref{lem:compare_wall_crossing}(1).

\end{proof}

\begin{Eg}\label{eg:opposite_scattering_diagram}

Let $I=I_{\ufv}=\{1,2\}$ and $B(t_{0})=\left(\begin{array}{cc}
0 & -1\\
1 & 0
\end{array}\right)$, $\epsilon=-B(t_{0})$. The cluster scattering diagram $\frD=\frD(t_{0})$
in $M(t_{0})_{\R}=\R f_{1}\oplus\R f_{2}\simeq\R^{2}$ is given by
\begin{align*}
\frD & =\{(e_{1}^{\bot},\exp(-\mathrm{Li}_{2}(-y_{1})\},(e_{2}^{\bot},\exp(-\mathrm{Li}_{2}(-y_{2})),(\R_{\geq0}(1,-1),\exp(-\mathrm{Li}_{2}(-y_{1}y_{2}))\}
\end{align*}
where the Poisson bracket on $A=\Z[y_{1},y_{2}]$ satisfies $\{y_{i},y_{j}\}=-\epsilon_{ij}y_{i}y_{j}$.
By \cite{gross2018canonical}, we have $\exp(-\mathrm{Li}_{2}(y^{n}))(x^{m})=x^{m}(1+y^{n})^{\langle m,n\rangle}$.
Let $\gamma$ denote a path from $\cC^{+}$ to $\cC^{-}$. One checks
that, for $v_{i}=\tB e_{i}$,
\begin{align*}
\frp_{\gamma}x_{1} & =x_{1}(1+x^{v_{1}}+x^{v_{1}+v_{2}})\\
\frp_{\gamma}x_{2} & =x_{2}(1+x^{v_{2}})\\
\frp_{\gamma^{-1}}x_{1}^{-1} & =x_{1}^{-1}(1+x^{v_{1}})\\
\frp_{\gamma^{-1}}x_{2}^{-1} & =x_{2}^{-1}(1+x^{v_{2}}+x^{v_{1}+v_{2}})
\end{align*}

The opposite scattering diagram is given by

\begin{align*}
\frD^{op} & =\{(e_{1}^{\bot},\exp(-\mathrm{Li}_{2}(-y_{1})\},(e_{2}^{\bot},\exp(-\mathrm{Li}_{2}(-y_{2})),(\R_{\geq0}(-1,1),\exp(-\mathrm{Li}_{2}(-y_{1}y_{2}))\}
\end{align*}
and the Poisson bracket on $A^{\op}=\Z[y_{1},y_{2}]$ satisfies $\{y_{i},y_{j}\}=\epsilon_{ij}y_{i}y_{j}$.
The opposite seed $t_{0}^{op}$ has $B(t_{0}^{op})=-B(t_{0})$, $\epsilon(t_{0}^{op})=-B(t_{0}^{op})$.
The corresponding cluster scattering diagram is just $\frD^{op}$.
One checks that 
\begin{align*}
\frp_{\kappa\gamma}^{op}x_{1}^{-1} & =x_{1}^{-1}(1+x^{-v_{1}}+x^{-v_{1}-v_{2}})\\
\frp_{\kappa\gamma}^{op}x_{2}^{-1} & =x_{2}^{-1}(1+x^{-v_{2}})
\end{align*}

\end{Eg}

\begin{proof}[Proof of Theorem \ref{thm:genteel}]

We refer the reader to \cite{gross2018canonical}\cite{bridgeland2017scattering}
and \cite{Nagao10} for details of the related notions below.

As in \cite{gross2018canonical}, replacing $t_0$ by a principal coefficient seed $t_0^{\prin}$ by adding principal framing vertices $i'$ for all $i\in I$ if necessary, we first assume that the seed $t_0$ satisfies the full rank assumption. 

Recall that the equivalence classes of the consistent scattering diagrams
$\frD$, $\frD^{(st)}$ are determined by the corresponding wall crossing
operators $\mathfrak{p}_{t_{0}[1],t_{0}}$ and $\mathfrak{p}_{t_{0}[1],t_{0}}^{(st)}$
respectively. Because $G$ acts faithfully on $\widehat{\LP(\Mc(t_{0}))}$,
it suffices to show that $\mathfrak{p}_{t_{0}[1],t_{0}}$ and $\mathfrak{p}_{t_{0}[1],t_{0}}^{(st)}$
have the same action.

Because $\frD$ is a cluster scattering diagram, for any index $i$,
$\theta_{-f_{i}}$ agrees with the localized cluster variable 
\begin{align*}
\theta_{-f_{i}}=\begin{cases}
x_{i}^{-1}\cdot\sum_{n}(\chi(\Gr_{n}I_{i})\cdot x^{\tB\cdot n}) & i\in I_{\ufv}\\
x_{i}^{-1} & i\in I_{\fv}
\end{cases}
\end{align*}
where $I_{i}\in\cC_{(Q,W)}$ corresponds to the $i$-th injective
module of the Jacobian algebra $J_{(Q,W)}$ and $f_{i}$ denote the
$i$-th unit vector.

As a conceptual proof, we observe that the theta functions in $\frD^{(st)}(t)$
can be calculated by using the tilting theory as in the work of Nagao
\cite[Section 8.3]{bridgeland2017scattering}\cite{Nagao10}. Moreover,
the main result of Nagao \cite{Nagao10} is the deduction of the Caldero-Chapoton
type formula for cluster monomials from the the tilting theory. By
the main result of Nagao, the theta function $\theta_{-f_{i}}^{(st)}$
in $\frD^{(st)}$ must agree with the localized cluster variable with
degree $-f_{i}$. Therefore, we obtain $\mathfrak{p}_{t_{0},t_{0}[1]}(x_{i}^{-1})=\theta_{-f_{i}}=\theta_{-f_{i}}^{(st)}=\mathfrak{p}_{t_{0},t_{0}[1]}^{(st)}(x_{i}^{-1})$
for any $i\in I_{\ufv}$. The faithfulness of $G$ implies $\mathfrak{p}_{t_{0},t_{0}[1]}=\mathfrak{p}_{t_{0},t_{0}[1]}^{(st)}$
and, consequently, $\mathfrak{p}_{t_{0}[1],t_{0}}=\mathfrak{p}_{t_{0}[1],t_{0}}^{(st)}$.
We refer the reader to Mou's upcoming work \cite{mou2019scattering}
for a detailed treatment (and a quantized version) in terms of the
Hall algebras.

Instead of re-examining the arguments of \cite{Nagao10} in the setting
of \cite{bridgeland2017scattering}, we give an alternative proof
by using the scattering diagram $\frD^{op}$ opposite to $\frD$.

Choose any generic smooth path $\gamma$ from $\cC^{+}$ to $\cC^{-}$
in $\R^{I}$. Assume $\frp_{\gamma}x_{k}=x_{k}\cdot f$, then $\frp_{\kappa\gamma}^{op}x_{k}^{-1}=x_{k}^{-1}\cdot\kappa f$
by Lemma \ref{lem:opposite_wall_crossing}. Because $\frD^{op}\simeq\frD(t_{0}^{op})$
and $\kappa\gamma$ is a path from $\cC^{-}$ to $\cC^{+}$, we obtain
the cluster expansion formula for cluster variables associated to
$t_{0}^{\op}[1]$:
\begin{align*}
\frp_{\kappa\gamma}^{op}x_{k}^{-1} & =x_{k}^{-1}\sum_{n}\chi(\Gr_{n}I_{k}^{op})\cdot x^{-\tB n}
\end{align*}
where $k\in I_{\ufv}$, $I_{k}^{op}$ is the $k$-th injective module
associated to the opposite algebra $J_{(Q,W)}^{op}$. By the natural
isomorphism $\Quot_{n}(P_{k})\simeq\Gr_{n}I_{k}^{op}$, we obtain

\begin{align*}
\frp_{t_{0}[1],t_{0}}x_{k} & =\frp_{\gamma}x_{k}=x_{k}(\sum_{n}\chi(\Quot^{n}P_{k})x^{\tB n}).
\end{align*}
In addition, it trivially holds that $\frp_{t_{0}[1],t_{0}}x_{i}=x_{i}$
for any $i\in I_{\fv}$. Therefore, $\frp_{t_{0}[1],t_{0}}$ and $\frp_{t_{0}[1],t_{0}}^{(st)}$
have the same action on $\widehat{\LP(t_{0})}$.

Finally, if the original seed $t_0$ does not satisfy the full rank assumption and we have worked with its principal coefficient seed $t_0^{\prin}$ as in \cite{gross2018canonical}, we can consider the natural projection $\proj$ from $\Z^{I(t_0^{\prin})}$ to $\Z^{I(t_0)}$ and the induced $\Z$-linear projection $\proj$ from $\LP(t_0^{\prin})$ to $\LP(t)$. By applying the projections, we recover the theta functions and scattering diagrams for $t_0$ from those for $t_0^{\prin}$, see \cite{gross2018canonical} for details. The desired claim follows.
\end{proof}

\begin{Rem}

By \cite{qin2017triangular}, a seed is injective-reachable if and
only if it is ``projective reachable''. Recall that projective modules
of $J=J_{(Q,W)}$ can be identified with injective modules of $J^{op}=J_{(Q^{op},W^{op})}$.
We deduce that if $t$ is injective-reachable, then so is $t^{op}$.
Consequently, if $J$ is weakly genteel, then so is $J^{op}$.

\end{Rem}






\newcommand{\etalchar}[1]{$^{#1}$}
\def\cprime{$'$}
\providecommand{\bysame}{\leavevmode\hbox to3em{\hrulefill}\thinspace}
\providecommand{\MR}{\relax\ifhmode\unskip\space\fi MR }
\providecommand{\MRhref}[2]{%
	\href{http://www.ams.org/mathscinet-getitem?mr=#1}{#2}
}
\providecommand{\href}[2]{#2}

\end{document}